\patchcmd\Gread@eps{\@inputcheck#1 }{\@inputcheck"#1"\relax}{}{}
\theoremstyle{plain}
\newtheorem{lemma}{Lemma}[section]
\newtheorem*{theorem*}{Theorem}
\newtheorem*{lemma*}{Lemma}
\newtheorem*{proposition*}{Proposition}
\newtheorem*{conjecture*}{Conjecture}
\newtheorem*{corollary*}{Corollary}
\newtheorem*{problem*}{Problem}
\newtheorem{theorem}[lemma]{Theorem}
\newtheorem{conjecture}[lemma]{Conjecture}
\newtheorem{corollary}[lemma]{Corollary}
\newtheorem{proposition}[lemma]{Proposition}
\newtheorem{problem}[lemma]{Problem}
\newtheorem{propositionDefinition}[lemma]{Proposition-Definition}
\theoremstyle{definition}
\newtheorem{definition}[lemma]{Definition}
\newtheorem{example}[lemma]{Example}
\newtheorem{remark}[lemma]{Remark}
\newcommand{\F}[1]{\mathscr{#1}}
\newcommand{\Z}{\mathbb{Z}}
\newcommand{\FF}{\mathbb{F}}
\renewcommand{\F}{\mathbb{F}}
\newcommand{\CC}{\mathbb{C}}
\newcommand{\QQ}{\mathbb{Q}}
\newcommand{\RR}{\mathbb{R}}
\newcommand{\OO}{\mathcal{O}}
\newcommand{\te}{\otimes}
\newcommand{\sm}{\setminus}
\newcommand{\id}{\mathrm{id}}
\newcommand{\gen}{\mathrm{gen}}
\newcommand{\cF}{\mathcal F}
\newcommand{\cQ}{\mathcal Q}
\newcommand{\onto}{\twoheadrightarrow}
\newcommand{\cA}{\mathcal A}
\newcommand{\cM}{\mathcal M}
\newcommand{\cK}{\mathcal K}
\newcommand{\cP}{\mathcal P}
\newcommand{\cU}{\mathcal U}
\newcommand{\bv}{{\bf v}}
\newcommand{\be}{{\bf e}}
\newcommand{\bk}{{\bf k}}
\newcommand{\bl}{{\bf l}}
\newcommand{\bw}{{\bf w}}
\newcommand{\bgr}{{\bf gr}}
\newcommand{\cE}{\mathcal{E}}
\newcommand{\cV}{\mathcal{V}}
\newcommand{\cW}{\mathcal{W}}
\newcommand{\gr}{\mathrm{gr}}
\newcommand{\mus}{\mu\textrm{-}s}
\newcommand{\muss}{\mu\textrm{-}ss}
\newcommand{\ZZ}{\mathbb{Z}}
\renewcommand{\P}{\mathbb{P}}
\renewcommand{\cL}{\mathcal{L}}
\newcommand{\PP}{\mathbb{P}}
\DeclareMathOperator{\ch}{ch}
\DeclareMathOperator{\DLP}{DLP}
\DeclareMathOperator{\Hom}{Hom}
\DeclareMathOperator{\Pic}{Pic}
\DeclareMathOperator{\rk}{rk}
\DeclareMathOperator{\length}{length}
\DeclareMathOperator{\Ext}{Ext}
\DeclareMathOperator{\ext}{ext}
\DeclareMathOperator{\Coh}{Coh}
\DeclareMathOperator{\sHom}{\mathcal{H}\kern -.5pt\mathit{om}}
\DeclareMathOperator{\sTor}{\mathcal{T}\kern -1.5pt\mathit{or}}
\DeclareMathOperator{\Quot}{Quot}
\newcommand{\leqor}{\underset{{\scriptscriptstyle (}-{\scriptscriptstyle )}}{<}}
\begin{document}

\date{\today}
\author[I. Coskun]{Izzet Coskun}
\address{Department of Mathematics, Statistics and CS \\University of Illinois at Chicago, Chicago, IL 60607}
\email{coskun@math.uic.edu}

\author[J. Huizenga]{Jack Huizenga}
\address{Department of Mathematics, The Pennsylvania State University, University Park, PA 16802}
\email{huizenga@psu.edu}

\subjclass[2010]{Primary: 14J60, 14J26. Secondary: 14D20, 14F05}
\keywords{Moduli spaces of sheaves, Hirzebruch surfaces, Bogomolov inequalities}
\thanks{During the preparation of this article the first author was partially supported by the  NSF grant DMS-1500031 and NSF FRG grant  DMS 1664296 and  the second author was partially supported  by the NSA\ Young Investigator Grant H98230-16-1-0306 and NSF FRG grant DMS 1664303}

\title{Existence of semistable sheaves on Hirzebruch surfaces}

\begin{abstract}

Let $\FF_e$ denote the Hirzebruch surface $\PP(\OO_{\PP^1} \oplus \OO_{\PP^1}(e))$, and let $H$ be any ample divisor. In this paper, we algorithmically determine when the moduli space of semistable sheaves $M_{\FF_e,H}(r,c_1,c_2)$ is nonempty.  Our algorithm relies on certain stacks of prioritary sheaves. We first solve the existence problem for these stacks and then algorithmically determine the  Harder-Narasimhan filtration of the general sheaf in the stack. In particular, semistable sheaves exist if and only if the Harder-Narasimhan filtration has length one. 

We then study sharp Bogomolov inequalities $\Delta \geq \delta_H(c_1/r)$ for the discriminants of stable sheaves which take the polarization and slope into account; these inequalities essentially completely describe the characters of stable sheaves.  The function $\delta_H(c_1/r)$ can be computed to arbitrary precision by a limiting procedure.  In the case of an anticanonically polarized del Pezzo surface, exceptional bundles are always stable and $\delta_H(c_1/r)$ is computed by exceptional bundles.  More generally, we show that for an arbitrary polarization there are further necessary conditions for the existence of stable sheaves beyond those provided by stable exceptional bundles.  We compute $\delta_H(c_1/r)$ exactly in some of these cases.  Finally, solutions to the existence problem have immediate applications to the birational geometry of $M_{\F_e,H}(\bv)$.
\end{abstract}

\maketitle

\setcounter{tocdepth}{1}
\tableofcontents

\section{Introduction}\label{sec-Intro}
Let $X$ be a smooth, complex projective surface and let $H$ be an ample divisor on $X$.  The moduli spaces of sheaves $M_{X, H}({\bf v})$ parameterizing $S$-equivalence classes of Gieseker semistable sheaves on $X$ play a fundamental role in mathematics, especially in algebraic geometry, Donaldson's theory of four-manifolds \cite{Donaldson} and in mathematical physics \cite{Witten}. Despite being intensively studied, many basic questions concerning the geometry of $M_{X, H}({\bf v})$ remain open. Central among them is determining when $M_{X, H}({\bf v})$ is nonempty. 

We say that a Chern character ${\bf v}$ is {\em $H$-semistable} if there exists an  $H$-Gieseker semistable sheaf with Chern character ${\bf v}$; or equivalently, when $M_{X, H}({\bf v})$ is nonempty. Bogomolov's inequality says that if ${\bf v}$ is an $H$-semistable Chern character for any ample $H$, then the discriminant satisfies $\Delta({\bf v}) \geq 0$. On the other hand, O'Grady's theorem \cite{OGrady} guarantees the existence of $H$-semistable sheaves provided $\Delta(\bv) \gg 0$. However, a complete classification of $H$-semistable Chern characters is only known for a handful of surfaces such as  $\PP^2$, K3 surfaces and abelian surfaces (see \cite{CoskunHuizengaGokova, DLP, HuybrechtsLehn, LePotier} for detailed descriptions and references). 

There are many  results on classifying stable Chern characters on anticanonically polarized del Pezzo surfaces. Rudakov \cite{RudakovQuadric}, Gorodentsev \cite{Gorodentsev} and Kuleshov and Orlov \cite{KuleshovOrlov} study exceptional bundles on del Pezzo surfaces and prove many foundational results. Rudakov studies the stable Chern characters for the anticanonical polarization on $\PP^1 \times \PP^1$ and more generally del Pezzo surfaces \cite{Rudakov, Rudakov2} and shows that exceptional bundles control the classification. These results on del Pezzo surfaces form the starting point of our investigations. 

Let $\FF_e$ for $e \geq 0$ denote the Hirzebruch surface $\PP(\OO_{\PP^1} \oplus \OO_{\PP^1}(e))$. Let $F$ be the fiber of the natural projection to $\PP^1$ and let $E$ denote a section of self-intersection $-e$. Let $H$ be any ample divisor on $\FF_e$ and ${\bf v}$ be a positive rank Chern character.  In this paper, we algorithmically determine the Chern characters of $H$-Gieseker semistable sheaves on $\FF_e$; equivalently, we determine when $M_{\FF_e, H}({\bf v})$ is nonempty. For fixed rank $r$ and $c_1$, we obtain sharp Bogomolov inequalities on the discriminant $\Delta$ for the existence of semistable sheaves. These results have consequences for computing birational invariants such as ample and effective cones on $M_{\FF_e, H}({\bf v})$ (see \cite{CoskunHuizengaNef}). Our work is directly inspired by Dr\'{e}zet and Le Potier's classification of stable Chern characters on $\PP^2$; however, the classification is considerably more complicated. In particular, we will see that for an arbitrary polarization, exceptional bundles do not control the complete classification.

For any real number $m$, let $$H_m = E + (e+m)F.$$ The divisor  class $H_m$ is ample if $m>0$, and every ample divisor is a multiple of some $H_m$. Since scaling a divisor does not change semistability, to study semistable sheaves with respect to any ample divisor, it suffices to study $H_m$-semistable sheaves on $\FF_e$.  We will now explain our results in greater detail.

\subsection{Sharp Bogomolov inequalities}  
Recall that for a smooth surface $X$ the \emph{total slope} $\nu$ and \emph{discriminant} $\Delta$ of a Chern character $\bv \in K(X)$ are defined by $$ \nu = \frac{c_1}{r} \qquad \Delta = \frac{1}{2}\nu^2 - \frac{\ch_2}{r}.$$  We will often record a Chern character $\bv$ by the data $(r,\nu,\Delta)$.  The classical Bogomolov inequality says that if $\cV$ is a $\mu_H$-semistable sheaf, then $\Delta(\cV) \geq 0$.  By taking the polarization and the total slope into account, one can prove stronger lower bounds on the discriminant.

\begin{theorem}[See Theorem \ref{thm-deltaSurface}]
Let $m>0$.  There is a unique real-valued function $\delta_m^{\mus}(\nu)$ of the total slope $\nu$ with the following property.  Let $\bv = (r,\nu,\Delta)\in K(\F_e)$ be a character of positive rank.  
\begin{enumerate}
\item If $\Delta > \delta_m^{\mus}(\nu)$, then there are $\mu_{H_m}$-stable sheaves of character $\bv$.
\item If there is a non-exceptional $\mu_{H_m}$-stable sheaf of character $\bv$, then $\Delta \geq \delta_m^{\mus}(\nu)$.
\end{enumerate} 
\end{theorem}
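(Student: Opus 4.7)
The plan is to define $\delta_m^{\mus}(\nu)$ directly as an infimum and then verify the two asserted properties, with uniqueness being formal. Put
\[
\delta_m^{\mus}(\nu) \;:=\; \inf\bigl\{\Delta : (r,\nu,\Delta) \text{ admits a non-exceptional } \mu_{H_m}\text{-stable sheaf for some rank } r\bigr\}.
\]
O'Grady's theorem, applied in any rank with $r\nu$ integral, shows the set is non-empty, while the classical Bogomolov inequality bounds it below by $0$, so $\delta_m^{\mus}(\nu)$ is a well-defined non-negative real number. Property (2) is then immediate from the definition, and uniqueness of the function is formal: if $\delta_1,\delta_2$ both satisfy (1) and (2) and $\delta_1(\nu) < \delta_2(\nu)$, one picks a valid character $\bv$ with $\Delta \in (\delta_1(\nu),\delta_2(\nu))$ that is not the character of any exceptional sheaf (possible since exceptional characters of a fixed slope are countable while valid $\Delta$-values become dense in the interval as $r$ grows), and then (1) for $\delta_1$ combined with (2) for $\delta_2$ produces a contradiction.

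The substance lies in property (1), which I would split into two steps. The first step is \emph{fixed-rank monotonicity}: if a non-exceptional $\mu_{H_m}$-stable sheaf $\cV$ of character $(r,\nu,\Delta_0)$ exists, then every valid character $(r,\nu,\Delta)$ with $\Delta > \Delta_0$ admits a $\mu_{H_m}$-stable sheaf. This is proved by iterated elementary modification: for a length-$\ell$ torsion quotient $\cV \twoheadrightarrow T$ supported where $\cV$ is locally free, the kernel $\cV' := \ker(\cV \twoheadrightarrow T)$ has the same rank and $c_1$ as $\cV$ but $\ch_2(\cV') = \ch_2(\cV) - \ell$, hence $\Delta(\cV') = \Delta(\cV) + \ell/r$. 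Since $\mu_{H_m}$-stability depends only on rank and first Chern class and $\cV' \subset \cV$ shares both, $\cV'$ inherits $\mu_{H_m}$-stability from $\cV$. Varying $\ell$ reaches every admissible $\Delta$ in $\Delta_0 + (1/r)\ZZ_{>0}$, which covers all valid discriminants exceeding $\Delta_0$ at rank $r$.

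The second step is \emph{rank-independence}: the rank-$r$ infimum $\inf\{\Delta : (r,\nu,\Delta) \text{ admits a non-exceptional } \mu_{H_m}\text{-stable sheaf}\}$ agrees with $\delta_m^{\mus}(\nu)$ for every rank $r$ with $r\nu$ integral. Given a non-exceptional $\mu_{H_m}$-stable $\cV_0$ of some rank $r_0$ with $\Delta(\cV_0)$ arbitrarily close to $\delta_m^{\mus}(\nu)$, one must construct $\mu_{H_m}$-stable sheaves of any other prescribed rank $r$ with discriminant only slightly above $\Delta(\cV_0)$. This is precisely where the prioritary-sheaf machinery developed earlier in the paper is indispensable: appropriate direct sums and modifications of $\cV_0$ are parameterized by stacks of prioritary sheaves on $\F_e$, whose general member is $\mu_{H_m}$-stable once the character lies above the existence/stability threshold already identified. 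Combining this with fixed-rank monotonicity to raise $\Delta$ up to the prescribed value realizes every character $(r,\nu,\Delta)$ with $\Delta > \delta_m^{\mus}(\nu)$, establishing (1).

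The principal obstacle is Step 2: rank-independence of the threshold is not a priori obvious and is exactly where the Hirzebruch-surface-specific existence and generic-$\mu$-stability results for stacks of prioritary sheaves do the real work. Step 1 is a standard elementary-modification argument and the uniqueness assertion is purely formal, so essentially all of the geometric content of the theorem is funneled through the prioritary-sheaf existence theory used to transport stable sheaves between ranks.
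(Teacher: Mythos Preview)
Your outline matches the paper's architecture: $\delta_m^{\mus}(\nu)$ is defined as an infimum (the paper uses $\inf\{\Delta \geq \tfrac{1}{2}: \text{there is a }\mu_{H_m}\text{-stable sheaf of slope }\nu\text{ and discriminant }\Delta\}$, equivalent to yours since non-exceptional stable sheaves have $\Delta \geq \tfrac{1}{2}$), property (2) and uniqueness are formal, and property (1) splits into fixed-rank monotonicity via elementary modifications (your Step 1, correct) plus rank-independence (your Step 2).

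Step 2, however, is where your argument has a genuine gap rather than a mere sketch. The phrase ``whose general member is $\mu_{H_m}$-stable once the character lies above the existence/stability threshold already identified'' is circular: no such threshold for $\mu$-stability has been established prior to this theorem, and the prioritary/generic-HN machinery of \S\ref{sec-genHN} produces \emph{Gieseker-semistable} sheaves, not $\mu$-stable ones. The paper supplies three ingredients you have not named. First, it reduces to a \emph{generic} polarization, where Schatz-stratum codimension estimates (Propositions \ref{prop-ssIMPs} and \ref{prop-sIMPmus}) promote $H_m$-semistability to $\mu_{H_m}$-stability whenever $\Delta > \tfrac{1}{2}$. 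Second, to change ranks it uses a ``divide by $n$'' step (Proposition \ref{prop-divideSpace}): given a witness $\cW$ of rank $r'$ with $\Delta'\in(\tfrac{1}{2},\Delta)$, form $\cW^{\oplus r}$ (only semistable), promote to $\mu$-stable via the previous step, elementary-modify to character $r'\bv$, and then invoke the generic-HN criterion (Theorem \ref{thm-HNcriterion}) to conclude that if $r'\bv$ admits semistable sheaves then so does $\bv$. Third, the boundary case $\delta_m^{\mus}(\nu)=\tfrac{1}{2}$ needs a separate construction (Lemma \ref{lem-muss12}) to manufacture a witness with $\Delta'>\tfrac{1}{2}$. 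Arbitrary $m$ is then recovered by perturbing to generic $m\pm\epsilon$ and intersecting the resulting open stability loci. Your direct-sums-and-modifications sentence does not bridge semistable to slope-stable, and without the division step you cannot reach ranks that are not multiples of $r_0$; these are the missing ideas, not merely missing details.
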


Thus the inequality $\Delta \geq \delta_m^{\mus}(\nu)$ is a sharp Bogomolov inequality which is satisfied by all non-exceptional $\mu_{H_m}$-stable sheaves.    We show that the function $\delta_m^{\mus}(\nu)$ exists and we study its basic properties.  We also compute it in many cases.

\begin{enumerate}
\item For a fixed character $\bv\in K(\F_e)$, we give an explicit algorithm to determine if there are $\mu_{H_m}$-stable sheaves of character $\bv$.  This algorithm can be used to compute $\delta_m^{\mus}(\nu)$ as a limit and approximate it to arbitrary precision.  See \S\ref{ssec-introHN}.

\item If $e=0$ or $1$, then $\F_e$ is a del Pezzo surface, and we can consider the anticanonical polarization $H_{1-\frac{e}{2}} = -\frac{1}{2} K_{\F_e}$.  In this case, we expand on work of Rudakov to show that the function $\delta_{1-\frac{e}{2}}^{\mus}(\nu)$ can be computed by \emph{exceptional bundles}.    See \S \ref{ssec-introDLP}.

\item For certain polarizations $H_m$ different from the anticanonical polarization, the function $\delta^{\mus}_{m}(\nu)$ is not always computed by exceptional bundles.  We will show that $\delta_m^{\mus}(\nu)$ is sometimes computed by orthogonal pairs of \emph{Kronecker modules}.  See \S \ref{ssec-introKronecker}.
\end{enumerate}

\subsection{Prioritary sheaves} We now turn to the problem of constructing semistable sheaves.  In order to construct semistable sheaves it is convenient to have an irreducible family of sheaves that contains all the $H_m$-semistable sheaves.  Recall that for a divisor $D$ on $X$, a torsion-free coherent sheaf $\cV$ on a surface $X$ is called {\em $D$-prioritary} if $$\Ext^2(\cV, \cV(-D))=0.$$ For a character $\bv \in K(\F_e)$, let $\cP_D(\bv) \subset \Coh(\bv)$ be the open substack of $D$-prioritary sheaves.

The stack $\cP_F(\bv)$ of $F$-prioritary sheaves is irreducible by a theorem of Walter \cite{Walter}, and it is nonempty whenever the Bogomolov inequality $\Delta \geq 0$ holds.  Every $H_m$-semistable sheaf is $F$-prioritary.  Furthermore, every $H_m$-semistable sheaf is additionally $H_{\lceil m\rceil +1}$-prioritary, and $\cP_F(\bv)$ contains the stack $\cP_{H_{\lceil m\rceil+1 }}(\bv)$ as an open substack.  See \S\ref{sec-Prioritary} for details. Thus, if $M_{H_m}(\bv)$ is nonempty, we have a chain of open dense substacks $$\cM_{H_m}(\bv) \subset \cP_{H_{\lceil m\rceil+1 }}(\bv) \subset \cP_F(\bv).$$  The existence question for $M_{H_m}(\bv)$ can then be reduced to two separate questions.
\begin{enumerate} 
\item When is $\cP_{H_{\lceil m\rceil+1}}(\bv)$ nonempty?
\item If $\cP_{H_{\lceil m \rceil+1}}(\bv)$ is nonempty, is the general sheaf $H_m$-semistable?
\end{enumerate}
We study the first question in \S\ref{sec-existPrioritary}. The next theorem gives a complete answer.
\begin{theorem}[Proposition \ref{prop-triangle} and Corollary \ref{cor-prioritaryDelta}]
For a positive integer $n$, there is an explicitly computable function $\delta_n^p(\nu)$ with the following property.  Let $\bv = (r,\nu,\Delta) \in K(\F_e)$ be a character with $\Delta \geq 0$.  Then $\cP_{H_n}(\bv)$ is nonempty if and only if $\Delta \geq \delta_n^p(\nu)$.
\end{theorem}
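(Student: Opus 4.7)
The plan is to identify the prioritary sheaves of minimal discriminant as (essentially) direct sums of line bundles drawn from a short window of the Picard lattice, and then to raise $\Delta$ by elementary modifications while staying inside the prioritary locus.

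First I would reformulate $H_n$-prioritarity via Serre duality: $\cV$ is $H_n$-prioritary if and only if $\Hom(\cV,\cV(K_{\F_e}+H_n))=0$. Since $K_{\F_e}+H_n = -E + (n-2)F$, the twist has $E$-coefficient exactly $-1$, so any single line bundle is trivially $H_n$-prioritary; the condition only starts to bite on a direct sum $\bigoplus_i \OO(a_iE+b_iF)$ once the spread of the $a_i$ is too large relative to $n$. This reformulation suggests that the extremal prioritary sheaves (realizing the bound $\delta_n^p$) are direct sums of line bundles with tightly clustered $a_i$.

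For necessity of $\Delta \ge \delta_n^p(\nu)$, I would work with the generic splitting type of an $F$-prioritary sheaf $\cV$ along the section $E$ (or along a movable section in $|E+kF|$). By Walter's theorem, $\cP_F(\bv)$ is irreducible and nonempty when $\Delta\ge 0$, so the generic splitting type is a well-defined invariant of $(r,\nu,\Delta)$. The $H_n$-prioritarity condition translates into an upper bound on how unbalanced this splitting can be, and unraveling this bound in terms of Chern data yields a lower bound $\Delta \ge \delta_n^p(\nu)$. The function $\delta_n^p$ emerges as a piecewise-quadratic function of $\nu$ whose pieces correspond to which windows of line bundles can realize the given total slope.

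For sufficiency, I would first exhibit an $H_n$-prioritary sheaf at the boundary $\Delta = \delta_n^p(\nu)$ by writing down an explicit direct sum $\cV_0 = \bigoplus_i \OO(a_iE+b_iF)$ of the prescribed character, with the $a_i$ chosen so that for every ordered pair $(i,j)$ the line bundle $\OO((a_j-a_i-1)E + (b_j-b_i+n-2)F)$ has no global sections on $\F_e$; this verifies the Serre-dual vanishing directly. To cover characters with $\Delta > \delta_n^p(\nu)$, I would apply elementary modifications: replace the current $\cV$ by the kernel of a generic surjection $\cV \onto \OO_p$ for a point $p\in\F_e$. Each such modification raises $\Delta$ by $1/r$, and by openness of the prioritary condition in flat families one stays inside $\cP_{H_n}$ for generic choices. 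Iterating produces prioritary sheaves of every admissible character above the threshold.

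The main obstacle I expect is the combinatorial optimization that produces the explicit formula for $\delta_n^p(\nu)$: one must enumerate the admissible windows of line bundles (whose width depends on $n$), minimize $\Delta$ over all ways of distributing $r$ summands through a window with prescribed total $c_1$, and then piece these local optima together as $\nu$ varies. A secondary technical hurdle is showing that elementary modifications sweep out \emph{every} character above the bound rather than missing a sparse set; this should reduce to a dimension count on the relevant $\Quot$ scheme combined with the irreducibility of $\cP_F(\bv)$.
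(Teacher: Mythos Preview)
Your sufficiency direction is essentially the paper's: an explicit direct sum of line bundles realises the minimal discriminant, and elementary modifications climb from there. The paper is more specific---after twisting/dualising $\nu$ into a triangle with vertices $(-1,n-1)$, $(0,0)$, $(0,-1)$, it uses exactly the three bundles $\OO(-E+(n-1)F)$, $\OO$, $\OO(-F)$---but the idea is yours. Your ``secondary hurdle'' about elementary modifications missing characters is not an issue: for fixed $(r,c_1)$ the integral discriminants form a coset of $\tfrac{1}{r}\Z$, and each modification steps by exactly $\tfrac{1}{r}$.

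The necessity direction has a genuine gap. Restriction to the section $E$ carries no information about $H_n$-prioritarity, since $H_n$-prioritarity is $\Ext^2(\cV,\cV(-E-(n+e)F))=0$; nothing about this vanishing is visible on $E$. Restricting instead to a curve of class $H_n$ does use the hypothesis---it forces $\cV|_C$ to be balanced---but balancedness bounds the spread $\mu_{\max,H_n}(\cV)-\mu_{\min,H_n}(\cV)\leq 1$, which constrains subsheaf slopes, not the discriminant of $\cV$ itself. There is no evident mechanism by which this unwinds into a sharp lower bound on $\Delta$, and your sketch does not supply one.

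The paper's route is entirely different and worth knowing. The general sheaf in $\cP_F(\bv)$ admits a Gaeta-type resolution
\[
0\to L_0(-E-(e+1)F)^\alpha \to L_0(-E-eF)^\beta \oplus L_0(-F)^\gamma \oplus L_0^\delta \to \cV \to 0
\]
for an explicitly determined line bundle $L_0$. Applying $\Hom(-,\cV(-H_n))$ and using the known cohomology of the general $F$-prioritary sheaf (Theorem~\ref{thm-BN}) shows that $\Ext^2(\cV,\cV(-H_n))=0$ forces $H^2(\cV(-L_0-H_n))=0$, hence $\chi(\bv(-L_0-H_n))\leq 0$. This single Euler-characteristic inequality \emph{is} the bound $\Delta\geq\delta_n^p(\nu)$, and it matches on the nose the discriminant of the three-term direct sum from the sufficiency step. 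The Gaeta resolution, not a splitting-type analysis, is the missing idea.
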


There are two key previous results from \cite{CoskunHuizengaBN} used in the proof.  First, we know that the general sheaf $\cV$ in $\cP_F(\bv)$ admits a Gaeta-type resolution by line bundles.  We use this resolution to study $\Ext^2(\cV,\cV(-H_n))$.  We know  the cohomology of a general $F$-prioritary sheaf, and this gives us necessary numerical conditions on $\bv$ in order to have $\Ext^2(\cV,\cV(-H_n))=0$.  On the other hand, given a character $\bv$ that satisfies these conditions, we explicitly construct $H_n$-prioritary sheaves as direct sums of line bundles. 

\begin{remark}The theorem can also be rephrased in several other different ways.  For example, given a general $F$-prioritary sheaf $\cV$ of character $\bv$, what is the largest integer $n$ such that $\cV$ is $H_n$-prioritary? See Corollary \ref{cor-prioritaryRho}.
\end{remark}

\begin{remark}It follows that $\delta_n^p$ provides a strong Bogomolov inequality, in the sense that $$\delta_m^{\mus}(\nu) \geq \delta_{\lceil m\rceil +1}^p(\nu).$$ The right hand side has the advantage of being easily computable.
\end{remark}

\subsection{The generic Harder-Narasimhan filtration}\label{ssec-introHN}
To determine when the stack $\cP_{H_{\lceil m\rceil }}(\bv)$ contains $H_m$-semistable sheaves, we then study the $H_m$-Harder-Narasimhan filtration of the general sheaf.  In particular, there exists an $H_m$-semistable sheaf with Chern character ${\bf v}$ if and only if the generic $H_m$-Harder-Narasimhan filtration has length 1.

\begin{remark}
If the substack $\cP_{H_{\lceil m\rceil+1}}(\bv) \subset \cP_{H_{\lceil m\rceil}}(\bv)$ is empty, then there are definitely not $H_m$-semistable sheaves of character $\bv$, but we can still study the generic $H_m$-Harder-Narasimhan filtration of sheaves in $\cP_{H_{\lceil m\rceil}}(\bv)$.
\end{remark}

Suppose the $H_m$-Harder-Narasimhan factors of the general sheaf in $\cP_{H_{\lceil m\rceil}}(\bv)$ are $\bv_1,\ldots,\bv_\ell$.  They then must satisfy three key properties.  First, $M_{H_m}(\bv_i)$ is nonempty.  Second, prioritariness shows the general sheaf restricts to a curve of class $H_{\lceil m\rceil}$ (which is a $\P^1$) or $H_{\lfloor m\rfloor}$ as a balanced direct sum of line bundles.  This implies that the $H_m$-slopes of the factors are close: in particular, $$\mu_{H_m}(\bv_1) - \mu_{H_m}(\bv_\ell) \leq 1.$$  This then gives us $\Ext^2$-vanishings which allow us to compute the codimension of the Schatz stratum parameterizing sheaves with $H_m$-Harder-Narasimhan factors $\bv_1,\ldots,\bv_\ell$.  Since this codimension must be $0$, we get the orthogonality relations $$\chi(\bv_i,\bv_j) = 0 \qquad (i<j).$$
Conversely, we show that the Harder-Narasimhan factors are completely determined by these three properties.  

\begin{theorem}[Lemmas \ref{lem-HNclose} \& \ref{lem-HNorthogonal} and Theorem \ref{thm-HNcriterion}]\label{thm-HNintro}
Let $\bv \in K(\F_e)$, suppose there are $H_{\lceil m\rceil}$-prioritary sheaves of character $\bv$, and let $\bv = \bv_1 + \cdots + \bv_\ell$ be a decomposition with the following properties.
\begin{enumerate}
\item $q_1>\cdots > q_\ell$, where $q_i$ is the reduced $H_m$-Hilbert polynomial of $\bv_i$.
\item The moduli space $M_{H_m}(\bv_i)$ is nonempty.
\item $\mu_{H_m}(\bv_1) - \mu_{H_m}(\bv_\ell) \leq 1$.
\item $\chi(\bv_i,\bv_j)= 0$ for $i<j$.
\end{enumerate}
Then the general sheaf $\cV$ in $\cP_{H_{\lceil m\rceil}}(\bv)$ has an $H_m$-Harder-Narasimhan filtration with factors of characters $\bv_1,\ldots,\bv_\ell$.  Conversely, the characters of the factors of the $H_m$-Harder-Narasimhan filtration of $\cV$ have all the above properties.
\end{theorem}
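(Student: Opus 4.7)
The plan is to analyze the Schatz stratification of $\cP := \cP_{H_{\lceil m\rceil}}(\bv)$ by $H_m$-Harder-Narasimhan type and identify the unique codimension-zero stratum. The theorem naturally splits into three claims: the HN slopes of a general prioritary sheaf cluster (Lemma \ref{lem-HNclose}), the HN factors are $\chi$-orthogonal (Lemma \ref{lem-HNorthogonal}), and a converse realizing any decomposition satisfying (1)--(4) as the generic HN type (Theorem \ref{thm-HNcriterion}).

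I would begin with the converse direction. Conditions (1) and (2) are immediate from the definition of an HN filtration. For (3), I would restrict a general $\cV \in \cP$ to a general member $C$ of the linear system $|H_{\lceil m\rceil}|$; because $\cV$ is $H_{\lceil m\rceil}$-prioritary and $C$ is a smooth rational curve, $\cV|_C$ is a balanced direct sum of line bundles on $\P^1$. Any HN subquotient of $\cV$ then restricts to a subsheaf of this balanced bundle, and comparing degrees on $C$ yields $\mu_{H_m}(\bv_1)-\mu_{H_m}(\bv_\ell)\leq 1$. For (4), I would compute the codimension of the Schatz stratum in $\cP$. Using (3) together with Serre duality on $\F_e$—where $\mu_{H_m}(K_{\F_e})=-2m-(e+2)$ is strictly less than $-1$—one obtains $\Ext^2(\bv_i,\bv_j)=0$ between $H_m$-semistable representatives of factors with $i<j$. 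Combined with $\Hom(\bv_i,\bv_j)=0$ from semistability (since $\mu_{H_m}(\bv_i)>\mu_{H_m}(\bv_j)$), the standard Schatz codimension formula reduces to
\[ \mathrm{codim}_{\cP}\, S(\bv_1,\ldots,\bv_\ell) \;=\; \sum_{i<j} \ext^1(\bv_i,\bv_j) \;=\; \sum_{i<j}\bigl(-\chi(\bv_i,\bv_j)\bigr). \]
Each summand is nonnegative, and codimension zero forces $\chi(\bv_i,\bv_j)=0$ for $i<j$.

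For the forward direction, assume (1)--(4). Using (2), pick $H_m$-semistable representatives $\cV_i$ of each $\bv_i$ and form an iterated extension $\cV$ with $\cV_1$ as the top HN quotient and $\cV_\ell$ as the bottom subsheaf. I would verify $\cV \in \cP$ by reducing $\Ext^2(\cV,\cV(-H_{\lceil m\rceil}))=0$, via the long exact sequences associated to the filtration, to the pairwise vanishings $\Ext^2(\cV_i,\cV_j(-H_{\lceil m\rceil}))=0$. A Serre-duality calculation gives $\mu_{H_m}(K_{\F_e}+H_{\lceil m\rceil})=\lceil m\rceil-m-2<-1$, and the slope bound (3) then forces the required $\Hom$ between $H_m$-semistables to vanish in every case. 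Hence the Schatz stratum containing $\cV$ has HN type $(\bv_1,\ldots,\bv_\ell)$; by the codimension formula above, condition (4) makes this stratum codimension zero in $\cP$. Since $\cP_F(\bv)$ is irreducible by Walter's theorem and $\cP$ is an open substack, there is a unique codimension-zero stratum, and it is therefore the generic HN type.

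The principal obstacle is that the slope bound in (3) is sharp: the inequality $\mu_{H_m}(\bv_1)-\mu_{H_m}(\bv_\ell)\leq 1$ only barely suffices to force the $\Ext^2$-vanishings needed for both the codimension formula and the prioritariness of the iterated extension. Verifying that $\mu_{H_m}(K_{\F_e}+H_{\lceil m\rceil})<-1$ strictly across all $m>0$—so that the Serre-dual $\Hom$ comparison is strict even in boundary cases like $m \in \ZZ$—is the technical heart of the argument; once in place, the rest is standard Schatz-stratification bookkeeping.
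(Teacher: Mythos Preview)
Your overall strategy matches the paper's exactly: prove the slope bound by restricting to a rational curve, derive orthogonality from the Schatz codimension formula, and realize any admissible decomposition by building a prioritary sheaf with the prescribed HN type and invoking irreducibility of $\cP_F(\bv)$.  The $\Ext^2$ bookkeeping you sketch is correct, and your observation that $\mu_{H_m}(K_{\F_e}+H_{\lceil m\rceil})\in[-2,-1)$ is the right sharpness check.

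There is one genuine gap, in your argument for (3).  Restricting $\cV$ to $C\in|H_{\lceil m\rceil}|$ and using balancedness only controls $H_{\lceil m\rceil}$-slopes: degrees on $C$ compute $c_1\cdot H_{\lceil m\rceil}$, so what you actually obtain is $\mu_{H_{\lceil m\rceil}}(\bv_1)-\mu_{H_{\lceil m\rceil}}(\bv_\ell)\leq 1$, not the $H_m$-version.  When $m\notin\ZZ$ this does not immediately give the $H_m$-bound you need.  The paper fixes this by noting that an $H_{\lceil m\rceil}$-prioritary sheaf is automatically $H_{\lfloor m\rfloor}$-prioritary (since $F$ is effective), so the restriction to a curve of class $H_{\lfloor m\rfloor}$ is also balanced; writing $H_m=\lambda H_{\lfloor m\rfloor}+(1-\lambda)H_{\lceil m\rceil}$ and combining the two bounds via a convex combination then yields $\mu_{H_m}(\bv_1)-\mu_{H_m}(\bv_\ell)\leq 1$.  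Your sketch needs this interpolation step.

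Two minor points.  First, in the forward direction the paper simply takes the direct sum $\cU=\bigoplus_i\cW_i$ rather than an iterated extension; this already has the correct HN filtration and is $H_{\lceil m\rceil}$-prioritary by a one-line appeal to Proposition~\ref{prop-ssPrior}, so your pairwise $\Ext^2$ check is unnecessary (and note your ``top quotient / bottom subsheaf'' is reversed: $\gr_1$ is the maximal destabilizing \emph{sub}sheaf).  Second, to justify the codimension formula the paper builds a complete family on a smooth Quot scheme; you can instead observe that $H_{\lceil m\rceil}$-prioritariness forces $\Ext^2(\cV,\cV)=0$ (take $M=0$ in Lemma~\ref{lem-priorCompare}), so the stack itself is smooth and the Schatz stratification applies directly.
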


Thus if there is no length $\ell \geq 2$ list of characters with these properties, then there are $H_m$-semistable sheaves.  We also show that the characters $\bv_i$ necessarily come from a bounded region in $K(\F_e)$, so there are only finitely many possibilities to consider.  The length $\ell$ of the filtration is also at most $4$.  We can then turn Theorem \ref{thm-HNintro} into an effective inductive algorithm to determine when the moduli space $M_{H_m}(\bv)$ is nonempty.  See Corollary \ref{cor-algorithm} for our most efficient procedure.

\begin{theorem}
For a fixed Chern character $\bv\in K(\F_e)$, there is an inductive algorithm to determine if the moduli space $M_{H_m}(\bv)$ is nonempty.  The number $\delta_m^{\mus}(\nu)$ can be computed to arbitrary precision as a limit.
\end{theorem}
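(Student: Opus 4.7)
The plan is to assemble the algorithm directly from the machinery already established in the excerpt, using the two prior existence theorems as decision subroutines. Given a Chern character $\bv=(r,\nu,\Delta)$, the first step is to decide whether $\cP_{H_{\lceil m\rceil}}(\bv)$ is nonempty by testing the inequality $\Delta \geq \delta^p_{\lceil m\rceil}(\nu)$ from Corollary \ref{cor-prioritaryDelta}; since $\delta_n^p$ is explicitly computable, this is effective. If the inequality fails, then every $H_m$-semistable sheaf would in particular be $H_{\lceil m\rceil +1}$-prioritary (and hence $H_{\lceil m\rceil}$-prioritary), so $M_{H_m}(\bv)$ must be empty.

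If $\cP_{H_{\lceil m\rceil}}(\bv)$ is nonempty, the algorithm enumerates all candidate Harder--Narasimhan types $(\bv_1,\ldots,\bv_\ell)$ with $\ell\geq 2$ satisfying conditions (1), (3) and (4) of Theorem \ref{thm-HNintro}. By the boundedness statement immediately following that theorem, the possible summands $\bv_i$ lie in a finite, explicitly computable region of $K(\F_e)$, and $\ell\leq 4$, so only finitely many tuples need be examined. For each candidate, condition (2) -- the nonemptiness of $M_{H_m}(\bv_i)$ -- is tested recursively. Termination follows by induction on rank: since $\ell\geq 2$ and $\sum r_i=r$, each $r_i<r$, and the base case $r=1$ consists of line bundles which are automatically stable. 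By Theorem \ref{thm-HNintro}, the generic sheaf in $\cP_{H_{\lceil m\rceil}}(\bv)$ has trivial Harder--Narasimhan filtration -- and hence $M_{H_m}(\bv)$ is nonempty -- if and only if no such admissible nontrivial tuple exists, which is the termination criterion.

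For the limiting statement, fix a rational total slope $\nu$ and, for each positive integer $r$ with $r\nu\in \NS(\F_e)$, let $\delta^{(r)}(\nu)$ be the smallest value of $\Delta\in \frac{1}{2r^2}\ZZ$ (the lattice of admissible discriminants at rank $r$) for which the above algorithm outputs \emph{nonempty}. The first cited theorem of the subsection shows $\delta^{(r)}(\nu)\geq \delta_m^{\mus}(\nu)$, and since the lattice of admissible $\Delta$'s becomes dense in $\RR$ as $r\to\infty$, any $\Delta >\delta_m^{\mus}(\nu)$ is eventually realized; thus $\delta^{(r)}(\nu)\to \delta_m^{\mus}(\nu)$. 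Irrational total slopes are handled by approximation, using that $\delta_m^{\mus}$ inherits continuity from the surrounding geometry of rational slopes and characters of large rank.

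The main obstacle in a careful writeup is the inductive well-foundedness: one must verify that in every admissible decomposition produced by the enumeration the rank drops strictly and the summands really do lie in an \emph{a priori} bounded and effectively enumerable subset of $K(\F_e)$ depending only on $\bv$ and $m$. Once this finiteness is in place -- it is essentially the content of the boundedness remark after Theorem \ref{thm-HNintro}, combined with the observation $\mu_{H_m}(\bv_1)-\mu_{H_m}(\bv_\ell)\leq 1$ and integrality of $\chi(\bv_i,\bv_j)=0$ -- the algorithm and its termination fall out formally, and the precision claim for $\delta_m^{\mus}(\nu)$ is a direct consequence.
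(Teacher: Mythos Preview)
Your algorithm for deciding nonemptiness of $M_{H_m}(\bv)$ is correct and matches the paper's approach: test prioritariness via the explicit function $\delta^p$, then search a finite set of candidate Harder--Narasimhan types using the criteria of Theorem~\ref{thm-HNintro}, recursing on rank. The paper streamlines this slightly in Corollary~\ref{cor-algorithm} by only searching for the first factor $\bgr_1$ and recursing on the quotient character, but the logic is the same.

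Your argument for the limit statement, however, has a genuine gap. The quantity $\delta^{(r)}(\nu)$ you define is the least discriminant at rank $r$ for which the algorithm outputs ``nonempty''---that is, for which $M_{H_m}(r,\nu,\Delta)\neq\emptyset$. This tests $H_m$-\emph{Gieseker semistability}, not $\mu_{H_m}$-stability, and you do not restrict to $\Delta\geq\tfrac12$. So the inequality $\delta^{(r)}(\nu)\geq\delta_m^{\mus}(\nu)$ does not follow from Theorem~\ref{thm-deltaSurface}: an $H_m$-semistable exceptional bundle (with $\Delta<\tfrac12$) or, for special $m$, a strictly $H_m$-semistable sheaf with $\tfrac12\leq\Delta<\delta_m^{\mus}(\nu)$, would make your $\delta^{(r)}(\nu)$ too small. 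What \emph{does} follow from Theorem~\ref{thm-deltaSurface}\,(1) is the contrapositive: if the algorithm reports $M_{H_m}(r,\nu,\Delta)=\emptyset$ for some integral $(r,\nu,\Delta)$, then $\Delta\leq\delta_m^{\mus}(\nu)$, giving a lower bound. Upper bounds then come from nonemptiness with $\Delta\geq\tfrac12$, but to conclude $\delta^{(r)}(\nu)\to\delta_m^{\mus}(\nu)$ you need the bridge from Gieseker semistability to slope stability, which is supplied by the results of \S\ref{ssec-deltaGeneric} (Propositions~\ref{prop-ssIMPs} and~\ref{prop-sIMPmus}) for generic $m$. You should invoke these explicitly and restrict to $\Delta\geq\tfrac12$.

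Finally, the remark about irrational slopes is both unnecessary and unjustified: $\delta_m^{\mus}(\nu)$ is defined in the paper only for $\nu\in\Pic(\F_e)\otimes\QQ$, and no continuity property is established.
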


\subsection{Exceptional bundles and the Dr\'ezet-Le Potier function}\label{ssec-introDLP}

 Recall that an exceptional bundle is a simple bundle $\cV$ with $\Ext^i(\cV,\cV) = 0$ for $i>0$.  Stable exceptional bundles give necessary conditions which the invariants of semistable sheaves must satisfy.  Suppose $\cV$ is a $\mu_H$-stable exceptional bundle, and suppose $\cW$ is a $\mu_H$-stable sheaf of character $\bw = (r,\nu,\Delta)$  with $$\frac{1}{2}K_{\F_e}\cdot H \leq \mu_H(\cW) - \mu_H(\cV) <0.$$ Then the only possibly nonzero group $\Ext^i(\cV,\cW)$ is $\Ext^1(\cV,\cW)$, so $\chi(\cV,\cW) \leq 0$.  When expanded with Riemann-Roch, this gives a lower bound on $\Delta$.  If the order of the $H$-slopes of $\cW$ and $\cV$ is reversed, an analogous discussion holds, and we conclude that there is an inequality of the form $$\Delta \geq \DLP_{H,\cV}(\nu)$$ which is satisfied by every $\mu_H$-stable sheaf $\cW$ with $0 < | \mu_H(\cW) - \mu_H(\cV)| \leq -\frac{1}{2}K_{\F_e}\cdot H.$ See \S \ref{ssec-DLP} for more details.

For simplicity, suppose $\cW$ does not have the same $H$-slope as any exceptional bundle.  If we take all the $\mu_H$-stable exceptional bundles $\cV$ which are sufficiently close to $\cW$ into account, we can define a \emph{Dr\'ezet-Le Potier} function $\DLP_H(\nu)$ which is the supremum of the functions $\DLP_{H,\cV}$. We will then have $$\Delta \geq \DLP_{H}(\nu),$$ and it follows that $$\delta_m^{\mus}(\nu) \geq \DLP_{H_m}(\nu);$$ see Corollary \ref{cor-deltaDLPe} in general.  Thus the Dr\'ezet-Le Potier function gives a stronger Bogomolov inequality, and can be used to bound the sharp Bogomolov inequality $\delta_m^{\mus}(\nu)$.  

If $e=0$ or $1$, then every exceptional bundle on $\F_e$ is $\mu_{-K_{\F_e}}$-stable by Gorodentsev \cite{Gorodentsev}.  Expanding on work of Rudakov \cite{Rudakov}, we see that  exceptional bundles precisely compute $\delta_{1-\frac{e}{2}}^{\mus}$.

\begin{theorem}[Corollary \ref{cor-deltaDLP}]
Let $e=0$ or $1$, and let $\nu \in \Pic(\F_e)\te \QQ$.  Then $$\delta_{1-\frac{e}{2}}^{\mus}(\nu) = \DLP_{-K_{\F_e}}(\nu).$$
\end{theorem}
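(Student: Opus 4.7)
The plan is to prove the reverse inequality $\delta_{1-e/2}^{\mus}(\nu) \leq \DLP_{-K_{\F_e}}(\nu)$, since the opposite inequality is Corollary \ref{cor-deltaDLPe}. Equivalently, I want to show that for any character $\bv = (r,\nu,\Delta)$ with $\Delta > \DLP_{-K_{\F_e}}(\nu)$, the moduli space $M_{-K_{\F_e}}(\bv)$ contains a $\mu_{-K}$-stable sheaf. Note that $H_m = -\tfrac{1}{2}K_{\F_e}$ when $m = 1-\tfrac{e}{2}$, and this polarization is ample on $\F_0, \F_1$. The DLP hypothesis should also imply that the prioritariness bound $\delta^p_{\lceil m\rceil + 1}(\nu)$ is satisfied, so we may apply the Harder-Narasimhan algorithm of Theorem \ref{thm-HNintro}.

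Suppose for contradiction that the general $H_{\lceil m\rceil}$-prioritary sheaf is $\mu_{-K}$-unstable, with a generic HN decomposition $\bv = \bv_1 + \cdots + \bv_\ell$ ($\ell \geq 2$) satisfying properties (1)--(4) of Theorem \ref{thm-HNintro}. Consider two consecutive factors $\bv_i, \bv_{i+1}$. Since $-K_{\F_e}$ is anticanonical, the slope gap $\mu_{-K}(\bv_i) - \mu_{-K}(\bv_{i+1})$ is strictly positive but at most $1$; together with the fact that $-K \cdot H_m = -K \cdot (-K/2) > 0$, Serre duality and the stability of $\bv_i, \bv_{i+1}$ force $\Hom(\bv_i, \bv_{i+1}) = \Ext^2(\bv_i, \bv_{i+1}) = 0$. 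Combined with the orthogonality $\chi(\bv_i, \bv_{i+1}) = 0$ from property (4), this gives $\Ext^1(\bv_i, \bv_{i+1}) = 0$ as well, so $(\bv_i, \bv_{i+1})$ forms a completely orthogonal pair in the sense of Rudakov.

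The main obstacle is showing that such a completely orthogonal pair of $\mu_{-K}$-stable sheaves forces at least one factor to be exceptional. This is the heart of Rudakov's argument in \cite{Rudakov,Rudakov2}, which I would adapt to both $\F_0$ and $\F_1$ using Gorodentsev's theorem that every exceptional bundle on these surfaces is automatically $\mu_{-K}$-stable. The inductive step proceeds by noting $\Delta(\bv_i) + \Delta(\bv_{i+1}) < \Delta(\bv)$ together with Bogomolov's inequality on each factor, so that by induction on $\Delta$ we may assume the theorem for the $\bv_i$'s: if neither $\bv_i$ nor $\bv_{i+1}$ is exceptional, then each lies on or above its own $\DLP_{-K}$ curve, and the orthogonality relation forces a numerical contradiction with Riemann-Roch. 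Hence, after shrinking the filtration, some factor $\bv_j$ is exceptional. Applying the DLP inequality $\Delta(\bv) \geq \DLP_{-K,\bv_j}(\nu)$ associated to this exceptional factor --- which is valid because the slope gap constraint of property (3) places $\bv$ in the correct slope range near $\bv_j$ --- yields $\DLP_{-K}(\nu) \geq \DLP_{-K,\bv_j}(\nu) \geq \Delta(\bv)$, contradicting the assumption $\Delta(\bv) > \DLP_{-K}(\nu)$. A direct check handles the borderline case where $\bv$ itself has the slope of an exceptional bundle.
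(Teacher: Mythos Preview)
Your argument has two genuine gaps.

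First, the key inequality you invoke, $\Delta(\bv_i) + \Delta(\bv_{i+1}) < \Delta(\bv)$, is simply false: the discriminant is not additive under extensions and no such comparison holds. The paper's mechanism for showing that some Harder--Narasimhan factor is semiexceptional is different and does not use induction on $\Delta$. From $\chi(\bgr_1,\bgr_\ell)=0$ and Riemann--Roch one gets $P(\nu_\ell-\nu_1)=\Delta_1+\Delta_\ell$; the point is then that for polarizations sufficiently close to $-K_{\F_e}$ the constraint $|(\nu_\ell-\nu_1)\cdot H|\leq 1$ forces $P(\nu_\ell-\nu_1)\leq 1$, using the geometry of the hyperbola $P(\nu)=1$ and the fact that its tangent line at $\nu=0$ is $\nu\cdot(-K_{\F_e})=0$ (see the proof of Theorem~\ref{thm-existence}). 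This yields $\min(\Delta_1,\Delta_\ell)<\tfrac{1}{2}$, hence one factor is semiexceptional. Your induction does not do this work, and your assertion that consecutive Gieseker HN factors have strictly positive $-K$-slope gap is also false in general.

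Second, you work directly at the anticanonical polarization, which is not generic. Even if you established that the Gieseker HN filtration has length one, you would only have $(-K_{\F_e})$-semistable sheaves, not $\mu_{-K_{\F_e}}$-stable ones; Propositions~\ref{prop-ssIMPs} and~\ref{prop-sIMPmus}, which upgrade Gieseker semistability to slope stability, require the polarization to be generic. The paper circumvents this by perturbing to $H=-\tfrac{1}{2}K_{\F_e}\pm\epsilon F$, invoking Theorem~\ref{thm-existence} and Lemma~\ref{lem-limitK} to produce $H$-semistable sheaves, upgrading to $\mu_H$-stability via the generic-polarization results (using $\Delta>\tfrac{1}{2}$ from Corollary~\ref{cor-K1/2}), and finally pulling slope stability back to $-K_{\F_e}$ via Corollary~\ref{cor-KstabilityEasy}. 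Your proposal has no substitute for this perturbation--and--return maneuver.
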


\begin{example}
The analogous result holds for $\P^2$ by work of Dr\'ezet and Le Potier.  In that case it is customary to write the slope as $\mu$ and put $\delta(\mu) = \DLP_H(\mu)$.  Non-exceptional $\mu$-stable sheaves of character $(r,\mu,\Delta)$ exist if and only if $\Delta \geq \delta(\mu)$.  In Figure \ref{fig-P2}, the function $\delta(\mu)$ is the top fractal-like curve, bounding the region labeled ``S'' (for \emph{stable}).
\end{example}

\subsection{Stability of exceptional bundles and stability intervals} On the other hand,  computing $\DLP_{H}$ for $H$ different from the anticanonical polarization first requires that we determine the collection of $\mu_H$-stable exceptional bundles.  This can be done by induction on the rank.  First, we define a restricted Dr\'ezet-Le Potier function $\DLP_H^{<r}$ which only takes the $\mu_H$-stable exceptional bundles of rank less than $r$ into account. 
\begin{theorem}[Corollary \ref{cor-DLPExceptional}]\label{thm-exceptionalIntro}
Let $\bv = (r,\nu,\Delta) \in K(\F_e)$ be a character such that $\chi(\bv,\bv) = 1$, and let $H$ be an arbitrary polarization.  There is a $\mu_H$-stable exceptional bundle of character $\bv$ if and only if $$\Delta \geq \DLP_H^{<r}(\nu).$$
\end{theorem}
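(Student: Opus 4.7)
\emph{Necessity.} Suppose $\cW$ is a $\mu_H$-stable exceptional bundle of character $\bv$. For any $\mu_H$-stable exceptional bundle $\cV$ of rank strictly less than $r$ whose $H$-slope is close enough to $\mu_H(\cW)$ (in the sense prescribed by the definition of $\DLP_{H,\cV}$), the $\mu_H$-stability of $\cV$ and $\cW$, Serre duality, and the sign $K_{\F_e}\cdot H<0$ force either $\chi(\cV,\cW)\leq 0$ or $\chi(\cW,\cV)\leq 0$ depending on the slope ordering, by exactly the argument of the paragraph preceding the definition of $\DLP_{H,\cV}$. Via Riemann-Roch this is equivalent to $\Delta\geq\DLP_{H,\cV}(\nu)$, and taking the supremum over all such rank-$<r$ exceptional $\cV$ gives $\Delta\geq\DLP_H^{<r}(\nu)$.

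\emph{Sufficiency.} I would induct on $r$, the case $r=1$ being trivial. Fix $\bv$ of rank $r$ with $\chi(\bv,\bv)=1$ and $\Delta\geq\DLP_H^{<r}(\nu)$. Since $\DLP_H^{<r}(\nu)\geq 0$, the stack $\cP_F(\bv)$ is nonempty, and I would consider the $H_m$-Harder-Narasimhan factors $\bv_1,\ldots,\bv_\ell$ of a general sheaf $\cV$ in the appropriate prioritary stack, as described by Theorem \ref{thm-HNintro}. I would first dispose of the case $\ell=1$: the general prioritary sheaf is $\mu_H$-semistable, $\chi(\bv,\bv)=1$ rules out a nontrivial Jordan-H\"older decomposition (which would force $\chi(\bv,\bv)\geq 2$), and for the resulting $\mu_H$-stable $\cV$ we have $\Hom(\cV,\cV)=\CC$ and $\Ext^2(\cV,\cV)=\Hom(\cV,\cV\te K_{\F_e})^\vee=0$ (by stability of $\cV$ and $\cV\te K_{\F_e}$, using $K_{\F_e}\cdot H_m<0$), so $\chi(\cV,\cV)=1$ forces $\Ext^1(\cV,\cV)=0$ and $\cV$ is exceptional.

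It remains to rule out $\ell\geq 2$. The slope-closeness $\mu_{H_m}(\bv_1)-\mu_{H_m}(\bv_\ell)\leq 1$ combined with $K_{\F_e}\cdot H_m<0$ and $\mu_H$-stability of general representatives $\cV_i$ yields $\Ext^2(\cV_j,\cV_i)=0$ for $i\leq j$, so $\chi(\bv_j,\bv_i)\leq 0$ for $i<j$ and $\chi(\bv_i,\bv_i)\leq 1$. Combined with the orthogonality $\chi(\bv_i,\bv_j)=0$ for $i<j$ from Theorem \ref{thm-HNintro}, expanding $\chi(\bv,\bv)=1$ gives
$$1=\sum_i\chi(\bv_i,\bv_i)+\sum_{i<j}\chi(\bv_j,\bv_i),$$
and the integrality plus sign constraints force at least one factor $\bv_{i_0}$ to satisfy $\chi(\bv_{i_0},\bv_{i_0})=1$. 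Since $r(\bv_{i_0})<r$, $M_{H_m}(\bv_{i_0})$ is nonempty, and $\Delta(\bv_{i_0})\geq\delta_m^{\mus}(\nu(\bv_{i_0}))\geq\DLP_H^{<r(\bv_{i_0})}(\nu(\bv_{i_0}))$, the inductive hypothesis provides a $\mu_H$-stable exceptional bundle $\cU$ of character $\bv_{i_0}$, and $\cU$ contributes to $\DLP_H^{<r}$. If one can take $i_0=1$, orthogonality alone gives $\chi(\cU,\bv)=\chi(\bv_1,\bv_1)+\sum_{k>1}\chi(\bv_1,\bv_k)=1>0$, which by Riemann-Roch is equivalent to $\Delta<\DLP_{H,\cU}(\nu)$, contradicting $\Delta\geq\DLP_H^{<r}(\nu)$; the case $i_0=\ell$ is symmetric, yielding $\chi(\bv,\cU)=1>0$.

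\emph{Main obstacle.} The delicate step is handling configurations where neither $\bv_1$ nor $\bv_\ell$ is exceptional, yet some middle $\bv_{i_0}$ is. The expansion $\chi(\cU,\bv)=1+\sum_{k<i_0}\chi(\bv_{i_0},\bv_k)$ is no longer controlled by orthogonality alone, since the pairings $\chi(\bv_{i_0},\bv_k)$ with $k<i_0$ are in the non-orthogonal direction. I expect the right approach is a case analysis based on the equality $\sum_{i\notin S}\chi(\bv_i,\bv_i)+\sum_{i<j}\chi(\bv_j,\bv_i)=|S|-1$ (where $S=\{i:\chi(\bv_i,\bv_i)=1\}$), combined with Serre duality $\chi(\bv_{i_0},\bv_k)=\chi(\bv_k,\bv_{i_0}\te K_{\F_e})$ expanded via Riemann-Roch and the slope-closeness $\leq 1$, to either produce a different violating exceptional bundle or to rule out the configuration directly. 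This is where the bulk of the technical work of the proof will reside.
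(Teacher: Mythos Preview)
Your necessity argument is essentially correct and matches the paper's Proposition~\ref{prop-DLPss}.  Your sufficiency argument, however, contains a genuine error that is more serious than the ``Main obstacle'' you flag.

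The claim ``$\chi(\bv_j,\bv_i)\leq 0$ for $i<j$'' is not justified and is false for general polarizations.  Your argument for it is that $\Ext^2(\cV_j,\cV_i)=0$ (correct) together with $\mu_H$-stability gives $\Hom(\cV_j,\cV_i)=0$; but stability only kills $\Hom$ from \emph{higher} slope to \emph{lower} slope, i.e.\ $\Hom(\cV_i,\cV_j)=0$ for $i<j$, which is the wrong direction.  In fact, using orthogonality $\chi(\bv_i,\bv_j)=0$ and Riemann--Roch one computes
\[
\chi(\bv_j,\bv_i)=r_ir_j\bigl(\mu_{-K_{\F_e}}(\bv_j)-\mu_{-K_{\F_e}}(\bv_i)\bigr),
\]
so $\chi(\bv_j,\bv_i)\leq 0$ exactly when the $-K_{\F_e}$-slopes are ordered the same way as the $H$-slopes.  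For $H$ close to $-K_{\F_e}$ this is automatic (and is precisely why Theorem~\ref{thm-existence} and Corollary~\ref{cor-HNShape} work), but for $H$ far from $-K_{\F_e}$ it fails.  Once this sign constraint is gone, your equation $1=\sum_i\chi(\bv_i,\bv_i)+\sum_{i<j}\chi(\bv_j,\bv_i)$ no longer forces any factor to be exceptional, and the whole strategy collapses.  (There is also a smaller issue: the inequality $\Delta(\bv_{i_0})\geq\delta_m^{\mus}(\nu(\bv_{i_0}))$ is false for exceptional characters, since $\delta_m^{\mus}\geq\tfrac12>\Delta(\bv_{i_0})$; and you have not verified that $\cP_{H_{\lceil m\rceil}}(\bv)$ is nonempty.)

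The paper's approach is entirely different and does not attempt a direct Harder--Narasimhan analysis at the given polarization $H$.  For $e\in\{0,1\}$ (Corollary~\ref{cor-DLPexcdelPezzo}) one first proves monotonicity of $\DLP_H^{<r}(\nu)$ in $H$ (Proposition~\ref{prop-DLPmonotone}), which reduces the hypothesis $\Delta\geq\DLP_H^{<r}(\nu)$ to $\Delta\geq\DLP_{-K_{\F_e}}^{<r}(\nu)$.  At the anticanonical polarization the Harder--Narasimhan analysis \emph{does} work (Theorem~\ref{thm-existence}, via exactly the $-K$-slope ordering above), producing an exceptional bundle $\cV$ of character $\bv$.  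Then one shows $\cV$ is $\mu_H$-stable by computing its stability interval: Theorem~\ref{thm-stabilityInterval} shows that the endpoints of $I_\cV$ are detected by smaller-rank exceptionals $\cW$ with $\chi(\cW,\cV)>0$, and such a $\cW$ would witness $\Delta<\DLP_H^{<r}(\nu)$.  For $e\geq 2$ one reduces to the del Pezzo case via the linear map $\pi:K(\F_e)\to K(\F_{e-2})$ of \S\ref{sec-reduction}.  The moral is that varying the polarization (monotonicity plus stability intervals) is the key mechanism, not a fixed-polarization HN argument.
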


Given a bundle $\cV$ on $\F_e$, we can seek to compute the open interval $$I_\cV = \{m > 0 : \cV \textrm{ is $\mu_{H_m}$-stable}\} \subset \RR_{>0},$$ its \emph{stability interval}.  When $\cV$ is exceptional and $\mu_H$-stable for some polarization $H$ (e.g. $-K_{\F_e}$ in the del Pezzo case), we give an inductive algorithm to compute $I_\cV$ precisely; this is essentially equivalent to Theorem \ref{thm-exceptionalIntro}.  See Example \ref{ex-stabilityIntervals}.

For a \emph{general} sheaf $\cV$ it turns out that $\mu_{-K_{\F_e}}$-stability is the easiest stability to satisfy in the del Pezzo case (see Corollary \ref{cor-KstabilityEasy}), and in the non-del Pezzo case a general sheaf which is slope stable for some polarization is slope stable for polarizations arbitrarily close to $H_0$.  Consideration of stability intervals leads to the following result.

\begin{theorem}[Corollaries \ref{cor-deltaMonotone} and \ref{cor-deltaMonotoneHigher}]
Fix $\nu \in \Pic(\F_e)\te \QQ$.  As $m$ moves away from $1-\frac{e}{2}$ the number is $\delta_m^{\mus}(\nu)$ increasing.
\end{theorem}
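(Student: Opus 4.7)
The plan is to deduce the monotonicity of $\delta_m^{\mus}(\nu)$ from the ``ease of stability'' principle quoted just before the theorem, together with the defining property of $\delta_m^{\mus}$ given by Theorem \ref{thm-deltaSurface}. Fix $\nu \in \Pic(\F_e)\te \QQ$, and let $m_0 = 1 - \tfrac{e}{2}$ in the del Pezzo cases $e \in \{0,1\}$, so that $H_{m_0} = -\tfrac{1}{2}K_{\F_e}$; in the non-del-Pezzo cases $e \geq 2$ the inequality $1 - \tfrac{e}{2} \leq 0$ means we treat $H_0$ as the limiting polarization and all $m > 0$ lie on the same side of $m_0$, so ``moving away from $1-\tfrac{e}{2}$'' just means increasing $m$. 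In either case it suffices to show that whenever $0 < m_1 < m_2$ both lie on the same side of $m_0$ with $m_1$ closer, we have $\delta_{m_1}^{\mus}(\nu) \leq \delta_{m_2}^{\mus}(\nu)$.

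First I would fix any $\Delta > \delta_{m_2}^{\mus}(\nu)$ and set $\bv = (r,\nu,\Delta)$. By Theorem \ref{thm-deltaSurface}(1) there exist $\mu_{H_{m_2}}$-stable sheaves of character $\bv$. Since every $\mu_{H_{m_2}}$-semistable sheaf is $H_{\lceil m_2\rceil+1}$-prioritary, and since Walter's theorem gives irreducibility of the ambient stack $\cP_F(\bv)$, the open substack $\cP_{H_{N}}(\bv) \subset \cP_F(\bv)$ for any $N \geq \lceil m_2\rceil+1$ is nonempty and irreducible, and the general sheaf $\cV$ in it is $\mu_{H_{m_2}}$-stable.

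Next I would invoke the ease-of-stability statement alluded to in the introduction: a general prioritary sheaf that is $\mu_{H_m}$-stable for some $m$ is also $\mu_{H_{m'}}$-stable for every $m'$ lying in the closed interval bounded by $m$ and $m_0$. Applied to $m = m_2$ and $m' = m_1$, this gives that $\cV$ is $\mu_{H_{m_1}}$-stable as well. In particular there exist $\mu_{H_{m_1}}$-stable sheaves of character $\bv$, and (after perturbing $\Delta$ slightly if needed to avoid the discrete set of exceptional characters excluded in Theorem \ref{thm-deltaSurface}(2)) we conclude $\Delta \geq \delta_{m_1}^{\mus}(\nu)$. Letting $\Delta \downarrow \delta_{m_2}^{\mus}(\nu)$ yields the desired inequality $\delta_{m_1}^{\mus}(\nu) \leq \delta_{m_2}^{\mus}(\nu)$.

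The main obstacle is of course not this formal deduction but the ease-of-stability input itself, which is exactly the content of Corollary \ref{cor-KstabilityEasy} and its non-del-Pezzo analogue. In the del Pezzo case the proof of that input exploits the special symmetry of the anticanonical polarization, typically via a Serre duality / $\Ext^2$-vanishing argument applied to potential destabilizing quotients of the general prioritary sheaf; as $m$ approaches $1 - \tfrac{e}{2}$, the slope window available to destabilizers can only shrink. In the non-del-Pezzo case there is no preferred ``central'' polarization, so one argues instead that the set of $m$ for which a general prioritary sheaf is $\mu_{H_m}$-stable is an interval whose left endpoint is $0$, again by controlling how the slopes of hypothetical destabilizing subsheaves vary with $m$ along the ray $H_m$. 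Granting those two inputs, the monotonicity of $\delta_m^{\mus}(\nu)$ follows immediately from the argument above.
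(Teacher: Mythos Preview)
Your proposal is correct and follows essentially the same route as the paper: the monotonicity is a formal consequence of Corollary \ref{cor-KstabilityEasy} (and its $e\geq 2$ analogue, Corollary \ref{cor-highermus}) applied to the definition of $\delta_m^{\mus}$. The paper's argument is even more direct than yours: since Corollary \ref{cor-KstabilityEasy} says that the defining set for $\delta_{m_2}^{\mus}(\nu)$ is contained in the defining set for $\delta_{m_1}^{\mus}(\nu)$, the inequality of infima is immediate, with no need to invoke Theorem \ref{thm-deltaSurface}, perturb $\Delta$, or worry about exceptional characters.
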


\begin{remark}
The same statement is true for the function $\DLP_{H_m}(\nu)$, which will be an important technical tool in the second half of the paper.  See Proposition \ref{prop-DLPmonotone}.
\end{remark}

\subsection{Structure of the generic Harder-Narasimhan filtration}

When $\F_e$ is a del Pezzo surface, the fact that the function $\delta_{1-\frac{e}{2}}^{\mus}(\nu)$ is computed by exceptional bundles is closely related to the shape of the general Harder-Narasimhan filtration.  The following theorem generalizes an analysis of the Dr\'ezet and Le Potier classification of semistable sheaves on $\P^2$.  Recall that a {\em semiexceptional} bundle is a direct sum of copies of an exceptional bundle.

\begin{theorem}[Corollary \ref{cor-HNShape}]\label{thm-DPHNintro}
Let $e =0$ or $1$ and let $\bv=(r,\nu,\Delta)$ be a character with $\Delta \geq \frac{3}{8}$.  Let $H$ be a polarization sufficiently close to $-K_{\F_e}$ (depending on $\bv$). If there are no $H$-semistable sheaves of character $\bv$, then at most one of the $H$-Harder-Narasimhan factors of the general sheaf $\cV\in \cP_F(\bv)$ is not a semiexceptional bundle.
\end{theorem}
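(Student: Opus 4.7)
My plan is to combine the Harder--Narasimhan structure theorem (Theorem~\ref{thm-HNintro}) with the del Pezzo sharp Bogomolov inequality (Corollary~\ref{cor-deltaDLP}) and reduce everything to a Hodge index computation. Since $-K_{\F_e} = 2 H_{1-e/2}$ and $H_{1-e/2}$ is ample on the del Pezzo surface $\F_e$ with $H_{1-e/2}^2 = 2$, I may measure all slopes against $H_{1-e/2}$; for $H$ close enough to $-K_{\F_e}$ the constraints below are insensitive to the precise choice.

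First I would check that $\Delta \geq 3/8$ forces $\bv$ to lie above the explicit function $\delta_{\lceil m\rceil}^p(\nu)$ of Corollary~\ref{cor-prioritaryDelta}, so that $\cP_{H_{\lceil m\rceil}}(\bv) \subset \cP_F(\bv)$ is a dense open substack and Theorem~\ref{thm-HNintro} applies. If no $H$-semistable sheaf of character $\bv$ exists, the HN filtration has length $\ell \geq 2$ and the factors $\bv_1, \dots, \bv_\ell$ satisfy $M_H(\bv_i)$ nonempty, slope spread $\mu_H(\bv_1) - \mu_H(\bv_\ell) \leq 1$, and orthogonality $\chi(\bv_i, \bv_j) = 0$ for $i < j$. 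A direct Riemann--Roch computation on $\F_e$, using $\chi(\OO_{\F_e}) = 1$, rewrites orthogonality as
\[
\Delta_i + \Delta_j \;=\; 1 + \tfrac{1}{2}(\nu_j - \nu_i)^2 - s_{ij}, \qquad s_{ij} := (\nu_i - \nu_j) \cdot H_{1-e/2} \in [0,1].
\]

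Now suppose for contradiction that two HN factors $\bv_i, \bv_j$ are not semiexceptional. Because $\bv_i$ is not a multiple of an exceptional character, a generic sheaf in $M_H(\bv_i)$ is a non-exceptional stable sheaf $\cF$ (the properly semistable edge case is discussed below). On the del Pezzo $\F_e$, Serre duality together with the negativity of $K_{\F_e}$ gives $\ext^2(\cF, \cF) = 0$, while simplicity and non-exceptionality give $\ext^1(\cF, \cF) \geq 1$, so $\chi(\bv_i, \bv_i) \leq 0$ and Riemann--Roch forces $\Delta_i \geq 1/2$; likewise $\Delta_j \geq 1/2$. Hodge index applied to $\xi := \nu_j - \nu_i$ gives $\xi^2 \leq s_{ij}^2/2$; plugging this into the displayed identity together with $\Delta_i + \Delta_j \geq 1$ yields $s_{ij}^2 \geq 4 s_{ij}$, which on $[0,1]$ forces $s_{ij} = 0$. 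Then $\xi$ is orthogonal to the ample class $H_{1-e/2}$ with $\xi^2 \geq 0$, so Hodge index gives $\xi = 0$ in $\NS(\F_e) \te \QQ$; hence $\Delta_i = \Delta_j = 1/2$ and $\bv_i, \bv_j$ share a reduced Hilbert polynomial, contradicting the strict descent of HN.

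The step I expect to demand the most care is the properly semistable edge case, where a non-semiexceptional factor $\bv_i$ may only admit strictly semistable representatives whose Jordan--H\"older factors are pairwise distinct exceptional bundles of the same reduced Hilbert polynomial; then $\Delta_i$ can fall below $1/2$ and the estimate above breaks down. To dispose of two such factors simultaneously I would argue from the identity
\[
\Delta(\bv) \;=\; \tfrac{1}{2} + \frac{1}{r^2} \sum_i r_i^2 \left(\Delta_i - \tfrac{1}{2}\right) - \frac{1}{r^2} \sum_{i<j} r_i r_j \, s_{ij},
\]
obtained by substituting the orthogonality relation into the standard decomposition $r \Delta(\bv) = \sum r_i \Delta_i - \tfrac{1}{2r} \sum_{i<j} r_i r_j (\nu_i - \nu_j)^2$; using $\Delta(\bv) \geq 3/8$, the bound $s_{ij} \in [0,1]$, and the explicit formulas $\Delta(\bw) = \tfrac{1}{2}(1 - 1/r_\bw^2)$ for each exceptional JH constituent $\bw$, one should be able to rule out any configuration involving two such factors. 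Making this last bookkeeping precise, and interfacing it cleanly with the generic-sheaf language of $\cP_F(\bv)$, is where I expect the main technical hurdle to lie.
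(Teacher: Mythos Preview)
Your main line of argument is correct and amounts to the same reduction the paper makes: the orthogonality $\chi(\bv_i,\bv_j)=0$ together with the slope bound $s_{ij}\in[0,1]$ forces $P(\nu_j-\nu_i)\leq 1$, with equality only when $\nu_i=\nu_j$. The paper proves this inequality by analyzing the tangent lines of the conic $P(\nu)=1$ at $\nu=0$ and $\nu=K_{\F_e}$ (see the proof of Theorem~\ref{thm-existence}); your Hodge index estimate $\xi^2 \leq s_{ij}^2/2$ (using $H_{1-e/2}^2 = 2$) is an equivalent and arguably cleaner way to obtain the same bound. Once $P(\nu_j-\nu_i)\leq 1$ is in hand, both arguments finish identically: either one of $\Delta_i,\Delta_j$ is strictly below $\frac{1}{2}$, or $\nu_i=\nu_j$ and then the strict ordering of reduced Hilbert polynomials forces $\Delta_i<\Delta_j$, so again one discriminant is below $\frac{1}{2}$.

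The ``properly semistable edge case'' you flag is not a genuine obstacle. You do not need to produce a \emph{stable} representative of $\bv_i$ to conclude $\Delta_i \geq \frac{1}{2}$. For a generic polarization $H_m$ (which is what ``$H$ sufficiently close to $-K_{\F_e}$'' gives), Lemma~\ref{lem-excFacts}(6) says directly that any $H_m$-semistable sheaf with discriminant $<\frac{1}{2}$ is semiexceptional: its Jordan--H\"older factors all share the same total slope and discriminant (genericity), are therefore stable exceptional of the same primitive character, and splice together as a direct sum. The contrapositive gives $\Delta_i \geq \frac{1}{2}$ for any non-semiexceptional HN factor, with no further case analysis. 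Your identity for $\Delta(\bv)$ is correct but unnecessary once this lemma is invoked.
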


The obvious analogues of Theorems \ref{thm-HNintro} and \ref{thm-DPHNintro} also hold in the case of $\P^2$.  We make this story more explicit in this case, which motivated our study on $\F_e$.

\begin{example}\label{ex-HNP2}
On $\P^2$, let $H$ be the class of a line and consider the stack $\cP_H(\bv)$ of $H$-prioritary sheaves.  If $\bv=(r,\mu,\Delta)\in K(\P^2)$ is any character, then there are four possible shapes for the generic Harder-Narasimhan filtration.  See Figure \ref{fig-P2}.

\begin{figure}[t] 
\begin{center}
\includegraphics[scale=.6,bb=0 0 9.72in 6.81in]{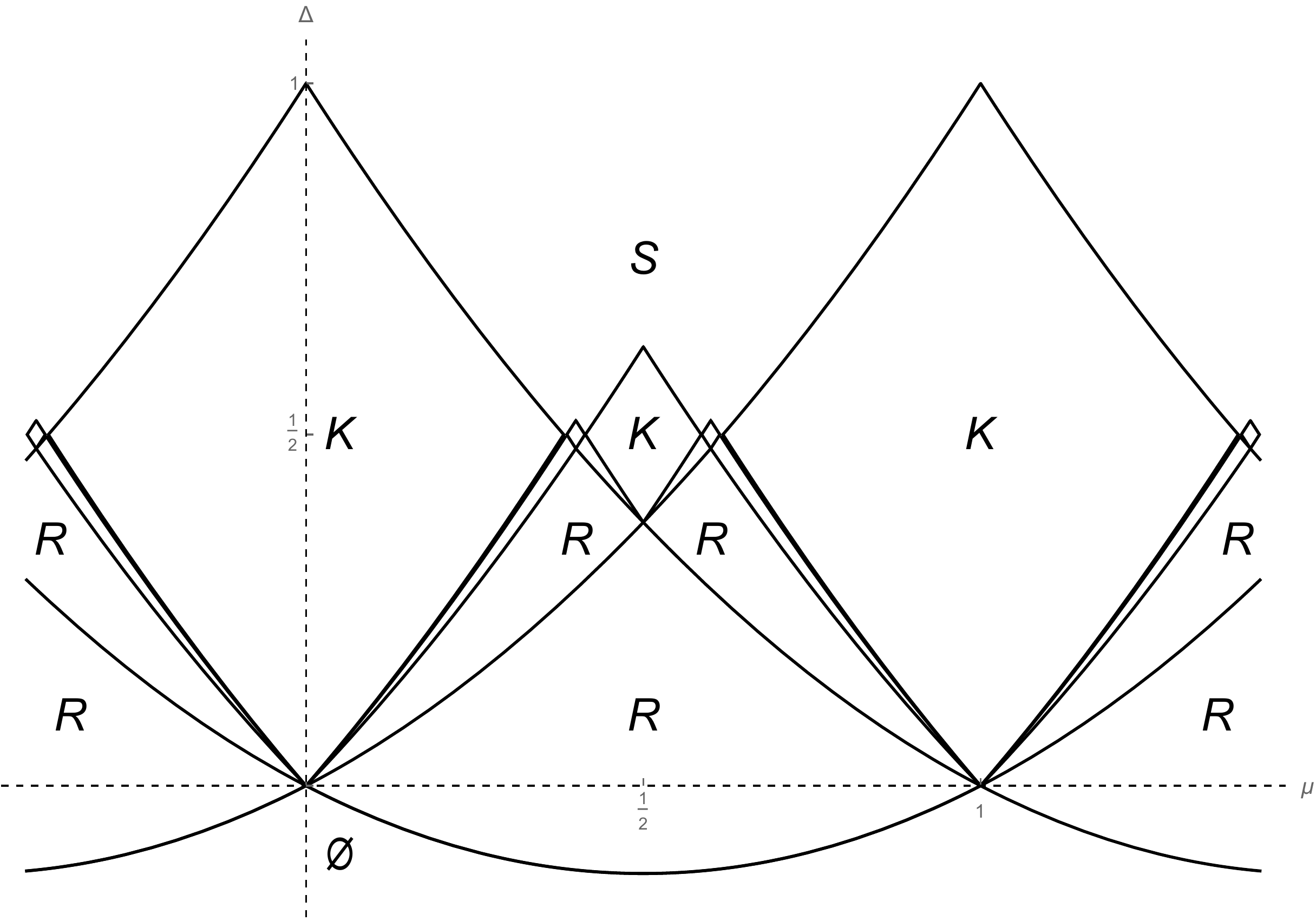}
\end{center}
\caption{On $\P^2$, the Dr\'ezet-Le Potier function $\delta(\mu)$ is the top curve bounding the region labeled ``S'' for stable.  Other regions in the $(\mu,\Delta)$-plane where the shape of the Harder-Narasimhan filtration of the general prioritary sheaf remains constant are also labeled.  See Example \ref{ex-HNP2}.}\label{fig-P2}
\end{figure}
\begin{enumerate}
\item In region ``S'' which lies on and above the Dr\'ezet-Le Potier curve $\Delta = \delta(\mu)$, there are semistable sheaves.

\item In the quadrilateral regions marked ``K,'' the Harder-Narasimhan filtration has length $2$.  One factor $\cE_3^{\oplus c}$ is a semiexceptional bundle with invariants at the bottom vertex of the quadrilateral.  The other factor is a \emph{Kronecker module} $\cK$, which has invariants lying on the portion of the curve $\Delta = \delta(\mu)$ lying over $\cE$.  For example, suppose $\mu(\cK) < \mu(\cE_3)$.  Then $\cK$ has a resolution $$0\to \cE_1^{\oplus a} \to \cE_2^{\oplus b} \to \cK\to 0$$ where $(\cE_1,\cE_2,\cE_3)$ is an exceptional collection.  This gives the orthogonality $\chi(\cE_3,\cK)=0$ needed for $\ch(\cE_3^{\oplus c}),\ch(\cK)$ to be the characters of the Harder-Narasimhan factors of the general sheaf.  If $\mu(\cK) > \mu(\cE_3)$ the discussion is dual.
\item In the triangular regions marked ``R,'' the general sheaf is \emph{rigid} and can be written as a direct sum of semiexceptional bundles with invariants at the vertices of the region.
\item Below the bottom curve, the stack $\cP_H(\bv)$ is empty.
\end{enumerate}
In every case where the generic Harder-Narasimhan filtration is nontrivial, we observe that the generic Harder-Narasimhan filtration arises from a full exceptional collection $\cE_1,\cE_2,\cE_3$ by grouping some of the adjacent terms together and constructing semistable bundles in the subcategories generated by the groups.  Case (2) corresponds to the groupings $\cE_1\cE_2|\cE_3$ and $\cE_1|\cE_2\cE_3$, while Case (3) corresponds to the grouping $\cE_1|\cE_2|\cE_3$ into three parts.  
\end{example}

\begin{example}\label{ex-HNDP}
Consider $\F_e$ with $e=0$ or $1$, and let $H$ and $\bv$ be as in Theorem \ref{thm-DPHNintro}.  Let $\cV_1,\ldots,\cV_\ell$ be the \emph{semiexceptional} factors appearing in the $H$-Harder-Narasimhan filtration of a general sheaf $\cV\in \cP_F(\bv)$, ordered as in the Harder-Narasimhan filtration.  So, there is at most one additional factor $\cW$, say inserted between $\cV_k$ and $\cV_{k+1}$.  

Let $\cE_\ell,\ldots,\cE_1$ be the exceptional bundles appearing in $\cV_1,\ldots,\cV_\ell$, indexed in the reverse order.  Then $\cE_1,\ldots,\cE_\ell$ is an exceptional collection.  Indeed, for $i>j$ we have $\Hom(\cE_i,\cE_j) = 0$ by stability, $\Ext^2(\cE_i,\cE_j) = 0$ since the slopes of the exceptional bundles have to be close (Theorem \ref{thm-HNintro} (3)), and therefore $\Ext^1(\cE_i,\cE_j) = 0$ by the orthogonality $\chi(\cE_i,\cE_j)=0$ (Theorem \ref{thm-HNintro} (4)).

Any exceptional collection on $\F_0$ or $\F_1$ can be completed to a full exceptional collection (of length $4$); see \cite{RudakovQuadric} and \cite{KuleshovOrlov}.  Furthermore, by a sequence of mutations we can ensure that the new exceptional sheaves which arise lie between the exceptionals corresponding to $\cV_k$ and $\cV_{k+1}$.  Since the remaining factor $\cW$ satisfies the orthogonalities $\chi(\cV_i,\cW) = 0$ for $i\leq k$ and $\chi(\cW,\cV_i) = 0$ for $i>k$, its Chern character must be a linear combination of the new exceptional sheaves.

We conclude that it is always possible to find a full exceptional collection $\cE_1,\cE_2,\cE_3,\cE_4$ and group the terms into adjacent blocks (at most one of size larger than $1$) so that the characters in the $H$-Harder-Narasimhan filtration correspond to the blocks in the reverse order, with each character being a linear combination of the exceptionals in the corresponding block.  The possible groupings are $$ \cE_1|\cE_2\cE_3\cE_4 \qquad \cE_1\cE_2\cE_3|\cE_4 \qquad \cE_1|\cE_2|\cE_3\cE_4 \qquad \cE_1 | \cE_2\cE_3 |\cE_4 \qquad \cE_1\cE_2 | \cE_3 | \cE_4 \qquad \cE_1|\cE_2|\cE_3|\cE_4.$$
\end{example}

\subsection{Harder-Narasimhan filtrations from orthogonal Kronecker modules}\label{ssec-introKronecker}
 Going in the other direction from Example \ref{ex-HNDP}, exceptional collections give a natural starting point to construct interesting generic Harder-Narasimhan filtrations. If we group the terms of a full exceptional collection $\cE_1,\cE_2,\cE_3,\cE_4$ into adjacent blocks and construct characters $\bv_1,\ldots,\bv_\ell$ as linear combinations of exceptionals in the blocks in reverse order, then the orthogonalities $\chi(\bv_i,\bv_j) = 0$ for $i<j$ are automatic.  In Example \ref{ex-HNDP} the grouping $$\cE_1\cE_2|\cE_3\cE_4$$ did not appear since it was forbidden by Theorem \ref{thm-DPHNintro}.  However, for polarizations other than the anticanonical, this grouping can be a source of length $2$ generic Harder-Narasimhan filtrations where no factor is semiexceptional. In \S\ref{sec-HNKronecker} we carry out this strategy to construct Chern characters ${\bf v}$ and polarizations $H_m$ such that the general $H_m$-Harder-Narasimhan filtration has length 2 and neither factor is semiexceptional. This will give examples of characters $\bv = (r,\nu,\Delta)$ such that the inequality $$\delta_m^{\mus}(\nu) > \DLP_{H_m}^{<r}(\nu)$$ is strict.  In these cases $\delta_m^{\mus}(\nu)$ is not computed by exceptional bundles, but we are able to exactly compute $\delta_m^{\mus}(\nu)$ (see Theorem \ref{thm-deltaKronecker}).  By contrast with the function $\DLP_{H_m}^{<r}(\nu)$, which varies in a locally constant fashion as $m$ changes, we find that $\delta_m^{\mus}(\nu)$ can increase continuously with $m$!

To construct these examples, we let $\ell\geq 3$ and start with the full exceptional collection $$\cE_1= \OO_{\F_e}(-E-\ell F),\quad\cE_2=\OO_{\F_e},\quad \cE_3=\OO_{\F_e}(F),\quad \cE_4=\OO_{\F_e}(E-(\ell-1-e)F).$$ 
We consider extensions of the form 
$$0 \rightarrow \cE_3^a \rightarrow \cK \rightarrow \cE_4^b \rightarrow 0$$ and cokernels of the form
$$0  \rightarrow \cE_1^c \rightarrow \cE_2^d \rightarrow \cL \rightarrow 0,$$ both of which are determined by \emph{Kronecker modules}.  We then have to determine the polarizations $H_m$ such that $\mu_{H_m}(\cK)$ is slightly larger than $\mu_{H_m}(\cL)$ and $\cK$ and $\cL$ are $\mu_{H_m}$-stable (the bulk of the methods in the paper come into play here).  Then the characters $\bk = \ch \cK$ and $\bl = \ch \cL$ will give the $H_m$-Harder-Narasimhan factors for the general sheaf in $\cP_F(\bk + \bl)$.  

For example, on $\FF_1$, if $\epsilon>0$ is small then the $H_{\frac{12}{7}+\epsilon}$-Harder-Narasimhan filtration of a general $F$-prioritary sheaf of character $$(r, \nu, \Delta) = \left(13, \frac{3}{13}E + \frac{6}{13} F,  \frac{98}{169} \right)$$  has two factors of characters $$\left(2, \frac{1}{2}E, \frac{5}{8} \right) \quad \mbox{and} \quad \left(11, \frac{2}{11} E + \frac{6}{11}F, \frac{65}{121} \right),$$ neither of which are exceptional.
Similarly, on $\FF_0$, the $H_{\frac{25}{9}+\epsilon}$-Harder-Narasimhan filtration of a general $F$-prioritary sheaf of character$$ \left(15,\frac{1}{5}{E}+\frac{1}{3}F,\frac{3}{5}\right)$$ has two factors given by $$\left(2,\frac{1}{2}E-\frac{1}{2}F,\frac{3}{4}\right) \quad \mbox{and} \quad 
\left(13,\frac{2}{13}E+\frac{6}{13}F,\frac{90}{169}\right).$$ 
See Examples \ref{ex-KroneckerF0} and \ref{ex-KroneckerF1}.

We conjecture that pairs of orthogonal Kronecker modules are the only additional source of generic Harder-Narasimhan filtrations.  We expect that an affirmative answer to the conjecture will allow an exact inductive computation of the function $\delta_m^{\mus}(\nu)$.

\begin{conjecture}
Let $\F_e$ be a Hirzebruch surface and let $H_m$ be an arbitrary polarization. Let $\bv\in K(\F_e)$ be a character such that there are $H_{\lceil m\rceil}$-prioritary sheaves of character $\bv$, and let $\bv_1,\ldots,\bv_\ell$ be the characters of the factors in the $H_m$-Harder-Narasimhan filtration of a general sheaf $\cV\in \cP_F(\bv)$.  Suppose that more than one of the $\bv_i$ is not semiexceptional.  Then $\ell=2$, and there is a full exceptional collection $\cE_1,\cE_2,\cE_3,\cE_4$ such that $\bv_1$ is a linear combination of $\ch \cE_3$ and $\ch \cE_4$ and $\bv_2$ is a linear combination of $\ch \cE_1$ and $\ch \cE_2$.  
\end{conjecture}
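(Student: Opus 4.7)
The plan is to proceed by induction on the rank of $\bv$, using Theorem \ref{thm-HNintro} as the main structural input and running the Kronecker-module constructions of \S\ref{sec-HNKronecker} in reverse.

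First I would reduce to the case $\ell=2$ with both factors non-semiexceptional. By Theorem \ref{thm-HNintro}, the HN factors satisfy $\chi(\bv_i,\bv_j)=0$ for $i<j$, $\mu_{H_m}(\bv_1)-\mu_{H_m}(\bv_\ell)\leq 1$, and each $\bv_i$ is $H_m$-semistable; also $\ell\leq 4$. A non-semiexceptional $H_m$-semistable character has $\chi(\bv_i,\bv_i)\leq 0$, while exceptional characters have $\chi=1$. Using Riemann-Roch on $\F_e$ together with the slope-interval constraint, I would argue that whenever two non-semiexceptional factors are present, inserting any additional factor produces an orthogonality relation $\chi(\bv_i,\bv_j)=0$ that, combined with the positivity of the discriminants of non-semiexceptional factors, is numerically incompatible with the slope and discriminant bounds supplied by Theorem \ref{thm-HNintro}.

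Second, I would apply the inductive hypothesis to $\bv_1$ and $\bv_2$ separately, using that they have rank strictly smaller than $\rk(\bv)$. This produces $\mu_{H_m}$-stable exceptional pairs $(\cE_3,\cE_4)$ and $(\cE_1,\cE_2)$ with $\bv_1$ a linear combination of $\ch\cE_3$ and $\ch\cE_4$ and $\bv_2$ a linear combination of $\ch\cE_1$ and $\ch\cE_2$, so that a general $H_m$-semistable sheaf of character $\bv_i$ arises as an extension or cokernel of a map between powers of the corresponding exceptional bundles, as in the constructions of \S\ref{sec-HNKronecker}. I would then verify that $(\cE_1,\cE_2,\cE_3,\cE_4)$ is a full exceptional collection: the pairwise $\Hom$ and $\Ext^2$ vanishings between bundles from opposite pairs follow from $\mu_{H_m}(\cE_1),\mu_{H_m}(\cE_2)<\mu_{H_m}(\cE_3),\mu_{H_m}(\cE_4)$ and the slope-interval bound, while $\chi(\bv_1,\bv_2)=0$ combined with the linearity of $\chi$ forces $\Ext^1=0$. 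Fullness then follows from the classification of exceptional sheaves on Hirzebruch surfaces \cite{RudakovQuadric,KuleshovOrlov}.

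The hard part will be justifying the inductive hypothesis for the non-semiexceptional factors. This is tantamount to showing that a non-semiexceptional factor appearing in the generic HN filtration of a prioritary sheaf must itself be the Chern character of an orthogonal Kronecker module, i.e.\ must achieve the Dr\'ezet-Le Potier bound $\Delta=\DLP_{H_m}(\nu_i)$. Since $\delta_m^{\mus}(\nu)>\DLP_{H_m}(\nu)$ is generally strict (see \S\ref{sec-HNKronecker}), the induction cannot merely use that $\bv_i$ is $H_m$-semistable; one must use that $\bv_i$ is carved out by the Schatz stratification, forcing genericity properties stronger than mere semistability. Proving this boundary-achievement property — perhaps by tilting into the heart generated by $(\cE_1,\cE_2,\cE_3,\cE_4)$ and analyzing the torsion pair induced by $H_m$-stability — is where I expect the substantive technical difficulty, and where new ideas beyond Theorem \ref{thm-HNintro} are most likely to be needed.
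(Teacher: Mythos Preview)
The statement you are attempting to prove is a \emph{conjecture} in the paper, not a theorem; the authors state it as an open problem and give no proof. So there is no paper proof to compare against, and your proposal should be read as an outline of an attack on an open problem.

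That said, the outline has real gaps beyond the one you flag at the end. First, the reduction to $\ell=2$ is not justified: the numerical argument you sketch (two non-semiexceptional factors plus a third factor being ``incompatible'' with the slope and discriminant bounds) does not go through as stated. The paper only proves the analogous restriction (Corollary \ref{cor-HNShape}) for polarizations near $-K_{\F_e}$, and that argument uses the special fact that $P(\nu)\leq 1$ on the relevant slope strip, which fails for general $H_m$. Second, your inductive hypothesis is vacuous as written: the statement concerns the generic $H_m$-Harder-Narasimhan filtration, but each $\bv_i$ is by hypothesis $H_m$-semistable, so its generic HN filtration has length one and the induction tells you nothing about realizing $\bv_i$ via an exceptional pair. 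Third, even granting exceptional pairs $(\cE_3,\cE_4)$ and $(\cE_1,\cE_2)$ for $\bv_1$ and $\bv_2$, the single relation $\chi(\bv_1,\bv_2)=0$ does not force the four vanishings $\chi(\cE_i,\cE_j)=0$ for $i\in\{3,4\}$, $j\in\{1,2\}$ needed to make $(\cE_1,\cE_2,\cE_3,\cE_4)$ an exceptional collection; linearity only gives one linear combination of these four quantities.

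You correctly identify the core difficulty in your last paragraph: one must show that the non-semiexceptional HN factors lie on the appropriate boundary (i.e., are Kronecker-type), and this is exactly what the paper leaves open.
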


\subsection{Reduction to the del Pezzo case}  Much of the paper is written in the del Pezzo case $e=0$ or $1$, since the anticanonical polarization is often useful.  However, in the final section \S\ref{sec-reduction} we show that most of our results in these cases can be easily transported to the surfaces $\F_e$ with $e\geq 2$ by means of a simple linear map.

\subsection{The birational geometry of $M_{\FF_e, H_m}({\bf v})$} The results of this paper have immediate applications to the birational geometry of moduli spaces of sheaves on $\FF_e$. In \cite{CoskunHuizengaAmple} and \cite{CoskunHuizengaNef}, we showed how to compute the ample cone of the moduli spaces of sheaves on a surface using Bridgeland stability conditions provided $\Delta \gg 0$. A  $\mu_H$-stable sheaf is $(H,D)$-twisted Gieseker semistable for every twisting divisor $D$. Consequently, our results also classify when the Matsuki-Wentworth moduli spaces of $(H_m,D)$-twisted Gieseker semistable sheaves are nonempty for generic polarizations $H_m$ on $\FF_e$. By the main theorem of \cite{CoskunHuizengaNef}, if $\Delta \gg 0$, the problem of determining the Gieseker wall in the $(H,D)$-slice of the Bridgeland stability manifold is equivalent to knowing the classification of Chern characters of $(H, D)$-twisted Gieseker semistable sheaves. Hence, our results determine the Gieseker wall in the $(H_m,D)$-slice of the stability manifold of $\FF_e$. Furthermore, when the moduli space has no strictly semistable objects, by \cite{CoskunHuizengaNef}, the Bayer-Macr\`{i} divisor provides a nef divisor on the boundary of the nef cone. Hence, the calculations in \cite[\S 7.3]{CoskunHuizengaNef} extend from $\PP^1 \times \PP^1$ to all $\FF_e$ and from rank 2 to arbitrary rank.

The classification of stable Chern characters is also one of the main ingredients of solving the interpolation problem and constructing theta and Brill-Noether divisors on moduli spaces of sheaves (see \cite{ABCH, CoskunHuizengaGokova, CHW}). Our results make it possible to extend results of \cite{BertramCoskun} to higher rank and all Hirzebruch surfaces. Even in the case of $\PP^1 \times \PP^1$,  our classification of stable Chern characters allows constructions of effective divisors using the algorithm of \cite{Ryan} and eliminates assumptions in \cite{Ryan} regarding existence of stable Chern characters.

\subsection*{Organization of the paper} In \S \ref{sec-Prelim}, we recall the preliminary facts needed in the rest of the paper. In \S \ref{sec-Prioritary}, we study basic properties of prioritary sheaves on $\FF_e$ and show that $H_m$-semistable sheaves are $H_{\lceil m \rceil +1}$-prioritary. In \S \ref{sec-existPrioritary}, we classify Chern characters of $F$ and $H_k$-prioritary sheaves for every integer $k$.  In \S \ref{sec-genHN}, we study properties of generic Harder-Narasimhan filtrations and obtain strong restrictions on the Chern characters of the graded pieces.   

In sections \S 6-10, we primarily concentrate on the surfaces $\FF_0$ and $\FF_1$. In these cases, the anticanonical bundle is ample and the stability of sheaves with respect to the anticanonical polarization has been studied in great detail. In \S \ref{sec-exceptional}, we recall and reinterpret results of Gorodentsev, Kuleshov, Orlov and Rudakov concerning stability of sheaves with respect to $-K_{\FF_e}$. In \S \ref{sec-sufficient}, we show that the Dr\'{e}zet-Le Potier surface determines the stability of sheaves on $\FF_0$ or $\FF_1$ with respect to the anticanonical polarization. In \S \ref{sec-exceptional2}, we study exceptional bundles on $\FF_0$ and $\FF_1$ with respect to an arbitrary polarization and show that the Dr\'{e}zet-Le Potier surface determines them. We also show that the generic stability interval of an $\mu_{H_m}$-stable sheaf contains $1- \frac{e}{2}$ corresponding to the anticanonical polarization. In \S \ref{sec-sharpBogomolov}, we define a sharp Bogomolov function and determine its basic properties. In \S \ref{sec-HNKronecker}, we construct examples of spaces of prioritary sheaves and polarizations $H_m$ such that the generic $H_m$-Harder-Narasimhan filtration has two factors neither of which are semiexceptional. This shows that exceptional bundles do not control the entire story away from the anticanonical polarization. 

Finally, in \S \ref{sec-reduction}, we generalize many of the results from sections \S \ref{sec-exceptional}-\ref{sec-HNKronecker} to arbitrary Hirzebruch surfaces.

\subsection*{Acknowledgments} We would like to thank Daniel Levine and Dmitrii Pedchenko for valuable conversations.

\section{Preliminaries}\label{sec-Prelim}

In this section, we recall basic facts concerning moduli spaces of sheaves, prioritary sheaves and Hirzebruch surfaces that will be used in the rest of the paper. 

\subsection{Hirzebruch surfaces}\label{subsec-Hirzebruch} We refer the reader to \cite{Beauville, CoskunScroll} or \cite{Hartshorne} for more detailed discussions on Hirzebruch surfaces. For any integer $e \geq 0$, let $\FF_e$ denote the Hirzebruch surface $\PP( \OO_{\PP^1} \oplus \OO_{\PP^1}(e))$. The surface $\FF_e$ naturally fibers over $\PP^1$. Let $F$ denote the class of a fiber and let $E$ denote the class of a section with self-intersection $-e$. When $e=0$, $\FF_e \cong \PP^1 \times \PP^1$. When $e \geq 1$, then the section with negative self-intersection is unique. The Picard group of $\FF_e$ and the intersection pairing is given by 
$$\Pic(\FF_e) = \ZZ E \oplus \ZZ F, \quad E^2 = -e, \quad F^2 =0, \quad E \cdot F =1.$$
The canonical class of $\FF_e$ is $$K_{\FF_e} = -2E - (e+2) F.$$ The nef cone of $\FF_e$ is spanned $E+ eF$ and $F$ and the effective cone of $\FF_e$ is spanned by $E$ and $F$. In particular, the anti-canonical class $-K_{\FF_e}$ is effective, but when $e \geq 2$, it is not ample.  For  $m\in \QQ$, we consider the divisor class $H_m = E + (m+e)F$.  Then $H_0$ is nef and $H_m$ is ample for $m > 0$.  As $m$ tends to infinity, the divisor $H_m$ tends to the ray spanned by $F$, giving the other edge of the nef cone.  Every ample divisor on $\F_e$ is an integer multiple of some $H_m$ with $m>0$.

\subsection{Chern charcters and Riemann-Roch}  Given a  torsion-free sheaf $\cV$ on a surface $X$ and an ample divisor $H$, the {\em total slope $\nu$}, the {\em $H$-slope $\mu_H$} and the {\em discriminant $\Delta$} are defined as follows
$$\nu(\cV) = \frac{\ch_1(\cV)}{\ch_0(\cV)}, \quad \mu_H (\cV) = \frac{\ch_1(\cV) \cdot H}{\ch_0(\cV)}, \quad \Delta(\cV) = \frac{1}{2} \nu^2 - \frac{\ch_2(\cV)}{\ch_0(\cV)}.$$ These quantities depend only on the Chern character of $\cV$ and not on the particular sheaf. Given a Chern character ${\bf v}$, we define its total slope, $H$-slope and discriminant by the same formulae.  We will often record Chern characters by the rank, total slope and the discriminant. Note that one can  recover the Chern classes from this data. 

In terms of $\nu$ and $\Delta$, the Riemann-Roch Theorem says $$\chi(\cV) = \ch_0(\cV) (P(\nu(\cV)) - \Delta(\cV)),$$ where $$P(\nu) = \chi(\OO_X) + \frac{1}{2} (\nu^2 - \nu \cdot K_X)$$ is the Hilbert polynomial of $\OO_X$. Given two sheaves $\cV, \cW$ with invariants $(r(\cV), \nu(\cV), \Delta(\cV))$ and $(r(\cW), \nu(\cW), \Delta(\cW))$, let $\ext^i(\cV, \cW)$ denote the dimension of $\Ext^i(\cV, \cW)$. The Riemann-Roch Theorem  says that 
$$\chi(\cV, \cW) = \sum_{i=0}^2 (-1)^i \ext^i(\cV, \cW) = r(\cV) r(\cW) (P(\nu(\cW) - \nu(\cV)) - \Delta(\cV) - \Delta(\cW)).$$ In the case of the Hirzebruch surface $\FF_e$, for a sheaf with invariants ${\bf v} = (r, \nu, \Delta) = (r, aE + bF, \Delta)$ we have  $$P({\bf v}) = \left(a+1\right) \left(b+1 - \frac{ae}{2}\right) \quad  \mbox{and} \quad \chi({\bf v}) = r \left(\left(a+1\right) \left(b+1 - \frac{ae}{2}\right) - \Delta\right).$$ 

\subsection{Stability}\label{subsec-semistable} We refer the reader to \cite{CoskunHuizengaGokova, HuybrechtsLehn} and \cite{LePotier} for more detailed discussions.  A torsion-free, coherent sheaf $\cV$ is called $\mu_H$-(semi)stable (or \emph{slope} (semi)stable) if  every proper subsheaf $0 \not= \cW \subsetneq \cV$ of smaller rank satisfies $$\mu_H (\cW) \leqor \mu_H (\cV)$$Define the {\em Hilbert polynomial}  $P_{\cV}(m)$ and {\em reduced Hilbert polynomial}  $p_{\cV}(m)$ of a torsion-free sheaf $\cV$ on a surface $X$ with respect to an ample $H$ by 
$$P_{\cV}(m) = \chi(\cV(mH)), \quad p_{\cV}(m) = \frac{P_{\cV}(m)}{\rk(\cV)}.$$ A torsion-free sheaf $\cV$ is $H$-(semi)stable (or \emph{Gieseker} (semi)stable) if  for every proper subsheaf $\cW\subset \cV$, we have $$p_{\cW}(m) \leqor p_{\cV} (m) \ \mbox{for} \ m \gg 0.$$ Slope stability implies Gieseker stability and Gieseker semistability implies slope semistability. The Bogomolov inequality asserts that if $\cV$ is $\mu_H$-semistable, then $\Delta(\cV) \geq 0$. 

Every torsion-free sheaf $\cV$ admits a {\em Harder-Narasimhan} filtration with respect to both $\mu_H$- and $H$-semistability, that is there is a finite filtration 
$$0 \subset \cV_1 \subset \cV_2 \subset \cdots \subset \cV_n = \cV$$ such that the quotients $$\cW_i = \cV_i / \cV_{i-1}$$ are $\mu_H$ (respectively, $H$-Gieseker) semistable and $$\mu_H (\cW_i) > \mu_H (\cW_{i+1}) \quad \mbox{(respectively,} \  p_{\cW_i} (m) > p_{\cW_{i+1}} (m) \  \mbox{for}\  m \gg 0)$$ for $1 \leq i \leq n-1$.  The Harder-Narasimhan filtration is unique. Being $\mu_H$-(semi)stable or $H$-(semi)stable are open in flat families (see \cite[\S 2.3]{HuybrechtsLehn}). Furthermore, given a flat family of sheaves, there is a generic Harder-Narasimhan filtration (\cite[Theorem 2.3.2]{HuybrechtsLehn}). A semistable sheaf further admits a {\em Jordan-H\"{o}lder} filtration into stable sheaves. Two semistable sheaves are called {\em $S$-equivalent} if they have the same associated graded objects with respect to the Jordan-H\"{o}lder filtration.  

Gieseker \cite{Gieseker} and Maruyama \cite{Maruyama} constructed moduli spaces $M_{X, H}({\bf v})$ parameterizing $S$-equivalence classes of $H$-Gieseker semistable sheaves on $X$. In this paper, we will be concerned with the question of when $M_{\FF_e, H} ({\bf v})$ is nonempty.

 The notions of $\mu_H$-(semi)stability and $H$-(semi)stability depend on the polarization $H$. If we fix invariants of a sheaf on $X$, we obtain a locally finite wall-and-chamber decomposition of the ample cone of $X$, where within a chamber the set of sheaves that are $\mu_H$-stable remain constant (see \cite[\S 4.C]{HuybrechtsLehn} for more details). In particular, being $\mu_H$-stable is an open condition in the polarization $H$. Similarly, being $\mu_H$-semistable is a closed condition in the polarization. In contrast, Gieseker (semi)stability is not as well-behaved under change of polarization. The set of ample divisors $H$ for which a sheaf is $H$-Gieseker (semi)stable in general is neither open nor closed. However, if the sheaf is $\mu_H$-stable for some polarization, then the ample divisors $H$ for which it is not Gieseker (semi)stable differ only at the boundary of the corresponding chamber.

\subsection{Prioritary sheaves}\label{subsec-prioritary} It can be difficult to construct semistable sheaves. Prioritary sheaves provide an easier alternative on surfaces with negative canonical class.

\begin{definition}
Let $X$ be a smooth surface and $L$ a line bundle on $X$.  A torsion-free sheaf $\cV$ on $X$ is \emph{$L$-prioritary} if $\Ext^2(\cV,\cV(-L))=0$.  We write $\cP^X_L({\bf v})$ for the stack of prioritary sheaves of character ${\bf v}$.  We omit the surface $X$ from the notation if no confusion is possible.

We will also frequently consider sheaves that are simultaneously prioritary with respect to two different line bundles.  We write $\cP^X_{L_1,L_2}({\bf v}) = \cP_{L_1}^X({\bf v})\cap \cP_{L_2}^X({\bf v})$ for the stack of $L_1$- and $L_2$-prioritary sheaves of character ${\bf v}$.  The stack $\cP_{L_1,L_2}^X({\bf v}) \subset \cP_{L_1}^X({\bf v})$ is an open substack.
\end{definition}

If $H$ is a polarization on $X$ such that $H \cdot (K_X + L) < 0$, then every $\mu_H$-semistable sheaf is $L$-prioritary. If $\cV$ is $\mu_H$-semistable, we have 
$$\ext^2(\cV, \cV(-L))= \hom(\cV, \cV(K_X + L))=0$$ by semistability. In particular, when $K_X + L$ is anti-effective, then every semistable sheaf for any polarization $H$ is $L$-prioritary. 

Let $X$ be a birationally ruled surface with fiber class $F$ and let ${\bf v}$ be a positive rank Chern character. Then Walter's Theorem \cite{Walter} asserts that the stack $\cP_F({\bf v})$ is irreducible if nonempty. Furthermore, if  $r({\bf v}) \geq 2$, then the general member of $\cP_F({\bf v})$ is a vector bundle.

\subsection{Elementary modifications}\label{subsec-elementarymodification} Let $\cV$ be a torsion-free sheaf on a surface $X$. Let $p\in X$ be a general point  and  let $\phi: \cV \rightarrow \OO_p$ be a general surjection. Then the kernel $\cV'$ of $\phi$ 
$$ 0 \rightarrow \cV' \rightarrow \cV \stackrel{\phi}{\rightarrow} \OO_p \rightarrow 0$$
is a general {\em elementary modification} of $\cV$. The sheaves $\cV$ and $\cV'$ have the same rank $r$ and first Chern class and satisfy
$$\chi(\cV') = \chi(\cV) -1, \quad \Delta(\cV') = \Delta(\cV) + \frac{1}{r}.$$ By the long exact sequence of cohomology, $H^2(\cV) = H^2(\cV')$. If $h^0(\cV) > 0$, then  $$h^0(\cV')= h^0(\cV)-1.$$ If $h^0(\cV) = 0$, then $$h^1(\cV')= h^1(\cV)+1.$$
In particular, if $\cV$ has at most one nonzero cohomology group, so does $\cV'$. 

By \cite[Lemma 2.7]{CoskunHuizengaBN}, if $\cV$ is an $L$-prioritary sheaf, then $\cV'$ is also an $L$-prioritary sheaf. Consequently, in order to construct an $L$-prioritary sheaf of a given rank $r$, first Chern class $c_1$ and discriminant $\Delta$, it suffices to construct an $L$-prioritary sheaf with invariants $(r, c_1, \Delta')$ with $\Delta' \leq \Delta$.

\subsection{The cohomology of the general sheaf on a Hirzebruch surface} The cohomology of the general prioritary sheaf with $\Delta \geq 0$ was computed in \cite[Theorem 3.1]{CoskunHuizengaBN}. We include the statement for the reader's convenience.

\begin{theorem}[Betti numbers of a general sheaf]\label{thm-BN}
Let ${\bf v}\in K(\F_e)$ be a Chern character with positive rank $r=r({\bf v})$ and $\Delta({\bf v}) \geq 0$.  Then the stack $\cP_F({\bf v})$ is nonempty and irreducible. Let $\cV\in \cP_F({\bf v})$ be a general sheaf.
\begin{enumerate}
\item If we write $\nu({\bf v}) = \epsilon E + \varphi F$, then $$\chi({\bf v}) = r(P(\nu({\bf v}))-\Delta({\bf v})) = r \left(\left(\epsilon+1\right)\left(\varphi+1 - \frac{1}{2}e \epsilon \right)-\Delta({\bf v})\right)$$
\item If $\nu({\bf v})\cdot F \geq -1$, then $h^2(\F_e,\cV)=0$.
\item If $\nu({\bf v})\cdot F \leq -1$, then $h^0(\F_e,\cV)=0$.
\item In particular, if $\nu({\bf v})\cdot F = -1$, then $h^1(\F_e,\cV) = - \chi({\bf v})$ and all other cohomology vanishes.
\end{enumerate}
Now suppose $\nu({\bf v})\cdot F > -1$.  Then $H^2(\F_e,\cV)=0$, so either of the numbers $h^0(\F_e,\cV)$ or $h^1(\F_e,\cV)$ determine the Betti numbers of $\cV$.  These can be determined as follows.
\begin{enumerate}
\item[(5)] If $\nu({\bf v})\cdot E \geq -1$, then $\cV$ has at most one nonzero cohomology group.  Thus if $\chi({\bf v})\geq 0$, then $h^0(\F_e,\cV) = \chi({\bf v})$, and if $\chi({\bf v})\leq 0$, then $h^1(\F_e,\cV) = -\chi({\bf v})$.
\item[(6)] If $\nu({\bf v})\cdot E < -1$, then $H^0(\F_e,\cV) \cong H^0(\F_e,\cV(-E))$, and so the Betti numbers  of $\cV$ can be determined inductively using (3) and (5).
\end{enumerate}
(If $\nu({\bf v})\cdot F < -1$ and $r({\bf v})\geq 2$, then the cohomology of $\cV$ can be determined by Serre duality.)
\end{theorem}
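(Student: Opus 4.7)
The plan is to combine Walter's theorem on irreducibility of $\cP_F(\bv)$ with a restriction analysis of the general $F$-prioritary sheaf to the rational curves $F$ and $E$, and with the elementary modifications of \S\ref{subsec-elementarymodification}, which preserve $F$-prioritariness by \cite[Lemma 2.7]{CoskunHuizengaBN} and incrementally raise the discriminant.

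Irreducibility of $\cP_F(\bv)$ whenever nonempty is Walter's theorem from \S\ref{subsec-prioritary}. For nonemptiness I would first construct an explicit $F$-prioritary sheaf of slope $\nu(\bv)$ and small discriminant, for instance a suitable direct sum of line bundles, where the $F$-prioritary condition $\Ext^2(\cV_0,\cV_0(-F))=0$ reduces by Serre duality to a cohomology vanishing on $\F_e$ for each pair of summands. Iterated elementary modifications then produce sheaves with any prescribed $\Delta\geq 0$. Part (1) is a direct Riemann-Roch calculation.

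For (2)--(4) the key tool is the restriction sequence $0\to\cV(-F)\to\cV\to\cV|_F\to 0$ for a general fiber. The $F$-prioritary condition forces $\cV|_F$ to be balanced, i.e., $\cV|_F\cong \OO_{\P^1}(a)^s\oplus\OO_{\P^1}(a+1)^{r-s}$: a standard deformation-theoretic codimension argument shows that an $F$-prioritary deformation can always improve an unbalanced splitting. When $\nu(\bv)\cdot F\geq -1$ every summand has degree $\geq -1$, so $\Hom(\cV|_F, K_{\F_e}|_F)=0$ because $K_{\F_e}|_F=\OO_F(-2)$. Using Serre duality $H^2(\cV)\cong \Hom(\cV,K_{\F_e})^*$ and iterating the long exact sequence obtained by twisting $0\to K_{\F_e}(-F)\to K_{\F_e}\to K_{\F_e}|_F\to 0$, one reduces the vanishing of $H^2(\cV)$ to that of $\Hom(\cV,K_{\F_e}(-kF))$ for $k\gg 0$, which holds by a slope argument since the minimum slope of $\cV$ is fixed while the slope of $K_{\F_e}(-kF)$ tends to $-\infty$. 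This gives (2); (3) is symmetric; (4) follows from (2), (3), and Riemann-Roch.

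For (5) and (6) the key observation is that the property ``at most one of $H^0(\cV), H^1(\cV)$ is nonzero'' is preserved under elementary modifications, as noted in \S\ref{subsec-elementarymodification}. This reduces to a base case of minimal $\Delta$ where $\cV$ can be taken to be an explicit direct sum of line bundles with known cohomology. Part (5) then follows by semicontinuity, with the sign of $\chi(\bv)$ determining which cohomology group is nonzero. For (6) I would use the restriction sequence $0\to\cV(-E)\to\cV\to\cV|_E\to 0$ combined with a Gaeta-type resolution of the general $F$-prioritary sheaf (cf.\ the introduction) to verify that $H^0(\cV|_E)=0$ when $\nu(\bv)\cdot E<-1$, which yields the isomorphism $H^0(\cV)\cong H^0(\cV(-E))$; induction on $\nu\cdot E$ then reduces to (5). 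The main obstacle is establishing balancedness of $\cV|_F$ from $F$-prioritariness, and similarly controlling $\cV|_E$ in the absence of $E$-prioritariness: the former is handled by deformation-theoretic codimension estimates for unbalanced splitting loci, while the latter is best approached by the explicit Gaeta-type resolution rather than a purely deformation-theoretic argument.
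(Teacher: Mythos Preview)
The paper does not prove this theorem; it is quoted verbatim from \cite[Theorem 3.1]{CoskunHuizengaBN} and used as a black box. So there is no proof in the paper to compare against, only the original reference.

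That said, your sketch is essentially the argument in \cite{CoskunHuizengaBN}: build the base case as a direct sum of line bundles, reach arbitrary $\Delta\geq 0$ by elementary modifications (which preserve both $F$-prioritariness and the property ``at most one of $H^0,H^1$ is nonzero''), and use balancedness of $\cV|_F$ to get (2)--(4). Your iteration argument for (2) is correct once you note that $F^2=0$ forces $K_{\F_e}(-kF)|_F\cong\OO_F(-2)$ for every $k$, so the vanishing $\Hom(\cV|_F,\OO_F(-2))=0$ persists at each step.

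For (6), however, invoking a Gaeta-type resolution is both unnecessary and potentially circular: in \cite{CoskunHuizengaBN} the Gaeta resolution is developed in \S4, \emph{after} Theorem 3.1. The cleaner route is the one the paper itself sets up in Proposition~\ref{prop-EFprioritary} (which is \cite[Corollary 3.6]{CoskunHuizengaBN}): the same direct sums of line bundles used in the base case are simultaneously $F$- and $E$-prioritary, and elementary modifications preserve both conditions. Hence the general $F$-prioritary sheaf with $\Delta\geq 0$ is also $E$-prioritary, so $\cV|_E$ is balanced by the same restriction argument you use for $F$. When $\nu\cdot E<-1$ every summand of the balanced $\cV|_E$ has degree at most $-1$, giving $H^0(\cV|_E)=0$ and the isomorphism $H^0(\cV)\cong H^0(\cV(-E))$ directly. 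This replaces the Gaeta step entirely and keeps the argument self-contained within the prioritary framework.
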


\section{Prioritary sheaves and stability on Hirzebruch surfaces}\label{sec-Prioritary}

We begin by comparing $H_m$-semistability with the notion of a prioritary sheaf. The following lemma is useful for comparing notions of prioritary sheaves with respect to different line bundles.

\begin{lemma}\label{lem-priorCompare}
Let $X$ be a smooth surface and let $L$ and $M$ be line bundles on $X$.  If a torsion-free sheaf $\cV$ is $L$-prioritary and $L\te M^*$ is effective, then $\cV$ is $M$-prioritary.  
\end{lemma}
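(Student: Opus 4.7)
The plan is to reduce to a Serre-duality reformulation of prioritariness and then exploit the effectiveness of $L\otimes M^*$ to produce an injection between the relevant Hom groups.

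First, on the smooth projective surface $X$, Serre duality gives the identification
$$\Ext^2(\cV,\cV(-L)) \cong \Hom(\cV(-L),\cV\otimes \omega_X)^* \cong \Hom(\cV,\cV(L)\otimes \omega_X)^*,$$
so $\cV$ being $L$-prioritary is equivalent to the vanishing $\Hom(\cV,\cV(L+K_X))=0$, and likewise $M$-prioritariness is equivalent to $\Hom(\cV,\cV(M+K_X))=0$. The goal is therefore to produce a map from the latter Hom group to the former that is injective.

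Next, since $L\otimes M^*$ is effective we may choose a nonzero global section, which we interpret as an injection of line bundles $\OO_X(M+K_X)\hookrightarrow \OO_X(L+K_X)$ (locally multiplication by the defining equation of an effective divisor $D\in |L-M|$). Tensoring this injection with the torsion-free sheaf $\cV$ preserves injectivity: locally the map is multiplication by a nonzerodivisor, and a torsion-free module over the local ring has no nonzero elements killed by a nonzerodivisor. Hence we obtain an inclusion $\cV(M+K_X)\hookrightarrow \cV(L+K_X)$.

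Finally, applying the left exact functor $\Hom(\cV,-)$ yields an inclusion
$$\Hom(\cV,\cV(M+K_X))\hookrightarrow \Hom(\cV,\cV(L+K_X)),$$
and the right-hand side vanishes by the $L$-prioritary hypothesis (via Serre duality). Therefore the left-hand side vanishes too, which is $M$-prioritariness. There is no real obstacle here; the only subtlety worth spelling out is why injectivity of line bundle maps survives tensoring with $\cV$, which uses torsion-freeness in an essential way.
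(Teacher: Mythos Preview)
Your proof is correct and uses the same ingredients as the paper's argument---the effective section of $L\otimes M^*$, torsion-freeness of $\cV$, and Serre duality---but you organize them more economically. The paper tensors the sequence $0\to\OO_X(-L)\to\OO_X(-M)\to\OO_X(-M)|_D\to 0$ with $\cV$, applies $\Hom(\cV,-)$, and then must separately kill the term $\Ext^2(\cV,\cV\otimes\OO_X(-M)|_D)$ via a second Serre-duality computation (a torsion sheaf cannot map nontrivially to the torsion-free $\cV\otimes K_X$). By applying Serre duality at the outset to rewrite prioritariness as a $\Hom$-vanishing, you only need the injection $\cV(M+K_X)\hookrightarrow\cV(L+K_X)$ and left-exactness of $\Hom(\cV,-)$; the quotient term never enters. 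Both arguments hinge on the same point---that tensoring an injection of line bundles with a torsion-free sheaf stays injective---so this is a streamlining rather than a genuinely different route.
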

\begin{proof}
Use a section of $L\te M^*$ to consider an exact sequence $$0\to \OO_{X}(-L)\to \OO_X(-M)\to \OO_X(-M)|_D\to 0,$$ where $D$ is the zero scheme of the section. Applying $\cV\otimes -$, we have $\sTor^1(\cV,\OO_X(-M))=0$, so the sheaf $\sTor^1(\cV,\OO_X(-M)|_D)$ injects into the torsion-free sheaf $\cV(-L)$.  But $\sTor^1(\cV,\OO_X(-M)|_D)$ is torsion, supported on $D$, so it is zero and we have the exact sequence $$0\to \cV(-L)\to \cV(-M)\to \cV \otimes \OO_X(-M)|_D\to 0.$$ Applying $\Hom(\cV,-)$, we get an exact sequence $$\Ext^2(\cV,\cV(-L))\to \Ext^2(\cV,\cV(-M))\to \Ext^2(\cV,\cV \te \OO_X(-M)|_D)$$
We have $\Ext^2(\cV,\cV(-L))=0$ by assumption, and $$\Ext^2(\cV,\cV \te \OO_X(-M)|_D) \cong \Hom(\cV \te \OO_X(-M)|_D,\cV\te K_X)^* =0$$ by Serre duality since $\cV$ is torsion-free.  Therefore $\Ext^2(\cV,\cV(-M)) = 0$.
\end{proof}

In particular, on a Hirzebruch surface we can consider the notion of $H_m$-prioritary sheaves for integers $m$.  

\begin{propositionDefinition}\label{prop-index}
Let $\cV$ be a torsion-free sheaf on $\F_e$.  One of the following two possibilities holds.
\begin{enumerate}
\item There is an integer $\rho(\cV)$ such that $\cV$ is $H_m$-prioritary if and only if $m$ is an integer with $m\leq \rho(\cV)$.
\item The sheaf $\cV$ is $H_m$-prioritary for all $m\in \Z$.  In this case we declare $\rho({\cV})= \infty$.
\end{enumerate} 
The invariant $\rho({\cV})$ is called the \emph{prioritary index} of $\cV$.
\end{propositionDefinition}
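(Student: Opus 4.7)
The plan is to analyze the set
$$S(\cV) := \{m \in \Z : \cV \text{ is } H_m\text{-prioritary}\} \subseteq \Z,$$
and show that it is always downward closed and nonempty; the dichotomy in the statement then becomes a formal consequence, with $\rho(\cV) = \max S(\cV)$ in Case (1) and $\rho(\cV) = \infty$ in Case (2).

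First I would establish downward closure. For any integers $m' \leq m$, the divisor class $H_m - H_{m'} = (m-m')F$ is effective, so Lemma \ref{lem-priorCompare} applied with $L = H_m$ and $M = H_{m'}$ gives that every $H_m$-prioritary sheaf is $H_{m'}$-prioritary. Consequently $S(\cV)$ is downward closed, hence either equal to $\Z$ or bounded above in $\Z$.

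The main obstacle is nonemptiness, which is needed in the bounded case. The strategy is to invoke Serre duality:
$$\Ext^2(\cV, \cV(-H_m)) \cong \Hom(\cV, \cV \otimes \OO_{\F_e}(K_{\F_e} + H_m))^*,$$
where $K_{\F_e} + H_m = -E + (m-2)F$. I would fix the ample polarization $H_1$ and let $\mu_{\max}(\cV), \mu_{\min}(\cV)$ denote the largest and smallest $\mu_{H_1}$-slopes appearing in the $\mu_{H_1}$-Harder-Narasimhan filtration of $\cV$ (these are finite real numbers since a coherent sheaf has a finite HN filtration). Since twisting by a line bundle shifts $\mu_{H_1}$-slopes by its $H_1$-degree, and $(K_{\F_e} + H_m) \cdot H_1 = m - 3$, the slopes of $\cV \otimes \OO(K_{\F_e} + H_m)$ are shifted by $m-3$. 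Applying the standard comparison that $\Hom(\cV, \cW) = 0$ whenever $\mu_{\min}(\cV) > \mu_{\max}(\cW)$ to $\cW = \cV \otimes \OO(K_{\F_e} + H_m)$ reduces the vanishing to the inequality $m - 3 < \mu_{\min}(\cV) - \mu_{\max}(\cV)$, which certainly holds for $m$ sufficiently negative.

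Finally, the dichotomy is formal: either $S(\cV)$ is unbounded above, in which case downward closure gives $S(\cV) = \Z$ and we are in Case (2); or $S(\cV)$ is nonempty and bounded above, so $\rho(\cV) := \max S(\cV) \in \Z$ exists and $S(\cV) = \{m \in \Z : m \leq \rho(\cV)\}$, again by downward closure, which is Case (1). The principal content of the argument is the nonemptiness step; the remaining parts follow immediately from Lemma \ref{lem-priorCompare} and elementary set-theoretic reasoning.
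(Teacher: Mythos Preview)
Your proof is correct and follows essentially the same approach as the paper: downward closure via Lemma~\ref{lem-priorCompare} and the effectivity of $H_m - H_{m'} = (m-m')F$, then nonemptiness via Serre duality and a Harder-Narasimhan slope comparison (the paper uses an arbitrary ample $H$ where you use $H_1$, but the argument is identical). Your exposition is somewhat more detailed in spelling out the set $S(\cV)$ and the intersection computation $(K_{\F_e}+H_m)\cdot H_1 = m-3$, but the content matches.
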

\begin{proof}
By Lemma \ref{lem-priorCompare}, if $\cV$ is $H_m$-prioritary, then it is $H_{m-1}$-prioritary.  We must show that $\cV$ is $H_m$-prioritary for some $m$.  Consider the group $$\Ext^2(\cV,\cV(-H_m)) \cong \Hom(\cV(-H_m),\cV\te K_{\F_e})^* \cong \Hom(\cV,\cV(-E+(m-2)F)).$$ Pick any ample divisor $H$.  Then $\cV$ has an $H$-Harder-Narasimhan filtration.  If we pick $m \ll 0$, then we have that $$\mu_{\min,H}(\cV) > \mu_{\max,H}(\cV(-E+(m-2)F)).$$ This implies $\Ext^2(\cV,\cV(-H_m))=0$, so $\cV$ is $H_m$-prioritary for $m \ll 0$.
\end{proof}

\begin{example}
Since $H^2(\OO_{\F_e}(-H_m)) = 0$ for every integer $m$,  line bundles $L$ are $H_m$-prioritary for all $m$, and therefore $\rho(L) = \infty$.
\end{example}

\begin{example}\label{ex-prioritarySum}
Let $A>0$, $B>0$, $C\geq 0$ be integers, let $m,n\in \Z$, and consider the vector bundle $$\cV = \OO_{\F_e}(-E+(n-1)F)^A \oplus \OO_{\F_e}^B \oplus \OO_{\F_e}(-F)^C.$$ Then \begin{align*} \Ext^2(\cV,\cV(-H_m)) = 0 &\Leftrightarrow H^2(\OO_{\F_e}(-2E-(e+m-n+1)F))=0\\
&\Leftrightarrow H^0(\OO_{\F_e}((m-n-1)F))^*=0\\
&\Leftrightarrow m-n-1 \leq -1\\
&\Leftrightarrow m \leq n.\end{align*} Therefore $\rho(\cV) = n$.
\end{example}

\begin{example}
Taking $n = -e$ in Example \ref{ex-prioritarySum} gives a direct sum of line bundles which is $F$-prioritary and $E$-prioritary \cite[Lemma 3.3]{CoskunHuizengaBN}.  By choosing the exponents $a,b,c$ appropriately and by taking twists and/or duals of $\cV$, we can construct an $F$-prioritary and $E$-prioritary sheaf of any slope.  Furthermore, a calculation shows $\Delta(\cV) \leq 0$ \cite[Lemma 3.3]{CoskunHuizengaBN}, so by taking elementary modifications of $\cV$ (see \S \ref{subsec-elementarymodification}) we obtain the following result \cite[Corollary 3.6]{CoskunHuizengaBN}.
\end{example}

\begin{proposition}\label{prop-EFprioritary}
Let ${\bf v}\in K(\F_e)$ be a Chern character of positive rank satisfying the Bogomolov inequality $\Delta \geq 0$.  Then the stack $\cP_{F,E}({\bf v})$ is nonempty.
\end{proposition}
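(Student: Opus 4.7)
The plan is to execute the construction sketched in the example immediately preceding the proposition. Given a desired character $\bv = (r,c_1,\Delta)$ with $r>0$ and $\Delta \geq 0$, I first build a direct sum of line bundles $\cV_0$ of rank $r$ and first Chern class $c_1$ that is both $E$-prioritary and $F$-prioritary and satisfies $\Delta(\cV_0) \leq 0$, and then I apply elementary modifications to raise the discriminant to $\Delta$ without destroying prioritariness.

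First I would take the model sheaf from Example \ref{ex-prioritarySum} with $n=-e$, namely
\[
\cV = \OO_{\F_e}(-E-(e+1)F)^{\oplus A} \oplus \OO_{\F_e}^{\oplus B} \oplus \OO_{\F_e}(-F)^{\oplus C}
\qquad (A>0,\,B>0,\,C\geq 0).
\]
Example \ref{ex-prioritarySum} directly gives $\rho(\cV) = -e$, so in particular $\cV$ is $H_{-e}$-prioritary, and since the difference $H_{-e}-F = E+(e-1)F-F$ is not in general effective I instead check $F$-prioritariness and $E$-prioritariness by the Serre-duality identification $\Ext^2(\cV,\cV(-D)) \cong \Hom(\cV,\cV(K_{\F_e}+D))^*$ together with the fact that for each ordered pair of summands $L_i,L_j$ the line bundle $L_i^{-1}L_j(K_{\F_e}+D)$ has non-effective class on $\F_e$ in the cases $D=F$ and $D=E$. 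Both checks are short bookkeeping in the basis $\{E,F\}$.

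Next I would adjust $A,B,C$ and twist by a line bundle $\OO_{\F_e}(kE+\ell F)$ (and, if needed, dualize) to match the prescribed first Chern class $c_1$ for the given rank $r$. Because $\rk(\cV)=A+B+C=r$ and $c_1(\cV)=-AE+(-A(e+1)-C)F$, twisting shifts $c_1$ by $r(kE+\ell F)$, so varying $A\in\{0,\dots,r\}$, $C\in\{0,\dots,r-A\}$ and $(k,\ell)\in\Z^2$ reaches every element of $\Pic(\F_e)$. Twists, duals, and direct sums all preserve both prioritariness properties, so the resulting sheaf $\cV_0$ is still $E$- and $F$-prioritary.

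Then I would verify $\Delta(\cV_0)\leq 0$. Writing $\cV_0=\bigoplus L_i$ with $c_1(L_i)=D_i$ and expanding $2r^2\Delta(\cV_0) = (\sum D_i)^2 - r\sum D_i^2$ in the basis $\{E,F\}$, a direct calculation (carried out for the untwisted model $\cV$, as twisting and dualizing do not change $\Delta$) reduces to
\[
-2r^2\Delta(\cV_0) = A\bigl(B(e+2)+Ce\bigr)\geq 0,
\]
so $\Delta(\cV_0)\leq 0 \leq \Delta$.

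Finally, since $\Delta - \Delta(\cV_0)$ is a nonnegative multiple of $1/r$ (both discriminants share the same $\nu^2/2$ term and $\ch_2\in\Z$), I would perform the corresponding number of general elementary modifications at distinct points, as in \S\ref{subsec-elementarymodification}. Each such modification leaves the rank and first Chern class unchanged, increases $\Delta$ by exactly $1/r$, and preserves $L$-prioritariness by \cite[Lemma 2.7]{CoskunHuizengaBN} applied with $L=E$ and $L=F$. The resulting sheaf lies in $\cP_{F,E}(\bv)$. The only real subtlety is the $\Delta(\cV_0)\leq 0$ computation, which depends crucially on the sign of the cross term $2\sum a_ib_i$ being favorable thanks to the specific choice $n=-e$ in the construction.
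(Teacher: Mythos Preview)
Your approach is exactly the one the paper sketches (and attributes to \cite[Lemma 3.3 and Corollary 3.6]{CoskunHuizengaBN}): take the direct sum of Example \ref{ex-prioritarySum} with $n=-e$, twist and/or dualize to match $c_1$, check $\Delta(\cV_0)\le 0$, and finish with elementary modifications. Two small corrections are in order. First, your claim that varying $A\in\{0,\dots,r\}$, $C\in\{0,\dots,r-A\}$, and $(k,\ell)$ already reaches every $c_1$ is false: once the $E$-coefficient forces $A\in\{2,\dots,r-1\}$, the $r-A+1$ available values of $C$ do not cover all residues mod $r$ for the $F$-coefficient, so the dual really is needed (using $\cV$ and $\cV^*$ together the two $C$-ranges are consecutive of total length $r+1$, which does cover everything). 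Second, $\ch_2$ need not lie in $\Z$ on $\F_e$; what you actually use is that $\ch_2(\cV_0)-\ch_2(\bv)=c_2(\bv)-c_2(\cV_0)\in\Z$ because the two characters share the same $c_1$. With these fixes your argument is complete and coincides with the paper's.
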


Recall that Walter's theorem \cite{Walter} says that the stacks $\cP_{F}({\bf v})$ of $F$-prioritary sheaves are irreducible if they are nonempty.  Therefore we have a chain of open substacks of the stack $\cP_F({\bf v})$:
$$\cdots \supset \cP_{F,H_{m-1}}({\bf v}) \supset \cP_{F,H_m({\bf v})} \supset \cP_{F,H_{m+1}}({\bf v})\supset\cdots$$  For fixed ${\bf v}$ it is an interesting question to determine the largest integer $m$ such that $\cP_{F,H_m}({\bf v})$ is nonempty.  Then the general $F$-prioritary sheaf is $H_m$-prioritary but not $H_{m+1}$-prioritary.

\begin{definition}
Let ${\bf v}\in K(\F_e)$ be a Chern character of positive rank such that the stack $\cP_F({\bf v})$ is nonempty.  The \emph{generic prioritary index} $\rho_{\gen}({\bf v})$ is the prioritary index $\rho(\cV)$ of a general $\cV\in \cP_F({\bf v})$.
\end{definition}

\begin{example} By Proposition \ref{prop-EFprioritary}, if $\Delta \geq 0$, then $\rho_{\gen}({\bf v}) \geq -e$.\end{example}

The next result shows the basic implication between semistability and prioritary sheaves.

\begin{proposition}\label{prop-ssPrior}
Let $m>0$ be a rational number, let $\epsilon\in \RR_{\geq 0}$, and let $\cV$ be a torsion-free sheaf with $$\mu_{\max,H_m}(\cV)-\mu_{\min,H_m}(\cV) \leq \epsilon.$$ If $n$ is an integer with $n < m+2-\epsilon$, then $\cV$ is $H_{n}$-prioritary.

In particular, if $\cV$ is $\mu_{H_m}$-semistable, then it is $H_{\lceil m\rceil +1}$-prioritary.
\end{proposition}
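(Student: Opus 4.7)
The plan is to reduce the $\Ext^2$ vanishing to a statement about $\Hom$ via Serre duality, and then leverage the near-$\mu_{H_m}$-semistability hypothesis to rule out nonzero maps.

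First I would rewrite the obstruction. Since $\cV$ is torsion-free, Serre duality gives
\[
\Ext^2(\cV,\cV(-H_n)) \cong \Hom\bigl(\cV,\,\cV(K_{\F_e}+H_n)\bigr)^*.
\]
A direct calculation with $K_{\F_e} = -2E-(e+2)F$ shows $K_{\F_e}+H_n = -E+(n-2)F$. So it suffices to prove that $\Hom(\cV,\cV(-E+(n-2)F))=0$ under the stated numerical hypothesis.

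Next I would run the standard Harder--Narasimhan argument. Suppose there is a nonzero map $\varphi : \cV \to \cV(-E+(n-2)F)$. Let $\cV_1\subset \cV$ be the maximal destabilizing subsheaf (of $H_m$-slope $\mu_{\max,H_m}(\cV)$), and let $\cV/\cV_\ell$ be the minimal destabilizing quotient (of $H_m$-slope $\mu_{\min,H_m}(\cV)$). Then the composition
\[
\cV_1 \hookrightarrow \cV \xrightarrow{\varphi} \cV(-E+(n-2)F) \twoheadrightarrow (\cV/\cV_\ell)(-E+(n-2)F)
\]
would have to vanish if $\mu_{\min, H_m}(\cV_1)>\mu_{\max, H_m}((\cV/\cV_\ell)(-E+(n-2)F))$, since both $\cV_1$ and the right-hand quotient are $H_m$-semistable of their respective slopes. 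By iterating on the successive HN pieces of source and target, any nonzero $\varphi$ forces at least one composition of HN pieces to be nonzero, yielding
\[
\mu_{\min,H_m}(\cV) \;\leq\; \mu_{\max,H_m}(\cV) + H_m\cdot(-E+(n-2)F).
\]
A quick intersection computation gives $H_m\cdot E = m$ and $H_m\cdot F = 1$, so $H_m\cdot(-E+(n-2)F) = n-m-2$. Therefore
\[
\mu_{\max,H_m}(\cV)-\mu_{\min,H_m}(\cV) \;\geq\; m+2-n.
\]
Combined with the hypothesis $\mu_{\max,H_m}(\cV)-\mu_{\min,H_m}(\cV)\leq \epsilon$, this gives $\epsilon \geq m+2-n$, i.e.\ $n\geq m+2-\epsilon$, contradicting the assumption $n<m+2-\epsilon$. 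Hence $\Hom(\cV,\cV(-E+(n-2)F))=0$, proving $\cV$ is $H_n$-prioritary.

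For the final sentence, specialize to $\epsilon=0$ (the $\mu_{H_m}$-semistable case). One only needs to check $\lceil m\rceil+1 < m+2$, equivalently $\lceil m\rceil < m+1$, which holds for every real $m$ (whether or not $m$ is an integer). The only subtlety worth flagging is making the HN-iteration step precise --- one must invoke the standard fact that $\Hom(\cF,\cG)=0$ whenever $\mu_{\min,H_m}(\cF)>\mu_{\max,H_m}(\cG)$ and both sheaves are torsion-free, and deduce from the HN filtrations of $\cV$ and $\cV(-E+(n-2)F)$ that any nonzero $\varphi$ produces a nonzero map between some $H_m$-semistable factors of source and target, giving the slope inequality above. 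This is the only nontrivial step; the rest is numerical bookkeeping.
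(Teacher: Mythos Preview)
Your proof is correct and follows essentially the same approach as the paper: Serre duality reduces $\Ext^2(\cV,\cV(-H_n))$ to $\Hom(\cV,\cV(K_{\F_e}+H_n))$, and then the standard Harder--Narasimhan slope comparison together with the computation $(K_{\F_e}+H_n)\cdot H_m = n-m-2$ finishes it. The paper is terser---it invokes the $\mu_{\min}/\mu_{\max}$ vanishing criterion directly rather than spelling out the iteration over HN pieces---but the content is identical.
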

\begin{proof}
For an integer $n$, we compute $$\Ext^2(\cV,\cV(-H_{n})) \cong \Hom(\cV,\cV(K_{\F_e}+H_{n}))^*.$$ Then $\cV$ is $H_n$-prioritary if  $$\mu_{\min,H_m}(\cV) > \mu_{\max,H_m}(\cV)+(K_{\F_e}+H_n)\cdot H_m.$$ This inequality  holds if $$(K_{\F_e}+H_n)\cdot H_m<- \epsilon.$$ We compute \begin{align*}
(K_{\F_e}+H_n)\cdot H_m &= (-E+(n-2)F)\cdot (E+(m+e)F)\\
&= e-(m+e)+(n-2)\\
&= n-m-2,
\end{align*}
so if $n < m+2-\epsilon$, then $\cV$ is $H_n$-prioritary.
\end{proof}

\begin{remark}
Thus to study $H_m$-semistability, we will primarily be interested in sheaves that are prioritary with respect to a line bundle $H_m$ with $m\geq 1$.   An $H_1$-prioritary sheaf is automatically $F$-prioritary by Lemma \ref{lem-priorCompare}, so if $m\geq 1$, then the nonemptiness problems for the stacks $\cP_{F,H_m}({\bf v})$ and $\cP_{H_m}({\bf v})$ are equivalent.  On the other hand, the stacks $\cP_{H_m}({\bf v})$ must typically be badly behaved for $m \ll 0$, since their union is the stack of torsion-free sheaves of character ${\bf v}$.  The substack $\cP_{F,H_m}({\bf v})$ of the irreducible stack $\cP_F({\bf v})$ is a more reasonable object of study.
\end{remark}

\section{Existence of prioritary sheaves}\label{sec-existPrioritary}

Throughout this section we let ${\bf v}\in K(\F_e)$ be an integral Chern character of positive rank $r$, total slope $\nu  = \epsilon E + \varphi F$, and discriminant $\Delta\geq 0$ satisfying the Bogomolov inequality.  We fix an integer $m\in \Z$.  (Although we primarily care about the case where $m\geq 1$, we can handle all integers $m$ by the same argument.)  In this section we give a complete answer to the following question.
 
 \begin{problem}
 Is the stack $\cP_{F,H_m}({\bf v})$ of $F$- and $H_m$-prioritary sheaves of character ${\bf v}$ nonempty?
 \end{problem}
 
 We can phrase the answer to this question in two ways.  On the one hand, we explicitly compute the generic prioritary index $\rho_{\gen}({\bf v})$ of ${\bf v}$; then $\cP_{F,H_m}({\bf v})$ is nonempty if and only if $m\leq \rho_\gen({\bf v})$.
 On the other hand, we will give an explicit function $\delta_m^p({\nu})$ of $\nu$ such that $\cP_{F,H_m}({\bf v}) $ is nonempty if and only if $\Delta \geq \delta_m^p(\nu)$.

\subsection{Review of Gaeta resolutions}\label{ssec-Gaeta} Our key tool is results from \cite[\S4]{CoskunHuizengaBN} which show that the general sheaf in $\cP_F({\bf v})$ admits a particular Gaeta-type resolution.  Recall that for a line bundle $L$ an $L$-\emph{Gaeta resolution} of a sheaf $\cV$ of character ${\bf v}$ on $\F_e$ is a resolution of the form
$$0\to L(-E-(e+1)F)^\alpha \to L(-E-eF)^\beta \oplus L(-F)^\gamma \oplus L^\delta \to \cV \to 0$$ where $\alpha,\beta,\gamma,\delta$ are integers.  Here $L$ must be a line bundle such that the inequalities 
\begin{align}\label{lb-ineqs}\tag{$\ast$} \begin{split}\chi({\bf v} (-L))& \geq  0 \\ \chi({\bf v}  (-L-E)) &\leq  0 \\ \chi({\bf v}  (-L-F)) &\leq  0 \\ \chi({\bf v} (-L-E-F)) &\leq  0 \end{split}\end{align}
are satisfied, and $\alpha,\beta,\gamma,\delta$ must be the integers
\begin{align}\label{gaeta-exponents}\tag{$\ast\ast$} 
\begin{split}
\alpha &= -\chi({\bf v}(-L-E-F))\\
\beta &= -\chi({\bf v}(-L-E))\\
\gamma &= -\chi({\bf v}(-L-F))\\
\delta &= \chi({\bf v}(-L)).
\end{split}
\end{align}
Conversely, if we can find a line bundle $L$ such that the numerical inequalities (\ref{lb-ineqs}) are satisfied, then the stack $\cP_F({\bf v})$ is nonempty and the general $\cV\in \cP_F({\bf v})$ admits an $L$-Gaeta resolution.

To find an appropriate line bundle $L$, formally consider a variable line bundle $L_{a,b} = aE+bF$ with $a,b\in \RR$.  We consider the curve in the $(a,b)$-plane $\RR^2$ defined by the equation $\chi({\bf v}(-L_{a,b}))=0$, where the Euler characteristic is computed formally by Riemann-Roch.  Then $\chi({\bf v}(-L_{a,b}))=0$ gives $$\Delta = (\epsilon - a + 1)(\varphi - b+1-\frac{1}{2}e(\epsilon-a)).$$ If $\Delta >0$ (a similar discussion holds in the degenerate case $\Delta=0$), then this describes a hyperbola with asymptotes $$\ell_1 : a=\epsilon+1 \qquad \textrm{and} \qquad \ell_2 : b=\varphi + 1 - \frac{1}{2}e(\epsilon-a)$$ which are vertical and have slope $e/2$, respectively; the hyperbola has a branch $Q_1$ lying left of $\ell_1$ and below $\ell_2$, and a branch $Q_2$ lying right of $\ell_1$ and above $\ell_2$. See Example \ref{ex-hyperbola} and Figure \ref{fig-hyperbola} for an example of this hyperbola.

The function $\chi({\bf v}(-L_{a,b}))$ is positive below $Q_1$ and above $Q_2$, and negative on the region between the branches.  Thus a line bundle $L_{a,b}$ satisfies the inequalities (\ref{lb-ineqs}) if the lattice point $(a,b)$ lies below (or on) $Q_1$ and the points $(a+1,b)$, $(a,b+1)$, $(a+1,b+1)$ lie on or between $Q_1$ and $Q_2$.  There can be several possible line bundles with these properties, but there is one particular line bundle that typically works and is useful for our purposes.

\begin{definition}\label{def-L0}
Let $$ \psi := \varphi + \frac{1}{2}e(\lceil \epsilon \rceil-\epsilon)-\frac{\Delta}{1-(\lceil \epsilon\rceil - \epsilon)},$$ and define a line bundle $$L_0 := L_{\lceil \epsilon \rceil,\lceil \psi\rceil} = \lceil \epsilon\rceil E+\lceil \psi \rceil F.$$  Let $\alpha,\beta,\gamma,\delta$ be the integers defined by (\ref{gaeta-exponents}) when we take $L=L_0$.
\end{definition}

Next we analyze when the line bundle $L_0$ satisfies the inequalities (\ref{lb-ineqs}).  
  
\begin{proposition}\label{prop-Gaeta}
The line bundle $L_0$ has $\delta > 0$, $\beta\geq 0$, and $\gamma \geq 0$, so the inequalities (\ref{lb-ineqs}) hold if and only if $\alpha \geq 0$.
Thus, if $\alpha \geq 0$, then the general sheaf $\cV\in \cP_F({\bf v})$ admits a resolution of the form $$0\to L_0(-E-(e+1)F)^\alpha\to L_0(-E-eF)^\beta \oplus L_0(-F)^\gamma \oplus L_0^\delta \to \cV \to 0.$$
\end{proposition}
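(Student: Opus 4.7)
The plan is to verify the three ``easy'' inequalities in (\ref{lb-ineqs}) for $L=L_0$ directly, and then appeal to the Gaeta resolution framework recalled in \S\ref{ssec-Gaeta}. Introduce the fractional parts $\epsilon' := \lceil \epsilon\rceil - \epsilon$ and $\psi' := \lceil \psi\rceil - \psi$, both in $[0,1)$. The defining formula for $\psi$ (Definition \ref{def-L0}) rearranges to the key identity
$$\varphi + \tfrac{1}{2} e \epsilon' \;=\; \psi + \frac{\Delta}{1-\epsilon'} \;=\; \lceil \psi\rceil - \psi' + \frac{\Delta}{1-\epsilon'},$$
which I would substitute into each of the Euler characteristics $\chi(\bv(-L_0))$, $\chi(\bv(-L_0-F))$, $\chi(\bv(-L_0-E))$, computed via the Riemann--Roch expansion $\chi = r\bigl((\epsilon-a+1)(\varphi-b+1-\tfrac{1}{2}e(\epsilon-a))-\Delta\bigr)$ recalled in \S\ref{sec-Prelim}.

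After this substitution, the cross terms in $\Delta$ telescope and short computations yield the clean expressions
$$\delta = r(1-\epsilon')(1-\psi'), \qquad \gamma = r(1-\epsilon')\psi', \qquad \beta = r\Bigl(\epsilon'(1-\psi') + \tfrac{e\epsilon'}{2} + \frac{\Delta}{1-\epsilon'}\Bigr).$$
From $\epsilon',\psi' \in [0,1)$ one reads off $\delta>0$ and $\gamma\geq 0$ immediately, and $\beta\geq 0$ as a sum of nonnegative terms, using $e\geq 0$ and the Bogomolov hypothesis $\Delta\geq 0$. In particular the first three inequalities of (\ref{lb-ineqs}) hold for $L=L_0$ with no extra assumption on $\bv$, so the entire system (\ref{lb-ineqs}) is equivalent to the single inequality $\chi(\bv(-L_0-E-F)) \leq 0$, i.e.\ $\alpha\geq 0$.

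Once all four inequalities in (\ref{lb-ineqs}) hold, the existence of the stated resolution for the general sheaf in $\cP_F(\bv)$ is exactly the Gaeta resolution theorem recalled in \S\ref{ssec-Gaeta} from \cite{CoskunHuizengaBN}, applied to $L=L_0$; the exponents coincide with (\ref{gaeta-exponents}) by definition. This concludes the proof.

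The only real obstacle is careful bookkeeping: the formulas mix $\lceil \epsilon\rceil$ with $\lceil \psi\rceil$ and the definition of $\psi$ introduces a denominator $1-\epsilon'$ which must be tracked through the Riemann--Roch substitutions, and one should confirm the identity used to eliminate $\varphi$ continues to make sense on the boundary. In particular, when $\epsilon\in\Z$ (so $\epsilon'=0$), the formula for $\beta$ collapses to $r\Delta$, and the Bogomolov hypothesis $\Delta\geq 0$ is precisely what keeps $\beta\geq 0$; when $\psi\in\Z$ (so $\psi'=0$), $\gamma$ collapses to $0$ and the corresponding summand $L_0(-F)^\gamma$ is simply absent from the resolution, which is consistent.
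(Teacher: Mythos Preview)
Your proof is correct and your closed-form expressions for $\delta$, $\gamma$, $\beta$ check out. The approach, however, differs from the paper's. The paper argues geometrically using the hyperbola $\chi(\bv(-L_{a,b}))=0$ introduced in \S\ref{ssec-Gaeta}: it observes that $(\lceil\epsilon\rceil,\lceil\psi\rceil)$ lies strictly below the lower branch $Q_1$ (which gives $\delta>0$ after verifying $\chi(\bv(-L_{\lceil\epsilon\rceil,\psi+1}))=0$), and then uses the position of the asymptotes $\ell_1,\ell_2$ to see that the neighboring lattice points $(\lceil\epsilon\rceil+1,\lceil\psi\rceil)$ and $(\lceil\epsilon\rceil,\lceil\psi\rceil+1)$ lie between the two branches, forcing $\beta,\gamma\geq 0$. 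Your route is purely algebraic: you substitute the definition of $\psi$ into Riemann--Roch and read off the signs from the explicit formulas $\delta=r(1-\epsilon')(1-\psi')$, $\gamma=r(1-\epsilon')\psi'$, $\beta=r\bigl(\epsilon'(1-\psi')+\tfrac{e\epsilon'}{2}+\tfrac{\Delta}{1-\epsilon'}\bigr)$. Your argument is more self-contained and makes the role of the Bogomolov hypothesis $\Delta\geq 0$ in the bound $\beta\geq 0$ completely transparent; the paper's argument, on the other hand, builds the hyperbola picture that is reused repeatedly in the rest of \S\ref{sec-existPrioritary} (Remarks \ref{rmk-graphical} and \ref{rmk-deltamp}, Example \ref{ex-hyperbola}).
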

\begin{proof}
The integer $\lceil \epsilon\rceil$ is the largest integer which is strictly smaller than $\epsilon+1$.  Therefore any point $(\lceil \epsilon\rceil,b)$ lies left of the asymptote $\ell_1$, and if $b \ll 0$, then $(\lceil \epsilon \rceil,b)$ lies below $Q_1$.  A quick computation with Riemann-Roch shows that $$\chi({\bf v}(-L_{\lceil \epsilon\rceil, \psi +1})) = 0,$$ and therefore $\lceil \psi\rceil$ is the largest integer such that $\chi({\bf v}(-L_{\lceil \epsilon\rceil ,\lceil \psi \rceil}))> 0$.  Thus $\delta >0$.

Now $(\lceil \epsilon\rceil,\lceil \psi\rceil)$ lies strictly below $Q_1$. Considering the asymptotes $\ell_1$ and $\ell_2$, the point $(\lceil \epsilon \rceil +1 ,\lceil \psi \rceil)$ lies right of (or on) $\ell_1$ and below $\ell_2$, so it lies between $Q_1$ and $Q_2$.  Similarly, the point $(\lceil \epsilon\rceil,\lceil \psi\rceil +1)$ lies above (or on) $Q_1$ and left of $\ell_1$, so it lies between $Q_1$ and $Q_2$.  Therefore, if $\alpha \geq 0$, then all the inequalities (\ref{lb-ineqs}) are satisfied.
\end{proof}

\begin{remark}
The inequality $\alpha \geq 0$ nearly always holds.  Indeed, if $e\geq 2$, then the inequality $\alpha \geq 0$ automatically holds.  If $e = 0$ (resp. $e=1$), then it automatically holds if $\Delta \geq 1/4$ (resp. $\Delta \geq 1/8$).  See \cite[Lemma 4.5]{CoskunHuizengaBN} for details.
\end{remark}

Our problem is particularly easy to study in the relatively rare special case $\alpha <0$.

\begin{lemma}\label{lem-alpha<0}
If $\alpha < 0$, then the direct sum of line bundles $$\cW =  L(-E-(e+1)F)^{-\alpha} \oplus L(-E-eF)^\beta \oplus L(-F)^\gamma \oplus L^\delta$$ is rigid and $F$-prioritary of character ${\bf v}$.   Therefore, the general $\cV\in \cP_F({\bf v})$ is isomorphic to $\cW$.  Thus $\rho_{\gen}({\bf v}) = -e$ takes the smallest possible value.
\end{lemma}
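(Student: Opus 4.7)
The plan is to verify four things about $\cW$: (i) it has Chern character ${\bf v}$; (ii) it is $F$-prioritary; (iii) it is rigid, i.e.\ $\Ext^1(\cW,\cW)=0$; and from these (iv) to identify the general $\cV \in \cP_F({\bf v})$ with $\cW$ and conclude $\rho_{\gen}({\bf v}) = -e$.  Claim (i) is immediate from the defining formulas (\ref{gaeta-exponents}): the identity $\ch {\bf v} = \delta\,\ch L + \gamma\,\ch L(-F) + \beta\,\ch L(-E-eF) - \alpha\,\ch L(-E-(e+1)F)$ is the same Riemann--Roch relation that sets up a Gaeta resolution, and it makes sense for either sign of $\alpha$.

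For (ii) and (iii) I would label the summand line bundles $M_A = L$, $M_B = L(-F)$, $M_C = L(-E-eF)$, $M_D = L(-E-(e+1)F)$ and decompose the relevant $\Ext$ groups over pairs.  For $F$-prioritariness, Serre duality gives
$$\Ext^2(M_i,M_j(-F)) \cong H^0\bigl(\OO_{\F_e}(M_i - M_j + K_{\F_e}+F)\bigr)^*,$$
and since $K_{\F_e} + F = -2E-(e+1)F$ and the $E$-coefficient of every difference $M_i - M_j$ is at most $1$, the twist inside $H^0$ has $E$-coefficient at most $-1$ and therefore has no sections.  For rigidity, each $\Ext^1(M_i,M_j) = H^1(\OO_{\F_e}(M_j-M_i))$ vanishes by a direct check: the differences $M_j - M_i$ have $E$-coefficient $a \in \{-1,0,1\}$, and when $a = 0$ the line bundle is pulled back from $\PP^1$ of degree in $\{-1,0,1\}$, while when $a = \pm 1$ Riemann--Roch gives $\chi = 0$ and one verifies that both $H^0$ and $H^2$ (the latter via Serre duality) vanish.

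With (ii) and (iii) in hand, Walter's theorem \cite{Walter} gives the irreducibility of $\cP_F({\bf v})$, and the rigidity of $\cW$ means its isomorphism orbit is open (hence dense) in $\cP_F({\bf v})$, so the general $\cV$ is isomorphic to $\cW$.  For $\rho_{\gen}({\bf v}) = -e$ I would combine the lower bound $\rho_{\gen}({\bf v}) \geq -e$ from Proposition \ref{prop-EFprioritary} with the upper bound $\rho(\cW) \leq -e$, which reduces to showing $\Ext^2(\cW,\cW(-H_{-e+1})) \neq 0$.  Serre duality presents this group as $\Hom(\cW,\cW(-E-(e+1)F))^*$; expanding over summand pairs, the contribution from the pair $(M_D^{-\alpha},M_A^{\delta})$ is $\Hom(M_D^{-\alpha}, M_A^{\delta}(-E-(e+1)F)) = \Hom(M_D^{-\alpha}, M_D^{\delta}) \cong \CC^{(-\alpha)\delta}$ (using $M_A(-E-(e+1)F) = M_D$), which is nonzero since $-\alpha > 0$ by hypothesis and $\delta > 0$ by Proposition \ref{prop-Gaeta}.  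The main obstacle is the finite bookkeeping needed in (ii) and (iii); once the differences of summands are tabulated, everything collapses to small cohomology computations on $\F_e$.
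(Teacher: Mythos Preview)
Your outline matches the paper's proof: the paper simply observes that the four line bundles form a strong exceptional collection (hence $\cW$ is rigid and $F$-prioritary) and refers to Example~\ref{ex-prioritarySum} for $\rho(\cW)=-e$, whereas you unpack these facts by hand.

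There is one slip in step (iii). For differences $M_j - M_i$ with $E$-coefficient $+1$ your claim ``$\chi=0$ and $H^0=H^2=0$'' is false: these differences are $E+cF$ with $c\in\{e-1,e,e+1\}$, which (except when $e=0$, $c=-1$) are effective with $\chi=2c+2-e>0$. The desired vanishing $H^1(\OO_{\F_e}(E+cF))=0$ still holds for $c\geq e-1$, but by a different argument: in the restriction sequence
\[
0\to\OO_{\F_e}(cF)\to\OO_{\F_e}(E+cF)\to\OO_E(c-e)\to 0
\]
one has $H^1(\OO_{\F_e}(cF))=0$ (since $c\geq -1$) and $H^1(\OO_{\P^1}(c-e))=0$ (since $c-e\geq -1$). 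Your treatment of the $a=-1$ case is correct, since there $\chi$ carries the factor $(a+1)=0$. With this fix your argument goes through.
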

\begin{proof}
The Chern class computation is elementary.  Since the line bundles in the direct sum are a strong exceptional collection, it is clear that $\cW$ is rigid.  It is also clearly $F$-prioritary.  Since $\delta >0$, we compute $\rho(\cW) = -e$ by a computation analogous to Example \ref{ex-prioritarySum}.
\end{proof}

\subsection{A necessary condition for existence} If $\alpha \geq 0$,  then by Proposition \ref{prop-Gaeta} we can let $\cV\in \cP_F({\bf v})$ be general and consider an $L_0$-Gaeta resolution of $\cV$:
$$0 \to L_0(-E-(e+1)F)^{\alpha} \to L_0(-E-eF)^{\beta} \oplus L_0(-F)^\gamma \oplus L_0^\delta \to \cV \to 0.$$
To study $\Ext^2(\cV,\cV(-H_m))$, we apply $\Ext(-,\cV(-H_m))$ and get an exact sequence
\begin{equation}\tag{$\clubsuit$}\label{extsequence}\Ext^2(\cV,\cV(-H_m)) \to \hspace{-.5em}\begin{array}{c}\Ext^2(L_0(-E-eF),\cV(-H_m))^\beta \\ \oplus \\ \Ext^2(L_0(-F),\cV(-H_m))^\gamma\\
\oplus\\
\Ext^2(L_0,\cV(-H_m))^\delta \end{array} \hspace{-.5em}\to \Ext^2(L_0(-E-(e+1)F),\cV(-H_m))^\alpha.\end{equation}
 From this sequence we deduce the following necessary inequality for the existence of an $F$- and $H_m$-prioritary sheaf.

\begin{theorem}\label{thm-prioritaryNecessary}

If there is an $F$- and $H_m$-prioritary sheaf of character ${\bf v}$, then $$\chi({\bf v}(-L_0-H_m)) \leq 0.$$
\end{theorem}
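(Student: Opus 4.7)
Since $\cP_{F, H_m}({\bf v}) \subset \cP_F({\bf v})$ is an open substack of the irreducible stack of $F$-prioritary sheaves (Walter's theorem), its nonemptiness forces the general $\cV \in \cP_F({\bf v})$ to be $H_m$-prioritary, so I may assume $\Ext^2(\cV, \cV(-H_m)) = 0$. I split into the main case $\alpha \geq 0$, where $\cV$ admits the $L_0$-Gaeta resolution from Proposition \ref{prop-Gaeta}, and the atypical case $\alpha < 0$ handled by Lemma \ref{lem-alpha<0}.

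For $\alpha \geq 0$: since $\F_e$ is a surface, the long exact Ext sequence that contains (\ref{extsequence}) extends to $\cdots \to \Ext^2(B, \cV(-H_m)) \to \Ext^2(A, \cV(-H_m)) \to 0$, so the final map is surjective; combined with $\Ext^2(\cV, \cV(-H_m)) = 0$, it is actually an isomorphism. Rewriting each term via $\Ext^2(L, \cV(-H_m)) = h^2(\cV \otimes L^{-1}(-H_m))$ for line bundles $L$, I compute the $F$-intercept of each resulting twisted slope. The summands coming from $L_0(-E-eF)$ and from $L_0(-E-(e+1)F)$ both have twisted slope with $\nu' \cdot F = \epsilon - \lceil \epsilon \rceil \in (-1, 0]$, so Theorem \ref{thm-BN}(2) forces the corresponding $h^2$ to vanish. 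The isomorphism therefore reduces to $\delta\, h^2(\cV(-L_0 - H_m)) + \gamma\, h^2(\cV(-L_0 + F - H_m)) = 0$; since $\delta > 0$ by Proposition \ref{prop-Gaeta} and each $h^2$ is nonnegative, $h^2(\cV(-L_0 - H_m)) = 0$. On the other hand, the twisted slope of $\cV(-L_0 - H_m)$ itself has $\nu' \cdot F = \epsilon - \lceil \epsilon \rceil - 1 \leq -1$, so Theorem \ref{thm-BN}(3) also gives $h^0(\cV(-L_0 - H_m)) = 0$. Hence $\chi({\bf v}(-L_0 - H_m)) = -h^1(\cV(-L_0 - H_m)) \leq 0$.

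For $\alpha < 0$: Lemma \ref{lem-alpha<0} identifies the general $F$-prioritary sheaf as the rigid direct sum $\cW$ with $\rho(\cW) = -e$, so nonemptiness of $\cP_{F, H_m}({\bf v})$ forces $m \leq -e$. Applying Riemann-Roch termwise to the decomposition of $\cW \otimes \OO(-L_0 - H_m)$ (and noting that the summands $\OO(-H_m)$ and $\OO(-F - H_m)$ contribute zero Euler characteristic) yields $\chi({\bf v}(-L_0 - H_m)) = \beta(e+m-1) - \alpha(e+m)$, and both summands are nonpositive in the range $\alpha < 0$, $m \leq -e$. The main subtlety of the argument is the $F$-intercept bookkeeping in (\ref{extsequence}): Theorem \ref{thm-BN}(2) automatically kills exactly two of the four $h^2$-terms, and one must exploit the isomorphism provided by (\ref{extsequence}) together with $\delta > 0$ to eliminate the surviving term $h^2(\cV(-L_0 - H_m))$.
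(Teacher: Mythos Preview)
Your proof is correct and follows essentially the same approach as the paper: use the $L_0$-Gaeta resolution and the exact sequence (\ref{extsequence}) together with Theorem \ref{thm-BN} in the case $\alpha \geq 0$, and handle $\alpha < 0$ via Lemma \ref{lem-alpha<0}. The only minor differences are organizational. In the main case, the paper shows directly that $\Ext^2(L_0(-E-(e+1)F),\cV(-H_m))=0$ and concludes from exactness that $\Ext^2(\cV,\cV(-H_m))\to \Ext^2(B,\cV(-H_m))$ is surjective, hence $\Ext^2(B,\cV(-H_m))=0$; your extra computation that the $\beta$-summand vanishes is therefore unnecessary (though harmless). In the case $\alpha<0$, the paper appeals to the geometric description of $L_0$ (the lattice point $(a_0+1,b_0+e+m)$ stays on the correct side of the hyperbola for $m\leq -e$), while you carry out an explicit Riemann-Roch computation; both arrive at the same conclusion.
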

\begin{proof}
First notice by Lemma \ref{lem-alpha<0} that if $\alpha <0$, then $m\leq -e$  whenever there is an $H_m$-prioritary sheaf.  In this case we have $\chi({\bf v}(-L_0-H_{m})) \leq 0$ by the definition of $L_0$.

Assume $\alpha \geq 0$ for the rest of the proof, so we have an exact sequence (\ref{extsequence}).
First we compute $$\Ext^2(L_0(-E-(e+1)F),\cV(-H_m))=H^2(\cV(-L_0-(m-1)F)).$$ Since $$\nu(\cV(-L_0-(m-1)F))\cdot F=\epsilon - \lceil \epsilon\rceil >-1,$$ we conclude from Theorem \ref{thm-BN} that $H^2(\cV(-L_0-(m-1)F))=0$.

Therefore, if $\Ext^2(\cV,\cV(-H_m))=0$, then by sequence (\ref{extsequence}) we must have $\Ext^2(L_0,\cV(-H_m))=0$ (notice that $\delta > 0$ by the construction of $L_0$).  Now $$\Ext^2(L_0,\cV(-H_m)) = H^2(\cV(-L_0-H_m)),$$ and since $$\nu(\cV(-L_0-H_m))\cdot F = \epsilon - \lceil \epsilon\rceil -1 <-1$$ we have $H^0(\cV(-L_0-H_m))=0$ by Theorem \ref{thm-BN}.  Thus we must have $\chi(\cV(-L_0-H_m)) \leq 0$.
\end{proof}

\begin{remark}\label{rem-epInteger}
If $\epsilon$ is an integer, then by Riemann-Roch the inequality $\chi(\cV(-L_0-H_m)) \leq 0$ is always true by the Bogomolov inequality $\Delta \geq 0$.  So, there is no interesting restriction in this case.
\end{remark}

\begin{remark}\label{rmk-graphical} If $\epsilon$ is not an integer, the inequality $\chi(\cV(-L_0-H_m))\leq 0$ can be interpreted graphically in terms of the hyperbola $\chi({\bf v}(-L_{a,b}))=0$ in the $(a,b)$-plane.  Let $(a_0,b_0) = (\lceil \epsilon\rceil,\lceil \psi\rceil)$ be the lattice point which lies strictly below the branch $Q_1$ such that $(a_0+1,b_0)$ lies to the right of the vertical asymptote $\ell_1$ and $(a_0,b_0+1)$ lies on or above the branch $Q_1$.  Analogously, let $(a_1,b_1) = (a_0+1,b_1)$ be the lattice point which lies strictly above the branch $Q_2$ such that $(a_1-1,b_1)$ lies left of the vertical asymptote $\ell_1$ and $(a_1,b_1-1)$ lies on or below the branch $Q_2$.  Then the inequality $\chi(\cV(-L_0-H_m)) \leq 0$ means that the lattice point $(a_0+1,b_0+e+m)$ lies on or below the branch $Q_2$.  Equivalently, we must have 
$$b_0+e +m\leq b_1-1,$$ or $$m\leq b_1-b_0-e-1.$$
\end{remark}

\begin{example}\label{ex-hyperbola}
We illustrate the definitions and Remark \ref{rmk-graphical} in a particular case.  See Figure \ref{fig-hyperbola}.  We take $e=1$, $\nu = \frac{1}{2}E + \frac{1}{3}F$, and $\Delta = \frac{11}{10}$, and let $r$ be a rank such that the character $\bv = (r,\nu,\Delta)$ is integral ($r=120$ will do).  Then we compute $\psi = -\frac{97}{60}$ and $L_0 = \OO_{\F_1}(E-F)$. The points $(a_i,b_i)$ are $(a_0,b_0) = (1,-1)$ and $(a_1,b_1) = (2,5)$, respectively. Therefore $\chi(\cV(-L_0-H_m))\leq 0$ holds for integers $m \leq 4$.  Conversely, we will see that the general $\cV\in \cP_F(\bv)$ is $H_4$-prioritary in Corollary \ref{cor-prioritaryRho}.
\begin{figure}[t]
\begin{center}
\includegraphics[scale=.8,bb=0 0 6.25in 6.32in]{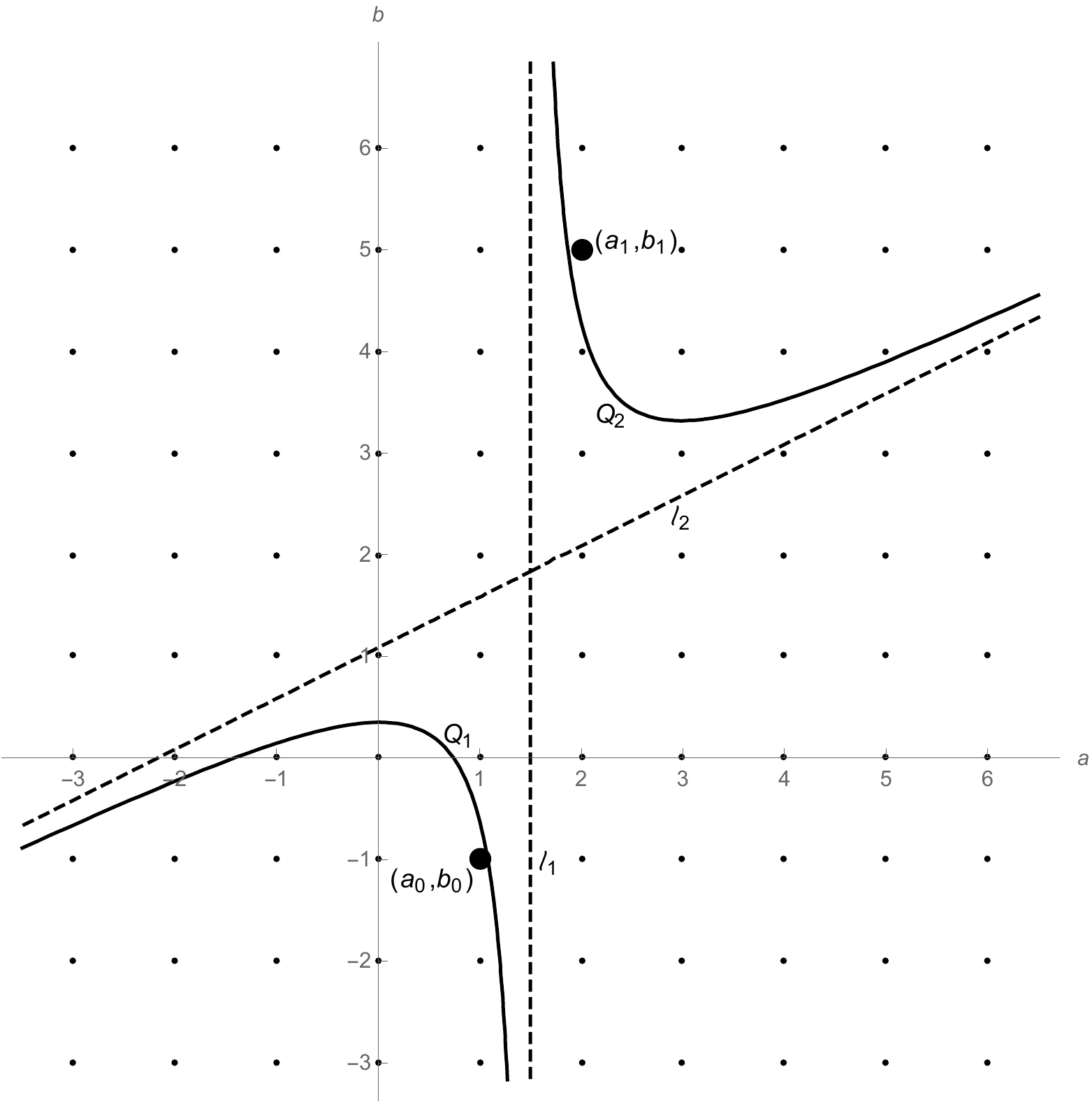}
\end{center}
\caption{For $e=1$, we show the hyperbola $\chi(\bv(-L_{a,b}))=0$ for a character $\bv$ with $\nu(\bv) = \frac{1}{2}E + \frac{1}{3}F$ and $\Delta(\bv)=\frac{11}{10}$.  Its two branches $Q_1$ and $Q_2$, asymptotes $\ell_1$ and $\ell_2$, and the points $(a_0,b_0) = (1,-1)$ and $(a_1,b_1) = (2,5)$ are shown.  See Example \ref{ex-hyperbola}.}\label{fig-hyperbola}
\end{figure}
\end{example}

\begin{remark}\label{rmk-deltamp}
Fix the integer $m$, and view ${\bf v}\in K(\F_e)$ as a Chern character with fixed rank $r > 0$ and slope $\nu = \epsilon E + \varphi F$, but variable discriminant $\Delta\geq 0$; then the corresponding line bundle $L_0 = L_0(\Delta)$ also varies with $\Delta$.  Suppose $\epsilon$ is not an integer.  As $\Delta$ increases, the branches of the hyperbola $\chi({\bf v}(-L_{a,b}))=0$ are pushed further away from the center $(\epsilon + 1, \varphi + \frac{1}{2}e+1)$, but the asymptotes remain fixed.  Then the lattice points $(a_0,b_0)= (a_0,b_0(\Delta))$ and $(a_1,b_1)=(a_1,b_1(\Delta))$  of Remark \ref{rmk-graphical} are pushed further apart from one another as $\Delta$ increases.  Thus the right hand side of the inequality $$m\leq b_1(\Delta)-b_0(\Delta)-e-1$$ is increasing in $\Delta$.  Furthermore, the functions $b_i(\Delta)$ are right-continuous in $\Delta$ (i.e. they remain constant when we increase $\Delta$ a little bit) and their discontinuities happen at rational $\Delta$.

\begin{definition}\label{def-deltaP}
If $\epsilon$ is not an integer, we let $\delta_m^p(\nu) \in \QQ_{\geq 0}$ be the smallest nonnegative number $\Delta$ such that $$m\leq b_1(\Delta) - b_0(\Delta)-e-1.$$  If $\epsilon$ is an integer, then because of Remark \ref{rem-epInteger} we define $\delta_m^p(\nu) = 0$.  Thus, if there is an $F$- and $H_m$- prioritary sheaf of character ${\bf v}=(r,\nu,\Delta)$ with $\Delta\geq 0$, then $\Delta \geq \delta_m^p(\nu)$.
\end{definition}

Suppose $\delta_m^p(\nu) > 0$.  If $\delta_m^p(\nu) < \delta_{m+1}^p(\nu)$, then $\delta_m^p(\nu)$ is characterized as the discriminant which makes $\chi(\cV(-L_0-H_m))=0$.  On the other hand  it is possible that $\delta_{m}^p(\nu) = \delta_{m+1}^p(\nu)$, since both $b_i(\Delta)$ can jump at the same value of $\Delta$.  In this case  $\delta_{m+1}^p(\nu) < \delta_{m+2}^p(\nu)$, so $\delta_{m}^p(\nu)$ is characterized as the discriminant which makes $\chi(\cV(-L_0-H_{m+1}))=0$; there is no discriminant which solves the equation $\chi(\cV(-L_0-H_m))=0$.
\end{remark}

\begin{example}
If $m \leq -e$, then since $b_1(\Delta) - b_0(\Delta) \geq 1$ for any $\Delta \geq 0$, the function $\delta_m^p(\nu)$ is identically $0$.  This corresponds to the fact that $\cP_{F,E}({\bf v})$ is nonempty by Proposition \ref{prop-EFprioritary}.
\end{example}

\begin{remark}\label{rem-twistDual}
Since twists of $H_m$-prioritary sheaves are $H_m$-prioritary and duals of locally free $H_m$-prioritary sheaves are $H_m$-prioritary, it is important to see that the necessary inequality of Theorem \ref{thm-prioritaryNecessary} is unchanged when we take a twist or dual.  
Correspondingly, the number $\delta_m^p(\nu)$ is unchanged under twisting or dualizing the slope $\nu$.

Let $M$ be a line bundle, and write $L_0({\bf v})$ for the line bundle $L_0$ of Definition \ref{def-L0} corresponding to the character ${\bf v}$.  Then $L_0({\bf v}\te M) = L_0({\bf v}) \te M$, so the inequality is unchanged.

Slightly less trivially, suppose the rank is at least $2$ (so a general sheaf in $\cP_F({\bf v})$ is locally free) and consider the Serre dual Chern character ${\bf v}^D$.  If $\epsilon$ is an integer, then the inequality always holds for both ${\bf v}$ and ${\bf v}^D$, so assume $\epsilon$ is not an integer.  Let $L_0({\bf v}) = a_0({\bf v})E+b_0({\bf v})F$ and $L_1({\bf v}) = a_1({\bf v})E+b_1({\bf v})F$ be the line bundles corresponding to the lattice points $(a_0({\bf v}),b_0({\bf v}))$ and $(a_1({\bf v}),b_1({\bf v}))$ defined for the character ${\bf v}$.  Then it is easy to check that $L_0({\bf v}^D) = L_1({\bf v})^*$ and $L_1({\bf v}^D) = L_0({\bf v})^*$.  Therefore $$b_1({\bf v}^D)-b_0({\bf v}^D)=-b_0({\bf v})+b_1({\bf v})$$ and the inequality is unchanged.
\end{remark}

We can also more explicitly rephrase the inequality $\chi(\cV(-L_0-H_m))\leq 0$ as an upper bound on $m$ in terms of $\nu$ and $\Delta$.

\begin{corollary}\label{cor-equivalentInequality}
If $\epsilon$ is not an integer, then the inequality $\chi(\cV(-L_0-H_m)) \leq 0$ holds if and only if
$$
m\leq \frac{\Delta}{(\lceil \epsilon \rceil-\epsilon)(\epsilon-\lfloor \epsilon\rfloor)}-\frac{e}{2}+1-(\lceil\psi\rceil - \psi).
$$
Thus if there is an $F$- and $H_m$-prioritary sheaf of character ${\bf v}$, the displayed inequality holds.
\end{corollary}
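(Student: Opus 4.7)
The plan is to rewrite the inequality $\chi(\bv(-L_0-H_m))\le 0$ via Riemann--Roch on $\F_e$ and solve for $m$. Since twists do not alter the Bogomolov discriminant, I work with a character of total slope
$$\nu-L_0-H_m=(\epsilon-\lceil\epsilon\rceil-1)E+(\varphi-\lceil\psi\rceil-e-m)F$$
and discriminant $\Delta$. Setting $a=\epsilon-\lceil\epsilon\rceil-1$ and $b=\varphi-\lceil\psi\rceil-e-m$, the Riemann--Roch formula recorded in \S\ref{sec-Prelim} gives
$$\tfrac{1}{r}\,\chi(\bv(-L_0-H_m))=(a+1)\!\left(b+1-\tfrac{ae}{2}\right)-\Delta.$$

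First I would simplify the two factors using the hypothesis that $\epsilon\notin\Z$, so $\lceil\epsilon\rceil=\lfloor\epsilon\rfloor+1$ and hence $a+1=-(\lceil\epsilon\rceil-\epsilon)$. For the second factor, I would substitute the defining identity
$$\varphi+\tfrac{e(\lceil\epsilon\rceil-\epsilon)}{2}=\psi+\frac{\Delta}{1-(\lceil\epsilon\rceil-\epsilon)}$$
coming from Definition \ref{def-L0}, which (after collecting terms) yields
$$b+1-\tfrac{ae}{2}=-(\lceil\psi\rceil-\psi)+\frac{\Delta}{1-(\lceil\epsilon\rceil-\epsilon)}-\tfrac{e}{2}-m+1.$$
Multiplying by $a+1=-(\lceil\epsilon\rceil-\epsilon)$ and subtracting $\Delta$, the coefficient of $\Delta$ telescopes:
$$1+\frac{\lceil\epsilon\rceil-\epsilon}{1-(\lceil\epsilon\rceil-\epsilon)}=\frac{1}{1-(\lceil\epsilon\rceil-\epsilon)}=\frac{1}{\epsilon-\lfloor\epsilon\rfloor},$$
using $1-(\lceil\epsilon\rceil-\epsilon)=\epsilon-\lfloor\epsilon\rfloor$ once more.

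Assembling these pieces produces
$$\tfrac{1}{r}\chi(\bv(-L_0-H_m))=(\lceil\epsilon\rceil-\epsilon)\!\left[(\lceil\psi\rceil-\psi)+\tfrac{e}{2}+m-1\right]-\frac{\Delta}{\epsilon-\lfloor\epsilon\rfloor}.$$
Since $\lceil\epsilon\rceil-\epsilon>0$, dividing through and rearranging shows that $\chi(\bv(-L_0-H_m))\le 0$ is equivalent to
$$m\le\frac{\Delta}{(\lceil\epsilon\rceil-\epsilon)(\epsilon-\lfloor\epsilon\rfloor)}-\tfrac{e}{2}+1-(\lceil\psi\rceil-\psi),$$
as required. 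The second assertion then follows immediately from Theorem \ref{thm-prioritaryNecessary}.

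There is no real conceptual obstacle here; the result is a mechanical bookkeeping exercise. The only minor care required is to track the sign of $a+1$ (which is negative) and to invoke the precise form of $\psi$ to eliminate the mixed term involving $\varphi$ and $e$. Both of these reductions use nothing beyond the assumption $\epsilon\notin\Z$ and the definition of $L_0$.
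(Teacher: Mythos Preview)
Your proof is correct and follows essentially the same approach as the paper: a direct Riemann--Roch computation of $\chi(\bv(-L_0-H_m))/r$ followed by solving for $m$. Your organization is marginally cleaner in that you substitute the defining relation for $\psi$ early to eliminate $\varphi$, whereas the paper carries $\varphi$ through and only recognizes the combination $\psi$ at the end; but the content is identical.
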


In particular, a (slightly weaker) necessary condition for the existence of an $H_m$-prioritary sheaf is the simpler inequality $$m \leq \frac{\Delta}{(\lceil \epsilon\rceil -\epsilon)(\epsilon-\lfloor \epsilon\rfloor)}-\frac{e}{2}+1$$ that depends only on $\epsilon$ and $\Delta$ (and not $\varphi$).

\begin{proof}
We use Riemann-Roch to compute
\begin{align*}\frac{\chi(\cV(-L_0-H_m))}{r}&=P(\nu(\cV(-L_0-H_m)))-\Delta \\&= (\epsilon-\lceil\epsilon\rceil)(\varphi-\lceil \psi\rceil-e-m+1-\frac{1}{2}e(\epsilon-\lceil\epsilon\rceil-1))-\Delta,\end{align*}  so the inequality $\chi(\cV(-L_0-H_m)) \leq 0$ is equivalent to 
\begin{align*}
m &\leq\frac{\Delta}{\lceil \epsilon\rceil -\epsilon}-\frac{\Delta}{\epsilon-\lfloor \epsilon\rfloor}-\frac{e}{2}+1+\left(\varphi +\frac{1}{2}e(\lceil \epsilon\rceil -\epsilon)+\frac{\Delta}{\epsilon-\lfloor \epsilon\rfloor}\right)-\lceil \psi \rceil\\
&= \frac{\Delta}{(\lceil \epsilon\rceil -\epsilon)(\epsilon-\lfloor \epsilon\rfloor)}-\frac{e}{2}+1-(\lceil \psi\rceil -\psi),
\end{align*}
as claimed.
\end{proof}

\subsection{Construction of prioritary sheaves}  Next we show that the inequality in Theorem \ref{thm-prioritaryNecessary} is sufficient for the existence of an $F$- and $H_m$-prioritary sheaf.   Note that if there is an $F$- and $H_m$-prioritary sheaf of invariants $(r,\nu,\Delta)$, then by considering elementary modifications there is also an $F$- and $H_m$-prioritary sheaf of invariants $(r,\nu,\Delta')$ for any $\Delta' > \Delta$ such that the character is integral.  Thus to complete the classification it suffices to construct $F$- and $H_m$-prioritary sheaves of rank $r$, slope $\nu$, and discriminant at most $\delta_m^p(\nu)$.

\begin{proposition}\label{prop-triangle}
Fix a rank $r$, total slope $\nu=\epsilon E + \varphi F$, and integer $m$.  Suppose  the point $(\epsilon,\varphi)\in \QQ^2$ is a convex combination $$(\epsilon,\varphi) = \lambda_1 (-1,m-1) + \lambda_2(0,0) + \lambda_3(0,-1).$$ Then there are uniquely determined integers $A,B,C$ such that the direct sum of line bundles $$\cV = \OO_{\F_e}(-E+(m-1)F)^A \oplus \OO_{\F_e}^B \oplus \OO_{\F_e}(-F)^C$$ has rank $r$ and total slope $\nu$. The bundle $\cV$ is $F$- and $H_m$-prioritary, and $$\delta_m^p(\nu) = \max\{\Delta(\cV),0\}.$$
\end{proposition}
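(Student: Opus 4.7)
First I would construct the triple $(A,B,C)$ by solving the linear system coming from matching rank and $c_1$ with $\nu$: $A+B+C=r$, $-A=r\epsilon$, and $A(m-1)-C=r\varphi$. This has the unique solution $(A,B,C)=(r\lambda_1,r\lambda_2,r\lambda_3)$, which are integers (since $\bv$ is integral with the specified $(r,\nu)$) and nonnegative (by the convex combination hypothesis), so $\cV$ is a well-defined torsion-free sheaf of the prescribed rank and total slope. The prioritariness of $\cV$ is then immediate: the $H_m$-prioritariness, including the exact value $\rho(\cV)=m$, is an instance of Example \ref{ex-prioritarySum} with parameter $n=m$, and $F$-prioritariness follows by expanding $\Ext^2(\cV,\cV(-F))$ as a direct sum of groups $\Ext^2(L_i,L_j(-F))$, applying Serre duality, and observing that each resulting $H^0$ concerns a line bundle whose $E$-coefficient is negative and therefore vanishes on $\F_e$. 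The edge cases where one of $A,B,C$ is zero only simplify this check.

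Next I would prove the identity $\delta_m^p(\nu)=\max\{\Delta(\cV),0\}$ in two directions. The upper bound is formal: when $\Delta(\cV)\geq 0$, Theorem \ref{thm-prioritaryNecessary} applied to $\cV$ forces $\chi(\bv(-L_0-H_m))\leq 0$ at $\Delta=\Delta(\cV)$, yielding $\delta_m^p(\nu)\leq\Delta(\cV)$; when $\Delta(\cV)\leq 0$, the monotonicity in $\Delta$ noted in Remark \ref{rmk-deltamp} gives $\delta_m^p(\nu)=0$ directly. The reverse inequality $\delta_m^p(\nu)\geq\Delta(\cV)$ in the case $\Delta(\cV)>0$ requires verifying that $\chi(\cV(-L_0-H_m))$ vanishes exactly at $\Delta=\Delta(\cV)$. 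Expanding $\chi(\cV(-L_0-H_m))=\sum_i (r\lambda_i)\chi(L_i-L_0-H_m)$ via Riemann-Roch and substituting the identity $\Delta(\cV)=-\tfrac{1}{2}\sum_{i<j}\lambda_i\lambda_j(L_i-L_j)^2$ along with $\lambda_1=-\epsilon$ and $\lambda_3=-(m-1)\epsilon-\varphi$ reduces the claim to a polynomial identity. Geometrically this says that the lattice point $(a_0+1,b_0+e+m)$ of Remark \ref{rmk-graphical} lies exactly on the branch $Q_2$ of the hyperbola when $\Delta=\Delta(\cV)$.

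The main obstacle will be precisely this last identity. Although conceptually transparent, it requires careful tracking of the ceilings $\lceil\epsilon\rceil$ and $\lceil\psi\rceil$ appearing in the definition of $L_0$, and the boundary cases (where $\epsilon$ is an integer so that $\delta_m^p(\nu)=0$ by definition, or where $(\epsilon,\varphi)$ lies on an edge of the triangle so that one of $A,B,C$ vanishes and the shape of the direct sum changes) must each be verified separately; they remain consistent with the formula but change the combinatorial setup slightly.
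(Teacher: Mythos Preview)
Your proposal is correct and follows essentially the same approach as the paper, but there are two places where the paper is sharper. First, your handling of the case $\Delta(\cV)\leq 0$ is not quite right: Remark~\ref{rmk-deltamp} only records that $b_1(\Delta)-b_0(\Delta)$ is increasing for $\Delta\geq 0$, which by itself does not force $\delta_m^p(\nu)=0$. The paper instead takes elementary modifications of (a power of) $\cV$ to produce an $F$- and $H_m$-prioritary sheaf of discriminant exactly $0$, and then Definition~\ref{def-deltaP} gives $\delta_m^p(\nu)=0$. Second, your proposed verification of $\chi(\cV(-L_0-H_m))=0$ via a polynomial identity with ceilings is more laborious than necessary. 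The paper computes $\psi$ directly for the character $\ch\cV$ and finds $\psi=-C/(B+C)$; hence $\lceil\epsilon\rceil=0$ and $\lceil\psi\rceil=0$ (if $B>0$) or $-1$ (if $B=0$), so $L_0$ is simply $\OO_{\F_e}$ or $\OO_{\F_e}(-F)$. With $L_0$ known explicitly, the vanishing $\chi(\cV(-L_0-H_m))=0$ (or $\chi(\cV(-L_0-H_{m+1}))=0$ when $B=0$, which matches the $\delta_m^p=\delta_{m+1}^p$ clause of Remark~\ref{rmk-deltamp}) is immediate by reading off the summands. This removes the need for the ``careful tracking of ceilings'' you anticipated.
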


The proposition can also be viewed as giving a more explicit description of the function $\delta_m^p(\nu)$ on the triangular region $T$ with vertices $(-1,m-1)$, $(0,0)$, $(0,-1)$.  By taking twists and duals, we can cover the entire plane $\QQ^2$ with regions of this shape.  Since $\delta_m^p(\nu)$ is invariant under twists and duals, we can compute the function for any $\nu$.

\begin{proof}
We must have $\lambda_1 = -\epsilon$, $\lambda_3 = -((m-1)\epsilon + \varphi)$, and $\lambda_2 = 1-\lambda_1-\lambda_3$.  These are all nonnegative rational numbers with denominators dividing $r$, so the desired nonnegative integers are $A=r\lambda_1$, $B=r\lambda_2$, $C=r\lambda_3$.  By Example \ref{ex-prioritarySum}, the bundle $\cV$ is $F$- and $H_m$-prioritary.

Next we compute $\Delta(\cV)$. We have
$$
\ch(\cV) = (A+B+C,-A E+(A(m-1)-C)F,\frac{1}{2}A(-e-2(m-1)))
$$
so
\begin{align*}\Delta(\cV) &= \frac{-A^2 e-2A(A(m-1)-C)}{2(A+B+C)^2}+\frac{A(e+2m-2)}{2(A+B+C)}
\\&= \frac{A}{2r^2}(B(e+2m-2)+C(e+2m)).
\end{align*} 

If $\Delta(\cV) = -\ell/r^2 \leq 0$, then by taking $\ell$ elementary modifications of  $\cV^{\oplus r}$ we can construct an $F$- and $H_m$-prioritary sheaf of rank $r^2$, slope $\nu$, and discriminant $0$.  This implies $\delta_m^p(\nu) = 0$.

Finally suppose $\Delta(\cV) > 0$; then in particular $\epsilon$ is not an integer.  We compute the line bundle $L_0$ for the character $\ch \cV$.  We have $\lceil \epsilon \rceil = 0$, and
\begin{align*}
\psi &= \varphi+\frac{1}{2}e(\lceil \epsilon\rceil -\epsilon) - \frac{\Delta}{1-(\lceil \epsilon\rceil-\epsilon)}\\
&= \frac{A(m-1)-C}{r}+\frac{eA}{2r}-\frac{A(B(e+2m-2)+C(e+2m))}{2r^2(-\frac{A}{r}+1)}\\
&= -\frac{C}{B+C}.
\end{align*}
There are two cases to consider.

\emph{Case 1: $B>0$.}  If $B>0$, then $\lceil \psi \rceil =0$ and $L_0 = \OO_{\F_e}$.  Then $$\cV(-H_m) = \OO_{\F_e}(-2E - (e+1)F)^A \oplus \OO_{\F_e}(-E-(e+m)F)^B \oplus \OO_{\F_e}(-E-(e+m+1)F))^C$$ has $\chi(\cV(-H_m)) = 0$, and therefore $\Delta(\cV) = \delta_m^p(\nu)$ by Remark \ref{rmk-deltamp}.

\emph{Case 2: $B=0$.}  In this case $\cV$ is actually $H_{m+1}$-prioritary.  Here $\lceil \psi\rceil = -1$, so $L_0 = \OO_{\FF_e}(-F)$.  Notice that $\chi(\cV(-L_0-F)) = \chi(\cV) = 0$ and $\chi(\cV(-L_0-H_{m+1}))=\chi(\cV(-H_m))=0$.  This implies that $\Delta(\cV) = \delta_{m+1}^p (\nu)$, but also $\delta_m^p(\nu) = \delta_{m+1}^p(\nu)$: in the notation of Remark \ref{rmk-deltamp}, the lattice points $(a_0,b_0+1)$ and $(a_1,b_1-1)$ both lie on the hyperbola, so $b_1(\Delta)-b_0(\Delta)-e-1$ jumps from $m-1$ to $m+1$ at $\Delta(\cV)$.
\end{proof}

This construction allows us to complete the classification of $F$- and $H_m$-prioritary sheaves which satisfy the Bogomolov inequality.

\begin{theorem}\label{thm-prioritaryChi}
Let ${\bf v}\in K(\F_e)$ be an integral Chern character of positive rank which satisfies the Bogomolov inequality $\Delta \geq 0$, and let $m\in \Z$.  Let $L_0$ be the line bundle of Definition \ref{def-L0}.  Then the stack $\cP_{F,H_m}({\bf v})$ is nonempty if and only if $$\chi({\bf v}(-L_0 - H_m)) \leq 0.$$  
\end{theorem}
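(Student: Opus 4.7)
The necessity direction is exactly Theorem \ref{thm-prioritaryNecessary}, so I focus on sufficiency. Assume $\chi(\bv(-L_0-H_m)) \leq 0$. By Corollary \ref{cor-equivalentInequality} together with the way $\delta_m^p(\nu)$ is defined in Remark \ref{rmk-deltamp}, this inequality is equivalent to $\Delta \geq \delta_m^p(\nu)$. The plan is to invoke the explicit direct-sum construction of Proposition \ref{prop-triangle} after a preliminary reduction of $\nu$, and then to take elementary modifications to reach the desired $\Delta$.

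First I would exploit the invariances noted in Remark \ref{rem-twistDual}: both the numerical hypothesis $\chi(\bv(-L_0-H_m))\leq 0$ and the conclusion that $\cP_{F,H_m}(\bv)$ is nonempty are preserved under twisting by line bundles, and (for rank $r\geq 2$, where a general $F$-prioritary sheaf is locally free by Walter's theorem) under Serre duality. The rank one case is trivial, since any line bundle twisted by an ideal sheaf of $n$ general points is $F$- and $H_m$-prioritary for all $m$. Assuming $r \geq 2$, the goal of the reduction step is to move $(\epsilon,\varphi)$ into the triangle $T$ with vertices $(-1,m-1)$, $(0,0)$, $(0,-1)$ appearing in Proposition \ref{prop-triangle}. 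For this I would apply the unimodular shear $(\epsilon,\varphi)\mapsto (\epsilon,\varphi+(m-1)\epsilon)$, which commutes with negation, preserves $\Z^2$, and carries $T_m$ onto the triangle $T_1$ with vertices $(-1,0),(0,0),(0,-1)$; since $T_1$ together with $-T_1$ tiles a fundamental domain for $\Z^2$, the triangles $T_m$ and $-T_m$ likewise tile a fundamental domain after the shear. Hence after twisting by some $\OO(aE+bF)$ and possibly dualizing, we may assume $(\epsilon,\varphi)\in T$.

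With the slope inside $T$, Proposition \ref{prop-triangle} produces an explicit $F$- and $H_m$-prioritary direct sum of line bundles $\cV_0$ of rank $r$ and slope $\nu$, with $\Delta(\cV_0)\leq \delta_m^p(\nu)\leq \Delta$. Because $\ch(\cV_0)$ and $\bv$ share the same rank and total slope and both lie in the integral lattice $K(\F_e)$, the difference $k:=r(\Delta-\Delta(\cV_0))=\ch_2(\cV_0)-\ch_2(\bv)$ is a nonnegative integer. Performing $k$ successive general elementary modifications at general points of $\F_e$ (see \S\ref{subsec-elementarymodification}) raises the discriminant by $1/r$ at each step, preserves both $F$- and $H_m$-prioritariness by \cite[Lemma 2.7]{CoskunHuizengaBN} which is recalled in \S\ref{subsec-elementarymodification}, and keeps the rank and slope unchanged. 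The result is an $F$- and $H_m$-prioritary sheaf whose Chern character matches $\bv$; twisting and, if necessary, dualizing back then yield a sheaf of the original character.

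The principal subtlety in this plan is the reduction step, where one must justify that the combined action of $\Z^2$-translation and negation on $\QQ^2$ has $T$ as a fundamental domain; the shear reduction to $m=1$ is what makes this elementary. The rest is a straightforward assembly of results already in the excerpt: Proposition \ref{prop-triangle} supplies the base case of the construction, the discussion of elementary modifications in \S\ref{subsec-elementarymodification} gives the increment in $\Delta$, and Remark \ref{rem-twistDual} ensures that we can freely toggle between $\bv$ and its twists and dual without disturbing the numerical inequality.
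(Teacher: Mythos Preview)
Your proof is correct and follows essentially the same route as the paper's: reduce to the triangle $T$ via twists and duals, invoke Proposition \ref{prop-triangle} for the base construction, and climb to the target discriminant by elementary modifications. The paper's proof simply asserts that twists and duals suffice to move any slope into $T$; your shear argument reducing $T_m$ to $T_1$ and observing that $T_1\cup(-T_1)$ tiles a fundamental domain for $\Z^2$ is a clean way to fill in that step.
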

\begin{proof}
By Theorem \ref{thm-prioritaryNecessary}, we need to show that if the inequality holds then the stack is nonempty.  If $r=1$, then the inequality automatically holds and the stack is nonempty, so we may as well assume $r\geq 2$.  By taking twists and/or duals, any slope $(\epsilon,\varphi)\in \QQ^2$ can be moved into the triangular region $T$ with vertices $(-1,-m-1)$, $(0,0)$, $(0,-1)$.  Since both the nonemptiness of the stack and the validity of the inequality are unchanged under taking twists and duals (see Remark \ref{rem-twistDual}), we may as well assume $(\epsilon,\varphi)$ lies in $T$.  The inequality $\chi({\bf v}(-L_0-H_m)) \leq 0$ implies $\Delta \geq \delta_m^p(\nu)$, and then taking elementary modifications of the bundle $\cV$ in Proposition \ref{prop-triangle} of discriminant $\Delta(\cV) \leq \delta_m^p(\nu)$ produces an $F$- and $H_m$-prioritary sheaf of character ${\bf v}$. 
\end{proof}

The next two corollaries are equivalent reformulations of Theorem \ref{thm-prioritaryChi}.

\begin{corollary}\label{cor-prioritaryDelta}
For ${\bf v}$ as in Theorem \ref{thm-prioritaryChi} with total slope $\nu$, the stack $\cP_{F,H_m}({\bf v})$ is nonempty if and only if $$\Delta \geq \delta_m^p(\nu),$$ where $\delta_m^p(\nu)$ is the function of Definition \ref{def-deltaP}.
\end{corollary}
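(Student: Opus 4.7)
The plan is to observe that this corollary is essentially a direct translation of Theorem \ref{thm-prioritaryChi} from the language of the Euler characteristic inequality $\chi(\bv(-L_0-H_m))\leq 0$ into the language of the function $\delta_m^p(\nu)$. So the work is to show that these two conditions coincide.

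First I would split into cases according to whether $\epsilon$ is an integer. If $\epsilon\in\ZZ$, then by Remark \ref{rem-epInteger} the inequality $\chi(\bv(-L_0-H_m))\leq 0$ is automatic from $\Delta\geq 0$, while by Definition \ref{def-deltaP} we have $\delta_m^p(\nu)=0$, so the equivalence $\Delta\geq \delta_m^p(\nu)\Leftrightarrow \chi(\bv(-L_0-H_m))\leq 0$ reduces to the trivial $\Delta\geq 0\Leftrightarrow \Delta\geq 0$. Theorem \ref{thm-prioritaryChi} then finishes this case.

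If $\epsilon\notin \ZZ$, I would use Remark \ref{rmk-graphical} to rewrite the Euler characteristic inequality as the lattice inequality
\[
m \leq b_1(\Delta) - b_0(\Delta) - e - 1,
\]
where $b_0(\Delta),b_1(\Delta)$ are the $b$-coordinates of the two lattice points adjacent to the hyperbola $\chi(\bv(-L_{a,b}))=0$. Definition \ref{def-deltaP} simply defines $\delta_m^p(\nu)$ to be the smallest $\Delta\geq 0$ for which this inequality holds, so what I need is that the set of $\Delta\geq 0$ satisfying it is the closed ray $[\delta_m^p(\nu),\infty)$. This is precisely the content of Remark \ref{rmk-deltamp}: as $\Delta$ grows, the two branches $Q_1,Q_2$ of the hyperbola move apart while the asymptotes stay fixed, so $b_1(\Delta)-b_0(\Delta)$ is a nondecreasing, right-continuous, integer-valued function of $\Delta$. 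Therefore once the inequality holds at some $\Delta_0$, it holds for all $\Delta\geq \Delta_0$, and the minimum is attained.

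Combining the two cases with Theorem \ref{thm-prioritaryChi} gives $\cP_{F,H_m}(\bv)\neq\emptyset \Leftrightarrow \chi(\bv(-L_0-H_m))\leq 0 \Leftrightarrow \Delta\geq \delta_m^p(\nu)$, as desired. There is no real obstacle here — the only thing to be careful about is the right-continuity/monotonicity of $b_1(\Delta)-b_0(\Delta)$ so that $\delta_m^p(\nu)$ itself satisfies the inequality (otherwise the statement would only give $\Delta>\delta_m^p(\nu)$), and this is handled by Remark \ref{rmk-deltamp} together with Proposition \ref{prop-triangle}, which exhibits an explicit prioritary sheaf realizing the boundary discriminant.
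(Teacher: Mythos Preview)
Your proposal is correct and takes essentially the same approach as the paper: the paper simply states that this corollary is an ``equivalent reformulation'' of Theorem~\ref{thm-prioritaryChi} and gives no further proof, and you have correctly filled in the routine verification (via Remarks~\ref{rem-epInteger}, \ref{rmk-graphical}, and~\ref{rmk-deltamp}) that the condition $\chi(\bv(-L_0-H_m))\leq 0$ is equivalent to $\Delta\geq \delta_m^p(\nu)$.
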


\begin{corollary}\label{cor-prioritaryRho}
For ${\bf v}$ as in Theorem \ref{thm-prioritaryChi} with slope $\nu = \epsilon E + \varphi F$, if $\epsilon$ is not an integer, then the generic prioritary index satisfies
$$\rho_{\gen}({\bf v}) = \left\lfloor\frac{\Delta}{(\lceil\epsilon\rceil-\epsilon)(\epsilon-\lfloor \epsilon\rfloor)}-\frac{e}{2}+1 - (\lceil \psi\rceil -\psi)\right\rfloor$$ where $$\psi = \varphi + \frac{1}{2}e(\lceil \epsilon\rceil - \epsilon) - \frac{\Delta}{\epsilon - \lfloor \epsilon\rfloor}.$$ If $\epsilon$ is an integer, then $\rho_\gen({\bf v}) = \infty$.
\end{corollary}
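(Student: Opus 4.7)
The plan is to deduce Corollary \ref{cor-prioritaryRho} as a direct reformulation of the results already established, using the definition of the generic prioritary index in concert with Theorem \ref{thm-prioritaryChi} and Corollary \ref{cor-equivalentInequality}. Recall that by Proposition-Definition \ref{prop-index} the prioritary index of a fixed $\cV$ is either $\infty$ or a specific integer $\rho(\cV)$, and by the nested open substack structure $\cdots\supset \cP_{F,H_{m}}(\bv)\supset \cP_{F,H_{m+1}}(\bv)\supset \cdots$ (together with irreducibility of $\cP_F(\bv)$ from Walter's theorem), the generic prioritary index $\rho_{\gen}(\bv)$ is the largest integer $m$ for which $\cP_{F,H_m}(\bv)$ is nonempty, or $\infty$ if all such stacks are nonempty.

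First I would treat the case when $\epsilon\in\Z$. By Remark \ref{rem-epInteger}, the inequality $\chi(\bv(-L_0-H_m)) \leq 0$ is automatic from the Bogomolov inequality $\Delta\geq 0$, regardless of $m$. Hence by Theorem \ref{thm-prioritaryChi} the stack $\cP_{F,H_m}(\bv)$ is nonempty for every $m\in\Z$, and therefore $\rho_{\gen}(\bv)=\infty$, matching the stated formula.

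Next I would treat the case $\epsilon\notin\Z$. By Theorem \ref{thm-prioritaryChi}, $\cP_{F,H_m}(\bv)$ is nonempty if and only if $\chi(\bv(-L_0-H_m))\leq 0$, and by Corollary \ref{cor-equivalentInequality} this inequality is in turn equivalent to
\[
m \leq \frac{\Delta}{(\lceil \epsilon \rceil-\epsilon)(\epsilon-\lfloor \epsilon\rfloor)}-\frac{e}{2}+1-(\lceil\psi\rceil - \psi),
\]
where $\psi$ is as defined in Definition \ref{def-L0}. Since $\rho_{\gen}(\bv)$ is the largest integer $m$ for which this holds, and since the right-hand side is a fixed real number depending only on the data $(e,\epsilon,\varphi,\Delta)$, we immediately conclude that $\rho_{\gen}(\bv)$ equals the floor of the right-hand side, which is exactly the claimed expression.

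There is essentially no obstacle here beyond bookkeeping; the hard work was done in Theorem \ref{thm-prioritaryChi} (existence via Gaeta resolutions and elementary modifications) and in the algebraic computation in Corollary \ref{cor-equivalentInequality}. The only minor subtlety is that when the right-hand side above happens to be an integer, that integer is itself attained (the inequality is non-strict), so the floor operation gives the correct answer; this is consistent with the boundary-value discussion in Remark \ref{rmk-deltamp}.
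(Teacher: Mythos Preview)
Your proposal is correct and follows essentially the same approach as the paper, which simply states that the result is immediate from Theorem \ref{thm-prioritaryChi} and Corollary \ref{cor-equivalentInequality}. You have just spelled out the details of that deduction explicitly, including the integer-$\epsilon$ case via Remark \ref{rem-epInteger}.
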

\begin{proof}
This is immediate from Theorem \ref{thm-prioritaryChi} and Corollary \ref{cor-equivalentInequality}.
\end{proof}

\section{The generic Harder-Narasimhan filtration} \label{sec-genHN}
Throughout this section, we let $m\in \QQ_{>0}$ and suppose ${\bf v}\in K(\F_e)$ is a Chern character such that there are $H_{\lceil m\rceil}$-prioritary sheaves of character $\bv$.  Then the general $F$-prioritary sheaf of character $\bv$ is $H_{\lceil m\rceil}$-prioritary. For a general $\cV\in \cP_{F}({\bf v})$, the numerical invariants of the factors in the $H_m$-Harder-Narasimhan filtration of $\cV$ are fixed.  The goal of this section is to describe a finite computational procedure for determining these numerical invariants.  In particular, we can computationally determine whether the moduli space $M_{H_m}({\bf v})$ is nonempty.

Let $\cV_s/S$ be a complete family of $F$-prioritary sheaves of character ${\bf v}=(r,\nu,\Delta)$ parameterized by a smooth and irreducible variety $S$.  By passing to an open dense subset of $S$, we may assume that every sheaf $\cV_s$ is $H_{\lceil m\rceil}$-prioritary and has an $H_m$-Harder-Narasimhan filtration where the quotients have the same numerical invariants.  Suppose this Harder-Narasimhan filtration has length $\ell$, and the $H_m$-semistable quotients $\gr_{i,s}$ have corresponding $H_m$-Hilbert polynomials $P_i,$ reduced $H_m$-Hilbert polynomials $p_1>\cdots > p_\ell$, and Chern characters ${\bf gr}_i = (r_i,\nu_i,\Delta_i).$  
 
Note that these assumptions require the moduli spaces $M_{H_m}({\bf gr}_i)$ to be nonempty.  Additionally, since the stack $\cP_{F}({\bf v})$ is irreducible, the invariants ${\bf gr}_i$ depend only on $m$ and ${\bf v}$, and not on the particular choice of complete family.

 First we show the total slopes of the terms in the $H_m$-Harder-Narasimhan filtration lie in a narrow strip centered on the slope $\nu$.  This result depends on the existence of $H_{\lceil m\rceil}$-prioritary sheaves.

\begin{lemma}\label{lem-HNclose}
With the above notation, we have $$0\leq (\nu_1-\nu_\ell) \cdot H_m \leq 1,$$ and therefore $$|(\nu_i - \nu) \cdot H_m|< 1$$ for each $i$.
\end{lemma}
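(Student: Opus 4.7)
The lower bound $(\nu_1 - \nu_\ell) \cdot H_m \geq 0$ is essentially definitional: since the reduced $H_m$-Hilbert polynomials satisfy $p_1 > \cdots > p_\ell$, comparison of leading coefficients forces $\mu_{H_m}(\gr_1) \geq \mu_{H_m}(\gr_\ell)$. Thus the substantive content is the upper bound, and I may assume $\ell \geq 2$ throughout.

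My plan is to exploit $H_{\lceil m\rceil}$-prioritariness by restricting to smooth rational curves. For each integer $n \geq 0$ the class $H_n$ has arithmetic genus zero, so $|H_n|$ contains smooth rational curves $C \cong \P^1$. The key technical ingredient I will invoke is the restriction principle for prioritary sheaves: the general sheaf in a nonempty $\cP_{H_n}(\bw)$ restricts to a general such $C$ as a balanced direct sum of line bundles. By Lemma \ref{lem-priorCompare}, since $H_{\lceil m\rceil} - H_n = (\lceil m\rceil - n)F$ is effective whenever $0 \leq n \leq \lceil m\rceil$, the sheaf $\cV$ is $H_n$-prioritary for every such $n$. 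In particular $\cV$ is simultaneously general in both $\cP_{H_{\lfloor m\rfloor}}(\bv)$ and $\cP_{H_{\lceil m\rceil}}(\bv)$, since these are dense open substacks of the irreducible stack $\cP_F(\bv)$ (Walter's theorem, noting $H_{\lceil m\rceil} - F$ is effective because $\lceil m\rceil \geq 1$).

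Granting the principle, fix $n \in \{\lfloor m\rfloor, \lceil m\rceil\}$ and a general $C \in |H_n|$. For general $C$ the Harder-Narasimhan sequence restricts without torsion, as the potential obstructions $\sTor^1(\cV/\cV_i, \OO_C)$ are supported on the zero-dimensional singular loci of the torsion-free quotients. Hence $\gr_1|_C$ is a subsheaf and $\gr_\ell|_C$ is a quotient of the balanced bundle $\cV|_C$, yielding $\mu(\gr_1|_C) \leq \mu_{\max}(\cV|_C)$ and $\mu(\gr_\ell|_C) \geq \mu_{\min}(\cV|_C)$. Balancedness gives $\mu_{\max}(\cV|_C) - \mu_{\min}(\cV|_C) \leq 1$, and since $\mu(\gr_i|_C) = c_1(\gr_i)\cdot H_n / r_i = \mu_{H_n}(\gr_i)$, we conclude $\mu_{H_n}(\gr_1) - \mu_{H_n}(\gr_\ell) \leq 1$. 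With $t = m - \lfloor m\rfloor \in [0,1]$, a direct computation shows $H_m = (1-t)H_{\lfloor m\rfloor} + tH_{\lceil m\rceil}$, so taking the convex combination yields $(\nu_1 - \nu_\ell)\cdot H_m \leq 1$ (the case $m\in \Z$ is subsumed by $n = m$). For the ``therefore'' inequality, writing $\nu = \sum_j(r_j/r)\nu_j$ gives $(\nu_i - \nu)\cdot H_m = \sum_{j\neq i}(r_j/r)(\nu_i - \nu_j)\cdot H_m$; each pairwise difference lies in $[-1,1]$ and the weights sum to $1 - r_i/r < 1$, so $|(\nu_i - \nu)\cdot H_m| \leq 1 - r_i/r < 1$.

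The hard part will be justifying the balanced restriction principle rigorously. I expect it to follow by exhibiting one $H_n$-prioritary sheaf (for instance a direct sum of line bundles modeled on Example \ref{ex-prioritarySum}, possibly after elementary modifications) whose restriction to some $C \in |H_n|$ is balanced, together with openness of the balanced locus in bundles on $\P^1$ and the irreducibility of $\cP_F(\bv)$. A minor subtlety appears in the edge case $\lfloor m\rfloor = 0$, where $|H_0| = |E + eF|$ must still supply smooth rational curves through a general point of $\F_e$: on $\F_0$ this is the ruling by $E$, while for $e \geq 1$ the system is positive-dimensional with genus-zero members, so this causes no real difficulty.
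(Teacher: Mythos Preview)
Your proof is correct and follows essentially the same approach as the paper: restrict to smooth rational curves in $|H_{\lfloor m\rfloor}|$ and $|H_{\lceil m\rceil}|$, use that a general $H_n$-prioritary sheaf has balanced restriction, and take the convex combination $H_m = (1-t)H_{\lfloor m\rfloor} + tH_{\lceil m\rceil}$. The balanced restriction principle you flag as the hard part is exactly what the paper invokes from \cite[Proposition 2.6]{CoskunHuizengaBN}; the paper phrases the bound via the inequalities $\mu_{H_n}(\gr_{1,s}) \leq \mu_{\max,H_n}(\cV_s) \leq \mu_{\max}(\cV_s|_C)$ (and dually for $\gr_{\ell,s}$) rather than restricting the HN filtration itself, which sidesteps the $\sTor^1$ discussion, and for the ``therefore'' it simply observes that both $\nu_i\cdot H_m$ and $\nu\cdot H_m$ lie in the length-$\leq 1$ interval $[\nu_\ell\cdot H_m,\nu_1\cdot H_m]$ with $\nu\cdot H_m$ in the interior.
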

\begin{proof}
First suppose $C\subset \F_e$ is a smooth rational curve.  The general $\cV_s$ is either locally free (if $r\geq 2$) or a twist of an ideal sheaf supported at general points of $\F_e$, so $\cV_s|_C$ is locally free.  Recall that if $\cV_s/S$ is a complete family of $\OO_{\F_e}(C)$-prioritary sheaves which are locally free along $C$, then the general $\cV_s$ has restriction $\cV_s|_C$ which is a balanced vector bundle \cite[Proposition 2.6]{CoskunHuizengaBN}, so that $$\mu_{\max}(\cV_s|_C)-\mu_{\min}(\cV_s|_C) \leq 1.$$  Observe that $\mu_{{\max},\OO_{\F_e}(C)}(\cV_s) \leq \mu_{\max}(\cV_s|_C)$. Indeed, suppose $\cF\subset \cV_s$ is a subsheaf.  Then $$\mu_{\OO_{\F_e}(C)}(\cF) = \mu(\cF|_C) \leq \mu_{\max}(\cV_s|_C).$$ Analogously we have $\mu_{{\min},\OO_{\F_e}(C)} (\cV_s) \geq \mu_{\min}(\cV_s|_C)$, and we conclude that $$\mu_{\max,\OO_{\F_e}(C)} (\cV_s) - \mu_{\min,\OO_{\F_e}(C)} (\cV_s) \leq 1$$ holds for a general $s\in S$.  (Even if $L$ is not ample, we write for example $\mu_{\max,L}(\cV)$ for the maximum $L$-slope of a subsheaf of $\cV$, if it exists.  For $L=\OO_{\F_e}(C)$, the above restriction argument shows the maximum exists.)

Now observe that if $\cV_s/S$ is a complete family of $H_{\lceil m\rceil }$-prioritary sheaves, then it is also a complete family of $H_{\lfloor m\rfloor}$-prioritary sheaves.  Therefore by the previous paragraph, for a general $s$ we have 
\begin{align*} \mu_{\max,H_{\lceil m\rceil}}(\cV_s) - \mu_{\min,H_{\lceil m\rceil }}(\cV_s) \leq 1\\
\mu_{\max,H_{\lfloor m\rfloor}}(\cV_s) - \mu_{\min,H_{\lfloor m\rfloor}}(\cV_s) \leq 1.
\end{align*}
Then we have an inequality $\mu_{H_{\lceil m\rceil}}(\gr_{1,s}) \leq \mu_{\max,H_{\lceil m\rceil}}(\cV_s)$, as well as three analogous inequalities obtained by switching $\gr_{1,s}$ with $\gr_{\ell,s}$ or $\lceil m\rceil$ with $\lfloor m\rfloor$.  Write $m$ as a convex combination $m = \lambda \lfloor m\rfloor +(1-\lambda) \lceil m \rceil$.  Then $H_m = \lambda H_{\lfloor m \rfloor} + (1-\lambda) H_{\lceil m\rceil}$, so taking $s$ to be general we find\begin{align*}
 (\nu_1-\nu_\ell)\cdot H_m &= \mu_{\max,H_m} (\cV_s) - \mu_{\min,H_m}(\cV_s) \\& = \mu_{H_m}(\gr_{1,s}) - \mu_{H_m}(\gr_{\ell,s})\\
 &= \lambda(\mu_{H_{\lfloor m\rfloor}}(\gr_{1,s}) - \mu_{H_{\lfloor m\rfloor}}(\gr_{\ell,s}))  + (1-\lambda)(\mu_{H_{\lceil m\rceil}}(\gr_{1,s}) - \mu_{H_{\lceil m\rceil}}(\gr_{\ell,s}))\\
&\leq \lambda(\mu_{\max,H_{\lfloor m\rfloor}}(\cV_s) - \mu_{\min,H_{\lfloor m\rfloor}}(\cV_s)) + (1-\lambda)(\mu_{\max,H_{\lceil m\rceil}}(\cV_s)-\mu_{\min,H_{\lceil m \rceil}}(\cV_s))\\
&\leq \lambda + (1-\lambda)\\
& = 1.
 \end{align*}  
Both $\nu_i\cdot H_m$ and $\nu\cdot H_m$ lie in the interval $[\nu_{\ell}\cdot H_m,\nu_1\cdot H_m]$ of length $\leq 1$ (and $\nu\cdot H_m$ is in the interior unless the interval is a point), so the second statement follows immediately.
\end{proof}

Next we show that the characters ${\bf gr}_i$ must satisfy strong orthogonality properties.

\begin{lemma}\label{lem-HNorthogonal}
With the above notation, we must have $\chi({\bf gr}_i,{\bf gr}_j) = 0$ for all $i<j$.
\end{lemma}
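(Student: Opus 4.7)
The plan is to exploit the fact that the Shatz stratum of $\cP_F(\bv)$ cut out by the HN type $(\bgr_1,\ldots,\bgr_\ell)$ contains the generic point of the irreducible stack $\cP_F(\bv)$, and hence has codimension $0$; I would then compute this codimension explicitly in terms of the numbers $\chi(\bgr_i,\bgr_j)$ for $i<j$. Since each such term turns out to be nonpositive, the vanishing of the sum will force each term to vanish individually.

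The first step is to collect the necessary $\Ext^2$-vanishings. For any pair $a,b$, Serre duality gives $\Ext^2(\bgr_a,\bgr_b)^{\vee}\cong \Hom(\bgr_b,\bgr_a\otimes K_{\F_e})$. A direct calculation shows $K_{\F_e}\cdot H_m=-2m-e-2<-1$ for $m>0$ and $e\geq 0$, while Lemma \ref{lem-HNclose} provides $|\mu_{H_m}(\bgr_a)-\mu_{H_m}(\bgr_b)|\leq 1$. Combining these yields the strict inequality
$$\mu_{H_m}(\bgr_b)>\mu_{H_m}(\bgr_a)+K_{\F_e}\cdot H_m=\mu_{H_m}(\bgr_a\otimes K_{\F_e})$$
for every pair $a,b$; semistability of $\bgr_b$ and $\bgr_a\otimes K_{\F_e}$ then forces the $\Hom$ to vanish, so $\Ext^2(\bgr_a,\bgr_b)=0$ for all $a,b$. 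A short induction along the HN filtration of $\cV$, applied separately in each argument of $\Hom(\cV,\cV\otimes K_{\F_e})$, then also yields $\Ext^2(\cV,\cV)=0$.

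The second step is to compute the codimension of the Shatz stratum. I would parametrize it by its universal HN filtration: over $\prod_i\cM_{H_m}(\bgr_i)$, build $\cV_i$ from $\cV_{i-1}$ and $\bgr_i$ with extension class in $\Ext^1(\bgr_i,\cV_{i-1})$, keeping careful track of the automorphism groups of filtered objects. The $\Ext^2$-vanishings above, together with the identity $\ext^1-\ext^0=\ext^2-\chi$ and the bilinearity $\chi(\cV,\cV)=\sum_{a,b}\chi(\bgr_a,\bgr_b)$, should collapse the stack-dimension count to
$$\mathrm{codim}_{\cP_F(\bv)}(\mathrm{Shatz})=-\sum_{i<j}\chi(\bgr_i,\bgr_j).$$

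To finish, for $i<j$ the HN slope order gives $\Hom(\bgr_i,\bgr_j)=0$, and the first step gives $\Ext^2(\bgr_i,\bgr_j)=0$, so Riemann--Roch yields $\chi(\bgr_i,\bgr_j)=-\ext^1(\bgr_i,\bgr_j)\leq 0$. Since the Shatz stratum contains the generic point of the irreducible stack $\cP_F(\bv)$, its codimension is $0$, so the nonpositive sum above vanishes, forcing each $\chi(\bgr_i,\bgr_j)=0$. The main technical obstacle will be making the stack-dimension computation rigorous: the iterated-extension parametrization requires careful bookkeeping of the stabilizers of filtered objects, but the $\Ext^2$-vanishings render the deformation theory unobstructed and collapse the relevant long exact sequences into the clean Euler-characteristic expression above.
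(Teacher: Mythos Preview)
Your proposal is correct and follows essentially the same approach as the paper: both arguments use Lemma \ref{lem-HNclose} to obtain the $\Ext^2$-vanishings, deduce that the codimension of the Schatz stratum equals $-\sum_{i<j}\chi(\bgr_i,\bgr_j)$, and conclude from nonpositivity of each summand. The only difference is in packaging the codimension computation: the paper invokes the $\Ext_+/\Ext_-$ spectral sequences and the scheme of relative flags from \cite[\S15.3--15.4]{LePotier}, which is the standard formalization of exactly the iterated-extension bookkeeping you outline.
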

\begin{proof}
Let $S_{H_m}(P_1,\ldots,P_\ell)= S$ be the Schatz stratum parameterizing sheaves $\cV_s$ with an $H_m$-Harder-Narasimhan filtration having quotients with  Hilbert polynomials $P_1,\ldots,P_\ell$.  Therefore the codimension of the stratum in $S$ is $0$.

On the other hand, we can use a standard argument to compute the codimension of the stratum in a different way.  Fix some $s\in S$, and equip $\cV_s$ with its Harder-Narasimhan filtration with quotients $\gr_{1,s},\ldots,\gr_{\ell,s}$.    Then by \cite[\S 15.3]{LePotier} there is a spectral sequence with $E_1$-term given by $$E_1^{p,q} = \begin{cases} \bigoplus_i \Ext^{p+q}(\gr_{i,s},\gr_{i-p,s}) &\textrm{if $p<0$}\\ 0 & \textrm{if $p\geq 0$}\end{cases}$$ which abuts on $\Ext_+^{p+q}(\cV_s,\cV_s)$ in degree $p+q$.  Similarly, there is a spectral sequence with $E_1$-term given by $$E_1^{p,q} = \begin{cases} 0 &\textrm{if $p<0$}\\ \bigoplus_i \Ext^{p+q}(\gr_{i,s},\gr_{i-p,s}) & \textrm{if $p\geq 0$}\end{cases}$$ which abuts on $\Ext_-^{p+q}(\cV_s,\cV_s)$ in degree $p+q$.  

Since $\Hom(\gr_{i,s},\gr_{j,s}) = 0$ for $i<j$ by semistability, we see that $\Ext_+^0(\cV_s,\cV_s)=0$.  Likewise, Lemma \ref{lem-HNclose} gives $$\mu_{\max,H_m}(\cV_s)-\mu_{\min,H_m}(\cV_s) \leq 1 < -K_{\F_e}\cdot H_m,$$ so $$\ext^2(\gr_i(\cV_s),\gr_j(\cV_s))=\hom(\gr_j(\cV_s),\gr_i(\cV_s)\te K_{\F_e}) = 0$$ for any $i,j,$ and therefore both $\Ext^2_+(\cV_s,\cV_s)=0$ and $\Ext^2_-(\cV_s,\cV_s)=0$.  So, the only nonzero terms in the spectral sequence for $\Ext_+^{p+q}(\cV_s,\cV_s)$ have $p+q = 1$, and we conclude $$\ext_+^1(\cV_s,\cV_s) = \sum_{i<j} \ext^1(\gr_i(\cV_s),\gr_j(\cV_s)) = -\sum_{i<j} \chi({\bf gr}_i,{\bf gr}_j).$$ By using the scheme of relative flags we can then show that $S_{H_m}(P_1,\ldots,P_\ell)\subset S$ is smooth at $s$ and the normal space has dimension $\ext^1_+(\cV_s,\cV_s)$; see \cite[\S 15.4]{LePotier} for details.  Since $\chi({\bf gr}_i,{\bf gr}_j)\leq 0$ for all $i<j$, the result follows.
\end{proof}

Conversely, the restrictions in Lemmas \ref{lem-HNclose} and \ref{lem-HNorthogonal} \emph{determine} the characters ${\bf gr}_i$.

\begin{theorem}\label{thm-HNcriterion}
Suppose ${\bf w}_1,\ldots, {\bf w}_k\in K(\F_e)$ are characters of positive rank with the following properties.
\begin{enumerate}
\item ${\bf w}_1+\cdots + {\bf w}_k = {\bf v}$.
\item $q_1 > \ldots > q_k,$ where $q_i$ is the reduced $H_m$-Hilbert polynomial corresponding to ${\bf w}_i$.
\item $\mu_{H_m}({\bf w}_1) - \mu_{H_m}({\bf w}_k) \leq 1$.
\item $\chi({\bf w}_i,{\bf w}_j) = 0$ for $i<j$.
\item The moduli space $M_{H_m}({\bf w}_i)$ is nonempty for each $i$.
\end{enumerate}
Then $k = \ell$ and ${\bf gr}_i = {\bf w}_i$ for each $i$.
\end{theorem}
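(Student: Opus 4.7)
The plan is to exhibit a single sheaf $\cV \in \cP_F(\bv)$ whose $H_m$-Harder-Narasimhan filtration has quotients of characters $\bw_1,\ldots,\bw_k$ in order, and then to deduce from the codimension calculation in the proof of Lemma \ref{lem-HNorthogonal} together with the irreducibility of $\cP_F(\bv)$ that this is automatically the generic Harder-Narasimhan type.

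For the construction, hypothesis (5) supplies an $H_m$-semistable sheaf $\cW_i$ of character $\bw_i$ for each $i$, and I would simply take $\cV := \cW_1 \oplus \cdots \oplus \cW_k$. Then $\ch(\cV) = \bv$ by (1), and the natural flag $\cW_1 \subset \cW_1\oplus\cW_2 \subset \cdots \subset \cV$ has $H_m$-semistable successive quotients $\cW_i$ with reduced $H_m$-Hilbert polynomials $q_1 > \cdots > q_k$ by (2); this is therefore the $H_m$-Harder-Narasimhan filtration of $\cV$.

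Next I would verify $\cV \in \cP_F(\bv)$ by showing $\Ext^2(\cW_i,\cW_j(-F)) = 0$ for all $i,j$. Serre duality identifies this group with $\Hom(\cW_j,\cW_i(K_{\F_e}+F))^*$, and semistability forces the Hom to vanish whenever
\[ \mu_{H_m}(\cW_i) - \mu_{H_m}(\cW_j) \ <\ -(K_{\F_e}+F)\cdot H_m \ =\ e + 2m + 1. \]
Condition (3) gives the stronger bound $\mu_{H_m}(\cW_i) - \mu_{H_m}(\cW_j) \leq 1$, and $1 < e+2m+1$ since $m > 0$ and $e \geq 0$, so the vanishing holds. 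The same slope estimate also supplies $\mu_{\max,H_m}(\cV) - \mu_{\min,H_m}(\cV) \leq 1 < -K_{\F_e}\cdot H_m = 2m+e+2$, so the spectral sequence argument in the proof of Lemma \ref{lem-HNorthogonal} applies verbatim and shows that the Schatz stratum $S_{H_m}(P_1,\ldots,P_k) \subset \cP_F(\bv)$ is smooth at $\cV$ of codimension $\sum_{i<j} \ext^1(\cW_i,\cW_j)$. For $i<j$ one has $\hom(\cW_i,\cW_j) = 0$ by semistability and the ordering $q_i > q_j$, and $\ext^2(\cW_i,\cW_j) = 0$ by the Serre-duality argument above, so $\ext^1(\cW_i,\cW_j) = -\chi(\bw_i,\bw_j) = 0$ by (4). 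Hence the Schatz stratum contains a Zariski-open neighborhood of $\cV$; since $\cP_F(\bv)$ is irreducible by Walter's theorem, this stratum must be the unique open dense one, that is, the generic Schatz stratum. Consequently $k = \ell$ and $\bgr_i = \bw_i$ for each $i$.

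The main obstacle, and the only point where the hypotheses do nontrivial work, is the slope estimate above: the bound $\leq 1$ from (3) must lie strictly below the threshold $e+2m+1$ produced by twisting by $K_{\F_e}+F$, which is precisely why the ampleness hypothesis $m>0$ is essential. Everything else reduces to Serre duality, the spectral sequence of the Harder-Narasimhan filtration, and irreducibility of $\cP_F(\bv)$.
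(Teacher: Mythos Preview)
Your proof is correct and follows essentially the same strategy as the paper: construct the direct sum $\cV = \bigoplus_i \cW_i$, observe that its $H_m$-Harder-Narasimhan filtration has factors $\cW_1,\ldots,\cW_k$, and use the orthogonality (4) together with the slope bound (3) to see that the corresponding Schatz stratum has codimension zero, hence is the generic one by irreducibility.

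The one place where the paper is more careful is the ambient space in which the codimension calculation is carried out. You write that the spectral-sequence argument of Lemma~\ref{lem-HNorthogonal} ``applies verbatim'' to the Schatz stratum inside $\cP_F(\bv)$, but that lemma was stated for a complete family over a smooth \emph{variety} $S$, not for the stack itself. The paper closes this gap by explicitly constructing such a family: it realizes $\cV$ as a point $t_0$ of a Quot scheme $\Sigma = \Quot(\OO_{\F_e}(-dH_m)^\chi,\bv)$ and checks, using $\Ext^2(\cU_t,\cU_t)=0$ (which follows from the same slope estimate you use), that $\Sigma$ is smooth and the Kodaira--Spencer map is surjective at $t_0$. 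Your argument implicitly relies on the same vanishing $\Ext^2(\cV,\cV)=0$ to know that smooth complete families exist near $\cV$, so the substance is identical; the paper simply makes the passage from stack to variety explicit. If you want your version to stand on its own, it would suffice to add one sentence noting that $\Ext^2(\cV,\cV)=0$ (from your slope bound and Serre duality) guarantees a smooth complete family near $\cV$, after which the Schatz-stratum computation of Lemma~\ref{lem-HNorthogonal} genuinely does apply verbatim.
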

\begin{proof}
Pick $H_m$-semistable sheaves $\cW_i \in M_{H_m}({\bf w}_i)$ for each $i$, and consider the sheaf $$\cU := \bigoplus_i \cW_i,$$ so that the Harder-Narasimhan filtration of $\cU$ has factors $\cW_1,\ldots, \cW_k$.
Then by assumption $$\mu_{\max,H_m}(\cU) - \mu_{\min,H_m}(\cU) = \mu_{H_m}(\cW_1) - \mu_{H_m}(\cW_k) \leq 1,$$ so by Proposition \ref{prop-ssPrior} we see that $\cU$ is $H_{\lceil m\rceil}$-prioritary of character ${\bf v}$.  

We can now construct a complete family $\cU_t/\Sigma$ parameterized by a smooth, irreducible variety $\Sigma$ such that $\cU = \cU_{t_0}$ for some $t_0\in T$.  Indeed, let $d \gg 0$ be sufficiently large and divisible, let $\chi = \chi(\OO_{\F_e}(-dH_m),\cU)$, and consider the universal family of quotients $\cU_t/\Sigma$ on $\Sigma = \Quot(\OO_{\F_e}(-dH_m)^{\chi},\ch \cU)$ parameterizing quotients \begin{equation}\label{eqn-sequence}\tag{$\spadesuit$} 0\to \cK_t \to \OO_{\F_e}(-dH_m)^{\chi} \to \cU_t\to 0.\end{equation} Let $t_0\in \Sigma$ be the point corresponding to the canonical evaluation $$\OO_{\F_e}(-dH_m) \te \Hom(\OO_{\F_e}(-dH_m),\cU)\to \cU.$$  The tangent space to $\Sigma$ at a point $t$ corresponding to an exact sequence (\ref{eqn-sequence}) is $\Hom(\cK_t,\cU_t)$, and $\Sigma$ is smooth at $t$ if $\Ext^1(\cK_t,\cU_t) = 0$.  Applying $\Hom(-,\cU_t)$ to (\ref{eqn-sequence}), we get the following portion of the long exact sequence in cohomology:
$$\Hom( \cK_t,\cU_t) \to \Ext^1(\cU_t,\cU_t) \to \Ext^1(\OO_{\F_e}(-dH_m)^\chi, \cU_t) \to \Ext^1(\cK_t,\cU_t)\to \Ext^2(\cU_t,\cU_t).$$ By passing to the open subset parameterizing locally-free sheaves if necessary, we have $\ext^2(\cU_t, \cU_t) = \hom(\cU_t, \cU_t(K_{\FF_e}))=0$ by our assumptions on the slopes.  
Since $d\gg 0$, we have $\Ext^1(\OO_{\F_e}(-dH_m)^\chi,\cU_{t})=0$ by Serre vanishing and boundedness of the Quot scheme. Therefore $\Ext^1(\cK_{t},\cU_{t})=0$ and $\Sigma$ is smooth at $t$, including at $t=t_0$.  Furthermore, the Kodaira-Spencer map at $t$ is  the natural map $$T_{t}\Sigma = \Hom(\cK_{t},\cU_{t}) \to \Ext^1(\cU_{t},\cU_{t}),$$ so the universal family on $\Sigma$ is complete at $t$, including at $t=t_0$.  We have thus constructed the required complete family $\cU_t/\Sigma$.

Let $Q_i$ be the $H_m$-Hilbert polynomial corresponding to ${\bf w}_i$.  Then by the same computation as in the proof of Lemma \ref{lem-HNorthogonal}, the Schatz stratum $S_{H_m}(Q_1,\ldots,Q_k) \subset \Sigma$ is smooth at $t_0$ of codimension $0$.  Therefore the stratum is dense in $\Sigma$, and the general sheaf $\cU_t$ has an $H_m$-Harder-Narasimhan filtration with quotients of character ${\bf w}_i$.  Thus ${\bf gr}_i = {\bf w}_i$.\end{proof}

It follows that if we know (say by induction) when moduli spaces $M_{H_m}({\bf w})$ are nonempty for characters ${\bf w}$ with rank smaller than ${\bf v}$, then we can determine the characters ${\bf gr}_i$ by searching for the unique list of characters ${\bf w}_1,\ldots, {\bf w}_k$ satisfying the assumptions of Theorem \ref{thm-HNcriterion}.  If there is no such list with length $k\geq 2$, then the only possibility is that $k=1$ and the moduli space $M_{H_{m}}({\bf v})$ is nonempty.

We can make a couple quick observations to place further bounds on the characters ${\bf gr}_i$ which reduce the search for a list of characters ${\bf w}_1,\ldots,{\bf w}_k$ satisfying the assumptions of Theorem \ref{thm-HNcriterion} to a finite computation.  First, we bound the total slopes $\nu_i$ of the ${\bf gr}_i$ to a parallelogram region centered on $\nu$.

\begin{lemma}\label{lem-slopeQuad}
For each $i$, we have \begin{align*} |(\nu_i - \nu)\cdot F| &< \max\{1,\frac{2}{e+2m}\}\\ |(\nu_i-\nu)\cdot H_m| &< 1.\end{align*}
\end{lemma}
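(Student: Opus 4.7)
The second inequality $|(\nu_i - \nu)\cdot H_m| < 1$ is exactly the second assertion of Lemma \ref{lem-HNclose}, so no new work is required there.

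For the first inequality, the plan is to combine the orthogonality relations $\chi({\bf gr}_i, {\bf gr}_j) = 0$ for $i < j$ from Lemma \ref{lem-HNorthogonal} with the Bogomolov inequality $\Delta_k \geq 0$. Fix a pair $i < j$ and write $D = \nu_j - \nu_i = \alpha E + \beta F$, so that $D\cdot F = \alpha$ and $D\cdot H_m = \alpha m + \beta$. Riemann-Roch rewrites the orthogonality as $P(D) = \Delta_i + \Delta_j \geq 0$, and on $\F_e$ the Hilbert polynomial factors conveniently as $P(D) = (\alpha + 1)(\beta + 1 - \alpha e/2)$. The Harder-Narasimhan ordering together with Lemma \ref{lem-HNclose} constrains $D \cdot H_m$ to the interval $[-1, 0)$. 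A case analysis on the signs of the two factors of $P(D)$ will then suffice: if both factors are non-positive, then combining $\beta \leq \alpha e/2 - 1$ with $\beta \geq -1-\alpha m$ forces $\alpha(m + e/2) \geq 0$ together with $\alpha \leq -1$, which is impossible since $m > 0$; and if both factors are non-negative, then combining $\beta \geq \alpha e/2 - 1$ with the strict bound $\beta < -\alpha m$ forces $\alpha < 2/(e+2m)$. This establishes the pairwise bound
\[
(\nu_j - \nu_i)\cdot F \in \left[-1,\ \tfrac{2}{e+2m}\right) \qquad \text{for all } i < j.
\]

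To pass from pairwise differences to the individual differences $(\nu_i - \nu)\cdot F$, the next step is to use the convex combination $\nu = \sum_k (r_k/r)\nu_k$, which gives
\[
(\nu_i - \nu)\cdot F = \sum_{k \neq i} \frac{r_k}{r}(\nu_i - \nu_k)\cdot F.
\]
The pairwise result applied in both orderings yields $(\nu_i - \nu_k)\cdot F \in [-1, 2/(e+2m))$ for $k < i$ and $(\nu_i - \nu_k)\cdot F \in (-2/(e+2m), 1]$ for $k > i$. Since $\sum_{k\neq i} r_k/r = 1 - r_i/r < 1$, a routine weighted-average estimate, splitting according to whether $e + 2m \geq 2$ or $e + 2m < 2$, yields $|(\nu_i - \nu)\cdot F| < \max\{1, 2/(e+2m)\}$, with strict inequality coming from $r_i/r > 0$.

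The main subtlety is the case analysis ruling out the bad sign combination in the factorization of $P(D)$; this is where the hypothesis $m > 0$ enters decisively. After that, the convex-combination step is elementary bookkeeping.
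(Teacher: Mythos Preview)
Your argument is essentially the same as the paper's: both use the orthogonality $\chi(\mathbf{gr}_i,\mathbf{gr}_j)=0$ with Riemann--Roch and Bogomolov to get $P(\nu_j-\nu_i)\geq 0$, combine this with the slope constraint $-1\leq (\nu_j-\nu_i)\cdot H_m\leq 0$ to bound $(\nu_j-\nu_i)\cdot F$, and then pass to $(\nu_i-\nu)\cdot F$ via the weighted-mean expression $\nu=\sum_k (r_k/r)\nu_k$, with strictness coming from the vanishing $k=i$ term.

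One small overclaim: you assert $D\cdot H_m\in[-1,0)$ and hence a strict upper bound $(\nu_j-\nu_i)\cdot F<\tfrac{2}{e+2m}$ pairwise. But the Gieseker Harder--Narasimhan filtration only forces $p_i>p_j$, which gives $\mu_{H_m}(\mathrm{gr}_i)\geq\mu_{H_m}(\mathrm{gr}_j)$, not strict inequality; so the pairwise bound is only $\leq\tfrac{2}{e+2m}$. This does not damage the final result, since the strict inequality $|(\nu_i-\nu)\cdot F|<\max\{1,\tfrac{2}{e+2m}\}$ already follows from $\sum_{k\neq i}r_k/r<1$ in the averaging step, exactly as the paper does. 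Your case split on $e+2m\gtrless 2$ is harmless but unnecessary: once each $|(\nu_i-\nu_k)\cdot F|\leq M:=\max\{1,\tfrac{2}{e+2m}\}$, the weighted average is automatically strictly below $M$.
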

\begin{proof}
The second inequality was Lemma \ref{lem-HNclose}.  For any $i<j$, the orthogonality $\chi({\bf gr}_i,{\bf gr}_j)=0$, Riemann-Roch, and the Bogomolov inequality give $$P(\nu_j-\nu_i) = \Delta_i + \Delta_j \geq 0.$$ We also have the inequalities $$ -1 \leq (\nu_j-\nu_i)\cdot H_m \leq 0$$ from Lemma \ref{lem-HNclose}.
Write $\nu_j-\nu_i = aE+bF$.  Then these inequalities give $$P(aE+bF) = (a+1)(b+1-\frac{1}{2}ea) \geq 0$$ and $$-1 \leq am+b  \leq 0.$$
These inequalities easily imply $$ - 1 \leq a \leq \frac{2}{e+2m},$$ or $$|(\nu_j-\nu_i)\cdot F| \leq \max\{1,\frac{2}{e+2m}\}.$$  The result follows since $\nu = \alpha_1\nu_1+\cdots +\alpha_\ell \nu_\ell$ is a weighted mean of the $\nu_i$'s, so that $$
|(\nu_i-\nu)\cdot F| = \left| \sum_j \alpha_j(\nu_i-\nu_j)\cdot F\right| \leq \sum_j \alpha_j |(\nu_i-\nu_j)\cdot F| < \max\{1,\frac{2}{e+2m}\},$$
with the last inequality being strict because of the $0$ term $\alpha_i|(\nu_i-\nu_i)\cdot F|=0$ in the sum.
\end{proof}

The discriminants $\Delta_i$ of the ${\bf gr}_i$ are also easy to describe.

\begin{lemma}\label{lem-discBound}
Suppose $\ell \geq 2$, so that there are no semistable sheaves of character ${\bf v}$.
\begin{enumerate}
\item If $\nu_1 \cdot H_m > \nu_\ell \cdot H_m$, then each discriminant $\Delta_i$ is the smallest discriminant of an $H_m$-semistable sheaf with arbitrary rank and slope $\nu_i$.
\item If $\nu = \nu_1  = \cdots  = \nu_\ell$, then $\ell  =2$.  Then $\Delta_1$ (resp. $\Delta_2$) is the smallest (resp. second smallest) discriminant of an $H_m$-semistable sheaf with arbitrary rank and slope $\nu$.
\end{enumerate}
\end{lemma}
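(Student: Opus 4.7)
The plan is to treat parts (1) and (2) using the same short Ext-theoretic contradiction, with part (2) requiring a preliminary rigidity computation to establish $\ell = 2$.

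For part (1), suppose for contradiction that an $H_m$-semistable character $\bw = (r(\bw), \nu_i, \Delta(\bw))$ with $\Delta(\bw) < \Delta_i$ exists. The hypothesis $\nu_1 \cdot H_m > \nu_\ell \cdot H_m$ guarantees an index $j \ne i$ with $\nu_j \cdot H_m \ne \nu_i \cdot H_m$; after possibly swapping the roles of $\cW$ and $\cV_j$, I assume $\nu_j \cdot H_m > \nu_i \cdot H_m$. Fix $H_m$-semistable representatives $\cV_j$ and $\cW$ of $\bv_j$ and $\bw$. Gieseker semistability combined with the strict slope inequality forces $\hom(\cV_j, \cW) = 0$, while Serre duality together with Lemma \ref{lem-HNclose} and the inequality $-K_{\F_e} \cdot H_m = 2m + e + 2 > 1$ gives $\ext^2(\cV_j, \cW) = 0$. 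Hence $\chi(\bv_j, \bw) \le 0$. Expanding by Riemann--Roch and substituting the orthogonality $P(\nu_i - \nu_j) = \Delta_i + \Delta_j$ from Lemma \ref{lem-HNorthogonal}, this reduces to
\[
\chi(\bv_j, \bw) = r_j\, r(\bw)\, (\Delta_i - \Delta(\bw)) \le 0,
\]
contradicting $\Delta(\bw) < \Delta_i$.

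For part (2), I first deduce $\ell = 2$. Orthogonality and Riemann--Roch give $\chi(\bv_i, \bv_j) = r_i r_j (P(0) - \Delta_i - \Delta_j) = 0$ and $P(0) = \chi(\OO_{\F_e}) = 1$, so $\Delta_i + \Delta_j = 1$ for all $i < j$. Since all slopes coincide, the strict decrease of reduced Hilbert polynomials translates to $\Delta_1 < \Delta_2 < \cdots$. If $\ell \ge 3$, then $\Delta_1 + \Delta_2 = \Delta_1 + \Delta_3 = 1$ forces $\Delta_2 = \Delta_3$, contradicting strictness. Hence $\ell = 2$ and $\Delta_1 < \tfrac{1}{2} < \Delta_2$.

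For the minimality and second-minimality claims, suppose $\bw$ is $H_m$-Gieseker semistable of slope $\nu$ with $\Delta(\bw) \in [0,\Delta_2)$ and $\Delta(\bw) \ne \Delta_1$. In the subcase $\Delta(\bw) < \Delta_1$, I would apply the argument of part (1) with $\bv_2$ in place of $\bv_j$: same-slope Gieseker semistability gives $\hom(\cW, \cV_2) = 0$ (any image would be a subsheaf of $\cV_2$ with reduced Hilbert polynomial at least $p_\cW > p_{\cV_2}$, violating Gieseker semistability of $\cV_2$); Serre duality using $K_{\F_e} \cdot H_m < 0$ gives $\ext^2(\cW, \cV_2) = 0$; and Riemann--Roch yields
\[
\chi(\bw, \bv_2) = r(\bw)\, r_2\, (1 - \Delta(\bw) - \Delta_2) \le 0,
\]
contradicting $\Delta(\bw) + \Delta_2 < \Delta_1 + \Delta_2 = 1$. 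The subcase $\Delta_1 < \Delta(\bw) < \Delta_2$ is symmetric, applying the same argument with $\bv_1$ in place of $\bv_j$ to obtain $\chi(\bv_1, \bw) > 0$ against the upper bound $\chi(\bv_1, \bw) \le 0$.

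The step I expect to be the most delicate is the $\ext^2$ vanishing in the same-slope setting of part (2): the usual slope gap from Lemma \ref{lem-HNclose} is unavailable, but the gap instead comes from the strict negativity of $K_{\F_e} \cdot H_m$, which shifts the Serre dual to a strictly smaller $H_m$-slope and allows semistability to kill the relevant $\Hom$. Everything else is routine Riemann--Roch bookkeeping on top of the orthogonality and slope-bound lemmas already established.
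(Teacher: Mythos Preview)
Your proof is correct and follows essentially the same approach as the paper: both parts use the orthogonality $\chi(\bgr_i,\bgr_j)=0$ together with $\Hom$ and $\Ext^2$ vanishings from semistability and Serre duality to derive a Riemann--Roch contradiction from a hypothetical smaller-discriminant competitor. Your treatment of part (2) matches the paper's closely (your $\Delta_2=\Delta_3$ argument is an equivalent rephrasing of the paper's $\chi(\bgr_1,\bgr_\ell)<0$ observation), and you spell out the same-slope $\Ext^2$ vanishing via $K_{\F_e}\cdot H_m<0$ with a bit more care than the paper does.
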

If $m$ is generic and $\nu_1\cdot H_m = \nu_\ell \cdot H_m$, then $\nu_1=\cdots = \nu_\ell$, so one of the two cases of Lemma \ref{lem-discBound} always applies when $m$ is generic.
\begin{proof}
(1) Given an index $1\leq i\leq \ell$, either $\nu_1\cdot H_m > \nu_i \cdot H_m$ or $\nu_i \cdot H_m > \nu_\ell\cdot H_m$.  Assume $\nu_1\cdot H_m > \nu_i \cdot H_m$; the other case is similar.  Suppose there is an $H_m$-semistable sheaf $\cV$ of rank $r'$, slope $\nu_i$, and discriminant $\Delta' < \Delta_i$.  We have $\chi({\bf gr}_1,{\bf gr}_i) = 0$, so Riemann Roch gives $$\frac{\chi({\bf gr}_1,\cV)}{r_1r'}=P(\nu_i-\nu_1) - \Delta_1-\Delta'  > P(\nu_i-\nu_1)-\Delta_1-\Delta_i =\frac{\chi({\bf gr}_1,{\bf gr}_i)}{r_1r_i}=0,$$ and therefore $\chi({\bf gr}_1,\cV)> 0$.  If $\gr_1$ is an $H_m$-semistable sheaf of character ${\bf gr}_1$, then $$\hom( \gr_1,\cV) = \ext^2(\gr_1,\cV) = 0$$ by semistability and Serre duality since $(\nu_1-\nu_i)\cdot H_m \leq 1$.  Therefore $\chi({\bf gr}_1,\cV) \leq 0$, a contradiction.

(2) First note that the discriminants satisfy $\Delta_1< \Delta_2 < \cdots < \Delta_\ell$.  Then since $\chi({\bf gr}_1,{\bf gr}_2)=0$, Riemann-Roch shows $\chi({\bf gr}_1,{\bf gr}_\ell) < 0$ if $\ell > 2$.  Therefore $\ell = 2$.

The rest of the proof is similar to (1).  We have $\chi({\bf gr}_1,{\bf gr}_2) = 0$.  Suppose we can find an $H_m$-semistable sheaf $\cV$ with slope $\nu$ and discriminant $\Delta'<\Delta_1$.  Then $\chi(\cV,{\bf gr}_2) >0$, even though $\hom(\cV,\gr_2) = \ext^2(\cV,\gr_2)=0$ for any $H_m$-semistable sheaf $\gr_2$ of character ${\bf gr}_2$.

Similarly, suppose we can find an $H_m$-semistable sheaf $\cV$ with slope $\nu$ and discriminant $\Delta'$ satisfying $\Delta_1 < \Delta' < \Delta_2$. Then $\chi({\bf gr}_1,\cV) > 0$, but $\hom(\gr_1,\cV) = \ext^2(\gr_1,\cV) = 0$ for any $H_m$-semistable sheaf $\gr_1$ of character ${\bf gr}_1$.
\end{proof}

Finally, the length $\ell$ of the generic Harder-Narasimhan filtration is easy to bound.

\begin{lemma}
The characters ${\bf gr}_1,\ldots,{\bf gr}_\ell\in K(\F_e)\te \QQ$ are linearly independent.  Therefore $\ell \leq 4$.
\end{lemma}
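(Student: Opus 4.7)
The plan is to show the $\bgr_i$ are linearly independent in the $4$-dimensional $\QQ$-vector space $K(\F_e) \te \QQ$; the bound $\ell \leq 4$ then follows immediately. The key input is the orthogonality $\chi(\bgr_i, \bgr_j) = 0$ for $i < j$ established in Lemma~\ref{lem-HNorthogonal}, together with the non-degeneracy of the Euler pairing on $K(\F_e) \te \QQ$.

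Suppose there is a relation $\sum_i c_i \bgr_i = 0$. Pairing on the right with $\bgr_j$ via the Euler form yields the matrix equation $cG = 0$, where $G_{ij} = \chi(\bgr_i, \bgr_j)$ is lower triangular by the orthogonality. By Riemann-Roch the diagonal is $G_{ii} = r_i^2(1 - 2\Delta_i)$, so if no $\Delta_i$ equals $\tfrac{1}{2}$ then $G$ is invertible and $c = 0$, giving linear independence at once.

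To handle the degenerate possibility $\Delta_i = \tfrac{1}{2}$, I would consider a minimal nontrivial relation with nonzero support $i_1 < \cdots < i_r$. Pairing with $\bgr_{i_1}$ on the left (using $\chi(\bgr_{i_1}, \bgr_{i_k}) = 0$ for $k > 1$) forces $\chi(\bgr_{i_1}, \bgr_{i_1}) = 0$; symmetrically, pairing with $\bgr_{i_r}$ on the right forces $\chi(\bgr_{i_r}, \bgr_{i_r}) = 0$. Thus $\Delta_{i_1} = \Delta_{i_r} = \tfrac{1}{2}$, and applying Riemann-Roch to $\chi(\bgr_{i_1}, \bgr_{i_r}) = 0$ then gives $P(\nu_{i_r} - \nu_{i_1}) = 1$, where $P$ is the Hilbert polynomial of $\OO_{\F_e}$.

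If $\nu_{i_1} = \nu_{i_r}$, then $\bgr_{i_1}$ and $\bgr_{i_r}$ share both slope and discriminant and hence have the same reduced $H_m$-Hilbert polynomial, contradicting $p_{i_1} > p_{i_r}$. The remaining case $\delta := \nu_{i_r} - \nu_{i_1} \neq 0$ is the main technical obstacle: one must combine $P(\delta) = 1$ with the slope bounds $\delta \cdot H_m \in [-1, 0]$ from Lemma~\ref{lem-HNclose} and $|\delta \cdot F| < \max\{1, 2/(e+2m)\}$ from Lemma~\ref{lem-slopeQuad}, together with the intermediate orthogonalities $P(\nu_{i_j} - \nu_{i_1}) = \Delta_{i_j} + \tfrac{1}{2}$ and $P(\nu_{i_r} - \nu_{i_j}) = \Delta_{i_j} + \tfrac{1}{2}$ for $1 < j < r$, to exhaust the finite list of numerical possibilities for $\delta$ and the intermediate $\nu_{i_j}$.
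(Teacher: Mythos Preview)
Your triangular-matrix argument via the Euler form is clean and correctly handles the non-degenerate case where every $\Delta_i \neq \tfrac{1}{2}$. The degenerate case, however, is genuinely incomplete, and the plan you sketch does not work as stated. The constraints you list on $\delta = \nu_{i_r} - \nu_{i_1}$ do \emph{not} force $\delta = 0$: for instance, on $\F_0$ with $m = 3$, the class $\delta = -\tfrac{1}{2}E + F$ satisfies $P(\delta) = 1$, $\delta \cdot H_3 = -\tfrac{1}{2} \in [-1,0]$, and $|\delta \cdot F| = \tfrac{1}{2} < 1$. So the numerical constraints from Lemmas~\ref{lem-HNclose} and~\ref{lem-slopeQuad} alone cannot exclude $\delta \neq 0$, and your suggestion to ``exhaust the finite list'' is unjustified---there is no finite list without a rank bound, which is precisely what you are trying to prove. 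The intermediate orthogonalities add more equations but do not visibly close the gap. (A two-term minimal relation \emph{is} fine, since then $\bgr_{i_1}$ and $\bgr_{i_2}$ are proportional, hence share slope and discriminant, contradicting $p_{i_1} > p_{i_2}$; the problem is $r \geq 3$.)

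The paper's proof bypasses all of this numerics. Given a dependence $\sum_i c_i \bgr_i = 0$, separate positive and negative coefficients and clear denominators to write $\sum_{a \in A} m_a \bgr_a = \sum_{b \in B} n_b \bgr_b =: \bw$ with $A, B$ disjoint and $m_a, n_b \in \Z_{\geq 0}$ not all zero. Each of the two sublists $\{m_a \bgr_a\}_{a \in A}$ and $\{n_b \bgr_b\}_{b \in B}$ inherits every hypothesis of Theorem~\ref{thm-HNcriterion} (ordered reduced Hilbert polynomials, slope spread $\leq 1$, pairwise orthogonality, nonempty moduli), so each is the list of factors of the generic $H_m$-Harder-Narasimhan filtration for $\cP_{H_{\lceil m\rceil}}(\bw)$. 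Uniqueness of that filtration forces the two lists to coincide, which is impossible since $A \cap B = \emptyset$ and the $\bgr_i$ are pairwise distinct. No case analysis on discriminants is needed.
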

\begin{proof}
Suppose the characters are dependent.  Then we can partition $\{1,\ldots,\ell\} = A\coprod B$ into two sets $A$ and $B$ and find nonnegative integers $\{m_a\}_{a\in A}$ and $\{n_b\}_{b\in B}$ (not all $0$) such that $$ \sum_{a\in A} m_a {\bf gr}_a = \sum_{b\in B} n_b {\bf gr}_b=:{\bf w}.$$ The stack $\cP_{H_{\lceil m\rceil}} ({\bf w})$ is nonempty since it contains the sheaf $\bigoplus_{a\in A} \gr_a^{\oplus m_a}$, as in the proof of Theorem \ref{thm-HNcriterion}. Then Theorem \ref{thm-HNcriterion} shows that both $\{m_a {\bf gr}_a\}_{a\in A}$ and $\{n_b {\bf gr}_b\}_{b\in B}$ are the characters of the quotients in the $H_m$-Harder-Narasimhan filtration of a general sheaf $\cW\in \cP_{H_{\lceil m\rceil}}({\bf w})$.  However, the ${\bf gr}_i$ are all distinct and the $H_m$-Harder-Narasimhan filtration is unique, so this is a contradiction.
\end{proof}

If we have computed the generic Harder-Narasimhan filtration for all characters of rank $<r$, then it becomes easier to find the generic Harder-Narasimhan filtration for a character of rank $r$.  Instead of searching over all lists $\bgr_1,\ldots,\bgr_\ell$ for the generic Harder-Narasimhan filtration, we can instead just search for the character $\bgr_1$ of the maximal destabilizing subsheaf.  Then under the assumption that $\bgr_1$ is the first character in the generic Harder-Narasimhan filtration, the rest of the characters $\bgr_2,\ldots,\bgr_\ell$ are determined by induction.

\begin{lemma}
Let ${\bf u} = \bgr_2 + \cdots + \bgr_\ell$.  Then there are $H_{\lceil m\rceil}$-prioritary sheaves of character ${\bf u}$, and the generic $\cU \in \cP_{H_{\lceil m\rceil}}({\bf u})$ has an $H_m$-Harder-Narasimhan filtration with quotients of characters $\bgr_2,\ldots,\bgr_\ell$.
\end{lemma}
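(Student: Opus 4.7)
The plan is to directly verify that the list $\bgr_2,\ldots,\bgr_\ell$ satisfies the hypotheses of Theorem \ref{thm-HNcriterion} for the character ${\bf u}$, so that Theorem \ref{thm-HNcriterion} immediately identifies these as the characters of the $H_m$-Harder-Narasimhan factors of the generic sheaf in $\cP_{H_{\lceil m\rceil}}({\bf u})$. The only condition of Theorem \ref{thm-HNcriterion} that is not inherited verbatim from the analogous properties of the original list $\bgr_1,\ldots,\bgr_\ell$ is the nonemptiness of $\cP_{H_{\lceil m\rceil}}({\bf u})$ itself; this is where the main work lies.

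First I would construct an explicit $H_{\lceil m\rceil}$-prioritary sheaf of character ${\bf u}$. For each $i\geq 2$, condition (2) of Theorem \ref{thm-HNcriterion} (as inherited by the original $\bgr_i$) gives an $H_m$-semistable sheaf $\cW_i$ of character $\bgr_i$, and I set $\cU := \bigoplus_{i\geq 2} \cW_i$. This sheaf has character ${\bf u}$, and its $H_m$-Harder-Narasimhan filtration has factors $\cW_2,\ldots,\cW_\ell$, so
\[
\mu_{\max,H_m}(\cU)-\mu_{\min,H_m}(\cU) = \mu_{H_m}(\bgr_2) - \mu_{H_m}(\bgr_\ell) \leq \mu_{H_m}(\bgr_1) - \mu_{H_m}(\bgr_\ell) \leq 1
\]
by Lemma \ref{lem-HNclose} applied to the original list. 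Proposition \ref{prop-ssPrior} then shows $\cU$ is $H_{\lceil m\rceil}$-prioritary, so $\cP_{H_{\lceil m\rceil}}({\bf u})$ is nonempty.

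Next I verify the remaining hypotheses of Theorem \ref{thm-HNcriterion} for the list $\bgr_2,\ldots,\bgr_\ell$ and the character ${\bf u}$. Condition (1) holds by the definition of ${\bf u}$. Conditions (2), (3), (4), (5) -- the strict ordering of reduced $H_m$-Hilbert polynomials, the slope spread bound, the orthogonality $\chi(\bgr_i,\bgr_j)=0$ for $2\leq i<j\leq \ell$, and the nonemptiness of $M_{H_m}(\bgr_i)$ -- are immediate restrictions of the corresponding properties of the full list $\bgr_1,\ldots,\bgr_\ell$ (the slope spread inequality being the same one already used above).

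Applying Theorem \ref{thm-HNcriterion} with $\bv$ replaced by ${\bf u}$ and ${\bf w}_i$ replaced by $\bgr_{i+1}$, the theorem concludes that the generic sheaf $\cU\in \cP_{H_{\lceil m\rceil}}({\bf u})$ has its $H_m$-Harder-Narasimhan filtration with quotients of characters exactly $\bgr_2,\ldots,\bgr_\ell$, as desired. The only subtle point worth double-checking is the slope spread inequality for the shorter list, but this is automatic because removing the top factor of an HN filtration can only decrease the spread, and the bound transfers directly.
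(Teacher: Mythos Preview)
Your proof is correct and follows essentially the same approach as the paper: construct the direct sum $\bigoplus_{i\geq 2}\cW_i$, use the slope spread bound and Proposition \ref{prop-ssPrior} to verify $H_{\lceil m\rceil}$-prioritariness, then observe that the hypotheses of Theorem \ref{thm-HNcriterion} for the sublist $\bgr_2,\ldots,\bgr_\ell$ are inherited directly from those of the full list. The paper's proof is simply a more compressed version of what you wrote.
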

\begin{proof}
Considering slopes, a direct sum of sheaves of characters $\bgr_2,\ldots,\bgr_\ell$ is $H_{\lceil m\rceil}$-prioritary, so such sheaves exist.  Since the characters $\bgr_1,\ldots,\bgr_\ell$ are the characters of the $H_m$-Harder-Narasimhan filtration of a general sheaf in $\cP_{H_{\lceil m\rceil}}(\bv)$, they satisfy all the properties listed in Theorem \ref{thm-HNcriterion}.  The theorem also shows that $\bgr_2,\ldots,\bgr_\ell$ are the characters of the $H_m$-Harder-Narasimhan filtration of a general sheaf in $\cP_{H_{\lceil m\rceil}}({\bf u})$.
\end{proof}

This gives us the following inductive version of Theorem \ref{thm-HNcriterion}.

\begin{corollary}\label{cor-algorithm}
Let $\bw_1$ be a character and let ${\bf u} = \bv - \bw_1$.  Then $\bw_1 = \bgr_1$ if and only if the following conditions are satisfied.
\begin{enumerate}
\item There are $H_{\lceil m\rceil}$-prioritary sheaves of character ${\bf u}$.  In this case, let $\bw_2,\ldots,\bw_k$ be the characters of the $H_m$-Harder-Narasimhan filtration of a general sheaf in $\cP_{H_{\lceil m\rceil}}({\bf u})$.
\item $q_1 > q_2$, where $q_i$ is the reduced $H_m$-Hilbert polynomial corresponding to $\bw_i$.
\item $\mu_{H_m} (\bw_1) - \mu_{H_m}(\bw_k) \leq 1$.
\item $\chi(\bw_1,\bw_i) = 0$ for $i\geq 2$.
\item The moduli space $M_{H_m}(\bw_1)$ is nonempty.
\end{enumerate}
Furthermore, if these conditions are satisfied, then $\bw_i = \bgr_i$ for all $i$.
\end{corollary}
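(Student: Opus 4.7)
The plan is to deduce both directions directly from Theorem \ref{thm-HNcriterion} and the preceding lemma, which says that $\cP_{H_{\lceil m\rceil}}({\bf u})$ is nonempty and its general sheaf has $H_m$-Harder-Narasimhan factors of characters $\bgr_2,\ldots,\bgr_\ell$ whenever $\bw_1 = \bgr_1$. The key observation is that conditions (1)--(5) of the corollary are precisely the ``first-entry'' data needed to invoke Theorem \ref{thm-HNcriterion} inductively, once one absorbs the remaining data from the generic Harder-Narasimhan filtration on $\cP_{H_{\lceil m\rceil}}({\bf u})$.

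For the forward direction, suppose $\bw_1 = \bgr_1$. Then the preceding lemma applied to $\bv$ gives (1) and also identifies $\bw_i = \bgr_i$ for $2 \leq i \leq k = \ell$. Condition (2) is then the ordering of reduced Hilbert polynomials coming from the definition of the $H_m$-Harder-Narasimhan filtration of a general sheaf in $\cP_F(\bv)$; condition (3) is Lemma \ref{lem-HNclose}; condition (4) with $i=1$ is the special case of Lemma \ref{lem-HNorthogonal}; and condition (5) for $\bw_1 = \bgr_1$ holds because nonemptiness of $M_{H_m}(\bgr_i)$ for each $i$ is built into the setup at the start of Section \ref{sec-genHN}.

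For the backward direction, assume (1)--(5) hold. By (1), $\bw_2,\ldots,\bw_k$ are the $H_m$-Harder-Narasimhan characters of a general $\cU \in \cP_{H_{\lceil m\rceil}}({\bf u})$. I plan to apply Theorem \ref{thm-HNcriterion} to the combined list $\bw_1,\bw_2,\ldots,\bw_k$ on the character $\bv$. The sum condition follows because $\bw_1 + {\bf u} = \bv$; the Hilbert polynomial chain $q_1 > q_2 > \cdots > q_k$ combines (2) with the analogous chain for the Harder-Narasimhan factors of $\cU$; the slope gap condition follows from (3) because $\mu_{H_m}(\bw_k) \le \mu_{H_m}(\bw_j)$ for $2 \le j \le k$; the orthogonalities $\chi(\bw_1,\bw_j) = 0$ for $j \ge 2$ are (4), while the orthogonalities $\chi(\bw_i,\bw_j) = 0$ for $2 \le i < j$ follow from Lemma \ref{lem-HNorthogonal} applied to $\cU$; and nonemptiness of $M_{H_m}(\bw_i)$ is (5) for $i=1$ and is automatic for $i \ge 2$ since those characters already occur as Harder-Narasimhan factors. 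Theorem \ref{thm-HNcriterion} then yields $\bw_i = \bgr_i$ for all $i$, including the desired identification $\bw_1 = \bgr_1$, and simultaneously produces the ``furthermore'' clause of the corollary.

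There is no real obstacle beyond bookkeeping; the mild point to check carefully is that $\bw_2,\ldots,\bw_k$ are well-defined in condition (1) before $\bw_1 = \bgr_1$ is established, so the statement must be parsed as: conditions (2)--(5) are implicitly premised on (1). Once this is made explicit, the argument is a direct reduction to Theorem \ref{thm-HNcriterion} and its preceding lemma.
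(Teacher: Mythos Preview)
Your proposal is correct and follows essentially the same approach as the paper: the paper's proof simply states that everything follows from Theorem \ref{thm-HNcriterion} together with the fact that $\bw_2,\ldots,\bw_k$ are the characters of a generic Harder-Narasimhan filtration, and your argument is precisely the unpacking of that sentence. Your bookkeeping of which hypothesis of Theorem \ref{thm-HNcriterion} is supplied by which condition is accurate.
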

\begin{proof}
Everything follows immediately from Theorem \ref{thm-HNcriterion} and the fact that $\bw_2,\ldots,\bw_\ell$ are the characters of a generic Harder-Narasimhan filtration.
\end{proof}

\section{Exceptional bundles and necessary conditions for stability}\label{sec-exceptional}

In this section we study exceptional bundles on Hirzebruch surfaces and their stability.  First we recall known results from the case of a del Pezzo surface with anticanonical polarization to show that the exceptional bundles on $\F_0$ and $\F_1$ can be explicitly determined by induction on the rank.  For a given polarization, the exceptional bundles define a Dr\'ezet-Le-Potier type surface which restricts the numerical invariants of semistable bundles.  

\subsection{Exceptional bundles on del Pezzo and Hirzebruch surfaces} Here we recall some previous results on exceptional bundles, mostly for anticanonically polarized del Pezzo surfaces and Hirzebruch surfaces.

\begin{definition}
A sheaf $\cV$ on a smooth surface $X$ is
\begin{enumerate}
\item \emph{simple}, if $\Hom(\cV,\cV) = \CC$;
\item \emph{rigid}, if $\Ext^1(\cV,\cV) = 0$;
\item \emph{exceptional}, if it is simple, rigid, and $\Ext^2(\cV,\cV) = 0$;
\item \emph{semiexceptional}, if it is a direct sum of copies of an exceptional sheaf.
\end{enumerate} 
We call a character $\bv\in K(X)$ of positive rank \emph{potentially exceptional} if $\chi(\bv,\bv)=1$, and \emph{exceptional} if there is an exceptional bundle of character $\bv$.
\end{definition}

Simplicity automatically implies strong results about prioritariness when $-K_X$ is effective.

\begin{lemma}\label{lem-simple}
Let $\cV$ be a simple bundle on a smooth surface $X$.  If $D\in \Pic(X)$ is a divisor such that $-(K_X+D)$ is nontrivial and effective, then $\cV$ is $D$-prioritary.

In particular, any simple bundle on $\F_e$ is $H_2$-prioritary.
\end{lemma}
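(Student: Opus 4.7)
The plan is to reduce the vanishing of $\Ext^2(\cV,\cV(-D))$ to the vanishing of a $\Hom$ group via Serre duality, and then use the effective divisor $L:=-(K_X+D)$ together with simplicity of $\cV$ to rule out nonzero maps.

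First I would apply Serre duality (using that $\cV$ is locally free, hence reflexive) to obtain the isomorphism
\[
\Ext^2(\cV,\cV(-D))\;\cong\;\Hom(\cV,\cV(K_X+D))^{*}\;=\;\Hom(\cV,\cV(-L))^{*},
\]
so the goal is to show $\Hom(\cV,\cV(-L))=0$.

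Next, since $L$ is nontrivial and effective, a nonzero section of $L$ gives a short exact sequence $0\to\OO_X(-L)\to\OO_X\to\OO_Z\to 0$, where $Z$ is the nonempty zero divisor of the section. Tensoring with the locally free sheaf $\cV$ yields an injection $\iota\colon\cV(-L)\hookrightarrow\cV$ whose cokernel $\cV|_Z$ is nonzero because $\cV$ has positive rank and $Z$ is nonempty. Now suppose $\phi\colon\cV\to\cV(-L)$ is any nonzero homomorphism. The composition $\iota\circ\phi\colon\cV\to\cV$ cannot vanish (since $\iota$ is injective), so by simplicity $\iota\circ\phi=\lambda\cdot\id_{\cV}$ for some $\lambda\in\CC^{*}$. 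This forces $\iota$ to be surjective, contradicting the existence of the nonzero cokernel $\cV|_Z$. Hence $\Hom(\cV,\cV(-L))=0$, and $\cV$ is $D$-prioritary.

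For the final claim on $\F_e$, I would simply compute
\[
K_{\F_e}+H_2=(-2E-(e+2)F)+(E+(e+2)F)=-E,
\]
so $-(K_{\F_e}+H_2)=E$ is nontrivial and effective, and the previous paragraph applies with $D=H_2$ and $L=E$.

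The argument is essentially formal once Serre duality is in place; the only potential subtlety is checking that the cokernel $\cV|_Z$ is genuinely nonzero, which is where the hypothesis that $-(K_X+D)$ is \emph{nontrivial} (not just effective) enters. Everything else is a short composition with the identity.
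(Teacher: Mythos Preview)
Your proof is correct and follows essentially the same approach as the paper: Serre duality reduces to $\Hom(\cV,\cV(K_X+D))$, then composing a hypothetical nonzero map with the inclusion $\cV(K_X+D)\hookrightarrow\cV$ coming from a section of $-(K_X+D)$ produces a nonzero, non-surjective endomorphism of $\cV$, contradicting simplicity. The only cosmetic difference is that the paper phrases the contradiction as ``nonzero but not an isomorphism'' while you phrase it as ``equals $\lambda\cdot\id$, forcing $\iota$ surjective''; these are the same observation.
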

\begin{proof}
By Serre duality, $$\ext^2(\cV,\cV(-D)) = \hom(\cV(-D),\cV(K_X))=\hom(\cV,\cV(K_X+D)).$$ Suppose there is a nonzero map $\cV\to \cV(K_X+D)$.  Since $-(K_X+D)$ is effective, we can compose this with an injective map $\cV(K_X+D)\to \cV$ to get a map $\cV\to \cV$.  Since $-(K_X+D)$ is nontrivial, this map is not an isomorphism, but it is also not zero since $\cV(K_X+D)\to \cV$ is injective.  This contradicts the simplicity of $\cV$, so $\hom(\cV,\cV(K_X+D))=0$.
 
 In the Hirzebruch case, we have $-(K_{\F_e}+ H_2) = E$.
\end{proof}

As a consequence, exceptional bundles on Hirzebruch surfaces are determined by their Chern characters.

\begin{proposition}\label{prop-excPrior}
Let $\cV$ be an exceptional bundle on $\F_e$.
\begin{enumerate}
\item If $D\in \Pic(\F_e)$ is any divisor such that $-(K_{\F_e}+D)$ is effective, then $\cV$ is $D$-prioritary.  In particular, $\cV$ is $F$-prioritary and $H_2$-prioritary.
\item Any exceptional bundle with the same invariants as $\cV$ is isomorphic to $\cV$.
\end{enumerate}
\end{proposition}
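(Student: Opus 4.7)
For part (1), I would simply reduce to Lemma~\ref{lem-simple}. By definition, any exceptional bundle $\cV$ is simple, so the hypothesis of Lemma~\ref{lem-simple} is met whenever $-(K_{\F_e}+D)$ is a nontrivial effective class (note that the strictness matters: if $D = -K_{\F_e}$, Serre duality gives $\Ext^2(\cV,\cV(-D)) \cong \Hom(\cV,\cV)^* \cong \CC$, so the statement must be read with the ``nontrivial'' qualifier implicit, as in the lemma). Applying the lemma yields $\Ext^2(\cV,\cV(-D))=0$, i.e.\ $D$-prioritariness. For the two named cases, I would verify the effectivity directly: $-(K_{\F_e}+F) = 2E+(e+1)F$ and $-(K_{\F_e}+H_2) = E$ are both manifestly nontrivial effective divisors on $\F_e$.

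For part (2), the plan is to combine Walter's irreducibility theorem for $\cP_F(\bv)$ with the standard deformation-theoretic characterization of rigidity. Let $\cV'$ be another exceptional bundle with $\ch(\cV') = \ch(\cV) =: \bv$. By part (1), both $\cV$ and $\cV'$ define points of the stack $\cP_F(\bv)$, which is nonempty and irreducible by Walter's theorem. At $[\cV]$ the tangent space of this stack is $\Ext^1(\cV,\cV) = 0$ because $\cV$ is exceptional, while the obstruction space $\Ext^2(\cV,\cV(-F))$ vanishes by $F$-prioritariness of $\cV$. Hence $\cP_F(\bv)$ is smooth of dimension zero at $[\cV]$.

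Since an irreducible stack has the same dimension at every smooth point, $\cP_F(\bv)$ must itself be zero-dimensional, and then by irreducibility and reducedness of the smooth locus it consists of the single point $[\cV]$. The analogous argument applied at $[\cV']$ shows that $[\cV']$ is also this unique point, giving $\cV'\cong\cV$.

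The only delicate step is the stack-theoretic dimension/irreducibility argument in part (2). If one prefers to avoid stacky language, the same conclusion can be reached concretely via a Quot-scheme presentation: fix $d\gg 0$ and a resolution $\OO_{\F_e}(-dF)^N \twoheadrightarrow \cV$, and work in the irreducible open subset $Q \subset \Quot(\OO_{\F_e}(-dF)^N,\bv)$ parameterizing $F$-prioritary quotients. Then $\GL_N$ acts on $Q$ with $\dim \Aut(\cV) = \hom(\cV,\cV) = 1$ and $\dim \Ext^1(\cV,\cV) = 0$, so the orbit of $\cV$ is open in $Q$; irreducibility of $Q$ forces this open orbit to be all of $Q$, so $\cV'$ lies in the same $\GL_N$-orbit and hence is isomorphic to $\cV$.
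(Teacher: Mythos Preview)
Your approach is essentially the same as the paper's: part (1) is reduced to Lemma~\ref{lem-simple} (and you correctly flag that the ``nontrivial'' hypothesis must be read into the statement, since $D=-K_{\F_e}$ fails), and part (2) uses irreducibility of $\cP_F(\bv)$ together with rigidity to conclude that the isomorphism class of an exceptional bundle is open and dense, exactly as the paper does. One small slip in your stack formulation: the obstruction space for deforming $\cV$ is $\Ext^2(\cV,\cV)$, not $\Ext^2(\cV,\cV(-F))$; but this vanishes directly since $\cV$ is exceptional, and your Quot-scheme version (open orbit by rigidity, hence dense by irreducibility) is clean and correct.
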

\begin{proof}
(1) This is Lemma \ref{lem-simple}.

(2) The stack $\cP_F({\bf v})$ is irreducible.  Therefore, in any complete family of $F$-prioritary sheaves of character ${\bf v}$, the general sheaf is isomorphic to $\cV$.  The same argument applies to any other exceptional bundle.
\end{proof}

The next result was first proved by Mukai for K3 surfaces, then restated in a way that holds more generally in \cite{Gorodentsev}.

\begin{proposition}[\cite{Mukai,Gorodentsev}]\label{prop-mukai}
Let $X$ be a smooth surface.  
\begin{enumerate}
\item If $\cV$ is a torsion-free sheaf on $X$, then $$\ext^1(\cV,\cV) \geq \ext^1(\cV^{**},\cV^{**}) + 2 \length(\cV^{**}/\cV).$$ In particular, if $\cV$ is rigid, then it is locally free.

\item Suppose $$0\to \cW \to \cV \to \cU \to 0$$ is a short exact sequence with $\Hom(\cW,\cU) = \Ext^2(\cU,\cW) = 0$.  Then $$\ext^1(\cV,\cV) \geq \ext^1(\cW,\cW) + \ext^1(\cU,\cU).$$ Thus if $\cV$ is rigid, then so are $\cW$ and $\cU$.
\end{enumerate}
\end{proposition}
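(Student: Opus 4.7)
For part (2), I will apply $\Hom(\cV,-)$, $\Hom(-,\cW)$, and $\Hom(-,\cU)$ to the given short exact sequence and combine the three resulting long exact sequences. The hypothesis $\Ext^2(\cU,\cW)=0$ forces the natural map $d\colon\Ext^1(\cV,\cW)\twoheadrightarrow\Ext^1(\cW,\cW)$ to be surjective, while $\Hom(\cW,\cU)=0$ forces $e\colon\Ext^1(\cU,\cU)\hookrightarrow\Ext^1(\cV,\cU)$ to be injective. The $\Hom(\cV,-)$ sequence produces a short exact sequence $0\to K\to\Ext^1(\cV,\cV)\to I\to 0$, where $K$ is the image of $a\colon\Ext^1(\cV,\cW)\to\Ext^1(\cV,\cV)$ and $I$ is the image of $b\colon\Ext^1(\cV,\cV)\to\Ext^1(\cV,\cU)$. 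Naturality of the connecting homomorphism, combined with $\Hom(\cW,\cU)=0$, gives $\ker(a)\subseteq\ker(d)$, so $d$ factors as a surjection $K\twoheadrightarrow\Ext^1(\cW,\cW)$ and $\dim K\geq\ext^1(\cW,\cW)$. Dually, the naturality square for $\Ext^1(\cU,-)$, combined with the surjectivity of the connecting map $\Ext^1(\cU,\cV)\twoheadrightarrow\Ext^1(\cU,\cU)$ arising from $\Ext^2(\cU,\cW)=0$, shows that $I$ contains the image of $e$, giving $\dim I\geq\ext^1(\cU,\cU)$. Adding yields the inequality, and the rigidity consequence is then immediate from the nonnegativity of $\ext^1$.

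For part (1), set $T=\cV^{**}/\cV$, a torsion sheaf of length $\ell$ supported on the finite non-locally-free locus of $\cV$. Since the Chern characters of $\cV$ and $\cV^{**}$ agree except that $\ch_2$ differs by $\ell$, a Riemann-Roch computation yields the key identity
$$\chi(\cV,\cV)=\chi(\cV^{**},\cV^{**})-2r\ell\qquad(r=\rk\cV).$$
The universal property of double duals gives $\Hom(\cV^{**},\cV^{**})\cong\Hom(\cV,\cV^{**})$, and a $K_X$-twist combined with Serre duality yields the analogous statement for $\ext^2$. Together with the evident inclusion $\Hom(\cV,\cV)\hookrightarrow\Hom(\cV,\cV^{**})$, the quantities
$$a:=\hom(\cV^{**},\cV^{**})-\hom(\cV,\cV),\qquad a':=\ext^2(\cV^{**},\cV^{**})-\ext^2(\cV,\cV)$$
are nonnegative, and substitution into the displayed identity reduces the claimed inequality to the numerical bound $a+a'\leq(2r-2)\ell$.

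The main obstacle is this last bound, which I plan to prove by a local analysis at each point $p\in\Supp(T)$. Locally $\cV_p\subset\cV_p^{**}\cong\OO_{X,p}^r$ has quotient $T_p$ of length $\ell_p$, and since $K_X$ is trivial in a neighborhood of $p$ both $a$ and $a'$ reduce to the same local computation; it therefore suffices to show $a_p\leq(r-1)\ell_p$, where $a_p$ denotes the dimension of the image of $\Hom_{\OO_{X,p}}(\cV_p,\cV_p^{**})\to\Hom_{\OO_{X,p}}(\cV_p,T_p)$. This image is contained in the image of the restriction map $\Hom_{\OO_{X,p}}(\cV_p^{**},T_p)\cong k^{r\ell_p}\to\Hom_{\OO_{X,p}}(\cV_p,T_p)$, whose kernel is the copy of $\Hom_{\OO_{X,p}}(T_p,T_p)$ obtained by precomposition with $\cV_p^{**}\twoheadrightarrow T_p$. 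This gives $a_p\leq r\ell_p-\hom(T_p,T_p)\leq(r-1)\ell_p$, using the standard inequality $\hom(T_p,T_p)\geq\length(T_p)$ for finite-length modules. Summing over $p$ and substituting yields $\ext^1(\cV,\cV)\geq\ext^1(\cV^{**},\cV^{**})+2\ell$; the rigidity assertion follows, since $\ext^1(\cV,\cV)=0$ forces $\ext^1(\cV^{**},\cV^{**})=0$ and $\ell=0$, whence $T=0$ and $\cV\cong\cV^{**}$ is locally free.
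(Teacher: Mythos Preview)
The paper does not give its own proof of this proposition; it is stated with citations to \cite{Mukai} and \cite{Gorodentsev} and then used (only Part~(2), in fact) as a black box. So there is no argument in the paper to compare against, and your task was effectively to supply an independent proof.

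Your argument for Part~(2) is correct. The three long exact sequences combine exactly as you describe: the key points are that $\ker(a)$ lies in the image of the connecting map $\Hom(\cV,\cU)\to\Ext^1(\cV,\cW)$, and composing with the restriction $\cW\hookrightarrow\cV$ lands in $\Hom(\cW,\cU)=0$, so $\ker(a)\subseteq\ker(d)$; the dual naturality square gives $\operatorname{im}(e)\subseteq I$. This is a clean and standard proof.

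For Part~(1), your reduction via Riemann--Roch to the bound $a+a'\le(2r-2)\ell$ is correct, as is the localization step $a,a'\le\sum_p a_p$ and the computation $a_p\le r\ell_p-\hom_{\OO_{X,p}}(T_p,T_p)$. The gap is the final assertion that $\hom_{\OO_{X,p}}(T_p,T_p)\ge\length(T_p)$ is ``standard.'' This inequality is true over commutative local rings, but it is not immediate: it fails over non-commutative rings (e.g.\ for the length-two indecomposable projective over the path algebra of $A_2$), so commutativity must enter in an essential way, and you have not indicated how. You should either supply a short argument (for instance, reduce to indecomposable $T_p$ and exploit that the map $R\to\End_R(T_p)$ together with the additional nilpotent endomorphisms coming from the radical filtration gives enough; or use Matlis duality to rewrite $\length(\End T_p)=\length(T_p\otimes DT_p)$ and bound the latter) or give a precise reference. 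Without this, the proof of Part~(1) is incomplete, though the strategy is sound.
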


Rigid bundles on del Pezzo surfaces decompose into exceptional bundles.

\begin{theorem}[{\cite[Theorem 5.2]{KuleshovOrlov}}]\label{thm-rigidSplit}
Let $X$ be a del Pezzo surface.  Then any rigid bundle $\cV$ splits as a direct sum of exceptional bundles.
\end{theorem}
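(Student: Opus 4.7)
The plan is to argue by induction on the rank $r$ of $\cV$.

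For the base case $r=1$, Mukai's Lemma (Proposition~\ref{prop-mukai}(1)) gives that $\cV$ is a line bundle $L$. On a del Pezzo surface $X$ one has $H^1(\OO_X)=0$ by rationality and $H^2(\OO_X)=h^0(K_X)=0$ since $-K_X$ is ample, so $\Ext^i(L,L)=H^i(\OO_X)=0$ for $i>0$ and $L$ is exceptional.

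For the inductive step, let $\cV$ be rigid of rank $r\geq 2$, locally free by Mukai (1). If $\cV$ is already $\mu_{-K_X}$-stable then it is simple, and the slope identity $\mu_{-K_X}(\cV\otimes K_X)=\mu_{-K_X}(\cV)-K_X^2 < \mu_{-K_X}(\cV)$ (using $K_X^2>0$) together with stability gives $\Ext^2(\cV,\cV)=\Hom(\cV,\cV\otimes K_X)^*=0$, so $\cV$ itself is exceptional. Otherwise, I aim to produce a short exact sequence $0\to \cW\to \cV\to \cU\to 0$ of proper rigid pieces which actually splits, and then apply induction to each summand. When $\cV$ is not $\mu_{-K_X}$-semistable, take $\cW=\cV_{n-1}$ the penultimate term of its Harder--Narasimhan filtration and $\cU=\gr_n$; slope comparison yields $\Hom(\cW,\cU)=0$, while $\mu_{-K_X}(\cU\otimes K_X)=\mu_{-K_X}(\cU)-K_X^2$ sits strictly below every slope in $\cW$, forcing $\Ext^2(\cU,\cW)=\Hom(\cW,\cU\otimes K_X)^*=0$ by Serre duality. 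When $\cV$ is $\mu_{-K_X}$-semistable but not stable, I choose $\cW$ to be the subsheaf generated by all copies of a single stable Jordan--H\"older factor $\cS$ and $\cU=\cV/\cW$; both vanishings hold by the same twist-and-stability argument. In the degenerate subcase where the construction yields only $\cV\cong \cS^m$ for a single stable $\cS$, rigidity of $\cV$ descends to rigidity of $\cS$ (since $\ext^1(\cS^m,\cS^m)=m^2\ext^1(\cS,\cS)$), stability of $\cS$ makes it simple, and the same slope-shift argument gives $\Ext^2(\cS,\cS)=0$, so $\cS$ is exceptional.

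The main obstacle is upgrading the rigidity of $\cW$ and $\cU$ (supplied by Proposition~\ref{prop-mukai}(2)) into the splitting $\cV\cong \cW\oplus \cU$. This requires the refined inequality
\[
\ext^1(\cV,\cV)\ \geq\ \ext^1(\cW,\cW)+\ext^1(\cU,\cU)+\ext^1(\cU,\cW),
\]
which strengthens Proposition~\ref{prop-mukai}(2) by recording the extension class as the last summand. I would derive it by running the spectral sequence for the two-step filtration $\cW\subset \cV$ that computes $\Ext^\bullet(\cV,\cV)$ and using the hypotheses $\Hom(\cW,\cU)=\Ext^2(\cU,\cW)=0$ to kill the off-diagonal differentials; rigidity of $\cV$ then forces $\ext^1(\cU,\cW)=0$, so the extension splits. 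A secondary delicate point is justifying that the isotypic-type construction in the semistable case produces a genuine subsheaf $\cW\subset \cV$ with $\Hom(\cW,\cV/\cW)=0$; if this fails at some stage one simply iterates the procedure, noting that all ranks strictly drop, so the induction terminates.
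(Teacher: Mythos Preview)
The paper does not prove this statement; it is quoted from \cite[Theorem~5.2]{KuleshovOrlov} without argument, so there is no ``paper's own proof'' to compare against.

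Your inductive strategy is natural and in the spirit of the Kuleshov--Orlov argument, but the crucial ``refined inequality''
\[
\ext^1(\cV,\cV)\ \geq\ \ext^1(\cW,\cW)+\ext^1(\cU,\cU)+\ext^1(\cU,\cW)
\]
is \emph{false} under only the hypotheses $\Hom(\cW,\cU)=0$ and $\Ext^2(\cU,\cW)=0$, and your spectral-sequence sketch does not establish it. Those two vanishings kill certain $E_1$ \emph{terms}, but they do not kill the $d_1$ differential out of $E_1^{0,0}=\Hom(\cW,\cW)\oplus\Hom(\cU,\cU)$ landing in $\Ext^1(\cU,\cW)$: that map is cup product with the extension class and is nonzero exactly when the sequence is nonsplit, which lowers $\ext^1(\cV,\cV)$ below your claimed bound. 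Concretely, on $\F_0$ take $\cW=\OO(1,-2)$ and $\cU=\OO$; then $\Hom(\cW,\cU)=\Ext^1(\cW,\cU)=\Ext^2(\cU,\cW)=0$ while $\ext^1(\cU,\cW)=2$, yet a general extension $\cV$ has $\ext^1(\cV,\cV)=1<2$. More conceptually, your argument would force \emph{every} short exact sequence of this shape with rigid middle term to split; but a rigid (even exceptional) bundle can sit as a nonsplit extension of two exceptionals---the theorem only asserts that $\cV$ is \emph{some} direct sum of exceptionals, not that your chosen filtration splits. Kuleshov--Orlov instead exploit that, for fixed $\mu_{-K_X}$-slope, the category of semistable sheaves is hereditary (all relevant $\Ext^2$ vanish by Serre duality and ampleness of $-K_X$), and inside such a category one can appeal to Happel--Ringel-type arguments to decompose rigid objects; part of their work is to show a rigid bundle is already $\mu_{-K_X}$-semistable, so your Harder--Narasimhan case is in fact vacuous---but that requires proof, not the refined inequality.
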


The exceptional bundles on an anticanonically polarized del Pezzo surface are automatically slope-stable.

\begin{theorem}[{\cite{Gorodentsev}}]\label{thm-delPezzoExc}
Let $X$ be a del Pezzo surface.  Then any exceptional bundle is $\mu_{-K_X}$-stable.
\end{theorem}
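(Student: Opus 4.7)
I would argue by induction on the rank $r = r(\cV)$; the case $r=1$ is trivial since line bundles have no proper subsheaves of equal rank. For the inductive step, assume every exceptional bundle of rank less than $r$ is $\mu_{-K_X}$-stable, and suppose for contradiction that $\cV$ is exceptional of rank $r$ but not $\mu_{-K_X}$-stable.

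The first step is to show $\cV$ must be $\mu_{-K_X}$-semistable. If not, take the maximal destabilizing subsheaf $\cW \subset \cV$ and set $\cU = \cV/\cW$; then $\cW$ is $\mu_{-K_X}$-semistable and $\mu_{-K_X}(\cW) > \mu_{\max,-K_X}(\cU)$. The decisive del Pezzo input is that $K_X^2 = (-K_X)^2 > 0$, which gives the strict inequality
\[ \mu_{\max,-K_X}(\cU(K_X)) = \mu_{\max,-K_X}(\cU) - K_X^2 < \mu_{-K_X}(\cW). \]
Combined with the semistability of $\cW$, this yields $\Hom(\cW,\cU) = 0$ and $\Hom(\cW,\cU(K_X)) = 0$; by Serre duality the latter is $\Ext^2(\cU,\cW)^* = 0$. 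Applying Mukai's lemma (Proposition \ref{prop-mukai}(2)) to $0 \to \cW \to \cV \to \cU \to 0$ then gives
\[ 0 = \ext^1(\cV,\cV) \geq \ext^1(\cW,\cW) + \ext^1(\cU,\cU), \]
so $\cW$ and $\cU$ are both rigid. By Theorem \ref{thm-rigidSplit} each decomposes as a direct sum of exceptional bundles of rank less than $r$, and by induction each summand is $\mu_{-K_X}$-stable.

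The hardest part is extracting the final contradiction from this structure. My plan is to expand
\[ 1 = \chi(\cV,\cV) = \chi(\cW,\cW) + \chi(\cU,\cU) + \chi(\cW,\cU) + \chi(\cU,\cW) \]
via Riemann--Roch, using the identity $\Delta(\cV) = \tfrac{1}{2}(1 - r^{-2}) < \tfrac{1}{2}$ coming from $\chi(\cV,\cV) = 1$. Rigidity together with $\Ext^2$-vanishings between stable exceptional summands of equal slope (again from $K_X^2 > 0$) forces $\chi(\cW,\cW), \chi(\cU,\cU) \geq 1$. One then leverages the further exceptional constraint $\Ext^2(\cV,\cV) \cong \Hom(\cV,\cV(K_X))^* = 0$, which together with the Hodge index theorem applied to the ample divisor $-K_X$ translates the positive slope gap $\mu_{-K_X}(\cW) - \mu_{-K_X}(\cU) > 0$ into an inequality on $(\nu(\cW) - \nu(\cU))^2$ incompatible with the bounds above. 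The numerical bookkeeping here is delicate and is the crux of the proof.

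Finally, to upgrade semistability to stability, one repeats the argument with $\cW$ a proper Jordan--H\"older subsheaf of $\cV$ of the same slope; $K_X^2 > 0$ still forces the relevant $\Ext^2$-vanishing on twists of stable exceptional summands, so Mukai's lemma again decomposes the Jordan--H\"older pieces into stable exceptionals, and an analogous $\chi$-computation (combined with simplicity of $\cV$, which rules out a direct-sum splitting into exceptionals) yields the contradiction.
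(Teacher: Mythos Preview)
The paper does not prove this theorem; it is quoted from \cite{Gorodentsev} with no argument given. So your proposal must be judged on its own merits.

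Your framework---induction on rank, Mukai's lemma (Proposition~\ref{prop-mukai}), and Theorem~\ref{thm-rigidSplit}---is the natural setup, and the vanishings you derive from $K_X^2>0$ are correct. But the proposal halts precisely at the step you yourself label ``the crux,'' and the sketch offered there does not work as written. You propose to bound $(\nu(\cW)-\nu(\cU))^2$ via the Hodge index theorem; however, Hodge index only constrains $D^2$ for classes $D$ orthogonal to the ample class, and in your non-semistable case $(\nu(\cW)-\nu(\cU))\cdot(-K_X)>0$, so no such bound is available. The inequality $\chi(\cW,\cW)+\chi(\cU,\cU)\geq 2$ is not incompatible with $\chi(\cV,\cV)=1$, since the cross-terms can be arbitrarily negative. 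There is a further gap in the semistable-to-stable upgrade: when $\mu_{-K_X}(\cW)=\mu_{-K_X}(\cV)$ one need not have $\Hom(\cW,\cU)=0$, so Proposition~\ref{prop-mukai}(2) does not apply directly to a Jordan--H\"older sub. (In the equal-slope case Riemann--Roch does give the symmetry $\chi(\cW,\cU)=\chi(\cU,\cW)$, hence a parity constraint on $\chi(\cW,\cW)+\chi(\cU,\cU)$; this is useful and you do not invoke it, but it does not finish the general case either.)

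Gorodentsev's actual argument is geometric rather than numerical and uses the moving anticanonical system in an essential way. One restricts $\cV$ to a general smooth curve $C\in|-K_X|$, which is elliptic by adjunction. From the sequence $0\to\cV(K_X)\to\cV\to\cV|_C\to 0$ and the vanishings $\Ext^1(\cV,\cV)=\Ext^2(\cV,\cV)=0$, Serre duality yields $\End_C(\cV|_C)\cong\Hom(\cV,\cV|_C)\cong\End(\cV)\cong\CC$, so $\cV|_C$ is simple. On an elliptic curve a simple bundle is stable; hence any saturated proper subsheaf $\cW\subset\cV$ with $\mu_{-K_X}(\cW)\geq\mu_{-K_X}(\cV)$ would restrict (for general $C$) to a destabilizing subsheaf of $\cV|_C$, a contradiction. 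This geometric input is what your purely numerical route is missing.
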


The next lemma collects several useful facts about exceptional bundles on Hirzebruch surfaces.

\begin{lemma}\label{lem-excFacts} Let $\bv\in K(\F_e)$ be a potentially exceptional character of rank $r$ with $c_1(\bv) = aE + bF$. 
\begin{enumerate}
\item The discriminant of $\bv$ is $$\Delta = \frac{1}{2} - \frac{1}{2r^2}.$$
\item The integers $r$ and $a$ are coprime, and $r$ is odd if $e$ is even.
The integer $b$ satisfies the congruence $$2ab \equiv a^2e+aer-r^2-1 \pmod{2r}.$$ Conversely, the residue class of $b \pmod r$ is uniquely determined by $r$ and $a$.

\item The character $\bv$ is primitive.  

\item If $\cV$ is an $H_m$-stable sheaf of discriminant $\Delta(\cV)<\frac{1}{2}$, then $\cV$ is exceptional.

\item If $m$ is generic and $\cV$ is a $\mu_{H_m}$-semistable sheaf of character $\bv$, then it is $\mu_{H_m}$-stable and exceptional.

 \item If $m$ is generic and $\cV$ is an $H_m$-semistable sheaf of discriminant $\Delta(\cV)< \frac{1}{2}$, then $\cV$ is semiexceptional.
\end{enumerate}
\end{lemma}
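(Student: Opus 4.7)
The plan is to establish the six items in order since most build on earlier ones. For (1), Riemann--Roch gives $\chi(\bv,\bv) = r^2(P(0)-2\Delta) = r^2(1-2\Delta)$, and setting this equal to $1$ immediately yields $\Delta = \tfrac{1}{2} - \tfrac{1}{2r^2}$. Part (3) follows from the same identity: if $\bv = n\bv'$ with $n \geq 2$ and $\bv' \in K(\FF_e)$, then $\chi(\bv',\bv') = 1/n^2$ is not an integer, a contradiction.

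For (2), the key identity is $2rc_2 = (r-1)(c_1^2 + r + 1)$, obtained by combining (1) with $\Delta = \tfrac{c_1^2}{2r^2} - \tfrac{\ch_2}{r}$ and $\ch_2 = \tfrac{c_1^2}{2} - c_2$. Substituting $c_1^2 = -a^2e + 2ab$, the integrality of $c_2$ requires $2r \mid (r-1)(-a^2e + 2ab + r + 1)$. Reducing modulo $d := \gcd(r,a)$ gives $-1 \equiv 0 \pmod{d}$, forcing $d = 1$. A mod-$2$ analysis of the same divisibility then shows that $r$ even would force $a^2e$ odd, hence both $a$ and $e$ odd; contrapositively, $r$ is odd whenever $e$ is even. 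The stated congruence on $b$ modulo $2r$ is just the above divisibility rewritten, and the unique residue of $b$ modulo $r$ follows from $\gcd(a,r)=1$ making $a$ invertible mod $r$.

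For (4), Gieseker stability yields $\hom(\cV,\cV) = 1$, and Serre duality identifies $\ext^2(\cV,\cV) = \hom(\cV,\cV(K_{\FF_e}))$; since $K_{\FF_e} \cdot H_m < 0$ for $m > 0$, $\mu$-semistability of $\cV$ rules out such maps, so $\ext^2 = 0$. Then $\chi(\cV,\cV) = 1 - \ext^1 = r^2(1-2\Delta)$ is a positive integer (from $\Delta < \tfrac{1}{2}$) that is at most $1$ (from $\ext^1 \geq 0$), forcing $\chi = 1$ and $\ext^1 = 0$. For (5), at a generic polarization any destabilizing subsheaf $\cW$ of $\cV$ must satisfy $\nu(\cW) = \nu(\cV) = \tfrac{a}{r}E + \tfrac{b}{r}F$, else the equation $\mu_{H_m}(\cW)=\mu_{H_m}(\cV)$ would cut out a wall; the coprimality $\gcd(a,r)=1$ from (2) then forbids any strictly smaller-rank integral $c_1(\cW) = r(\cW)\,\nu(\cV)$, so $\cV$ is $\mu_{H_m}$-stable. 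Part (4) applied with $\Delta = \tfrac{1}{2} - \tfrac{1}{2r^2} < \tfrac{1}{2}$ then makes $\cV$ exceptional.

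For (6), the Jordan--H\"older factors $\cV_i$ of $\cV$ are Gieseker stable and share the reduced $H_m$-Hilbert polynomial of $\cV$, so they all have the same total slope $\nu$ and the same discriminant $\Delta < \tfrac{1}{2}$. By (4) each $\cV_i$ is exceptional, and (1) then pins down a common rank, so all $\cV_i$ share a single Chern character. Proposition \ref{prop-excPrior}(2) forces each $\cV_i$ to be isomorphic to a single exceptional bundle $\cE$, and $\Ext^1(\cE,\cE) = 0$ splits every iterated extension of $\cE$ by itself, giving $\cV \cong \cE^{\oplus k}$. The main technical obstacle is the divisibility bookkeeping in (2); the rest is a fairly standard assembly of Riemann--Roch, $\mu$-semistability, and Serre duality.
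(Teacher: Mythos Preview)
Your proof is correct and follows essentially the same approach as the paper's. The only noteworthy difference is in part (2): the paper extracts the congruence from the integrality of $\chi(\bv)$ via Riemann--Roch, whereas you extract it from the integrality of $c_2$ via the identity $2rc_2 = (r-1)(c_1^2 + r + 1)$. These two divisibility conditions are equivalent modulo $2r$ (the difference works out to $-rae(a+1)+2r^2$, which vanishes since $a(a+1)$ is even), so your claim that the stated congruence is ``just the above divisibility rewritten'' is correct, though a line of verification would not hurt. Your argument for (3) directly from $\chi(\bv',\bv')=1/n^2$ is slightly cleaner than the paper's, which cites (2); otherwise parts (4)--(6) match the paper almost verbatim.
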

\begin{proof}

(1) We have $\chi(\bv,\bv) = 1$, so solving the Riemann-Roch formula $$1 = \chi(\bv,\bv) = r^2(P(0)-2\Delta)$$ for $\Delta$ proves the equality.

(2) The Euler characteristic $\chi(\bv)$ must be an integer.  By Riemann-Roch,
\begin{align*}\chi(\bv) &= r\left(P\left(\frac{a}{r}E+\frac{b}{r}F\right)-\frac{1}{2}+\frac{1}{2r^2}\right)\\
&= r\left(\left(\frac{a}{r}+1\right)\left(\frac{b}{r}+1-\frac{ea}{2r}\right)-\frac{1}{2}+\frac{1}{2r^2}\right)\\
&= \frac{1}{2r}\left(1+2ab+2r(a+b)-a^2e-aer+r^2\right).
\end{align*}
Therefore, $2r$ divides $1+2ab-a^2e-aer+r^2$, giving the congruence $$2ab \equiv a^2e+aer-r^2-1 \pmod{2r}.$$  Then $e$ and $r$ cannot both be even.  Furthermore, $1$ is a $\Z$-linear combination of $r$ and $a$.

Note that $r$ is odd if either $a$ or $e$ are even.  Therefore $a^2e+aer-r^2-1$ is always even, and the congruence is equivalent to $$ab \equiv \frac{1}{2}(a^2e+aer-r^2-1) \pmod r.$$ Since $a$ and $r$ are coprime, this uniquely determines $b \pmod r$.

(3) Clearly $\bv$ is primitive by (2).  

(4)
We have $\hom(\cV,\cV) = 1$ and $\ext^2(\cV,\cV) = 0$, so $$\chi(\cV,\cV) = 1- \ext^1(\cV,\cV) = r^2(1-2\Delta) > 0.$$ Therefore $\ext^1(\cV,\cV) = 0$.

(5) Since $\bv$ is primitive and $m$ is generic, $\cV$ has no subsheaf of smaller rank with the same $H_m$-slope.  Thus $\cV$ is $\mu_{H_m}$-stable, and $\cV$ is exceptional by (4).

(6) Since $m$ is generic, the Jordan-H\"older factors $\gr_1,\ldots,\gr_\ell$ of $\cV$ all have the same total slope and discriminant.  They are also exceptional bundles, by (1), so their Chern characters are primitive, hence have the same rank, and they are the same.  By Proposition \ref{prop-excPrior} (2), the factors are all isomorphic.  Then an easy induction using $\ext^1(\gr_1,\gr_1) = 0$ shows $\cV \cong \gr_1^{\oplus \ell}$.
\end{proof}

\subsection{The Dr\'ezet-Le Potier surface}\label{ssec-DLP} Consider a Hirzebruch surface $\F_e$ polarized by an ample divisor $H$.  Suppose $\cV$ is a $\mu_H$-stable sheaf on $\F_e$.
The existence of $\cV$ restricts the possible numerical invariants of $\mu_H$-semistable sheaves.  In particular, if $\cW$ is a $\mu_H$-semistable sheaf with $$\frac{1}{2}K_{\F_e}\cdot H\leq \mu_{H} (\cW)-\mu_H(\cV) < 0,$$ then $\Hom(\cV,\cW) = 0$ and $$\ext^2(\cV,\cW) = \hom(\cW,\cV(K_{\F_e}))=0$$ by stability and Serre duality.  Therefore, $\chi(\cV,\cW)\leq 0$.  By the Riemann-Roch formula, this inequality can be viewed as a lower bound on $\Delta(\cW):$  $$\Delta(\cW) \geq P(\nu(\cW)-\nu(\cV))-\Delta(\cV).$$ Likewise, if instead $$0 < \mu_{H}(\cW)-\mu_H(\cV) \leq -\frac{1}{2}K_{\F_e}\cdot H,$$ then the inequality $\chi(\cW,\cV)\leq 0$ provides a lower bound $$\Delta(\cW) \geq P(\nu(\cV)-\nu(\cW))-\Delta(\cV)$$ on $\Delta(\cW)$.

Heuristically, $\mu_H$-stable exceptional bundles $\cV$ often give strong bounds for $\mu_H$-semistability since their discriminants are small.  For a $\mu_H$-stable exceptional bundle $\cV$, we define a function $$\DLP_{H,\cV}(\nu)  = \begin{cases} P(\nu-\nu(\cV)) - \Delta(\cV) & \textrm{if } \frac{1}{2}K_{\F_e}\cdot H\leq (\nu-\nu(\cV)) \cdot H < 0\\
P(\nu(\cV)-\nu) - \Delta(\cV) & \textrm{if } 0 <  (\nu-\nu(\cV)) \cdot H\leq  -\frac{1}{2}K_{\F_e}\cdot H\\
\max \{P(\pm(\nu-\nu(\cV)))-\Delta(\cV)\} &\textrm{if } (\nu-\nu(\cV))\cdot H = 0.
 \end{cases}$$ on the strip of slopes $\nu = \frac{a}{r}E + \frac{b}{r}F = (\frac{a}{r},\frac{b}{r})\in \QQ^2$ satisfying $$|(\nu-\nu(\cV))\cdot H| \leq -\frac{1}{2} K_{\F_e}\cdot H.$$
Our previous discussion shows that if there is a $\mu_H$-semistable sheaf of total slope $\nu$ and discriminant $\Delta$ such that $0 < |(\nu-\nu(\cV))\cdot H| \leq -\frac{1}{2} K_{\F_e}\cdot H$, then $\Delta \geq \DLP_{H,\cV}(\nu)$.  
 
\begin{remark}
The definition of $\DLP_{H,\cV}(\nu)$ in the third case $(\nu-\nu(\cV))\cdot H=0$ is somewhat arbitrary.  In particular, we don't necessarily know that $\Delta \geq \DLP_{H,\cV}(\nu)$ whenever there is a $\mu_H$-semistable sheaf of total slope $\nu$ and discriminant $\Delta$ such that $(\nu-\nu(\cV))\cdot H = 0$.  

However, if $H$ is  generic, then $(\nu - \nu(\cV))\cdot H = 0$ will only happen when $\nu = \nu(\cV)$.  Suppose $\cW$ is $H$-semistable of total slope $\nu(\cW) = \nu(\cV)$.   If $\Delta(\cW) = \Delta(\cV)$, then $\cW$ is semiexceptional by Lemma \ref{lem-excFacts} (6).  On the other hand if $\Delta(\cW) \neq \Delta(\cV)$, then either $\Hom(\cW,\cV)=0$ or $\Hom(\cV,\cW) = 0$ by $H$-semistability, and in either case Riemann-Roch implies $$\Delta(\cW) \geq \DLP_{H,\cV}(\nu) = \frac{1}{2}+\frac{1}{2r(\cV)^2}.$$  Thus if $H$ is generic, then $\Delta \geq \DLP_{H,\cV}(\nu)$ whenever there is an $H$-semistable sheaf of total slope $\nu$ and discriminant $\Delta$ satisfying $|(\nu-\nu(\cV))\cdot H| \leq -\frac{1}{2}K_{\F_e}\cdot H$.
\end{remark}

\begin{remark}
In the previous remark, if $H$ is generic and $\cW$ is only $\mu_H$-semistable, we cannot conclude $\Delta(\cW) \geq \DLP_{H,\cV}(\nu)$.  For example, the sheaf $\OO_{\F_e} \oplus I_p$  on $\F_e$ is only $\mu_H$-semistable, but has discriminant $\frac{1}{2} < \DLP_{H,\OO_{\F_e}}(0)=1.$
\end{remark}

 \begin{remark}\label{rem-DLPK}
It is useful to expand the two branches defining the function $\DLP_{H,\cV}(\nu)$.  We have \begin{align*} P(\nu-\nu(\cV))- \Delta(\cV) &= \frac{1}{2}(\nu-\nu(\cV))^2 + 1 - \Delta(\cV)-  \frac{1}{2}K_{\F_e}\cdot (\nu-\nu(\cV)) \\ 
 P(\nu(\cV)-\nu)-\Delta(\cV)&= \frac{1}{2}(\nu-\nu(\cV))^2+1-\Delta(\cV)+\frac{1}{2}K_{\F_e}\cdot (\nu-\nu(\cV)). \end{align*}  Thus the two branches have very similar formulas; the only difference is that depending on the sign of $(\nu-\nu(\cV))\cdot H$, we either subtract or add $\frac{1}{2}K_{\F_e}\cdot (\nu-\nu(\cV)).$

In particular, in the special case where $e=0$ or $1$ and $H  = -K_{\F_e}$, if $(\nu-\nu(\cV))\cdot (-K_{\F_e}) = 0$, then both branches always produce the same result.  Thus the two numbers being maximized in the third part of the definition of $\DLP_{-K_{\F_e},\cV}(\nu)$ are the same, and $\DLP_{-K_{\F_e},\cV}(\nu)$ is continuous on its domain in this case.

For all polarizations that are not a multiple of $-K_{\F_e}$, the function $\DLP_{H,\cV}(\nu)$ is discontinuous along the line $(\nu-\nu(\cV))\cdot H=0$, although the two branches do produce the same result at the point $\nu = \nu(\cV)$. 
\end{remark}

\begin{example}\label{ex-Crescents}
In Figure \ref{fig-Crescents}, we let $e=0$ or $1$ and plot the functions $\DLP_{-K_{\F_e},\OO_{\F_e}}(\epsilon E + \varphi F)$ for $(\epsilon,\varphi)$ in the square $[-2,2]\times [-2,2]$.  We cut off the graph below $\Delta = 0$, since the Bogomolov inequality will apply in that case.  The black line indicates the line of slopes where the branch used to compute the function changes.  Observe that the surface is highest for total slopes close to $\nu(\OO_{\F_e})$.  For other exceptional bundles, the graph is translated by the total slope and shifted downward by the discriminant.

\begin{figure}[t]
\begin{center}
\includegraphics[bb=0 0 5.86in 2.75in]{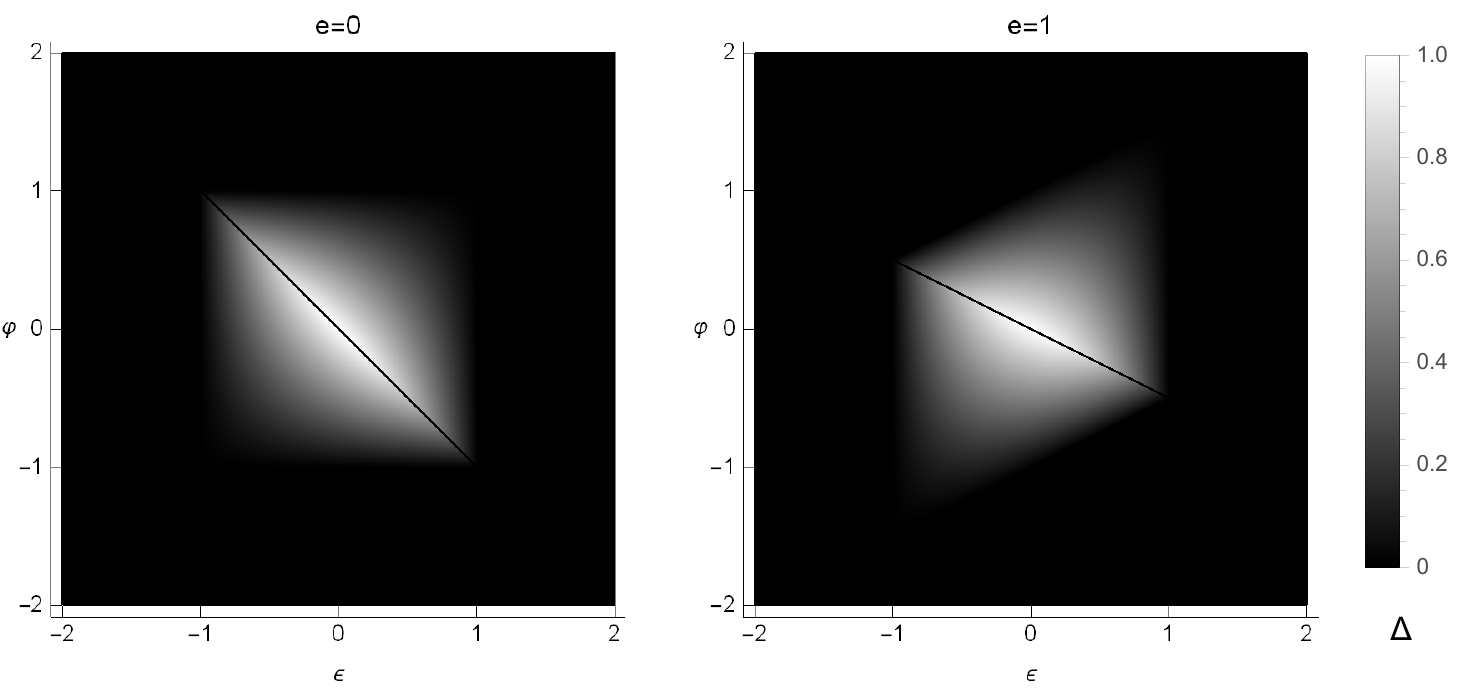}
\end{center}
\caption{For $e=0$ or $1$, we graph the functions $\DLP_{-K_{\F_e},\OO_{\F_e}}(\epsilon E + \varphi F)$ on the square $[-2,2]\times[-2,2]$ in the $(\epsilon,\varphi)$-plane.  See Example \ref{ex-Crescents}.}\label{fig-Crescents}
\end{figure}
\end{example}

\begin{example}\label{ex-CrescentsH}
In Figure \ref{fig-CrescentsH}, we show how the function $\DLP_{H,\cV}$ changes with the polarization.  For $e=0$ and $0\leq t \leq 8$, we graph the function $\DLP_{H_{1+\frac{t}{8}},\OO_{\F_0}}(\epsilon E + \varphi F)$ on the square $[-2,2]\times [-2,2]$.  Similar pictures hold for other exceptional bundles and other Hirzebruch surfaces.
\begin{figure}[t]
\begin{center}
\includegraphics[bb=0 0 5.86in 5.86in]{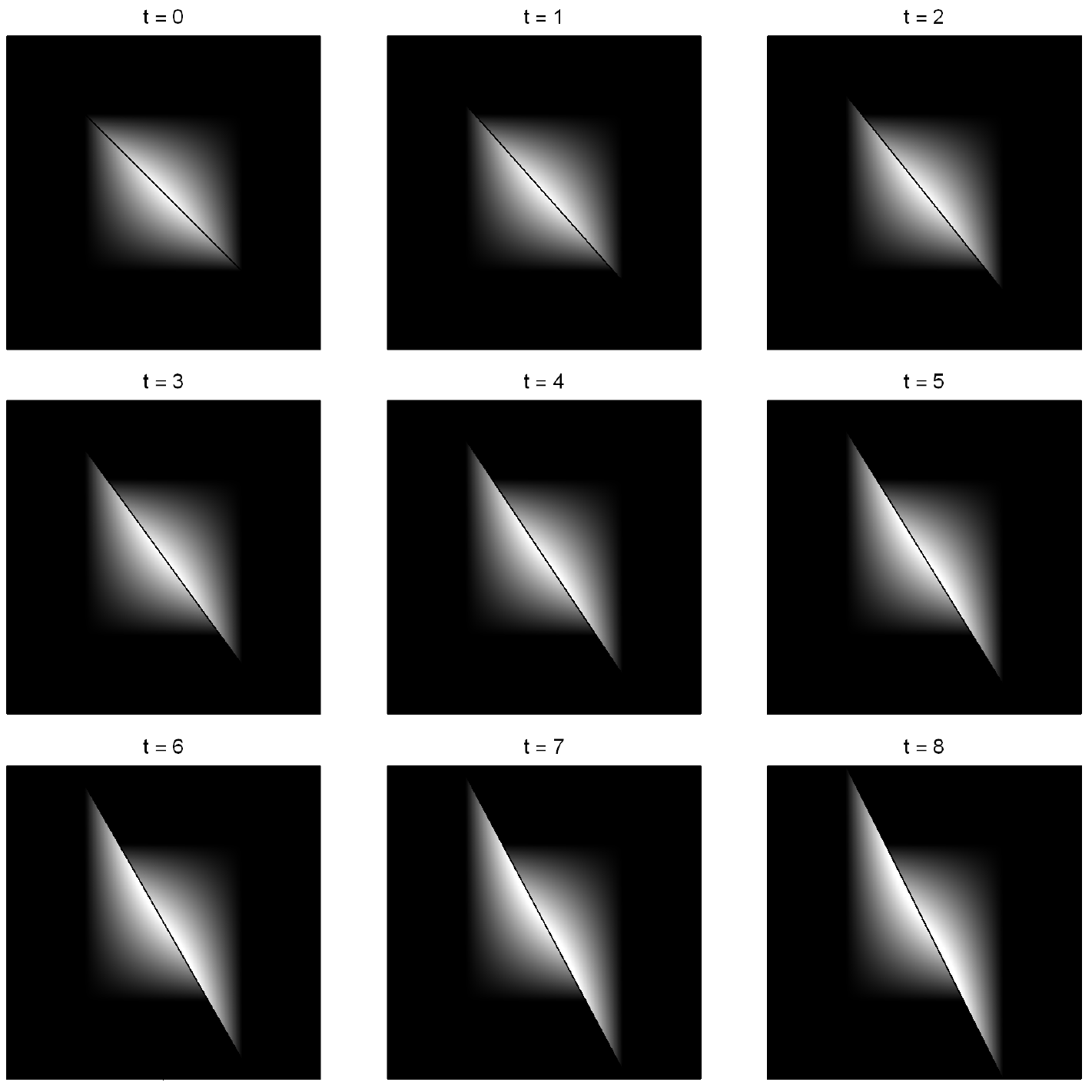}
\end{center}
\caption{For $e=0$ and $0\leq t\leq 8$, we graph the functions $\DLP_{H_{1+\frac{t}{8}},\OO_{\F_e}}(\epsilon E + \varphi F)$ on the square $[-2,2]\times[-2,2]$ in the $(\epsilon,\varphi)$-plane.  See Example \ref{ex-CrescentsH}.}\label{fig-CrescentsH}
\end{figure}
\end{example}
 
Taking the inequalities coming from all the $\mu_H$-stable exceptional bundles $\cV$ produces a Dr\'ezet-Le Potier type surface that restricts the numerical invariants of semistable sheaves.

\begin{definition}
Let $\mathbb{E}_{H}$ be the set of $\mu_H$-stable exceptional bundles on $\F_e$.  Define a function
$$\DLP_{H}(\nu) = \sup_{\substack{\cV\in \mathbb{E}_{H} \\ |(\nu-\nu(\cV))\cdot H|\leq -\frac{1}{2}K_{\F_e}\cdot H}} \DLP_{H,\cV}(\nu).$$
Similarly, it is useful to define analogous functions where the ranks of the exceptional bundles are bounded.  For an integer $r> 1$ we define $$\DLP_{H}^{<r} (\nu) = \sup_{\substack{\cV\in \mathbb{E}_{H} \\ |(\nu-\nu(\cV))\cdot H| \leq -\frac{1}{2}K_{\F_e}\cdot H \\ r(\cV) < r}} \DLP_{H,\cV}(\nu).$$ In the special case $r=2$, we get a function $\DLP_H^{<2}(\nu)$ controlled by line bundles.  Since this function is important in what follows, we write it as $$\DLP^1_H(\nu) = \sup_{L\in \Pic(\F_e) \atop |(\nu-L)\cdot H| < -\frac{1}{2}K_{F_e}\cdot H} \DLP_{H,L}(\nu).$$  Note that there is no question of stability for line bundles.
\end{definition}

\begin{remark}\label{rem-max}
For any exceptional bundle $\cV$, polarization $H$, and $c\in \RR$, the set $$(\DLP_{H,\cV})^{-1}([c,\infty)) = \{ \nu : |(\nu-\nu(\cV))\cdot H| \leq -\frac{1}{2}K_{\F_e} \cdot H \textrm{ and } \DLP_{H,\cV}(\nu) \geq c\}$$ is bounded.  It follows that the supremum in the definition of $\DLP_H^{<r}(\nu)$ is actually a maximum.

On the other hand, at least if irrational slopes $\nu$ are allowed, then the supremum in the definition of $\DLP_H(\nu)$ may not be achieved by any exceptional bundle.  Similar phenomena  occur as in the case of $\P^2$: the analogous function $\delta(\mu)$ on $\P^2$ takes a value of $\frac{1}{2}$ at some transcendental slopes $\mu$, and this value is not computed by any particular exceptional bundle. See \cite[\S 4]{CHW}.  On $\F_0$, Abe \cite{Abe} shows that there are balanced slopes $\mu F_1 +\mu F_2$ with the same property.
\end{remark}

Our discussion in this section shows the following.

\begin{proposition}\label{prop-DLPss}
Let $H$ be generic. 

\begin{enumerate}\item If $\cW$ is an $H$-semistable exceptional bundle on $\F_e$ of rank $r$, then $$\Delta(\cW) \geq \DLP_{H}^{<r}(\nu(\cW)).$$
\item If $\cW$ is an $H$-semistable non-semiexceptional sheaf on $\F_e$, then
$$\Delta(\cW) \geq \DLP_{H}(\nu(\cW)).$$
\end{enumerate}
\end{proposition}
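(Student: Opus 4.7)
The plan is to deduce both parts from the pointwise inequality $\Delta(\cW) \geq \DLP_{H,\cV}(\nu(\cW))$ for every $\mu_H$-stable exceptional bundle $\cV$ entering the relevant supremum; taking the supremum over all such $\cV$ then yields the desired bound. With $\cV$ fixed, I would split into three cases according to the sign of $(\nu(\cW)-\nu(\cV)) \cdot H$, matching the three branches in the definition of $\DLP_{H,\cV}$.

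For the two generic cases $\frac{1}{2}K_{\F_e} \cdot H \leq \mu_H(\cW) - \mu_H(\cV) < 0$ and $0 < \mu_H(\cW) - \mu_H(\cV) \leq -\frac{1}{2}K_{\F_e}\cdot H$, the argument is precisely the one already sketched at the start of \S\ref{ssec-DLP}. In the first case, $\mu_H$-stability of $\cV$ together with $\mu_H$-semistability of $\cW$ (which follows from $H$-semistability) forces $\Hom(\cV,\cW) = 0$, while the strict inequality $\mu_H(\cW) - \mu_H(\cV) \geq \frac{1}{2}K_{\F_e}\cdot H > K_{\F_e}\cdot H$ combined with Serre duality and $\mu_H$-stability of $\cV(K_{\F_e})$ yields $\Ext^2(\cV,\cW) \cong \Hom(\cW,\cV(K_{\F_e}))^* = 0$. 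The resulting inequality $\chi(\cV,\cW) \leq 0$ expanded by Riemann--Roch is exactly $\Delta(\cW) \geq P(\nu(\cW)-\nu(\cV)) - \Delta(\cV) = \DLP_{H,\cV}(\nu(\cW))$. The second case is symmetric, using $\chi(\cW,\cV) \leq 0$ instead.

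The boundary case $(\nu(\cW) - \nu(\cV))\cdot H = 0$ is where parts (1) and (2) diverge and some care is needed. Since $H$ is generic and both slopes are rational, this forces $\nu(\cW) = \nu(\cV)$. For part (1) the case is vacuous: by Lemma \ref{lem-excFacts}(2) the rank of a potentially exceptional character is pinned down by the denominator of its total slope, so $\nu(\cW) = \nu(\cV)$ would force $r(\cV) = r(\cW) = r$, contradicting $r(\cV) < r$. For part (2), I would first rule out $\Delta(\cW) = \Delta(\cV)$: otherwise $\Delta(\cW) = \frac{1}{2} - \frac{1}{2r(\cV)^2} < \frac{1}{2}$, and Lemma \ref{lem-excFacts}(6) would force $\cW$ to be semiexceptional, contrary to hypothesis. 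With $\Delta(\cW) \neq \Delta(\cV)$ the reduced Hilbert polynomials $p_\cW$ and $p_\cV$ are strictly comparable, so $H$-semistability kills one of $\Hom(\cV,\cW)$ or $\Hom(\cW,\cV)$, while $\mu_H$-stability of $\cV$ and $K_{\F_e}\cdot H < 0$ kill the corresponding $\Ext^2$ by Serre duality. Either way one gets $\Delta(\cW) \geq 1 - \Delta(\cV) = \frac{1}{2} + \frac{1}{2r(\cV)^2}$, which matches $\DLP_{H,\cV}(\nu(\cV))$ since by Remark \ref{rem-DLPK} the two branches agree at the base point. The main bookkeeping point is tracking which stability notion (slope vs.\ Gieseker) is invoked in each vanishing, but no new ingredient beyond those already assembled in Lemma \ref{lem-excFacts} and Remark \ref{rem-DLPK} is needed.
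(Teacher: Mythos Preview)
Your proposal is correct and follows essentially the same approach as the paper, whose proof is simply ``Our discussion in this section shows the following,'' referring to the case analysis in \S\ref{ssec-DLP} and the remark immediately preceding the proposition. Your handling of the boundary case $\nu(\cW)=\nu(\cV)$ in part~(1) via Lemma~\ref{lem-excFacts}(2) (the rank is the denominator of the $E$-coefficient of $\nu$, forcing $r(\cV)=r(\cW)$) is a slightly cleaner way to dispose of that case than the paper's implicit route, but both are valid and the overall structure is the same.
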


\begin{example}\label{ex-Kgraphs}
To build intuition, we graph some of the functions $\DLP_{-K_{\F_e}}^{<r}(\nu)$, which will be studied heavily in the next section.    These pictures will be better justified after Theorem \ref{thm-stabilityInterval}, which will give us a quick algorithm to determine all the exceptional bundles up to a given rank.

In Figure \ref{fig-F0K}, we take $e=0$ and graph the function $\DLP_{-K_{\F_0}}^{<8}(\nu)$ over the unit square $[0,1]\times [0,1]$ of slopes $\epsilon E + \varphi F = (\epsilon,\varphi)$.  There are contributions to the surface from exceptional bundles of ranks $1,3,5,7$ (see Example \ref{ex-stabilityIntervals} and Table \ref{table-stabilityInterval0}).

In Figure \ref{fig-F1K}, we take $e=1$ and graph the function $\DLP_{-K_{\F_0}}^{<7}(\nu)$ over the unit square $[0,1]\times [0,1]$ of slopes $\epsilon E + \varphi F = (\epsilon,\varphi)$.  There are contributions to the surface from exceptional bundles of ranks $1,2,4,5,6.$ (see Example \ref{ex-stabilityIntervals} and Table \ref{table-stabilityInterval1}).

In each case, both functions take values slightly below $1/2$ on certain small regions where higher rank exceptional bundles have not been included; on the other hand, we will see that when all exceptionals are accounted for we have $\DLP_{-K_{\F_e}}(\nu) \geq \frac{1}{2}$.  See Corollary \ref{cor-K1/2}.

\begin{figure}[t]
\begin{center}
\includegraphics[bb=0 0 5.58in 5.06in]{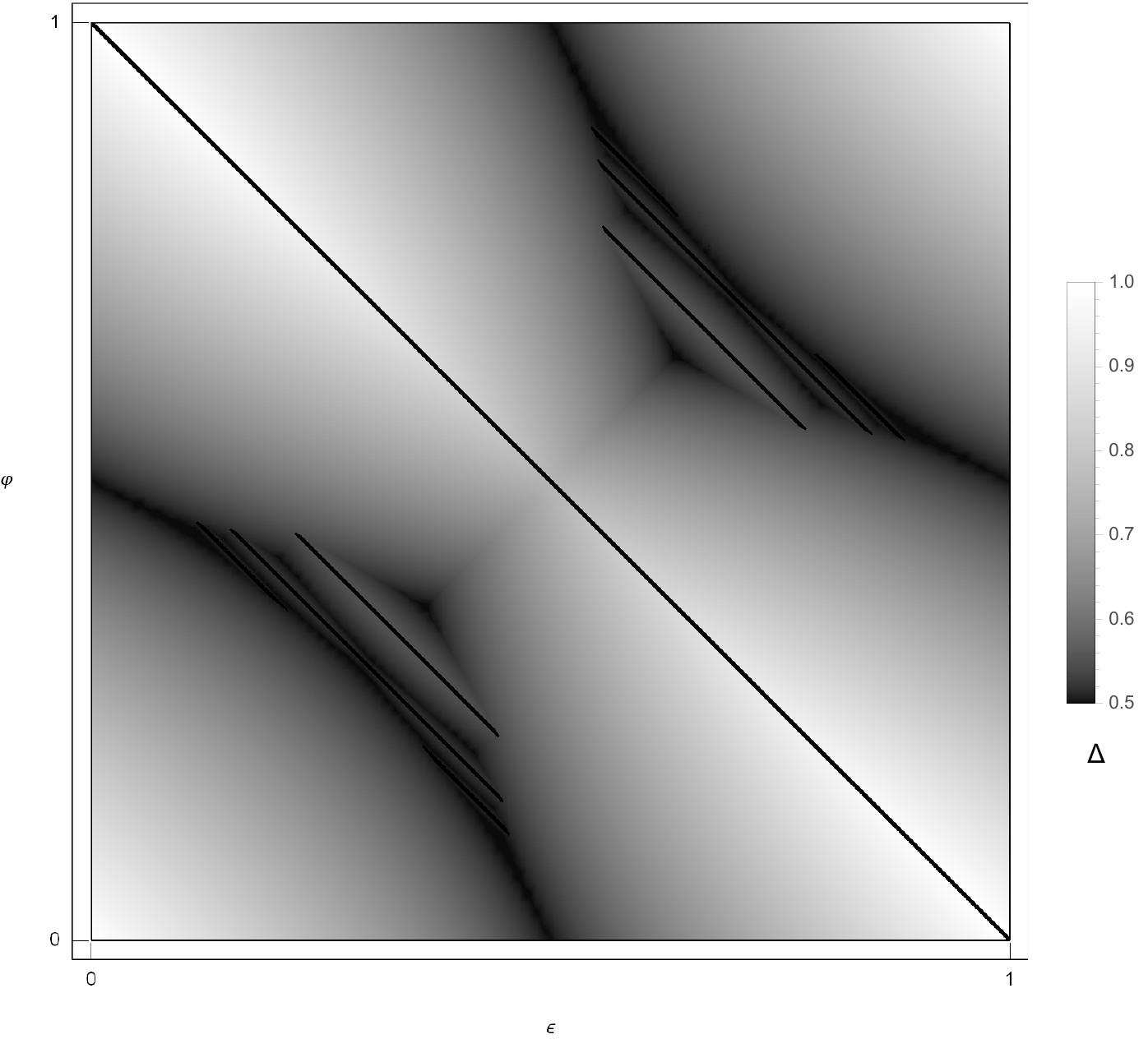}
\end{center}
\caption{For $e=0$, we graph the function $\DLP_{-K_{\F_0}}^{<8}(\epsilon E + \varphi F)$ on the unit square in the $(\epsilon,\varphi)$-plane.  See Example \ref{ex-Kgraphs}.}\label{fig-F0K}
\end{figure}

\begin{figure}[t]
\begin{center}
\includegraphics[bb=0 0 5.58in 5.06in]{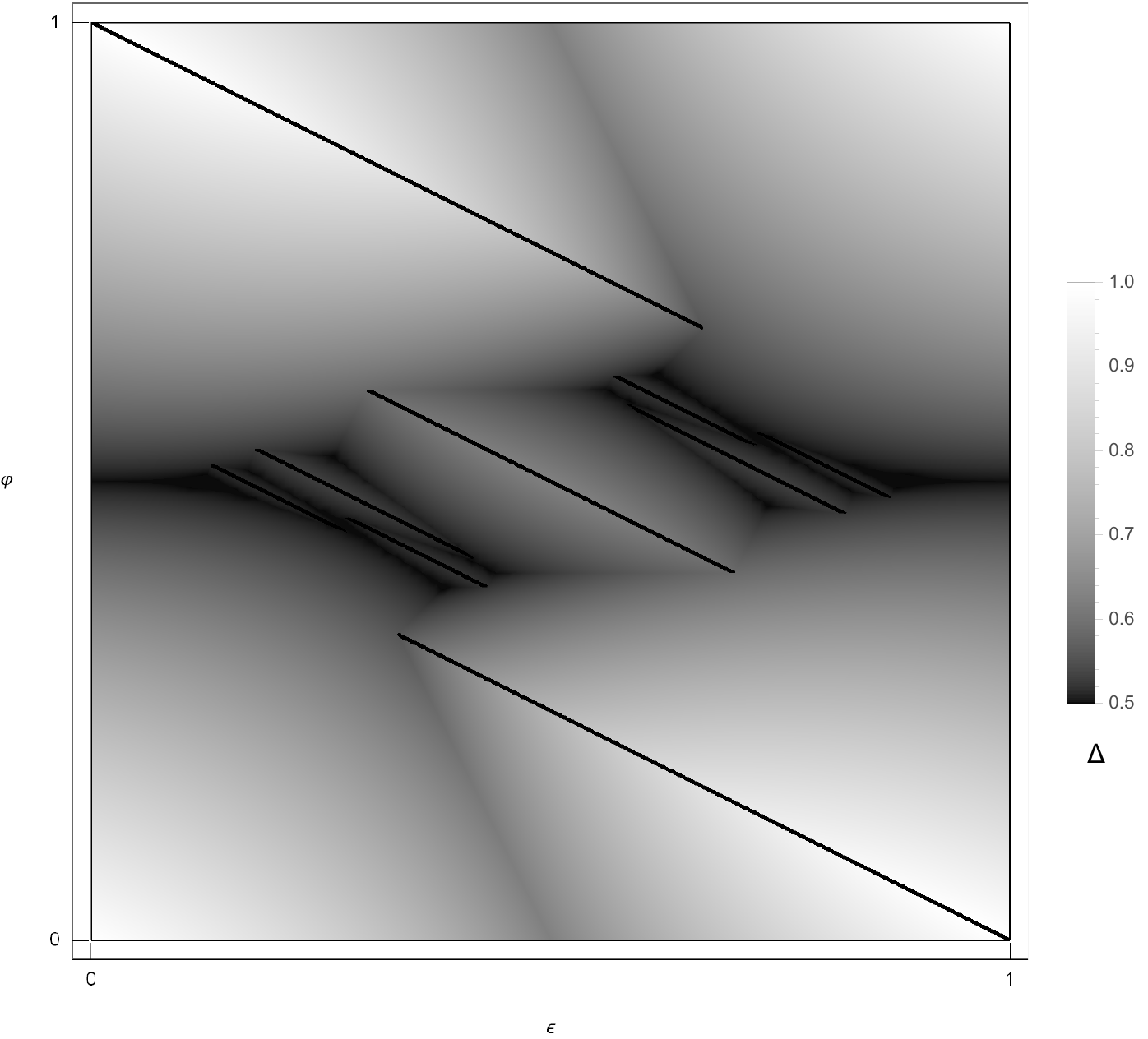}
\end{center}
\caption{For $e=1$, we graph the function $\DLP_{-K_{\F_1}}^{<7}(\epsilon E + \varphi F)$ on the unit square in the $(\epsilon,\varphi)$-plane.  See Example \ref{ex-Kgraphs}.}\label{fig-F1K}
\end{figure}
\end{example}

\section{Sufficient conditions for stability on a del Pezzo Hirzebruch surface}\label{sec-sufficient}

In this section, we study the existence problem for semistable sheaves on an anticanonically polarized del Pezzo Hirzebruch surface $\F_0$ or $\F_1$.  Similar results were previously obtained by Rudakov in \cite{Rudakov} and \cite{Rudakov2}.  

\subsection{Line bundles and $\DLP^{1}_H$ on a del Pezzo surface}  Before proceeding, we need to study the function $\DLP^{1}_H(\nu)$ which uses line bundles to restrict the numerical invariants of semistable sheaves on $\F_0$ and $\F_1$.  We begin by investigating the anticanonical case.

\begin{lemma}\label{lem-38}
Let $e=0$ or $1$, and let $\nu\in \Pic(\F_e)\te \QQ$ be a total slope.  Then $$\DLP_{-K_{\F_e}}^{1} (\nu) \geq \frac{3}{8}.$$
\end{lemma}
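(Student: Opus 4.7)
The plan is to prove the bound by choosing, for each slope $\nu$, a line bundle $L$ at which $\DLP_{-K_{\F_e},L}(\nu) \geq 3/8$. I would first record the following consequence of Remark \ref{rem-DLPK}: for any line bundle $L$, setting $\eta = \nu - L$,
$$\DLP_{-K_{\F_e},L}(\nu) \;=\; 1 + \tfrac{1}{2}\eta^2 - \tfrac{1}{2}|K_{\F_e}\cdot \eta|,$$
whenever $|\eta \cdot (-K_{\F_e})| \leq 4 = \tfrac{1}{2}K_{\F_e}^2$. Indeed, the two branches of the definition of $\DLP_{-K_{\F_e},L}(\nu)$ differ by the sign of $\tfrac{1}{2}K_{\F_e}\cdot\eta$, and in each branch this sign matches the sign of $\eta\cdot(-K_{\F_e})$, so the result always involves $-\tfrac{1}{2}|K_{\F_e}\cdot\eta|$.

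Since the inequality is invariant under simultaneously translating $\nu$ and $L$ by the same line bundle, I reduce to $\nu = xE + yF$ with $(x,y) \in [0,1)^2$ and consider the four lattice points $L_0 = 0,\ L_1 = E,\ L_2 = F,\ L_3 = E+F$ at the corners of the fundamental cell. Writing $V_i = \DLP_{-K_{\F_e},L_i}(\nu)$, each $V_i$ is a piecewise bilinear function of $(x,y)$, with pieces arising from the sign of $K_{\F_e}\cdot(\nu-L_i)$. The constraint on $L$ is easy to verify for the four $L_i$, and the goal is to show $\max_i V_i \geq 3/8$.

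For $e=0$ a short contradiction suffices: if all $V_i < 3/8$, then in the region $x+y \leq 1$ the formulas $V_1 = x(1+y) < 3/8$ and $V_2 = y(1+x) < 3/8$ force $x, y < 3/8$, so $V_0 = (1-x)(1-y) > (5/8)^2 = 25/64 > 3/8$, a contradiction; the complementary region $x+y > 1$ is symmetric. This in fact yields the stronger bound $\DLP^1_{-K_{\F_0}}(\nu) \geq 4/9$.

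For $e=1$ the bound $3/8$ is sharp, attained at $\nu = \tfrac{1}{2}E + \tfrac{1}{2}F$ where all four $V_i$ equal $3/8$, so a more delicate argument is needed. A direct computation gives $V_0 + V_1 + V_2 + V_3 = 2(1 - y + 2xy - x^2)$ in the central subcase, so the averaging inequality $\max_i V_i \geq \tfrac{1}{4}\sum_i V_i \geq 3/8$ handles the region $\{y(1-2x) + x^2 \leq 1/4\}$. The complementary region splits by the involution $(x,y) \mapsto (1-x,1-y)$ (which exchanges $V_0 \leftrightarrow V_3$ and $V_1 \leftrightarrow V_2$) into the subregion $\{x < 1/2,\ y > x/2 + 1/4\}$ and its mirror. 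On the first subregion I would compare $V_0$ and $V_2$: along $y = 1/2$ they coincide with common value $(1-x^2)/2 \geq 3/8$, and the straightforward bounds $V_2 \geq (1-x^2)/2$ for $y \geq 1/2$ and $V_0 \geq (1-x^2)/2$ for $y \leq 1/2$ finish the subregion; the mirror subregion is handled symmetrically using $V_1, V_3$ with the estimate $x(2-x)/2 \geq 3/8$ on $[1/2,1)$. The main obstacle is keeping track of the piecewise definitions of $V_1$ and $V_2$ across the lines $x+2y = 1$ and $x+2y = 2$ inside the unit cell, and checking that the desired estimates survive in each piece.
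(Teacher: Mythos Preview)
Your approach is exactly the paper's: reduce to the unit square by translation and check that one of the four corner line bundles $\OO,\OO(E),\OO(F),\OO(E+F)$ gives the bound. The paper's proof is a single sentence asserting this works; you are supplying the verification.

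For $e=0$ your argument is correct. The aside that ``this in fact yields the stronger bound $4/9$'' is an overstatement: from $V_1,V_2<c$ you only get $x,y<c$ and then $V_0>(1-c)^2$, which forces $c \geq (3-\sqrt 5)/2 \approx 0.382$, not $4/9$. The sharper bound needs a finer estimate, but this does not affect the lemma.

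For $e=1$ there is a genuine gap in the averaging step. Your sum formula $V_0+V_1+V_2+V_3 = 2(1-y+2xy-x^2)$ is only valid on the central strip $1\leq x+2y\leq 2$, where the relevant branches of $V_1$ and $V_2$ apply. But the region $\{y(1-2x)+x^2\leq 1/4\}$ is not contained in that strip: for instance at $(x,y)=(1/5,3/10)$ one has $y(1-2x)+x^2 = 0.22 \leq 1/4$ while $x+2y=0.8<1$, and there the correct average of the $V_i$ is $0.34 < 3/8$. So averaging alone does not cover this region.

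The gap is harmless, though, because your second argument never uses the hypothesis $y(1-2x)+x^2>1/4$. The estimates $\max(V_0,V_2)\geq (1-x^2)/2$ (splitting on $y\lessgtr 1/2$) and, under the involution, $\max(V_1,V_3)\geq x(2-x)/2$ use only $x\leq 1/2$ and $x\geq 1/2$ respectively; the piecewise checks across $x+2y=1$ and $x+2y=2$ that you flagged go through in each branch. So you can simply discard the averaging paragraph and run the $V_0,V_2$ comparison on all of $\{x\leq 1/2\}$ and the $V_1,V_3$ comparison on all of $\{x\geq 1/2\}$, meeting with equality $3/8$ at $x=1/2$.
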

\begin{proof}
We may as well assume $\nu = \epsilon E + \varphi F = (\epsilon,\varphi)\in \QQ^2$ lies in the unit square $[0,1]\times [0,1]$.  Then a straightforward computation shows that one of the four line bundles $\OO$, $\OO(E)$, $\OO(F)$, or $\OO(E+F)$ always provides the required inequality.
\end{proof}

\begin{remark}
When $e=1$, in fact $\DLP_{-K_{\F_e}}^{1}(\frac{1}{2}E+\frac{1}{2}F) = \frac{3}{8}$, and there is a rank $2$ exceptional bundle of this slope.  When $e=0$, the inequality can be further improved to $\DLP_{-K_{\F_e}}^{1}(\nu) \geq \frac{4}{9}$, which is achieved at the total slope $\frac{1}{3}F_1 + \frac{1}{3}F_2$, although we won't need this.
\end{remark}

The next result is the line bundle version of a more general result we will prove later.

\begin{lemma}[Monotonicity in the polarization---line bundle case]\label{lem-monotonicity2}
Let $e=0$ or $1$, and let $\nu\in \Pic(\F_e)\te \QQ$ be a total slope.  Consider the polarizations $A_m = -\frac{1}{2}K_{\F_e}+mF$.  If $0 \leq m \leq m'$ or if $\frac{e}{2}-1<m' \leq m \leq 0$, then $$\DLP_{A_m}^{1}(\nu) \leq \DLP_{A_{m'}}^{1}(\nu).$$ Informally, as a function of the polarization $H$, $\DLP_H^{1}(\nu)$ gets larger as (the ray spanned by) $H$ moves away from (the ray spanned by) $-K_{\F_e}$.
\end{lemma}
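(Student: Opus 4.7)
The plan is to reduce the monotonicity to the behavior of the finitely many line bundles that can realize the supremum. The key computations are that $-\tfrac{1}{2}K_{\F_e}\cdot A_m=2+m$ and that for $D=\nu-L=\alpha E+\beta F$ one has $D\cdot A_m=\alpha(1-\tfrac{e}{2}+m)+\beta$; thus eligibility of $L$ at $A_m$ amounts to $|\alpha(1-\tfrac{e}{2}+m)+\beta|\leq 2+m$, while $\DLP_{A_m,L}(\nu)\in\{P(D),P(-D)\}$ is selected by the sign of $D\cdot A_m$. Using the Hirzebruch Euler characteristic formula $P(X)=(x_1+1)(x_2+1-\tfrac{ex_1}{2})$ for $X=x_1E+x_2F$, one has the factorizations $P(D)=(\alpha+1)(\beta+1-\tfrac{e\alpha}{2})$ and $P(-D)=(1-\alpha)(1-\beta+\tfrac{e\alpha}{2})$, together with the identity $-K_{\F_e}\cdot D=(2-e)\alpha+2\beta$.

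For the first case $0\leq m\leq m'$, I will first show that the supremum is always realized by a line bundle with $|\alpha|\leq 1$. Indeed, for $|\alpha|\geq 2$ and $m\geq 0$, the eligibility window for $\beta$ forces one factor of each factored form of $P(\pm D)$ to have sign opposite to the other, yielding $\DLP_{A_m,L}(\nu)<0$; on the other hand Lemma \ref{lem-38} exhibits a line bundle with $|\alpha|\leq 1$ achieving at least $\tfrac{3}{8}>0$, so the supremum is realized by some $L_0$ with $|\alpha_0|\leq 1$. Next, the endpoints of the eligibility interval for $\beta_0$ at fixed $\alpha_0$ are $\pm(2+m)-\alpha_0(1-\tfrac{e}{2}+m)$, with $m$-derivatives $\pm 1-\alpha_0$; since $|\alpha_0|\leq 1$, the interval widens in both directions as $m$ increases, so $L_0$ remains eligible at $A_{m'}$.

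If the sign of $D_0\cdot A_\bullet$ is constant on $[m,m']$ the value is unchanged. If the sign switches from negative at $A_m$ to positive at $A_{m'}$, then $\alpha_0>0$, and substituting the strict inequality $D_0\cdot A_m<0$ into $-K_{\F_e}\cdot D_0=(2-e)\alpha_0+2\beta_0$ gives $-K_{\F_e}\cdot D_0<-2m\alpha_0\leq 0$, so $P(-D_0)\geq P(D_0)$ and the branch switch only increases the value; the reverse sign change is symmetric, and the borderline case $D_0\cdot A_m=0$ reduces to the equality $P(D_0)=P(-D_0)$ by the same computation.

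For the second case $\tfrac{e}{2}-1<m'\leq m\leq 0$, the polarization $A_m$ now moves towards the opposite edge of the nef cone. When $e=0$, the automorphism of $\F_0$ swapping $E$ and $F$ sends $A_m$ to a positive multiple of $A_{-m/(1+m)}$ and interchanges the two cases directly. When $e=1$, the analogous analysis proceeds after swapping the roles of $\alpha$ and $\beta$: one reduces to line bundles with $|\beta|\leq 1$ and verifies preservation of eligibility and branch-switch monotonicity using the parallel computation with the nef class $E+F=H_0$ playing the role previously played by $F$. The main obstacle is the initial reduction to $|\alpha|\leq 1$: although elementary, the sign analysis for $|\alpha|\geq 2$ requires checking all four combinations of signs of $\alpha\pm 1$ and $\beta\pm 1-\tfrac{e\alpha}{2}$ against the eligibility window.
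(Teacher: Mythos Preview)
Your argument for the first case $0\leq m\leq m'$ is essentially correct and takes a genuinely different route from the paper.  The paper never reduces to line bundles with $|\alpha|\leq 1$; instead, when a line bundle $L$ leaves the domain of $\DLP_{H,L}$, it replaces $L$ by $L'=L\otimes(-K_{\F_e})$ and uses the Serre duality identity $P(\nu+K_{\F_e})=P(-\nu)$ to see that $\DLP_{H_+,L'}(\nu)=\DLP_{H_-,L}(\nu)$.  Your reduction to $|\alpha|\leq 1$ is a nice alternative: once $|\alpha_0|\leq 1$, the eligibility window for $\beta_0$ genuinely widens with $m$, so the twist trick is never needed.  Two small corrections: the sign analysis you sketch works for $|\alpha|>1$, not only $|\alpha|\geq 2$ (and you need the former, since $\alpha$ is rational); and in the borderline case $D_0\cdot A_m=0$ you do \emph{not} get $P(D_0)=P(-D_0)$ in general (only when $m=0$), but the same computation gives $-K_{\F_e}\cdot D_0=-2m\alpha_0$, which has the correct sign to show the branch selected at $A_{m'}$ is the larger one.

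The second case is where your proposal has a genuine gap.  The $e=0$ symmetry argument is fine, but for $e=1$ the claim that ``the analogous analysis proceeds after swapping the roles of $\alpha$ and $\beta$'' does not go through: the factorizations $P(D)=(\alpha+1)(\beta+1-\tfrac{\alpha}{2})$ and $P(-D)=(1-\alpha)(1-\beta+\tfrac{\alpha}{2})$ are \emph{not} symmetric in $\alpha$ and $\beta$, so one cannot conclude that $|\beta|>1$ forces a negative value.  Concretely, take $\alpha=\tfrac{1}{2}$, $\beta=\tfrac{11}{10}$: then $D\cdot B_s>0$ for all $s\geq 0$, the line bundle is eligible, and the selected branch $P(-D)=\tfrac{1}{2}\cdot\tfrac{3}{20}>0$.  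So the reduction step fails as stated.  Your branch-switch and eligibility-widening computations for $|\beta_0|\leq 1$ along $B_s=-\tfrac{1}{2}K_{\F_1}+s(E+F)$ are correct, but you have not shown that the maximizer has $|\beta_0|\leq 1$.  The paper's twist trick $L\mapsto L\otimes(-K_{\F_e})$ sidesteps this entirely and works uniformly in both directions; that is the missing idea here.
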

\begin{proof}
Let $L$ be a line bundle.  Notice that in the definition of $\DLP_{H,L}$ the two formulas $$P(\pm(\nu-L))-\Delta(L)$$ defining the function don't depend on $H$; the polarization $H$ only matters to select the appropriate branch depending on the sign of $(\nu-L)\cdot H$.  In the special case when $H = -K_{\F_e}$, we furthermore have $$\DLP_{-K_{\F_e},L}(\nu) = \min\{ P(\pm(\nu-L))-\Delta(L)\}$$ by the formulas in Remark \ref{rem-DLPK}.  Thus, the smaller of the two branches is always selected when $H = -K_{\F_e}$.  As $H$ moves away from $-K_{\F_e}$, the sign of $(\nu-L)\cdot H$ possibly changes, and the larger branch will be selected to compute $\DLP_{H,L}(\nu)$.  Once the larger branch is selected it will continue to be selected as $H$ moves further away from $-K_{\F_e}$, so long as $\nu$ remains in the domain of definition of $\DLP_{H,L}(\nu)$.  With $m,m'$ as in the statement, it follows that $$\DLP_{A_m,L}(\nu) \leq \DLP_{A_{m'},L}(\nu)$$ if $\nu$ is in the domain of definition of both functions.

As $H$ moves away from $-K_{\F_e}$, it is also possible that $\nu$ either leaves or enters the domain of definition of $\DLP_{H,L}(\nu)$; this transition happens when $|(\nu-L)\cdot H| = -\frac{1}{2}K_{\F_e}\cdot H$.  There are four cases to consider, but they can all be handled similarly.  Suppose $(\nu-L)\cdot H = -\frac{1}{2}K_{\F_e}\cdot H$.  Let $H_-$ be a polarization slightly closer to $-K_{\F_e}$ and let $H_+$ be a polarization slightly farther from $-K_{\F_e}$.  Suppose that 
\begin{align*}(\nu-L)\cdot H_+ &> -\frac{1}{2}K_{\F_e}\cdot H_+ \\ (\nu-L)\cdot H_- &< -\frac{1}{2}K_{\F_e}\cdot H_{-},\end{align*} so that $\nu$ is in the domain of definition of $\DLP_{H_{-},L}$ but not in the domain of definition of $\DLP_{H_+,L}$.  Consider the line bundle $L' = L \te (-K_{\F_e})$.  Then $(\nu-L')\cdot H = \frac{1}{2}K_{\F_e}\cdot H$, and $$(\nu-L')\cdot H_+>\frac{1}{2}K_{\F_e}\cdot H_+,$$ so that $\nu$ is in the domain of definition of $\DLP_{H_+,L'}$.  But the function $P$ satisfies $P(\nu + K_{\F_e}) = P(-\nu)$ by Serre duality, so $$\DLP_{H_+,L'}(\nu) = P(\nu-L') - \Delta(L') = P(L-\nu)-\Delta(L) = \DLP_{H_{-},\cV}(\nu).$$ Thus as $\nu$ leaves the domain of definition of $\DLP_{H_{-},L}$, it enters the domain of definition of $\DLP_{H_+,L'}$, and the formula used to compute each function at $\nu$ is the same.  Therefore, the computation of the maximum in the definition of $\DLP^{1}_H(\nu)$ is unaffected as $\nu$ enters or leaves the domain of definition of a function $\DLP_{H,L}$.  Thus we have shown that every term in the supremum defining $\DLP_{A_m}^{1}(\nu)$ is bounded by some term in the supremum defining $\DLP_{A_{m'}}^{1}(\nu)$.  
\end{proof}

Combining Lemmas \ref{lem-38} and \ref{lem-monotonicity2}, we get the following inequality that will be needed in the next subsection.

\begin{corollary}\label{cor-38}
Let $e=0$ or $1$, and let $\nu\in \Pic(\F_e)\te \QQ$.  For any polarization $H$, we have $$\DLP_H^{1}(\nu) \geq \frac{3}{8}.$$
\end{corollary}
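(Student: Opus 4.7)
The plan is a direct combination of the two preceding lemmas. First I would observe that $\DLP_H^{1}(\nu)$ depends only on the ray spanned by $H$: rescaling $H$ by a positive constant preserves both the sign of each $(\nu - L)\cdot H$ and the ratio $|(\nu-L)\cdot H|/(-\tfrac{1}{2}K_{\F_e}\cdot H)$, so exactly the same line bundles $L$ enter the supremum defining $\DLP_H^{1}(\nu)$, and they contribute the same values (the formulas $P(\pm(\nu-L))-\Delta(L)$ in the definition of $\DLP_{H,L}(\nu)$ do not involve $H$). Since every ample divisor on $\F_e$ is a positive multiple of some $H_n$ with $n>0$, and $H_n = A_{n-1+e/2}$, I may therefore reduce to the case $H = A_m$ for some $m > \frac{e}{2}-1$.

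Next I would apply Lemma \ref{lem-monotonicity2} by comparing with the base case $m=0$, which corresponds to $A_0 = -\frac{1}{2}K_{\F_e}$. If $m \geq 0$, the first clause of the lemma (with parameters $0 \leq m$) gives $\DLP_{A_0}^{1}(\nu) \leq \DLP_{A_m}^{1}(\nu)$. If instead $\frac{e}{2}-1 < m \leq 0$, the second clause (with the lemma's $m'$ taken to be our $m$ and the lemma's $m$ taken to be $0$) yields the same conclusion $\DLP_{A_0}^{1}(\nu) \leq \DLP_{A_m}^{1}(\nu)$. In either case we have shown
\[
\DLP_{-K_{\F_e}}^{1}(\nu) \;\leq\; \DLP_{H}^{1}(\nu).
\]

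Finally, Lemma \ref{lem-38} supplies the base bound $\DLP_{-K_{\F_e}}^{1}(\nu) \geq \tfrac{3}{8}$, and chaining the two inequalities gives $\DLP_H^{1}(\nu) \geq \tfrac{3}{8}$. There is no substantive obstacle: once the monotonicity lemma and the anticanonical base case are in hand, the argument is purely formal. The only point worth articulating carefully is that the two clauses of Lemma \ref{lem-monotonicity2} together cover every $m > \frac{e}{2}-1$ when compared with $m = 0$, so that the reduction to the anticanonical polarization is always legitimate.
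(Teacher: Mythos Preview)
Your proposal is correct and follows exactly the paper's approach: the paper's proof is the single sentence ``Combining Lemmas \ref{lem-38} and \ref{lem-monotonicity2}, we get the following inequality,'' and you have simply spelled out the details of that combination, including the reduction to polarizations of the form $A_m$ and the case split on the sign of $m$.
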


\subsection{Exceptional bundles and stability for del Pezzo surfaces}

In the anticanonically polarized del Pezzo case, Proposition \ref{prop-DLPss} actually gives necessary conditions for the existence of semistable (resp. exceptional) sheaves.  Since the anticanonical polarization can lie on various walls it is more convenient to first work with polarizations arbitrarily close to it.

\begin{theorem}\label{thm-existence}
Let $e=0$ or $1$ and let $\bv = (r,\nu,\Delta)\in K(\F_e)$ have positive rank.  Let $\epsilon >0$ be sufficiently small (depending on $r$) and let $H:=H_{\pm} = -\frac{1}{2}K_{\F_e}\pm \epsilon F$.  If $$\Delta \geq \DLP_H^{<r}(\nu)$$ then there is an $H$-semistable sheaf of character $\bv$.
\end{theorem}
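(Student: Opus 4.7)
The plan is to induct on the rank $r$; the base case $r=1$ is trivial since line bundles are $H$-stable. For the inductive step, fix $\bv = (r,\nu,\Delta)$ with $\Delta \geq \DLP_H^{<r}(\nu)$ and suppose, for contradiction, that $M_H(\bv)$ is empty. Since $\DLP_H^{<r}(\nu) \geq \DLP_H^{1}(\nu) \geq \tfrac{3}{8}$ by Corollary~\ref{cor-38}, we have $\Delta \geq 0$, so $\cP_F(\bv)$ is nonempty by Proposition~\ref{prop-EFprioritary}. For $\epsilon$ small, the ceiling $\lceil 1 - \tfrac{e}{2}\pm\epsilon\rceil$ lies in $\{1,2\}$, so the general $F$-prioritary sheaf is also $H_{\lceil m\rceil}$-prioritary and the analysis of Section~\ref{sec-genHN} applies. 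Let $\bgr_1,\ldots,\bgr_\ell$ with $\ell \geq 2$ be the characters of the generic $H$-Harder--Narasimhan filtration, each of rank $r_i < r$.

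By Lemmas~\ref{lem-HNclose},~\ref{lem-HNorthogonal}, and~\ref{lem-discBound}, the $\bgr_i$ satisfy $|\mu_H(\bgr_1)-\mu_H(\bgr_\ell)| \leq 1 \leq -\tfrac{1}{2}K_{\F_e}\cdot H$, the orthogonalities $\chi(\bgr_i,\bgr_j)=0$ for $i<j$, and each $\Delta_i$ is the minimal discriminant of an $H$-semistable character of slope $\nu_i$. The inductive hypothesis produces a semistable sheaf of each character $\bgr_i$; moreover, the minimality of $\Delta_i$ combined with induction forces $\bgr_i$ either to be semiexceptional of the form $c_i\cdot \ch(\cE_i)$ for an exceptional bundle $\cE_i$ of rank strictly less than $r$, or to attain $\Delta_i = \DLP_H^{<r_i}(\nu_i)$ with the bound realized by a smaller-rank exceptional bundle. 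By Theorem~\ref{thm-delPezzoExc} all such exceptional bundles are $\mu_{-K_{\F_e}}$-stable; choosing $\epsilon > 0$ small enough (depending only on $r$, which bounds the finitely many exceptional bundles that can arise) and using openness of $\mu$-stability in the polarization, each $\cE_i$ in fact lies in $\mathbb{E}_H$.

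To obtain the contradiction, fix an index $k$ for which $\bgr_k$ is semiexceptional and $\nu(\cE_k)$ lies on the correct side of $\nu$ in $H$-slope for use in $\DLP_{H,\cE_k}(\nu)$. From $\chi(\bgr_k,\bgr_j) = 0$ and semiexceptionality of $\bgr_k$ we obtain $\chi(\cE_k,\bgr_j)=0$ for $j > k$, and analogously $\chi(\bgr_j,\cE_k) = 0$ for $j < k$. Each such orthogonality yields (via Riemann--Roch) $\Delta_j = P(\pm(\nu_j - \nu(\cE_k))) - \Delta(\cE_k)$. Substituting these relations into the standard additivity identity
\begin{equation*}
r\,\Delta(\bv) = \sum_i r_i\Delta_i - \frac{1}{2r}\sum_{i<j} r_i r_j(\nu_i-\nu_j)^2,
\end{equation*}
and using convexity of $P(\pm(\cdot))$ together with the fact that $\nu$ is a weighted mean of the $\nu_i$, the right-hand side collapses to a strict inequality $\Delta(\bv) < P(\pm(\nu - \nu(\cE_k))) - \Delta(\cE_k) = \DLP_{H,\cE_k}(\nu)$. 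Since $\cE_k\in\mathbb{E}_H$ has rank less than $r$ and $\nu$ lies in its DLP strip by the slope-closeness condition, we conclude $\Delta(\bv) < \DLP_H^{<r}(\nu)$, contradicting the hypothesis.

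The principal obstacle is guaranteeing the existence of the semiexceptional index $k$ used in the last step: a priori, multiple HN factors could fail to be semiexceptional. This is resolved simultaneously with the proof of Theorem~\ref{thm-DPHNintro}: for $H$ close to $-K_{\F_e}$ and $\Delta \geq \tfrac{3}{8}$, at most one of the $\bgr_i$ fails to be semiexceptional, by a rigidity argument combining Proposition~\ref{prop-mukai}, Theorem~\ref{thm-delPezzoExc}, and the orthogonalities $\chi(\bgr_i,\bgr_j)=0$. A secondary technical point---choosing $\epsilon$ small enough that all exceptional bundles of the relevant bounded rank remain $\mu_H$-stable---is handled uniformly via the stability-interval machinery, since for each fixed $\bv$ only finitely many exceptional characters can appear among the $\bgr_i$.
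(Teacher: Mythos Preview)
Your outline has the right architecture, but the step you flag as the ``principal obstacle'' is in fact the entire content of the proof, and your proposed resolution of it does not work. You suggest that the existence of a semiexceptional factor follows from ``a rigidity argument combining Proposition~\ref{prop-mukai}, Theorem~\ref{thm-delPezzoExc}, and the orthogonalities,'' to be proved simultaneously with Corollary~\ref{cor-HNShape}. But Proposition~\ref{prop-mukai} only says that subobjects and quotients of a \emph{rigid} sheaf are rigid; the general sheaf $\cV\in\cP_F(\bv)$ is not rigid, so there is no reason its Harder--Narasimhan factors should be. In the paper, Corollary~\ref{cor-HNShape} is deduced \emph{from} the argument proving Theorem~\ref{thm-existence}, not the other way around. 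The actual mechanism is a direct geometric estimate: from $\chi(\bgr_1,\bgr_\ell)=0$ and Riemann--Roch one gets $\Delta_1+\Delta_\ell = P(\nu_\ell-\nu_1)$, and one shows $P(\nu_\ell-\nu_1)\leq 1$ by observing that the tangent lines to the hyperbola $\{P(\nu)=1\}$ at $\nu=0$ and $\nu=K_{\F_e}$ are both perpendicular to $-K_{\F_e}$; since $H$ is close to $-K_{\F_e}$, the condition $-1\leq (\nu_\ell-\nu_1)\cdot H\leq 0$ traps $\nu_\ell-\nu_1$ between the two tangent lines and hence between the two branches. Thus $\Delta_1+\Delta_\ell\leq 1$, so one of $\Delta_1,\Delta_\ell$ is below $\tfrac{1}{2}$, forcing $\bgr_1$ or $\bgr_\ell$ (not merely some middle $\bgr_k$) to be semiexceptional.

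Your contradiction step is also more complicated than necessary, and the ``convexity of $P(\pm(\cdot))$'' assertion is unjustified: $P$ is a quadratic in $\nu$ whose Hessian is the intersection form, which has signature $(1,1)$. Once you know that, say, $\bgr_1$ is semiexceptional, the contradiction is one line: by linearity of $\chi$ and the orthogonalities, $\chi(\bgr_1,\bv)=\chi(\bgr_1,\bgr_1)+\sum_{i>1}\chi(\bgr_1,\bgr_i)=\chi(\bgr_1,\bgr_1)>0$, which immediately gives $\Delta<\DLP_{H,\bgr_1}(\nu)\leq\DLP_H^{<r}(\nu)$. Note that this needs the semiexceptional factor to be extremal ($\bgr_1$ or $\bgr_\ell$) so that all the cross terms vanish with the correct ordering; this is exactly what the hyperbola argument supplies.
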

\begin{proof}
The result is clear if $r = 1$, so assume $r\geq 2$.  If $e=0$, then by symmetry we only have to consider the polarization $H=H_-$.  By Corollary \ref{cor-38} we have $\DLP_{H}^{1}(\nu) \geq \frac{3}{8},$ so we may as well assume $\Delta\geq \frac{3}{8}$.  Then there are $H_1$-prioritary sheaves of character $\bv$, since in the notation of Section \ref{sec-existPrioritary} if $\nu = \alpha E + \beta F$ then $$\rho_{\gen}({\bf v}) = \left\lfloor\frac{\Delta}{(\lceil\alpha\rceil-\alpha)(\alpha-\lfloor \alpha\rfloor)}-\frac{e}{2}+1 - (\lceil \psi\rceil -\psi)\right\rfloor\geq \left\lfloor \frac{\frac{3}{8}}{\frac{1}{4}} -\frac{1}{2}+1-1\right\rfloor=1$$ by Corollary \ref{cor-prioritaryRho}.  Since $$H=-\frac{1}{2}K_{\F_e} \pm \epsilon F =  H_{1-\frac{e}{2}\pm \epsilon}$$ and $\lceil 1-\frac{e}{2}\pm \epsilon\rceil =1$ (recall we are taking the $-$ sign if $e=0$) we can use the methods of Section \ref{sec-genHN} to study the $H$-Harder-Narasimhan filtration of a general sheaf $\cV\in \cP_{F}(\bv)$.  

Suppose that there is no $H$-semistable sheaf of character $\bv$, so that the $H$-Harder-Narasimhan filtration of $\cV$ has length at least two.  Let $\bgr_1,\ldots,\bgr_\ell$ be the characters of the factors, with total slopes $\nu_1,\ldots,\nu_\ell$ and discriminants $\Delta_1,\ldots,\Delta_\ell$.
 Then in particular $\chi(\bgr_1,\bgr_\ell) = 0$, and so by Riemann-Roch $$P(\nu_\ell - \nu_1) = \Delta_1+\Delta_\ell.$$
Here we have $-1\leq H\cdot(\nu_\ell - \nu_1) \leq 0.$ We want to show that either $\Delta_1$ or $\Delta_\ell$ is less than $\frac{1}{2}$, so that the corresponding factor is semiexceptional by Lemma \ref{lem-excFacts} (6).  To do this, we can show that $P(\nu_\ell-\nu_1)\leq 1$ with equality if and only if $\nu_1 = \nu_\ell$; when equality holds, the fact that the filtration is a Harder-Narasimhan filtration forces $\Delta_1 < \Delta_\ell$.

We want to choose $\epsilon>0$ small enough that the Harder-Narasimhan factors of $\cV$ remain constant as $\epsilon$ becomes arbitrarily small.  To do this, note that if $\epsilon$ is bounded to an interval (say $(0,1))$, then by Lemma \ref{lem-slopeQuad} the total slopes $\nu_i$ will come from a fixed bounded region that does not depend on $\epsilon$.  Since the ranks are bounded, there are then only finitely many possibilities for the slopes $\nu_i$.   Thus there are only finitely many values $m$ such that two distinct total slopes $\nu_i$ have the same $H_m$-slopes.  If $\epsilon$ is chosen small enough, it follows that the ordering of the numbers $H_m\cdot \nu_i$ does not change as $\epsilon$ shrinks to $0$.  By an induction on the rank, if we further shrink $\epsilon$, then for a polarization $H_m \in [H,-\frac{1}{2}K_{\F_e})$ the set of $H_m$-semistable sheaves of rank less than $r$ will not depend on $m$.  Hence, for this choice of $\epsilon$ the $H_m$-Harder-Narasimhan filtration of $\cV$ is independent of the polarization $H_m\in [H,-\frac{1}{2}K_{\F_e})$.

The zero locus of the equation $P(\nu)=1$ in the $\nu$-plane is a hyperbola; the inequality $P(\nu)<1$ holds between the branches of the hyperbola.  The tangent line to $P(\nu)=1$ at $\nu = 0$ (corresponding to $\OO_{\F_e}$) is given by the equation $\nu\cdot (-K_{\F_e}) = 0$, and the tangent line to $P(\nu)=1$ at $\nu = K_{\F_e}$ is the parallel line $\nu\cdot (-K_{\F_e} )= -K_{\F_e}^2$.  Then if $\nu$ satisfies $$ -K_{\F_e}^2\leq\nu\cdot (-K_{\F_e}) \leq 0, $$ it follows that $P(\nu) \leq 1$, with equality if and only if $\nu$ is either $0$ or $K_{\F_e}$.

 By our choice of $\epsilon$, the inequalities $-1\leq H\cdot (\nu_\ell-\nu_1) \leq 0$ give $$-1\mp\epsilon F\cdot (\nu_\ell-\nu_1) \leq -\frac{1}{2}K_{\F_e} \cdot (\nu_\ell-\nu_1)\leq 0;$$  notice that if $H\cdot (\nu_\ell-\nu_1)=0$, then $\nu_\ell=\nu_1$.  Since the $H$-Harder-Narasimhan filtration of $\cV$ does not change as $\epsilon$ further shrinks, we can conclude that $$-\frac{1}{2}K_{\F_e}^2<-1\leq -\frac{1}{2}K_{\F_e}\cdot(\nu_\ell-\nu_1) \leq 0.$$ Then the previous paragraph shows $P(\nu_\ell-\nu_1) \leq 1$, with equality if and only if $\nu_1=\nu_\ell$.

Thus we have shown that either $\bgr_1$ or $\bgr_\ell$ is semiexceptional.  Without loss of generality, suppose $\bgr_1$ is semiexceptional.  Then $$\chi(\bgr_1,\bv) = \sum_{i=1}^\ell \chi(\bgr_1,\bgr_i) = \chi(\bgr_1,\bgr_1)>0.$$ We conclude that $$\Delta< \DLP_{H,\bgr_1}(\nu),$$ so  $\Delta<\DLP_H^{<r}(\nu)$ holds.
\end{proof}

Combining the theorem with Proposition \ref{prop-DLPss} gives the next more complete picture.

\begin{corollary}\label{cor-DLPexist}
Let $e=0$ or $1$ and let $\bv = (r,\nu,\Delta)\in K(\F_e)$ have positive rank.  Let $\epsilon >0$ be sufficiently small (depending on $r$) and let $H:=H_{\pm} = -\frac{1}{2}K_{\F_e}\pm \epsilon F$.  
\begin{enumerate}

\item If $\bv$ is potentially exceptional, then it is exceptional if and only if $$\Delta \geq \DLP_{H}^{<r}(\nu).$$

\item If $\bv$ is not semiexceptional, there is an $H$-semistable sheaf of character $\bv$ if and only if $$\Delta \geq \DLP_{H}(\nu).$$
\end{enumerate}
\end{corollary}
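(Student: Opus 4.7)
The corollary is essentially a packaging of Theorem \ref{thm-existence} and Proposition \ref{prop-DLPss}, so the plan is to verify that both directions for each of (1) and (2) follow by combining these tools. The first step is to arrange the choice of $\epsilon > 0$ so that $H$ is generic with respect to every character of rank at most $r$ (in addition to being small enough for Theorem \ref{thm-existence} to apply). Since we only need finitely many genericity conditions, this can be achieved simultaneously.

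For the sufficient directions ($\Leftarrow$) of both (1) and (2), I would note that $\DLP_H^{<r}(\nu)\leq \DLP_H(\nu)$, so the hypothesis $\Delta\geq \DLP_H^{<r}(\nu)$ (respectively $\Delta\geq \DLP_H(\nu)$) allows us to invoke Theorem \ref{thm-existence} to produce an $H$-semistable sheaf $\cV$ of character $\bv$. In part (2) this is all that is needed. In part (1), we use that $\bv$ is potentially exceptional, so $\Delta(\cV) = \frac{1}{2} - \frac{1}{2r^2} < \frac{1}{2}$ by Lemma \ref{lem-excFacts}(1); then Lemma \ref{lem-excFacts}(6) applied to the generic polarization $H$ shows $\cV$ is semiexceptional. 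Since the potentially exceptional character $\bv$ is primitive by Lemma \ref{lem-excFacts}(3), being semiexceptional of character $\bv$ forces $\cV$ to be exceptional.

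For the necessary direction ($\Rightarrow$) of (1), let $\cV$ be an exceptional bundle of character $\bv$. By Theorem \ref{thm-delPezzoExc}, $\cV$ is $\mu_{-K_{\F_e}}$-stable; since slope stability is open in the polarization, $\cV$ remains $\mu_H$-stable and in particular $H$-Gieseker stable for $\epsilon$ small enough. Proposition \ref{prop-DLPss}(1) then gives $\Delta\geq \DLP_H^{<r}(\nu)$. For the necessary direction of (2), Proposition \ref{prop-DLPss}(2) applies directly to any $H$-semistable non-semiexceptional sheaf of character $\bv$, yielding $\Delta\geq \DLP_H(\nu)$.

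The only subtle point is the interaction between the genericity hypothesis on $H$ in Proposition \ref{prop-DLPss} and the specific form $H = -\frac{1}{2}K_{\F_e}\pm \epsilon F$: one must be careful that as $\epsilon$ is shrunk to obtain the stability of exceptional bundles in (1), we do not cross any walls relevant to the invariants at hand. But since all characters involved have bounded rank, only finitely many walls are relevant and a generic choice of $\epsilon$ within a small interval avoids them. This is the main (minor) technicality; everything else is bookkeeping of the already-proved theorems.
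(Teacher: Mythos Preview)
Your proof is correct and follows essentially the same approach as the paper: both directions are obtained by combining Theorem~\ref{thm-existence} with Proposition~\ref{prop-DLPss}, using Lemma~\ref{lem-excFacts} to handle the exceptional case. You in fact supply a bit more detail than the paper does---for instance, explicitly invoking Theorem~\ref{thm-delPezzoExc} and openness of slope stability to see that an exceptional bundle is $H$-semistable, and noting primitivity to pass from semiexceptional to exceptional---but these are exactly the implicit steps underlying the paper's terse citation of Proposition~\ref{prop-DLPss} and Lemma~\ref{lem-excFacts}(6).
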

\begin{proof}
(1 \& 2) ($\Rightarrow$) This is Proposition \ref{prop-DLPss} (1) and (2).

(2) ($\Leftarrow$) By assumption $\Delta  \geq \DLP_H(\nu) \geq \DLP_H^{<r}(\nu)$, so there are $H$-semistable sheaves of character $\bv$ by Theorem \ref{thm-existence}.

(1) ($\Leftarrow$) By Theorem \ref{thm-existence} there are $H$-semistable sheaves of character $\bv$, and Lemma \ref{lem-excFacts} (6) completes the proof.
\end{proof} 

Analyzing the proof of Theorem \ref{thm-existence} gives the following fact about the generic Harder-Narismhan filtration in the anticanonically polarized case.

\begin{corollary}\label{cor-HNShape}
Let $e=0$ or $1$ and let $\bv = (r,\nu,\Delta)\in K(\F_e)$ have positive rank.  Let $\epsilon >0$ be sufficiently small (depending on $r$) and let $H:=H_{\pm} = -\frac{1}{2}K_{\F_e}\pm \epsilon F$.  
If $\Delta \geq \frac{3}{8}$ and there are not $H$-semistable sheaves of character $\bv$, then at most one of the $H$-Harder-Narasimhan factors of the general sheaf $\cV\in \cP_F(\bv)$ is not a semiexceptional bundle.
\end{corollary}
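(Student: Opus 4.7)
The plan is to strengthen the key slope/discriminant estimate from the proof of Theorem \ref{thm-existence} and apply it to every pair of Harder-Narasimhan factors, not just the extremal pair $(\bgr_1,\bgr_\ell)$.

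First I would verify that the setup of \S\ref{sec-genHN} applies. Since $\Delta \geq \frac{3}{8}$, the formula of Corollary \ref{cor-prioritaryRho} gives $\rho_{\gen}(\bv) \geq 1$ exactly as in the proof of Theorem \ref{thm-existence}, so general $F$-prioritary sheaves of character $\bv$ are $H_1 = H_{\lceil 1-e/2\pm\epsilon\rceil}$-prioritary. Therefore Lemmas \ref{lem-HNclose} and \ref{lem-HNorthogonal} apply to the $H$-Harder-Narasimhan factors $\bgr_1,\ldots,\bgr_\ell$ of a general $\cV\in\cP_F(\bv)$, with total slopes $\nu_1,\ldots,\nu_\ell$ and discriminants $\Delta_1,\ldots,\Delta_\ell$. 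As in Theorem \ref{thm-existence}, by shrinking $\epsilon$ I may assume these characters are independent of $\epsilon$ and that $H$ is $\bv$-generic.

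Next, I would fix a pair $i<j$ and show that at least one of $\bgr_i,\bgr_j$ is semiexceptional. By Lemma \ref{lem-HNorthogonal} and Riemann--Roch,
\[
P(\nu_j-\nu_i) \;=\; \Delta_i+\Delta_j.
\]
From $\mu_H(\bgr_1)>\cdots>\mu_H(\bgr_\ell)$ and Lemma \ref{lem-HNclose},
\[
-1 \;\leq\; H\cdot(\nu_j-\nu_i) \;\leq\; 0,
\]
so after letting $\epsilon\to 0$ (using that the filtration is constant),
\[
-1 \;\leq\; -\tfrac{1}{2}K_{\F_e}\cdot(\nu_j-\nu_i) \;\leq\; 0.
\]
Now I would reuse the tangent line argument from Theorem \ref{thm-existence}: the hyperbola $P(\nu)=1$ has tangent lines at $\nu=0$ and $\nu=K_{\F_e}$ given by $\nu\cdot(-K_{\F_e})=0$ and $\nu\cdot(-K_{\F_e})=K_{\F_e}^2=8$, so any $\nu$ with $-8\leq \nu\cdot(-K_{\F_e})\leq 0$ satisfies $P(\nu)\leq 1$ with equality only at $\nu=0$ or $\nu=K_{\F_e}$. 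Since $|H\cdot(\nu_j-\nu_i)|\leq 1<|H\cdot K_{\F_e}|$, the option $\nu_j-\nu_i=K_{\F_e}$ is excluded, and I conclude
\[
\Delta_i+\Delta_j \;=\; P(\nu_j-\nu_i) \;\leq\; 1,
\]
with equality forcing $\nu_i=\nu_j$.

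If the inequality is strict, then $\min(\Delta_i,\Delta_j)<\tfrac{1}{2}$, and the corresponding factor is semiexceptional by Lemma \ref{lem-excFacts}(6) (using that $H$ is generic). If equality holds, then $\nu_i=\nu_j$, and the Harder-Narasimhan ordering on reduced Hilbert polynomials forces $\Delta_i<\Delta_j$ (the reduced Hilbert polynomial decreases by $\Delta_j-\Delta_i$ when slopes agree), so again $\Delta_i<\tfrac{1}{2}$ and $\bgr_i$ is semiexceptional. Since this holds for every pair $i<j$, it is impossible to have two distinct factors $\bgr_k,\bgr_{k'}$ that are both non-semiexceptional, which is exactly the claim. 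The main obstacle is the careful book-keeping in the equality case $\nu_i=\nu_j$ and the verification that shrinking $\epsilon$ does not alter the filtration, both of which are essentially addressed already in the proof of Theorem \ref{thm-existence}.
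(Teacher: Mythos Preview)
Your proposal is correct and is essentially identical to the paper's own proof: both simply rerun the tangent-line estimate from the proof of Theorem~\ref{thm-existence} for an arbitrary pair $\bgr_i,\bgr_j$ with $i<j$, using $\chi(\bgr_i,\bgr_j)=0$ and the slope bound from Lemma~\ref{lem-HNclose}. The only slip is a sign typo in locating the second tangent line (it is $\nu\cdot(-K_{\F_e})=-K_{\F_e}^2=-8$, not $8$), but your subsequent inequality $-8\leq \nu\cdot(-K_{\F_e})\leq 0$ is the correct one and the argument goes through.
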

\begin{proof}
The assumption $\Delta\geq \frac{3}{8}$ allows us to carry out the argument in the proof of Theorem \ref{thm-existence}, but instead of analyzing the factors $\bgr_1$ and $\bgr_\ell$ we can analyze $\bgr_i$ and $\bgr_j$ with $i<j$.  It follows that at least one of $\bgr_i$ and $\bgr_j$ is semiexceptional, so at most one factor $\bgr_i$ is not semiexceptional.  
\end{proof}

The next technical lemma allows us to deduce results about the polarization $-K_{\F_e}$, instead of polarizations arbitrarily close to it.

\begin{lemma}\label{lem-limitK}
Let $e=0$ or $1$, let $r\geq 2$, and let $\nu\in \Pic(\F_e) \te \QQ$.  If $\epsilon > 0$ is sufficiently small (depending on $r$) and $H := H_{\pm} = -\frac{1}{2}K_{\F_e} \pm \epsilon F$ then 
$$\DLP_{-K_{\F_e}}^{<r} (\nu) = \DLP_{H}^{<r}(\nu).$$
\end{lemma}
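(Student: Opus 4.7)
The plan is to show that for $\epsilon$ small enough, every exceptional bundle contributing to $\DLP_H^{<r}(\nu)$ makes an equal contribution to $\DLP_{-K_{\F_e}}^{<r}(\nu)$, and vice versa.

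First I would reduce to a finite family of contributing exceptional bundles. By Remark~\ref{rem-max}, each supremum in the definition of $\DLP^{<r}$ is attained by an exceptional bundle of rank less than $r$, and the set of $\cV$ with $\DLP_{H_0,\cV}(\nu)\geq 0$ lies in a bounded region of $\Pic(\F_e)\te \QQ$ that varies continuously with $H_0$ near $-K_{\F_e}$; hence only finitely many characters of exceptional bundles of rank less than $r$ are relevant to either DLP function. By Theorem~\ref{thm-delPezzoExc} every exceptional bundle on $\F_e$ is $\mu_{-K_{\F_e}}$-stable, and by openness of $\mu_H$-stability in $H$ we may choose $\epsilon$ so small that all these finitely many bundles remain $\mu_H$-stable for $H = -\tfrac{1}{2}K_{\F_e}\pm\epsilon F$.

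Next I would carry out a branch comparison for each such $\cV$. Write $A_\cV(\nu) = P(\nu-\nu(\cV))-\Delta(\cV)$ and $B_\cV(\nu) = P(\nu(\cV)-\nu)-\Delta(\cV)$, so that Remark~\ref{rem-DLPK} gives $A_\cV - B_\cV = \alpha$ where $\alpha := (\nu-\nu(\cV))\cdot(-K_{\F_e})$, and $\DLP_{-K_{\F_e},\cV}(\nu) = \min\{A_\cV,B_\cV\}$. The value $\DLP_{H,\cV}(\nu)$ selects the branch according to the sign of $(\nu-\nu(\cV))\cdot H = \alpha/2 \pm \epsilon(\nu-\nu(\cV))\cdot F$, which for $\epsilon$ small compared to $|\alpha|$ agrees with the sign of $\alpha$ and therefore selects the minimum branch. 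Since exceptional bundles of rank $r'< r$ satisfy $\nu(\cV)\cdot(-K_{\F_e})\in \tfrac{1}{r'}\ZZ$, the nonzero values of $\alpha$ realized by the finite family lie in a fixed translate of $\tfrac{1}{\mathrm{lcm}(1,\dots,r-1)}\ZZ$, hence are uniformly bounded away from $0$; any $\epsilon$ below this bound works, and the case $\alpha=0$ is trivial since then $A_\cV = B_\cV$.

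The hard part will be the boundary case: a bundle $\cV$ may lie in the domain of one of the two DLP functions but not the other, since the condition $|(\nu-\nu(\cV))\cdot H| \leq -\tfrac{1}{2}K_{\F_e}\cdot H$ depends on $H$. Here I would invoke the Serre duality identity $P(\beta+K_{\F_e}) = P(-\beta)$, which yields $A_{\cV\te\OO_{\F_e}(-K_{\F_e})}(\nu) = B_\cV(\nu)$ and its symmetric counterpart. The twisted bundles $\cV\te \OO_{\F_e}(\pm K_{\F_e})$ are again $\mu_{-K_{\F_e}}$-stable exceptional bundles of the same rank, so any boundary contribution at $H$ is matched by a contribution at $-K_{\F_e}$ coming from an appropriate twist; this is precisely the compensation mechanism already used in the proof of Lemma~\ref{lem-monotonicity2} for line bundles. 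Combining the finiteness and stability reductions, the branch comparison, and the twist-based boundary compensation gives the desired equality.
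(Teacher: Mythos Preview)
Your proposal is correct and follows essentially the same strategy as the paper's proof: reduce to finitely many exceptional bundles of rank less than $r$ via Remark~\ref{rem-max}, use Theorem~\ref{thm-delPezzoExc} and openness of slope stability to ensure they are all $\mu_H$-stable for small $\epsilon$, and then compare branches bundle by bundle using Remark~\ref{rem-DLPK}.  The paper handles exactly your two cases: when $(\nu-\nu(\cW))\cdot(-K_{\F_e})\neq 0$, shrinking $\epsilon$ preserves the sign of $(\nu-\nu(\cW))\cdot H$ so the same branch is selected; when it equals $0$, both branches coincide.

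The one place you are more careful than the paper is the domain boundary $|(\nu-\nu(\cV))\cdot H|=-\tfrac{1}{2}K_{\F_e}\cdot H$, where a bundle might contribute to one supremum but not the other.  The paper's proof passes over this silently, while you invoke the twist-by-$(-K_{\F_e})$ compensation from Lemma~\ref{lem-monotonicity2}.  That is the right fix, and it is indeed already available from the line bundle argument; the paper presumably regards it as understood.  Your uniform lower bound on the nonzero values of $\alpha$ via the denominator $\mathrm{lcm}(1,\dots,r-1)$ is a nice way to make the choice of $\epsilon$ explicit, though the paper is content with ``there are only finitely many exceptionals which are relevant'' and shrinks $\epsilon$ for each one.
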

\begin{proof}
 If $\epsilon$ is sufficiently small, then every exceptional bundle of rank less than $r$ is $\mu_H$-stable, so the exceptionals used to compute $\DLP_{-K_{\F_e}}^{<r}$ and $\DLP_{H}^{<r}$ are the same.  By Remark \ref{rem-max} there are only finitely many exceptionals which are relevant to the computation.  Let $\cW$ be any exceptional used to compute either function.  Then if $\epsilon$ is sufficiently small we actually have $$\DLP_{-K_{\F_e},\cW}(\nu) = \DLP_{H,\cW}(\nu).$$
Indeed, either 
\begin{enumerate}
\item potentially shrinking $\epsilon$ further, $-K_{\F_e}\cdot (\nu - \nu(\cW))$ and $H\cdot (\nu-\nu(\cW))$ have the same sign, in which case both sides are computed in the same way, or \item $-K_{\F_e}\cdot (\nu - \nu(\cW)) = 0$, in which case both the expressions $P(\nu - \nu(\cW)) - \Delta(\cW)$ and $P(\nu(\cW)-\nu) - \Delta(\cW)$ which could compute $\DLP_{-K_{\F_e},\cW}(\nu)$ and $\DLP_{H,\cW}(\nu)$ are equal.  (See Remark \ref{rem-DLPK}.) 
\end{enumerate}
Therefore the supremums over the relevant exceptional bundles are equal.
\end{proof}

We deduce the following cleaner statement for exceptional bundles.  This allows us to quickly compute the exceptional bundles by induction on the rank.

\begin{corollary}\label{cor-delPezzoExceptional}
If $e=0$ or $1$ and $\bv=(r,\nu,\Delta)\in K(\F_e)$ is potentially exceptional, then it is exceptional if and only if $$\Delta \geq \DLP_{-K_{\F_e}}^{<r}(\nu).$$
\end{corollary}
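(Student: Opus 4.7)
The plan is to combine Corollary \ref{cor-DLPexist}(1) with Lemma \ref{lem-limitK}, and the key conceptual observation is that whether a character $\bv$ admits an exceptional bundle is an intrinsic property of $\bv$, independent of any polarization. In particular, the equivalence in Corollary \ref{cor-DLPexist}(1) is phrased in terms of $\DLP_H^{<r}(\nu)$ for a polarization $H = -\frac{1}{2}K_{\F_e} \pm \epsilon F$ close to the anticanonical one, but the existence or non-existence of an exceptional bundle of character $\bv$ is the same statement regardless of which polarization one uses to formulate the criterion.

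I would first apply Corollary \ref{cor-DLPexist}(1) to a polarization $H := H_+ = -\frac{1}{2}K_{\F_e} + \epsilon F$, with $\epsilon > 0$ taken small enough (depending on $r$) that the hypotheses of both that corollary and Lemma \ref{lem-limitK} are simultaneously in force. This gives that $\bv$ is exceptional if and only if $\Delta \geq \DLP_H^{<r}(\nu)$. Then Lemma \ref{lem-limitK} furnishes the equality $\DLP_H^{<r}(\nu) = \DLP_{-K_{\F_e}}^{<r}(\nu)$, and substitution yields the desired form of the inequality.

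There is essentially no obstacle here, since both ingredients are already proved; the argument amounts to a translation between two equivalent polarization-dependent descriptions of the same polarization-free condition. It is worth recalling that Lemma \ref{lem-limitK} in turn rests on Gorodentsev's theorem (Theorem \ref{thm-delPezzoExc}) to ensure that the collections of rank-less-than-$r$ exceptional bundles that are $\mu_{-K_{\F_e}}$-stable and $\mu_H$-stable coincide, together with the careful branch comparison for the finitely many relevant functions $\DLP_{H,\cW}$. Because the right-hand side $\DLP_{-K_{\F_e}}^{<r}(\nu)$ only involves exceptional bundles of rank strictly less than $r$, the resulting corollary serves as the inductive step of an algorithm enumerating all exceptional bundles on $\F_0$ and $\F_1$ by increasing rank.
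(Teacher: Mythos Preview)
Your proposal is correct. Both directions follow cleanly from Corollary~\ref{cor-DLPexist}(1) together with Lemma~\ref{lem-limitK}, exactly as you outline.

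There is one minor difference worth noting. The paper handles the two directions asymmetrically: for ($\Leftarrow$) it argues just as you do, via Lemma~\ref{lem-limitK} and Corollary~\ref{cor-DLPexist}(1); but for ($\Rightarrow$) it gives a direct argument at the anticanonical polarization itself, using Gorodentsev's theorem that the exceptional bundle $\cV$ is $\mu_{-K_{\F_e}}$-stable, so that for any lower-rank exceptional $\cW$ computing $\DLP_{-K_{\F_e}}^{<r}(\nu)$ one has $\Hom(\cW,\cV)=\Ext^2(\cW,\cV)=0$ and hence $\chi(\cW,\cV)\leq 0$. Your route is more uniform and arguably more economical, since Corollary~\ref{cor-DLPexist}(1) already packages this stability argument (via Proposition~\ref{prop-DLPss}(1)); the paper's choice simply avoids invoking Lemma~\ref{lem-limitK} in one direction at the cost of repeating a computation already embedded in the cited corollary.
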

\begin{proof}
($\Rightarrow$) Let $\cV$ be an exceptional bundle of character $\bv$, and let $\cW$ be an exceptional bundle which computes $\DLP_{-K_{\F_e}}^{<r}(\nu)$.   Without loss of generality, suppose $$\mu_{-K_{\F_e}}(\cW)-\frac{1}{2} K_{\F_e}^2\leq \mu_{-K_{\F_e}}(\cV) \leq \mu_{-K_{\F_e}}(\cW).$$  Since $\cV$ is $\mu_{-K_{\F_e}}$-slope stable and $r(\cW) < r(\cV)$, we have $\Hom(\cW,\cV)=\Ext^2(\cW,\cV)=0$, so $\chi(\cW,\cV)\leq 0$.  It follows that $\Delta\geq \DLP_{-K_{\F_e},\cW}(\nu) = \DLP_{-K_{\F_e}}^{<r}(\nu)$.  

($\Leftarrow$) Let $\epsilon>0$ be small and let $H = -\frac{1}{2}K_{\F_e}-\epsilon F$.  By Lemma \ref{lem-limitK}, $$\Delta \geq \DLP_{-K_{\F_e}}^{<r}(\nu) =\DLP_{H}^{<r}(\nu),$$ and $\bv$ is exceptional by Corollary \ref{cor-DLPexist} (1).
\end{proof}

Analogously there is a statement for semistability which only refers to the anticanonical polarization; however, since sheaves with different total slopes can have the same $-K_{\F_e}$-slope, it is not a complete classification. 

\begin{corollary}\label{cor-delPezzoKss}
Let $e=0$ or $1$ and let $\bv =(r,\nu,\Delta)\in K(\F_e)$ have positive rank.  If $$\Delta \geq \DLP^{<r}_{-K_{\F_e}}(\nu),$$ then there are $-K_{\F_e}$-semistable sheaves of character $\bv$.
\end{corollary}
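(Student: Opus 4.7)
The plan is to combine Theorem \ref{thm-existence} with a limiting argument as the polarization approaches $-K_{\F_e}$. By Lemma \ref{lem-limitK}, for $\epsilon > 0$ sufficiently small and $H_\epsilon := -\frac{1}{2}K_{\F_e} - \epsilon F$, we have $\DLP_{H_\epsilon}^{<r}(\nu) = \DLP_{-K_{\F_e}}^{<r}(\nu)$. The hypothesis $\Delta \geq \DLP_{-K_{\F_e}}^{<r}(\nu)$ therefore gives $\Delta \geq \DLP_{H_\epsilon}^{<r}(\nu)$, and Theorem \ref{thm-existence} produces an $H_\epsilon$-Gieseker semistable sheaf $\cV_\epsilon$ of character $\bv$ for every such $\epsilon$.

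Next I would pass to the limit $\epsilon \to 0$. Since the $\cV_\epsilon$ are $\mu_{H_\epsilon}$-semistable with fixed character $\bv$, they form a bounded family and can all be realized as points in a common Quot scheme, which is projective. Choosing a sequence $\epsilon_n \to 0$ and extracting a flat limit $\cV_0$ of the $\cV_{\epsilon_n}$, the closedness of $\mu_H$-semistability in the polarization $H$ (recorded in \S\ref{subsec-semistable}) ensures that $\cV_0$ is $\mu_{-K_{\F_e}}$-semistable of character $\bv$.

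The main obstacle is promoting $\mu_{-K_{\F_e}}$-semistability to $(-K_{\F_e})$-Gieseker semistability, since the latter is a strictly stronger condition and $-K_{\F_e}$ may lie on walls of the ample cone for $\bv$. The cleanest way I know is via the Gieseker--Maruyama GIT construction of $M_{\F_e,H}(\bv)$ as a good quotient of the $H$-Gieseker semistable locus in a fixed Quot scheme, where the semistability condition is a GIT condition whose linearization varies continuously with $H$. Because GIT semistable loci are closed under limits of linearizations, nonemptiness of $M_{\F_e, H_\epsilon}(\bv)$ for $\epsilon$ arbitrarily small forces nonemptiness of $M_{\F_e, -K_{\F_e}}(\bv)$. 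Alternatively, one can argue directly by examining the $\mu_{-K_{\F_e}}$-Jordan--H\"older factors of $\cV_0$, each of which is $\mu_{-K_{\F_e}}$-stable and hence $(-K_{\F_e})$-Gieseker stable; tracking reduced Hilbert polynomials carefully along the limit shows that the associated polystable sheaf $\bigoplus \cU_i^{m_i}$ is $(-K_{\F_e})$-Gieseker semistable of character $\bv$, completing the proof.
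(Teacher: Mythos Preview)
Your first paragraph is fine and matches the paper's opening move: use Lemma~\ref{lem-limitK} to transfer the hypothesis to nearby generic polarizations and invoke Theorem~\ref{thm-existence}.  The divergence begins in the second paragraph, and the third paragraph contains a genuine gap.

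The problem is your upgrade from $\mu_{-K_{\F_e}}$-semistability to $(-K_{\F_e})$-Gieseker semistability.  Your Jordan--H\"older argument does not work: the $\mu_{-K_{\F_e}}$-Jordan--H\"older factors $\cU_i$ of $\cV_0$ all have the same $-K_{\F_e}$-slope, but there is no reason they have the same reduced Hilbert polynomial, so $\bigoplus \cU_i^{m_i}$ need not be Gieseker semistable.  (For instance, on $\F_0$ the sheaf $\OO(F_1)\oplus \OO(F_2)$ is $\mu_{-K}$-polystable but each summand has a different $\chi/r$, and the example $\OO\oplus I_p$ in the paper is $\mu_H$-semistable yet not $H$-Gieseker semistable.)  Your GIT sketch is also not a standard fact as stated; variation of GIT is subtle and ``semistable loci are closed under limits of linearizations'' would itself require a proof tailored to this setting.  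Finally, the limiting step in paragraph two conflates two different closedness statements: \S\ref{subsec-semistable} records that for a \emph{fixed} sheaf the set of polarizations for which it is $\mu$-semistable is closed, but you are varying both the sheaf and the polarization simultaneously.

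The paper sidesteps all of this with a much shorter argument: apply Lemma~\ref{lem-limitK} and Theorem~\ref{thm-existence} on \emph{both} sides of $-K_{\F_e}$, obtaining $H_+$-semistable sheaves and $H_-$-semistable sheaves where $H_\pm = -\frac{1}{2}K_{\F_e}\pm\epsilon F$.  Since the $H_\pm$-semistable loci are nonempty open substacks of the irreducible stack $\cP_F(\bv)$, they meet, so there is a single sheaf $\cV$ that is simultaneously $H_+$- and $H_-$-Gieseker semistable.  Now $-K_{\F_e}$ is a convex combination of $H_+$ and $H_-$; for any subsheaf $\cW\subset\cV$ the linear coefficient of $p^H_{\cV}-p^H_{\cW}$ is $\mu_H(\cV)-\mu_H(\cW)$ and the constant coefficient $\chi(\cV)/r(\cV)-\chi(\cW)/r(\cW)$ is independent of $H$, so Gieseker semistability for both $H_\pm$ directly forces Gieseker semistability for $-K_{\F_e}$.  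No limits, no GIT, and the Gieseker condition comes for free.
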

\begin{proof}
The proof is similar to the proof of Corollary \ref{cor-delPezzoExceptional}.  If $\epsilon>0$ is sufficiently small and $H_{\pm} = -\frac{1}{2}K_{\F_e} \pm \epsilon F$, then we have $\DLP_{-K_{\F_e}}^{<r}(\nu) = \DLP_{H_{\pm}}^{<r}(\nu)$ and there are $H_{\pm}$-semistable sheaves of character $\bv$.  By the irreducibility of the stack of prioritary sheaves, there are sheaves of character $\bv$ which are simultaneously both $H_+$- and $H_-$-semistable, and they will also be $-K_{\F_e}$-semistable.
\end{proof}

\begin{example}
On $\F_0 = \P^1\times \P^1$ with $\Pic(\F_0) = \Z F_1\oplus \Z F_2$, the bundle $\OO(F_1) \oplus \OO(F_2)$ is $-K_{\F_0}$-semistable of discriminant $\frac{1}{4}$.  Its invariants satisfy $\Delta < \DLP_{-K_{\F_e}}(\nu)$, even though it is not semiexceptional.
\end{example}

We can now also prove the following qualitative fact about the Dr\'ezet-Le Potier surface.

\begin{corollary}\label{cor-K1/2}
Let $e=0$ or $1$ and let $\nu\in \Pic(\F_e)\te \QQ$.  Then $$\DLP_{-K_{\F_e}}(\nu) \geq \frac{1}{2}.$$  
\end{corollary}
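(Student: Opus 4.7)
The plan is to split into two cases based on whether $\nu$ is the total slope of some exceptional bundle on $\F_e$.

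First, suppose $\nu = \nu(\cW)$ for some exceptional bundle $\cW$. By Theorem \ref{thm-delPezzoExc} the bundle $\cW$ is $\mu_{-K_{\F_e}}$-stable, so $\cW \in \mathbb{E}_{-K_{\F_e}}$ contributes to the supremum defining $\DLP_{-K_{\F_e}}(\nu)$. Since $(\nu - \nu(\cW))\cdot (-K_{\F_e}) = 0$, the third branch of the definition of $\DLP_{-K_{\F_e},\cW}$ applies; combined with Lemma \ref{lem-excFacts}(1) this gives
$$\DLP_{-K_{\F_e}}(\nu) \;\geq\; \DLP_{-K_{\F_e},\cW}(\nu) \;=\; P(0) - \Delta(\cW) \;=\; \frac{1}{2} + \frac{1}{2 r(\cW)^2} \;>\; \frac{1}{2}.$$

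For the other case, suppose $\nu$ is not the slope of any exceptional bundle, and assume for contradiction that $\DLP_{-K_{\F_e}}(\nu) < \frac{1}{2}$. The strategy is to construct an integral Chern character $\bv = (r,\nu,\Delta)$ that is not semiexceptional with $\DLP_{-K_{\F_e}}(\nu) \leq \Delta < \frac{1}{2}$, then derive a contradiction via Theorem \ref{thm-existence} and Lemma \ref{lem-excFacts}(6). Writing $\nu = \frac{a}{r_0}E + \frac{b}{r_0}F$ in lowest terms, the integral characters of rank $r = n r_0$ and total slope $\nu$ have discriminants of the form $\frac{1-r}{2}\nu^2 + \frac{c_2}{r}$ for $c_2 \in \Z$, forming an arithmetic progression of step $\frac{1}{r}$. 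Choosing $n$ so large that $\frac{1}{n r_0} < \frac{1}{2} - \DLP_{-K_{\F_e}}(\nu)$, we can pick $\Delta$ inside the desired interval. Because $\nu$ is not an exceptional slope, no exceptional bundle has total slope $\nu$, and therefore $\bv$ cannot be semiexceptional.

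With such $\bv$ in hand, choose $\epsilon > 0$ small and generic and set $H = -\frac{1}{2} K_{\F_e} - \epsilon F = H_{1-e/2-\epsilon}$. By Lemma \ref{lem-limitK},
$$\DLP_H^{<r}(\nu) \;=\; \DLP_{-K_{\F_e}}^{<r}(\nu) \;\leq\; \DLP_{-K_{\F_e}}(\nu) \;\leq\; \Delta,$$
so Theorem \ref{thm-existence} produces an $H$-semistable sheaf $\cV$ of character $\bv$. Since $H$ is generic and $\Delta(\cV) < \frac{1}{2}$, Lemma \ref{lem-excFacts}(6) forces $\cV$, and hence $\bv$, to be semiexceptional, contradicting our choice of $\bv$. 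The one nontrivial ingredient to pin down is the arithmetic fact that integral characters of slope $\nu$ realize discriminants dense in $\QQ$ as we increase the rank, but this is elementary once the formula $\Delta = \frac{1-r}{2}\nu^2 + \frac{c_2}{r}$ is written out; the rest is a direct assembly of Theorem \ref{thm-delPezzoExc}, Theorem \ref{thm-existence}, Lemma \ref{lem-limitK}, and Lemma \ref{lem-excFacts}.
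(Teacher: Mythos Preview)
Your proof is correct and follows essentially the same strategy as the paper's: assume $\DLP_{-K_{\F_e}}(\nu)<\frac{1}{2}$, build an integral character $\bv=(r,\nu,\Delta)$ with $\Delta<\frac{1}{2}$, invoke Lemma~\ref{lem-limitK} and Theorem~\ref{thm-existence} to produce an $H$-semistable sheaf for a nearby generic polarization, and then obtain a contradiction from the small discriminant.

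The only organizational difference is how the contradiction is extracted. You split into two cases according to whether $\nu$ is already an exceptional slope, and in the second case use Lemma~\ref{lem-excFacts}(6) to force the constructed sheaf to be semiexceptional, contradicting the assumption on $\nu$. The paper instead avoids the case split by choosing $\bv$ of \emph{smallest} rank with $\DLP_{-K_{\F_e}}(\nu)\le\Delta<\frac{1}{2}$; minimality forces $\bv$ to be primitive, so the resulting $H$-semistable sheaf is automatically $H$-stable and hence exceptional by Lemma~\ref{lem-excFacts}(4), and its $\mu_{-K_{\F_e}}$-stability (Theorem~\ref{thm-delPezzoExc}) then directly gives $\DLP_{-K_{\F_e}}(\nu)\ge\frac{1}{2}+\frac{1}{2r^2}$. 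Your version trades the minimality trick for an explicit density argument for discriminants plus the case split; the paper's version is slightly more streamlined but both are equally valid.
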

\begin{proof}
To get a contradiction, let $\bv = (r,\nu,\Delta)\in K(\F_e)$ be a Chern character of smallest rank such that $$\DLP_{-K_{\F_e}}(\nu) \leq \Delta < \frac{1}{2}.$$ Then $\DLP_{-K_{\F_e}}^{<r} (\nu) \leq \Delta$.  By Lemma \ref{lem-limitK}, if $\epsilon>0$ is sufficiently small and $H = -\frac{1}{2}K_{\F_e}-\epsilon F$, then $\DLP_{-K_{\F_e}}^{<r}(\nu) = \DLP_H^{<r} (\nu)$, so $\DLP_H^{<r}(\nu) \leq \Delta$.  Theorem \ref{thm-existence} then gives an $H$-semistable sheaf $\cV$ of character $\bv$.  The character $\bv$ is primitive by our minimality assumption, and since $H$ is generic $\cV$ is $H$-stable. By Lemma \ref{lem-excFacts} (4), $\cV$ is exceptional. Since $\cV$ is $\mu_{-K_{\F_e}}$-stable by Theorem \ref{thm-delPezzoExc}, we find $$\DLP_{-K_{\F_e}}(\nu) \geq \DLP_{-K_{\F_e},\cV}(\nu) = \frac{1}{2}+\frac{1}{2r^2} > \frac{1}{2},$$ contradicting our assumption.
\end{proof}

\section{Stability intervals and the stability of exceptional bundles}\label{sec-exceptional2}

For a sheaf $\cV$, we can study the collection of polarizations such that $\cV$ is stable.  In this section, we both study this question in general and discuss how to compute this set for exceptional bundles on del Pezzo Hirzebruch surfaces.

\subsection{Stability intervals in general} First we make a general definition for an arbitrary Hirzebruch surface $\F_e$.

\begin{definition}
Let $\cV$ be a coherent sheaf on $\FF_e$.  The \emph{stability interval} of $\cV$ is $$I_\cV = \{m> 0: \cV\textrm{ is $\mu_{H_m}$-stable}\}\subset \RR_{>0}.$$
\end{definition}

\begin{proposition}
The stability interval $I_\cV$ is either empty or it is an open interval $(m_0,m_1)$, possibly with $m_0 = 0$ or $m_1=\infty$.  If $m_0>0$, then $\cV$ is strictly $\mu_{H_{m_0}}$-semistable, and if $m_1<\infty$, then $\cV$ is strictly $\mu_{H_{m_1}}$-semistable.
\end{proposition}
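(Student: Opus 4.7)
The plan hinges on a single linearization observation: for any subsheaf $\cW \subsetneq \cV$ of smaller rank, the function
$$f_\cW(m) := \mu_{H_m}(\cW) - \mu_{H_m}(\cV) = H_m \cdot (\nu(\cW)-\nu(\cV))$$
is affine linear in $m$, since if $\nu(\cW) - \nu(\cV) = aE + bF$, then a direct computation using $E^2=-e$, $E\cdot F = 1$, $F^2=0$ gives $f_\cW(m) = am+b$. The sheaf $\cV$ is $\mu_{H_m}$-stable precisely when $f_\cW(m)<0$ for every such $\cW$, so $I_\cV$ is the intersection with $\RR_{>0}$ of the open half-lines $\{m\in\RR:f_\cW(m)<0\}$.

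Openness of $I_\cV$ is immediate from the fact recorded in \S\ref{subsec-semistable} that $\mu_H$-stability is open in the polarization. For convexity, if $s,t\in I_\cV$ with $s<t$ and $u\in(s,t)$, then for every proper $\cW$ of smaller rank one has $f_\cW(s)<0$ and $f_\cW(t)<0$; affine linearity forces $f_\cW(u)\leq \max\{f_\cW(s),f_\cW(t)\}<0$, and hence $u\in I_\cV$. Combining, $I_\cV$ is either empty or an open interval $(m_0,m_1)\subset \RR_{>0}$, possibly with $m_0=0$ or $m_1=\infty$.

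For the boundary claim, suppose $m_0>0$. Taking any sequence $m_n\to m_0^+$ with $m_n\in I_\cV$, the sheaf $\cV$ is $\mu_{H_{m_n}}$-semistable, so by the closedness of $\mu_H$-semistability in the polarization (also recalled in \S\ref{subsec-semistable}) we conclude that $\cV$ is $\mu_{H_{m_0}}$-semistable. Were $\cV$ actually $\mu_{H_{m_0}}$-stable, openness would place a neighborhood of $m_0$ in $I_\cV$, contradicting $m_0$ being the left endpoint; thus $\cV$ must be strictly $\mu_{H_{m_0}}$-semistable. The case $m_1<\infty$ is identical. The argument is essentially mechanical once the affine linearity of $f_\cW(m)$ is observed, so there is no substantive obstacle beyond this computation.
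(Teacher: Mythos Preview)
Your proof is correct and follows essentially the same approach as the paper. The paper invokes openness of slope stability and stability under convex combinations of polarizations (which is precisely your affine-linearity observation), and for the boundary it argues that a failure of semistability at $m_1$ would produce a destabilizing subsheaf persisting to nearby $m$, which is the contrapositive of your closedness-of-semistability plus openness-of-stability argument.
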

\begin{proof}
Suppose $I_\cV$ is nonempty.  Slope stability is open in the polarization, so if $\cV$ is $\mu_H$-stable then it is also $\mu_{H'}$-stable for $H'$ sufficiently close to $H$.  If $H,H'$ are two polarizations such that $\cV$ is both $\mu_H$- and $\mu_{H'}$-stable, then $\cV$ is $\mu_{H''}$-stable for any convex combination $H''$ of $H$ and $H'$.  Therefore, $I_\cV$ is a nonempty open interval.

Without loss of generality, suppose $m_1<\infty$; then $\cV$ is not $\mu_{H_{m_1}}$-stable.  On the other hand, if it is not $\mu_{H_{m_1}}$-semistable, then a destabilizing subsheaf would show that $\cV$ is not $\mu_{H_{m_1-\epsilon}}$-stable for $\epsilon>0$ small.  Therefore $\cV$ is strictly $\mu_{H_{m_1}}$-semistable.
\end{proof}

\begin{remark}
We could analogously define intervals using other notions of stability, e.g. slope-semistabity or Gieseker (semi)-stability.  If $\cV$ is $\mu_H$-stable for some polarization $H$, then the closure of $I_{\cV}$ is the interval given by slope-semistability, and the other intervals potentially differ only at the endpoints.
\end{remark}

Also of interest is the stability interval of the general sheaf.  The next result shows that this notion makes sense.

\begin{proposition}
Suppose there is a $\mu_H$-stable sheaf of character $\bv$ for some polarization $H$. There is an open dense substack of $\cP_F(\bv)$ parameterizing sheaves $\cV$ such that the stability interval $I_{\bv}:=I_\cV$ is as large as possible.
\end{proposition}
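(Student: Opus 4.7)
The plan is to decompose the polarization ray $\RR_{>0}$ into finitely many chambers on which $\mu_{H_m}$-stability of sheaves of character $\bv$ is locally constant, and then intersect the resulting open dense loci in the irreducible stack $\cP_F(\bv)$.

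First I argue that there are only finitely many walls in $\RR_{>0}$. A wall arises from a numerical decomposition $\bv = \bu + \bw$ with $r(\bu), r(\bw) > 0$ and $\mu_{H_m}(\bu) = \mu_{H_m}(\bv)$ for some $m > 0$; after passing to the maximal destabilizing subobject and minimal destabilizing quotient, I may assume $\Delta(\bu), \Delta(\bw) \geq 0$. Set $\xi = c_1(\bu) - \tfrac{r(\bu)}{r(\bv)} c_1(\bv)$, so that $\xi \cdot H_m = 0$ and the Hodge index theorem gives $\xi^2 \leq 0$. The standard discriminant identity
\[
r(\bu)\Delta(\bu) + r(\bw)\Delta(\bw) = r(\bv)\Delta(\bv) + \frac{r(\bv)\,\xi^2}{2\,r(\bu)\,r(\bw)}
\]
combined with Bogomolov then forces $\xi^2 \geq -2\, r(\bu)\, r(\bw)\, \Delta(\bv)$. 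Writing $\xi = aE+bF$ with denominators bounded by $r(\bv)$, the condition $m = -b/a > 0$ forces $a$ and $b$ to have opposite signs, and the bound $|\xi^2| = a^2 e + 2a|b| \leq C$ admits only finitely many rational solutions with the given denominator bound. Hence there are only finitely many walls $0 < w_1 < \cdots < w_N$ in $\RR_{>0}$, and only finitely many chambers in $\RR_{>0}\setminus\{w_1,\ldots,w_N\}$.

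On each chamber $C$, $\mu_{H_m}$-stability is independent of $m \in C$, so I may define open substacks $U_C, U_w \subset \cP_F(\bv)$ of sheaves that are $\mu_{H_m}$-stable for $m \in C$ (resp.\ $m = w$). By Walter's irreducibility theorem, any nonempty such substack is open dense. Set $I^* := \{m > 0 : U_m \neq \emptyset\}$; this is open by openness of slope stability in the polarization and nonempty by hypothesis. To see $I^*$ is an interval, suppose $m_1 < m_2$ lie in $I^*$; pick $\cV \in U_{m_1} \cap U_{m_2}$, which is nonempty as a finite intersection of open dense substacks of the irreducible stack $\cP_F(\bv)$. For any subsheaf $\cW \subset \cV$ the function $m \mapsto (\nu(\cW)-\nu(\cV))\cdot H_m$ is linear in $m$ and negative at both $m_1$ and $m_2$, hence negative on all of $(m_1,m_2)$; so $\cV$ is $\mu_{H_m}$-stable for every $m \in (m_1,m_2)$ and $(m_1,m_2) \subset I^*$.

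Finally, set $U := \bigcap_{C \subset I^*} U_C \cap \bigcap_{w \in I^*} U_w$. Since $I^*$ is an open interval meeting only finitely many chambers and walls, $U$ is a finite intersection of open dense substacks, hence open dense in $\cP_F(\bv)$. Any $\cV \in U$ is $\mu_{H_m}$-stable for every $m \in I^*$, so $I_\cV \supseteq I^*$, while $I_\cV \subseteq I^*$ is immediate from the definition of $I^*$; hence $I_\cV = I^*$ on $U$, and $I_\bv := I^*$ is the required maximal stability interval. The main technical step is the wall-finiteness argument in the first paragraph; everything else is a formal consequence of irreducibility of $\cP_F(\bv)$ and the linearity of $m \mapsto \mu_{H_m}$.
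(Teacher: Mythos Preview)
Your argument is correct, and structurally it follows the same outline as the paper's proof: exhibit finitely many open dense substacks of the irreducible stack $\cP_F(\bv)$ whose intersection parameterizes sheaves with the maximal stability interval. The difference lies in how you establish the relevant finiteness. You prove \emph{global} finiteness of walls in $\RR_{>0}$ directly, via the Hodge index theorem and the Bogomolov inequality applied to a Jordan--H\"older factor and its quotient at a wall. The paper instead only uses \emph{local} finiteness of walls near the endpoints of $I_\bv$, citing Yoshioka's result for the behavior as $m\to\infty$ and the positive-classes machinery of Huybrechts--Lehn for the behavior as $m\to 0$ when $e\geq 1$. Your approach is more self-contained and yields the slightly stronger intermediate statement that there are only finitely many walls in all of $\RR_{>0}$; the paper's approach trades this for being able to quote known results off the shelf.

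Two small points of presentation. First, in the bound $|\xi^2|=a^2e+2a|b|$ you mean $2|a||b|$; as written the expression need not be positive. Second, the phrase ``after passing to the maximal destabilizing subobject and minimal destabilizing quotient'' is imprecise: what you actually need is that at a wall some strictly $\mu_{H_m}$-semistable sheaf has a Jordan--H\"older factor, so that both the sub $\bu$ and the quotient $\bw$ are $\mu_{H_m}$-semistable and hence satisfy Bogomolov. Finally, intersecting over the walls $w\in I^*$ is harmless but redundant: a sheaf that is $\mu_{H_m}$-stable for $m$ in the two chambers adjacent to $w$ is automatically $\mu_{H_w}$-stable, since $m\mapsto (\mu_{H_m}(\cF)-\mu_{H_m}(\cV))$ is linear.
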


We call $I_\bv$ the \emph{generic stability interval}.  

\begin{proof}
Let $I_{\bv} = (m_0,m_1)$ be the union of the stability intervals $I_\cV$ of all $F$-prioritary sheaves $\cV$ of character $\bv$.  Since $\cP_F(\bv)$ is irreducible and slope-stability is open in the polarization, $I_{\bv}$ is a nonempty open interval.  If $m_1$ is finite, then for $m<m_1$ sufficiently close to $m_1$ the notion of $\mu_{H_{m}}$-stability for sheaves of character $\bv$ is independent of $m$, since the walls for stability are locally finite near $H_{m_1}$.  If $m_1= \infty$, then by Yoshioka \cite[Lemma 1.2]{YoshiokaRuled} the notions of $\mu_{H_m}$-stability of sheaves of character $\bv$ stabilize for large enough $m$.  In either case, there is an open substack of $\cP_F(\bv)$ parameterizing sheaves which are $\mu_{H_m}$-stable for all $m<m_1$ sufficiently close to $m_1$.

If $e=0$, then by symmetry the notion of $\mu_{H_m}$-stability stabilizes for sheaves of character $\bv$ as $m>m_0$ approaches $m_0$.  If $e \geq 1$ then $H_{m_0}$ is a positive class in the sense of Huybrechts and Lehn \cite[\S 4.C]{HuybrechtsLehn}, and again the walls for stability are locally finite near $H_{m_0}$ (even when $m_0 = 0$).  So again, the notion of $\mu_{H_m}$-stability stabilizes for $m > m_0$ close to $m_0$.

If $m_0'>m_0$ and $m_1'<m_1$ are sufficiently close to $m_0$ and $m_1$, then the open substack of simultaneously $\mu_{H_{m_0'}}$- and $\mu_{H_{m_1'}}$-stable sheaves is the substack parameterizing sheaves $\cV$ with $I_\cV = I_{\bv}$.\end{proof}

\subsection{Stability intervals of exceptionals on $\F_0$ and $\F_1$} Let $e=0$ or $1$ and let $\cV$ be an exceptional bundle on $\F_e$ of rank at least $2$.  Then $\cV$ is $\mu_{-K_{\F_e}}$-stable, but as we vary the polarization it will eventually cease to be stable.  Intuitively, Proposition \ref{prop-mukai} says that the destabilizing subsheaf will have to be an exceptional sheaf of lower rank.  This makes it possible to compute the stability interval $I_\cV$.

\begin{remark}\label{rem-notStableForever}
Let $\cV$ be an exceptional bundle of rank at least $2$, and let $\nu(\cV) = \epsilon E+\varphi F$.  By Lemma \ref{lem-excFacts} (2), $\epsilon$ is not an integer.  The prioritary index $\rho(\cV)$ is then finite by Corollary \ref{cor-prioritaryRho}, and $\cV$ is not $\mu_{H_m}$-semistable if $m$ is sufficiently large. 

Note that $\varphi$ is also not an integer.  Indeed, a simple computation as in Lemma \ref{lem-38} shows that if $\varphi$ is an integer then $\DLP_{-K_{\F_e}}^1(\nu)\geq \frac{1}{2}$.  This contradicts Corollary \ref{cor-delPezzoExceptional}.
\end{remark}

The remark proves the following.

\begin{proposition}\label{prop-interval} Let $e=0$ or $1$ and let $\cV$ be an exceptional bundle of rank at least $2$ on $\F_e$.  The stability interval $I_\cV$ is an open interval $(m_0,m_1)$ that contains $1-\frac{e}{2}$ (corresponding to $-K_{\F_e}$), and $m_1<\infty$.  \qed
\end{proposition}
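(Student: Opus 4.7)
The plan is to read the proposition as a compact synthesis of three ingredients that have been prepared above, so the proof will be short.

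\textbf{First}, I will invoke the general structure result for stability intervals proved earlier in the subsection: any nonempty stability interval is an open interval of the form $(m_0,m_1)$. So it suffices to produce one polarization $H_m$ for which $\cV$ is $\mu_{H_m}$-stable, and then to rule out $\mu_{H_m}$-stability for all sufficiently large $m$.

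\textbf{Second}, to locate $1-\tfrac{e}{2}$ in $I_\cV$, I will note that for $e\in\{0,1\}$ we have $-K_{\F_e}=2E+(e+2)F=2H_{1-e/2}$, so $\mu_{-K_{\F_e}}$-stability is literally the same as $\mu_{H_{1-e/2}}$-stability. Since $\F_e$ is del Pezzo, Theorem \ref{thm-delPezzoExc} gives that $\cV$ is $\mu_{-K_{\F_e}}$-stable, hence $\mu_{H_{1-e/2}}$-stable. This forces $I_\cV$ nonempty and $m_0<1-\tfrac{e}{2}<m_1$.

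\textbf{Third}, I will show $m_1<\infty$ via the argument sketched in Remark \ref{rem-notStableForever}. Writing $\ch\cV=(r,\nu,\Delta)$ with $\nu=\epsilon E+\varphi F$ and $r\ge 2$, Lemma \ref{lem-excFacts}(2) gives $\gcd(r,a)=1$ where $c_1=aE+bF$, so $\epsilon=a/r$ is not an integer. By Proposition \ref{prop-excPrior}(2), any exceptional bundle of character $\ch\cV$ is isomorphic to $\cV$, so $\cV$ represents the generic point of the irreducible stack $\cP_F(\ch\cV)$; in particular $\rho(\cV)=\rho_{\gen}(\ch\cV)$, which is finite by Corollary \ref{cor-prioritaryRho} since $\epsilon\notin\Z$. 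Now if $\cV$ were $\mu_{H_m}$-stable for arbitrarily large $m$, then Proposition \ref{prop-ssPrior} applied with the maximal/minimal slope gap equal to $0$ would make $\cV$ an $H_{\lceil m\rceil+1}$-prioritary sheaf for arbitrarily large $m$, forcing $\rho(\cV)=\infty$ and contradicting the previous sentence.

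There is no real obstacle here: every step is a direct citation of a result already proved. The only point requiring a small verification is the identification $\rho(\cV)=\rho_{\gen}(\ch\cV)$, which I would justify explicitly using the uniqueness statement of Proposition \ref{prop-excPrior}(2) together with Walter's irreducibility of $\cP_F$; and the identification $-K_{\F_e}=2H_{1-e/2}$, which is a one-line Picard computation valid precisely for $e\in\{0,1\}$.
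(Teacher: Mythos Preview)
Your proof is correct and follows essentially the same approach as the paper, which simply records that Remark \ref{rem-notStableForever} (together with Theorem \ref{thm-delPezzoExc}) proves the proposition. You have unpacked the remark into its constituent citations---Lemma \ref{lem-excFacts}(2), Corollary \ref{cor-prioritaryRho}, Proposition \ref{prop-ssPrior}---and made explicit the identification $\rho(\cV)=\rho_{\gen}(\ch\cV)$ via rigidity, which the paper leaves implicit.
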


\begin{remark}
If $e=0$ and $\cV$ is an exceptional bundle on $\F_0$ of rank at least $2$, then by symmetry the stability interval $I_\cV = (m_0,m_1)$ has $m_0>0$.  If $c_1(\cV) = aF_1 + aF_2$ is symmetric, then furthermore $I_\cV$ is of the form $(1/m,m)$, but in general this need not be the case.  See Example \ref{ex-stabilityIntervals} for a rank $5$ example.

On $\F_1$, there are many exceptional bundles with a stability interval of the form $(0,m_1)$.  If $\pi : \F_1\to \P^2$ is the blowdown map, $\pi^*T_{\P^2}$ gives such an example.
\end{remark}

If $\cV$ is an exceptional bundle of rank $r$ and the intervals $I_\cW$ have already been computed for all exceptional bundles $\cW$ of rank less than $r$, then we can compute the interval $I_\cV$.

\begin{theorem}\label{thm-stabilityInterval}
Let $e=0$ or $1$ and let $\cV$ be an exceptional bundle of rank $r\geq 2$ on $\F_e$.  For an exceptional bundle $\cW\neq \cV$, let $m_{\cV,\cW}\in \QQ_{>0}$ be the number $m>0$ such that $\cV$ and $\cW$ have the same $H_m$-slope, if it exists.  Let $$S_\cV = \left\{m_{\cV,\cW}: {{\textrm{$\cW$ is an exceptional bundle with $r(\cW) < r$,}} \atop {\textrm{$\chi(\cW,\cV)>0$, and $m_{\cV,\cW}\in I_\cW$}}}\right\}\subset \QQ_{>0}.$$ Then $I_\cV$ is the connected component of $\RR_{>0}\sm S_\cV$ that contains $1-\frac{e}{2}$.
\end{theorem}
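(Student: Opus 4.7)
The plan is to establish $I_\cV=C$, where $C$ denotes the connected component of $\RR_{>0}\sm S_\cV$ containing $1-\tfrac{e}{2}$, by proving two inclusions. By Proposition \ref{prop-interval}, $1-\tfrac{e}{2}\in I_\cV$ and $m_1:=\sup I_\cV<\infty$. For disjointness $I_\cV\cap S_\cV=\emptyset$: if $m\in S_\cV$ is witnessed by an exceptional $\cW$ of rank $<r$ with $\chi(\cW,\cV)>0$, $m_{\cV,\cW}=m$, and $m\in I_\cW$, and if also $m\in I_\cV$, then $\cV$ and $\cW$ are both $\mu_{H_m}$-stable of equal slope. Serre duality plus the del Pezzo inequality $K_{\F_e}\cdot H_m<0$ yield $\ext^2(\cW,\cV)=\hom(\cV,\cW(K_{\F_e}))=0$, whence $\chi(\cW,\cV)\le\hom(\cW,\cV)$; but distinct slope-stable bundles of equal slope have no morphisms, a contradiction. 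Thus $I_\cV\subseteq C$.

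It remains to show that the endpoints of $I_\cV$ that are not $0$ or $\infty$ lie in $S_\cV$. I focus on $m_1$; the left-endpoint case is symmetric. Since $\cV$ is strictly $\mu_{H_{m_1}}$-semistable, let $\gr^{(1)},\dots,\gr^{(s)}$ be its distinct $\mu_{H_{m_1}}$-stable Jordan-H\"older factors with multiplicities $a_i$, all of rank $<r$. Bilinearity of $\chi$ gives
$$1=\chi(\cV,\cV)=\sum_i a_i\,\chi(\gr^{(i)},\cV),$$
so some $\chi(\gr^{(i_0)},\cV)>0$. Setting $\gr:=\gr^{(i_0)}$, the same Serre-duality vanishing yields $\ext^2(\gr,\cV)=0$, forcing $\Hom(\gr,\cV)\ne 0$ and producing a stable embedding $\gr\hookrightarrow\cV$. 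Let $\tilde\gr\subset\cV$ be the \emph{maximal} subsheaf all of whose $\mu_{H_{m_1}}$-Jordan-H\"older factors are isomorphic to $\gr$; this exists because the family of such subsheaves is closed under sum, and it is nonzero since it contains $\gr$.

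By maximality, any nonzero map $\phi\colon\tilde\gr\to\cV/\tilde\gr$ would yield a copy of $\gr$ inside $\cV/\tilde\gr$ (by taking a stable subobject of $\mathrm{im}(\phi)$, whose JH factors are all $\gr$), whose preimage in $\cV$ would strictly extend $\tilde\gr$ while preserving the JH-factor condition, a contradiction. So $\Hom(\tilde\gr,\cV/\tilde\gr)=0$, and $\Ext^2(\cV/\tilde\gr,\tilde\gr)=0$ by the usual slope-plus-Serre-duality argument. Mukai's lemma (Proposition \ref{prop-mukai}(2)) applied to $0\to\tilde\gr\to\cV\to\cV/\tilde\gr\to 0$ with $\cV$ rigid then forces $\tilde\gr$ to be rigid. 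Being rigid and torsion-free, $\tilde\gr$ is locally free (Proposition \ref{prop-mukai}(1)), and Theorem \ref{thm-rigidSplit} decomposes it as a direct sum of exceptional bundles, each of which must be isomorphic to $\gr$ because its unique stable JH factor equals $\gr$. Therefore $\gr$ is exceptional; $m_1\in I_\gr$ since $\gr$ is $\mu_{H_{m_1}}$-stable, $m_{\cV,\gr}=m_1$, and $\chi(\gr,\cV)>0$, so $\gr$ witnesses $m_1\in S_\cV$, pinning the right endpoint of $C$ to $m_1$. The main obstacle is the correct construction of $\tilde\gr$: one cannot work with the naive $\gr$-isotypic subsheaf, because non-split iterated extensions of $\gr$ can enlarge it, and only the stronger ``all JH factors equal $\gr$'' maximality property guarantees the crucial Mukai vanishing $\Hom(\tilde\gr,\cV/\tilde\gr)=0$.
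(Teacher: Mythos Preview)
Your disjointness argument $I_\cV\cap S_\cV=\emptyset$ is fine and matches the paper. The difference is in how you pin down the endpoint $m_1$. The paper perturbs the polarization to $H_{m_1+\epsilon}$ and takes the \emph{Gieseker maximal destabilizing subsheaf} $\cF\subset\cV$; this $\cF$ is automatically saturated, so $\cV/\cF$ is torsion-free, and the vanishings $\Hom(\cF,\cV/\cF)=0$ and $\Ext^2(\cV/\cF,\cF)=0$ needed for Mukai's lemma come for free from the Harder--Narasimhan property and Lemma~\ref{lem-HNclose}. Rigidity of $\cF$ plus genericity of $H_{m_1+\epsilon}$ then force $\cF\cong\cW^{\oplus k}$ semiexceptional, and the orthogonality $\chi(\cF,\cV/\cF)=0$ from Lemma~\ref{lem-HNorthogonal} gives $\chi(\cW,\cV)=k>0$.

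Your approach instead works at $m_1$ with $\mu$-Jordan--H\"older factors and builds the subsheaf $\tilde\gr$ by hand. The gap is that you never show $\tilde\gr$ is \emph{saturated} in $\cV$. If it is not, then $\cV/\tilde\gr$ has a nonzero torsion subsheaf $T$ supported in dimension $0$, and both vanishings you need for Proposition~\ref{prop-mukai}(2) can fail:
\begin{itemize}
\item $\Hom(\tilde\gr,\cV/\tilde\gr)$ contains $\Hom(\tilde\gr,T)$, which is typically nonzero (any torsion-free sheaf surjects onto skyscrapers). Your argument that a nonzero $\phi$ yields a copy of $\gr$ in $\cV/\tilde\gr$ breaks down here, because $\mathrm{im}(\phi)$ can be torsion and then has no $\mu$-stable subobject at all.
\item $\Ext^2(\cV/\tilde\gr,\tilde\gr)\cong\Hom(\tilde\gr,(\cV/\tilde\gr)\otimes K_{\F_e})^*$ picks up $\Hom(\tilde\gr,T)$ as well, so the ``slope-plus-Serre-duality'' vanishing you invoke does not apply.
\end{itemize}
Nor is saturation automatic from your maximality condition: if some other $\mu_{H_{m_1}}$-stable JH factor $\gr^{(j)}$ of $\cV$ happens to contain $\gr$ with $0$-dimensional cokernel (think $I_p\hookrightarrow\OO$), then $\tilde\gr^{sat}$ can acquire $\gr^{(j)}$ as a JH factor and strictly contain $\tilde\gr$ without having all JH factors equal to $\gr$. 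Replacing $\tilde\gr$ by its saturation repairs the $\Ext^2$-vanishing but destroys the maximality argument for the $\Hom$-vanishing, so there is a genuine tension. The cleanest fix is exactly the paper's move: pass to the perturbed Harder--Narasimhan filtration, where the destabilizing subsheaf is saturated by construction.
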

\begin{proof}
First let us show that $I_\cV \cap S_\cV = \emptyset$.  Suppose $m:=m_{\cV,\cW}\in S_\cV$, where $\cW$ is an exceptional bundle that proves $m\in S_{\cV}$.  Let us show that $\cV$ is not $\mu_{H_m}$-stable; suppose instead that $\cV$ is $\mu_{H_m}$-stable.  We have $\mu_{H_m}(\cV) = \mu_{H_m}(\cW)$, and $\cW$ is $\mu_{H_m}$-stable since $m\in I_\cW$.  Then $\Ext^2(\cW,\cV) = 0$ by stability, and since $\chi(\cW,\cV) > 0$ we have $\Hom(\cW,\cV)\neq 0$.  Stability forces $\cV\cong \cW$, which contradicts $r(\cW)<r(\cV)$.

Therefore, $I_\cV$ is contained in $\RR_{>0} \sm S_\cV$.   On the other hand, say $I_\cV = (m_0,m_1)$.  By symmetry, it is enough to show that $m_1\in S_\cV$.  Let $\epsilon > 0$ be small.  Then $\cV$ is $\mu_{H_{m_1}}$-semistable but not $H_{m_1+\epsilon}$-semistable; let $\cF \subset \cV$ be a maximal destabilizing subsheaf for $H_{m_1+\epsilon}$-semistability, and assume $\epsilon$ was chosen small enough that $\cF$ does not depend on $\epsilon$.  Note that since $\cV$ is $\mu_{H_{m_1}}$-semistable, it is $H_{\lceil m_1\rceil +1}$-prioritary and therefore it is $H_{\lceil m_1+\epsilon \rceil}$-prioritary.  Since $\cV$ is rigid, the $H_{m_1+\epsilon}$-Harder-Narasimhan filtration of $\cV$ is the Harder-Narasimhan filtration of a general $H_{\lceil m_1+\epsilon\rceil}$-prioritary sheaf of character $\ch \cV$.  If we consider the exact sequence $$0\to \cF\to \cV\to \cQ\to 0,$$ then we have $\Hom(\cF,\cQ) = 0$ (since $\cF$ is the maximal destabilizing subsheaf) and $\Ext^2(\cQ,\cF) = 0$ by Lemma \ref{lem-HNclose}.  From Proposition \ref{prop-mukai} we see that $\cF$ is rigid, and since it is semistable for a generic polarization $H_{m_1+\epsilon}$ it must be semiexceptional by Theorem \ref{thm-rigidSplit}.  Say $\cF = \cW^{\oplus k}$ for an exceptional bundle $\cW$.  Then $\chi(\cF,\cQ) = 0$ by Lemma \ref{lem-HNorthogonal}, so $\chi(\cW,\cQ) = 0$ and $\chi(\cW,\cV) = k >0$.  By our choice of $\epsilon$, we have $\mu_{H_{m_1}}(\cW) = \mu_{H_{m_1}}(\cV)$, so $m_{\cV,\cW} = m_1$.  Finally, since $\cW$ is $H_{m_1+\epsilon}$-stable  it is $\mu_{H_{m_1}}$-stable by Proposition \ref{prop-interval}.  Therefore $m_1\in I_\cW$.  We conclude that $m_1\in S_\cV$.
\end{proof}

\begin{remark}\label{rem-stabilityIntervalCompute}
The set $S_\cV$ can be efficiently computed near $1-\frac{e}{2}$.  Suppose $\cW$ is an exceptional bundle that shows that $m=m_{\cV,\cW}\in S_\cV$, and let $$\nu = \nu(\cV)-\nu(\cW) = a E+b F = (a,b)\in \QQ^2.$$ Then $\nu\cdot H_m=am+b=0$, so $m = -\frac{b}{a}$.  Geometrically, in order to have $m>0$, the slope of the line between $(0,0)$ and $(a,b)$ must be negative.  Equivalently, $a$ and $b$ must have opposite signs.  

Since $\chi(\cW,\cV)>0$ and $\Delta(\cV)>0$, we must have $P(\nu) >0$ by Riemann-Roch.  We have $$P(x E+y F) = (x+1)\left(y+1-\frac{1}{2}ex\right).$$ The two lines $\ell_1:x= -1$ and $\ell_2:y=-1+\frac{e}{2}x$ meet at $(-1,-1-\frac{e}{2})$ and divide the $(x,y)$-plane into four regions.  The function $P$ is positive on the region $R_1$ right of $\ell_1$ and above $\ell_2$ and the region $R_2$ left of $\ell_1$ and below $\ell_2$.  All the points in $R_2$ have both of their coordinates negative, so we must have $\nu\in R_1$ since $m>0$.  Furthermore, the region in $R_1$ of points with coordinates with opposite signs is contained in the union of two unit width rectangular strips $$R_3=((-1,0)\times (0,\infty)) \cup ((0,\infty)\times (-1,0)).$$ If $e=0$, then $R_3 \cap R_1  =R_3$.  If $e=1$, then $R_3\cap R_1$ contains one unbounded strip $((-1,0)\times (0,\infty))$ and a bounded triangular region with vertices $(0,-1)$, $(0,0)$, and $(2,0)$.  

Since the coordinates of $\nu(\cV)$ are not integers (see Remark \ref{rem-notStableForever}), line bundles $L$ give infinitely many points $\nu(\cV)-\nu(L)$ in each unbounded component of $R_3\cap R_1$.  Riemann-Roch shows that any of these line bundles with $c_1$ sufficiently far away from $\nu(\cV)$ will compute an element $m_{\cV,L}\in S_\cV$.  When $e=0$, this computes both an element $M_1\in S_\cV$ larger than $1$ (coming from the vertical strip) and an element $M_0\in S_\cV$ smaller than $1$ (coming from the horizontal strip).  When $e=1$, this computes an element $M_1\in S_\cV$ larger than $1/2$ (coming from the vertical strip).  We let $M_0=0$ when $e=1$ to streamline the exposition.

Now to compute $S_\cV$ near $1-\frac{e}{2}$ we only need to consider exceptional bundles $\cW$ such that $r(\cW)<r$, $\nu(\cV)-\nu(\cW)\in R_3\cap R_1$, and $m_{\cV,\cW}\in (M_0,M_1)$.  But, the region $R_4$ of points $\nu$ in $R_3\cap R_1$ such that the slope of the line through $\nu$ and $(0,0)$ is between $-M_1$ and $-M_0$ is bounded, and only contains finitely many points of the form $\nu(\cV)-\nu(\cW)$ where $\cW$ is an exceptional bundle with $r(\cW)<r$.  Thus we can compute the set $S_\cV$ in a neighborhood of $1-\frac{e}{2}$, and in particular we see that it is finite near $1-\frac{e}{2}$.  A slightly more detailed analysis would show $S_\cV$ is discrete in $\RR_{>0}$, but we do not need this.

\end{remark}

\begin{example}\label{ex-stabilityIntervals}
Using Remark \ref{rem-stabilityIntervalCompute},
it is straightforward to program a computer to compute the intervals $I_\cV$ for all exceptionals $\cV$ of low rank.  At the same time, we can record the exceptional bundles $\cW$ that compute the endpoints of $I_\cV$, in the sense of Theorem \ref{thm-stabilityInterval}.  (If $e=0$, each endpoint is computed by such an exceptional bundle; if $e=1$, then the right endpoint is computed by an exceptional, and the left endpoint is computed by an exceptional if and only if it is not $0$.)  In Tables \ref{table-stabilityInterval0}, and \ref{table-stabilityInterval1}, we record the rank and first Chern classes of the exceptional bundles $\cV$ of rank up to $20$.  When $e=0$, we use twists, duality, and the symmetry between the two fiber classes to take $c_1(\cV) = (a,b) = a F_1+bF_2$ with $0\leq a < r/2$ and $a\leq b< r$.   When $e=1$, we use twists and duality to take $c_1(\cV) = (a,b) = a E + bF$ with $0\leq a \leq r/2$ and $0 \leq b < r$. We compute the stability interval $I_\cV = (m_0,m_1)$, and we compute the ranks and first Chern classes of exceptional bundles $\cW_i$ which compute the endpoints $m_i$ of $I_\cV$, if they exist.

\begin{table}\caption{Stability intervals and destabilizing bundles for exceptional bundles $\cV$ of rank up to $19$ on $\FF_0$.  See Example \ref{ex-stabilityIntervals}.}
\begin{tabular}{cccc}
$(r(\cV),c_1(\cV))$ & $I_\cV$ & $(r(\cW_0),c_1(\cW_0))$ & $(r(\cW_1),c_1(\cW_1))$\\\hline
$(1,(0,0))$ & $(0,\infty)$ & &\\
$(3,(1,1))$ & $(1/2,2)$  & $(1,(-1,1))$& $(1,(1,-1))$\\
$(5,(1,2))$ & $(1/2,3)$ &   $(1,(-1,1))$ &$(1,(1,-2))$\\
$(7,(1,3))$ & $(1/2,4)$  & $(1,(-1,1))$& $(1,(1,-3))$\\
$(9,(1,4))$ & $(1/2,5)$  & $(1,(-1,1))$& $(1,(1,-4))$\\
$(11,(1,5))$ & $(1/2,6)$  & $(1,(-1,1))$& $(1,(1,-5))$\\
$(11,(4,4))$ & $(4/7,7/4)$  & $(5,(-2,4))$& $(5,(4,-2))$\\
$(13,(1,6))$ & $(1/2,7)$  & $(1,(-1,1))$& $(1,(1,-6))$\\
$(15,(1,7))$ & $(1/2,8)$  & $(1,(-1,1))$& $(1,(1,-7))$\\
$(17,(1,8))$ & $(1/2,9)$ & $(1,(-1,1))$ & $(1,(1,-8))$\\
$(17,(5,5))$ & $(8/9,9/8)$  & $(7,(1,3))$& $(7,(3,1))$\\
$(19,(1,9))$ & $(1/2,10)$  & $(1,(-1,1))$& $(1,(1,-9))$\\
$(19,(4,7))$ & $(8/9,3)$  & $(7,(1,3))$& $(1,(1,-2))$\\
\end{tabular}\label{table-stabilityInterval0}
\end{table}

\begin{table}\caption{Stability intervals and destabilizing bundles for exceptional bundles of rank up to $20$ on $\FF_1$.  See Example \ref{ex-stabilityIntervals}.}
\begin{tabular}{cccc}
$(r(\cV),c_1(\cV))$ & $I_\cV$ & $(r(\cW_0),c_1(\cW_0))$ & $(r(\cW_1),c_1(\cW_1))$\\\hline
$(1,(0,0))$ & $(0,\infty)$ & &\\
$(2,(1,1))$ & $(0,1)$ &  & $(1,(1,0))$\\
$(4,(1,2))$ & $(0,2)$ &  & $(1,(1,-1))$\\
$(5,(2,2))$ & $(0,2/3)$ &  & $(1,(1,0))$\\
$(6,(1,3))$ & $(0,3)$ &  & $(1,(1,-2))$\\
$(8,(1,4))$ & $(0,4)$ &  & $(1,(1,-3))$\\
$(10,(1,5))$ & $(0,5)$ &  & $(1,(1,-4))$\\
$(11,(3,5))$ & $(3/7,2)$ & $(6,(1,3))$ & $(1,(1,-1))$\\
$(12,(1,6))$ & $(0,6)$ &  & $(1,(1,-5))$\\
$(13,(5,5))$ & $(0,5/8)$ &  & $(1,(1,0))$\\
$(14,(1,7))$ & $(0,7)$ &  & $(1,(1,-6))$\\
$(16,(1,8))$ & $(0,8)$ &  & $(1,(1,-7))$\\
$(18,(1,9))$ & $(0,9)$ &  & $(1,(1,-8))$\\
$(19,(5,10))$ & $(1/9,9/5)$ & $(5,(-2,3))$ & $(6,(5,-3))$\\
$(20,(1,10))$ & $(0,10)$ &  & $(1,(1,-9))$\\
\end{tabular}\label{table-stabilityInterval1}
\end{table}
\end{example}

\begin{remark}\label{rem-stabilityIntervalQuotient}
In Theorem \ref{thm-stabilityInterval}, the set $S_\cV$ could instead be defined by requiring $\chi(\cV,\cW) > 0$; in the proof, we just replace the maximal destabilizing subbundle with a minimal destabilizing quotient bundle, and the component of $\RR_{>0}\sm S_\cV$ containing $1-\frac{e}{2}$ won't change.
\end{remark}

Now that we understand the stability of exceptional bundles on $\FF_0$ and $\FF_1$, we are ready to study how the functions $\DLP_H^{<r}(\nu)$ and $\DLP_H(\nu)$ change with the polarization.  We examine some pictures first, since they explain the key phenomenon that occurs when exceptional bundles are destabilized.

\begin{example}\label{ex-variationF0}
Let $e=0$.  In Figure \ref{fig-F0H} we graph the functions $\DLP_{H_{1+\frac{t}{8}}}^{<8}(\epsilon E+\varphi F)$ on the unit square in the $(\epsilon,\varphi)$-plane, where $0\leq t\leq 8$.  There are two times $t$ to focus on:
\begin{itemize}
\item At time $t=0$, we recover the function with the anticanonical polarization (see Example \ref{ex-Kgraphs} and Figure \ref{fig-F0K}).
\item At time $t=8$, the polarization is $H_2$, where the rank $3$ exceptional bundle with $\nu = \frac{1}{3}E + \frac{1}{3}F$ is destabilized by the line sub-bundle $\OO_{\F_0}(E-F)$ or the quotient line bundle $\OO_{\F_0}(F)$.  At this time the branches of the surface controlled by the destabilizing objects meet up and cover the part of the surface controlled by the rank $3$ exceptional bundle.
\end{itemize}
\begin{figure}[t]
\begin{center}
\includegraphics[bb=0 0 5.68in 5.68in]{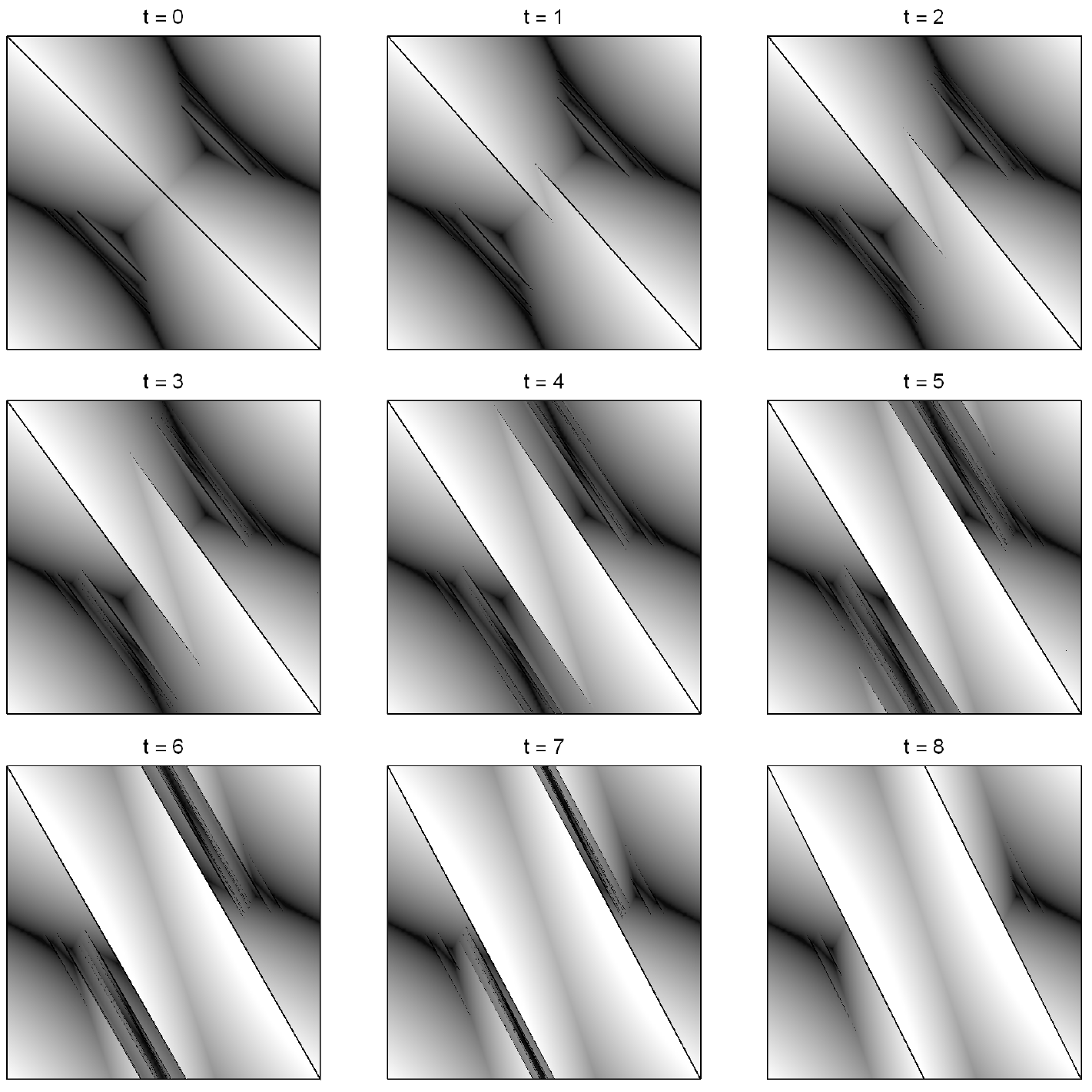}
\end{center}
\caption{For $e=0$ and $0\leq t \leq 8$, we graph the functions $\DLP_{H_{1+\frac{t}{8}}}^{<8}(\epsilon E + \varphi F)$ on the unit square in the $(\epsilon,\varphi)$-plane.  See Example \ref{ex-variationF0}.}\label{fig-F0H}
\end{figure}
\end{example}

\begin{example}\label{ex-variationF1}
Let $e=1$.  In Figure \ref{fig-F1H} we graph the functions $\DLP_{H_{\frac{1}{2}+\frac{t}{12}}}^{<7}(\epsilon E+ \varphi F)$ on the unit square in the $(\epsilon,\varphi)$-plane, where $0\leq t \leq 8$.  There are three times $t$ of particular note:  

\begin{itemize} \item At time $t=0$, we recover the function with the anticanonical polarization (see Example \ref{ex-Kgraphs} and Figure \ref{fig-F1K}).  

\item At time $t=2$ the polarization is $H_{2/3}$, where the rank $5$ exceptional bundle with $\nu = \frac{2}{5}E + \frac{2}{5}F$ is destabilized.  It is destabilized by the sub-line bundle $\OO_{\F_1}(E)$ and the quotient rank $4$ exceptional bundle with $\nu = \frac{1}{4}E + \frac{1}{2}F$.  At this time the portion of the surface controlled by these two destabilizing exceptional bundles meet up and cover the portion of the surface controlled by the rank $5$ exceptional bundle.

\item At time $t=6$ the polarization is $H_1$, and the rank two exceptional bundle with $\nu = \frac{1}{2}E + \frac{1}{2}F$ is destabilized by the sub-line bundle $\OO_{\F_1}(E)$ and the quotient line bundle $\OO_{\F_1}(F)$.  The branches of the surface controlled by these line bundles meet up and cover the portion of the surface controlled by the rank $2$ exceptional bundle.
\end{itemize}

\begin{figure}[t]
\begin{center}
\includegraphics[bb=0 0 5.68in 5.68in]{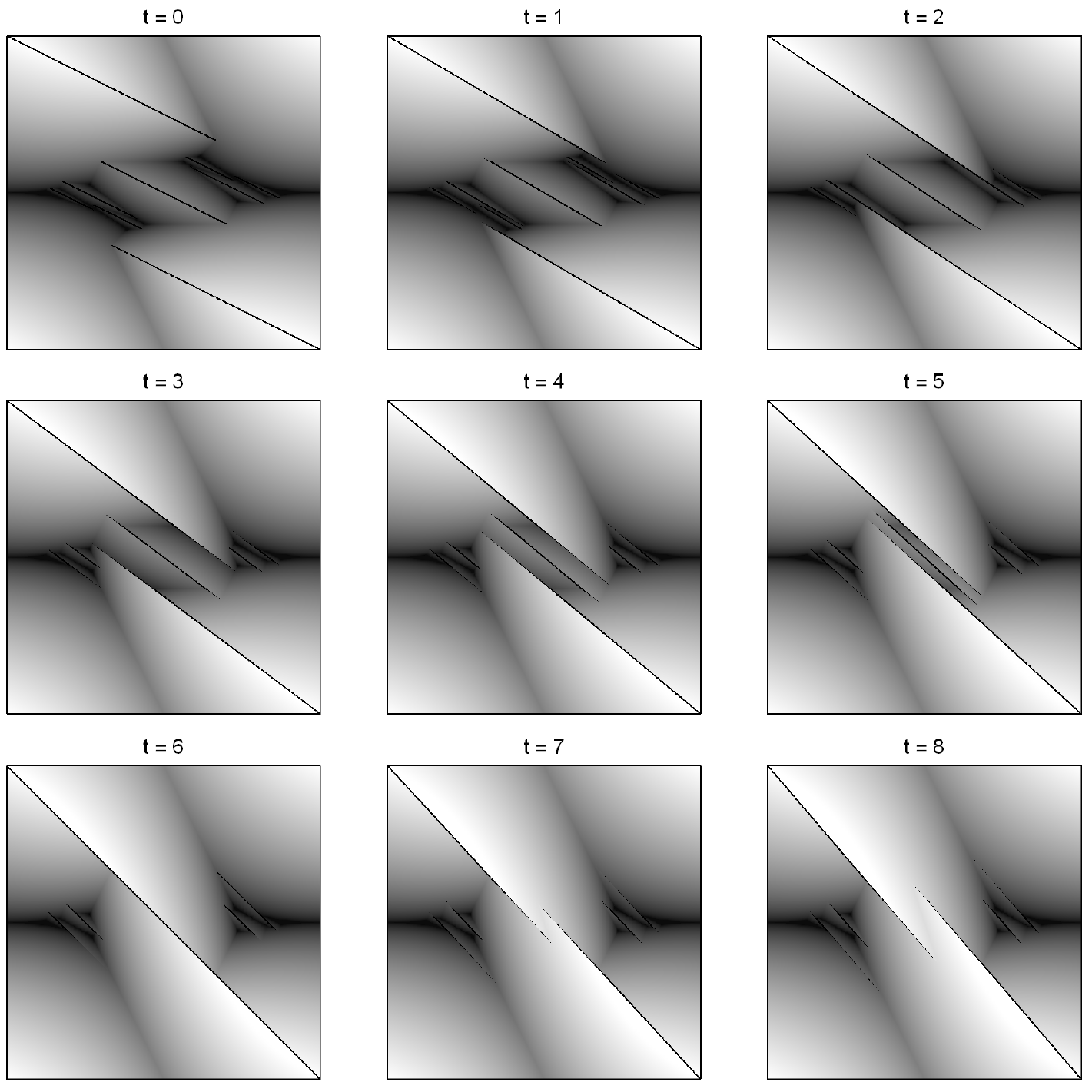}
\end{center}
\caption{For $e=1$ and $0\leq t \leq 8$, we graph the functions $\DLP_{H_{\frac{1}{2}+\frac{t}{12}}}^{<7}(\epsilon E + \varphi F)$ on the unit square in the $(\epsilon,\varphi)$-plane.  See Example \ref{ex-variationF1}.}\label{fig-F1H}
\end{figure}
\end{example}

Now we generalize Lemma \ref{lem-monotonicity2} to higher rank.

\begin{proposition}[Monotonicity in the polarization]\label{prop-DLPmonotone}
Let $e=0$ or $1$, let $r\geq 2$, and let $\nu\in \Pic(\F_e)\te \QQ$ be a total slope.  Consider the polarizations $A_m = -\frac{1}{2}K_{\F_e}+mF$.  If $0 \leq m \leq m'$ or if $\frac{e}{2}-1<m' \leq m \leq 0$, then $$\DLP_{A_m}^{<r}(\nu) \leq \DLP_{A_{m'}}^{<r}(\nu) \qquad \textrm{and} \qquad \DLP_{A_m}(\nu) \leq \DLP_{A_{m'}}(\nu).$$ 
\end{proposition}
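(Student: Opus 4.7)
The strategy is induction on $r$; the second claim for $\DLP$ follows from the first since $\DLP(\nu)=\sup_r \DLP^{<r}(\nu)$. The base case $r=2$ reduces to Lemma \ref{lem-monotonicity2} since $\DLP^{<2}=\DLP^1$. For the inductive step I would fix $0\leq m\leq m'$ (the range $\tfrac{e}{2}-1<m'\leq m\leq 0$ is symmetric) and invoke Remark \ref{rem-max} to choose a $\mu_{A_m}$-stable exceptional bundle $\cV$ of rank $\leq r-1$ attaining $\DLP^{<r}_{A_m}(\nu)$. If $r(\cV)\leq r-2$ then $\cV$ contributes to $\DLP^{<r-1}_{A_m}(\nu)$, and the inductive hypothesis together with the trivial bound $\DLP^{<r-1}\leq \DLP^{<r}$ gives $\DLP^{<r}_{A_{m'}}(\nu)\geq \DLP^{<r-1}_{A_{m'}}(\nu)\geq \DLP^{<r-1}_{A_m}(\nu)=\DLP^{<r}_{A_m}(\nu)$. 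I therefore assume $r(\cV)=r-1$ in what follows.

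If $\cV$ remains $\mu_{A_{m'}}$-stable, I mimic the proof of Lemma \ref{lem-monotonicity2} with $\cV$ playing the role of the line bundle $L$: the two expressions $P(\pm(\nu-\nu(\cV)))-\Delta(\cV)$ are independent of $m$; by Remark \ref{rem-DLPK} the smaller branch is selected at $A_0=-\tfrac{1}{2} K_{\F_e}$, and it can only switch to the larger one as $m$ moves away from $0$. Serre duality $P(x+K_{\F_e})=P(-x)$ ensures that whenever $\nu$ leaves the domain of $\cV$ it simultaneously enters that of the twist $\cV\otimes\OO(-K_{\F_e})$ with the same computed value, and since twisting by a line bundle preserves $\mu$-stability, one of $\cV$ or $\cV\otimes\OO(-K_{\F_e})$ is a $\mu_{A_{m'}}$-stable exceptional of rank $r-1$ yielding $\DLP_{A_{m'},\,\cdot}(\nu)\geq \DLP_{A_m,\cV}(\nu)$.

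Otherwise, let $\tilde m_1\in(m,m']$ be the right endpoint of the $A$-parametrized stability interval of $\cV$, so that $\cV$ is strictly $\mu_{A_{\tilde m_1}}$-semistable. Its Jordan--H\"older factors at $A_{\tilde m_1}$ are stable sheaves of the same slope as $\cV$; iteratively applying Proposition \ref{prop-mukai} and using semistability to kill the $\Ext^2$-obstructions (equivalently, Theorem \ref{thm-rigidSplit} on the associated graded) forces each factor to be an exceptional bundle $\cW_i$ of rank strictly less than $r(\cV)=r-1$. Writing $\ch\cV=\sum_i k_i\ch\cW_i$ in $K(\F_e)$, a direct Riemann--Roch calculation that uses $r(\cV)=\sum_i k_ir(\cW_i)$ yields the identity $$P(\nu-\nu(\cV))-\Delta(\cV)=\sum_i \lambda_i\bigl(P(\nu-\nu(\cW_i))-\Delta(\cW_i)\bigr),\qquad \lambda_i=\frac{k_ir(\cW_i)}{r(\cV)},$$ and its twin with $\nu$ and $\nu(\cdot)$ interchanged. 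Because the $\cW_i$ share $\cV$'s $A_{\tilde m_1}$-slope, the same branch is selected in each $\DLP_{A_{\tilde m_1},\,\cdot}(\nu)$, so the right-hand side is a convex combination of the $\DLP_{A_{\tilde m_1},\cW_i}(\nu)$; hence some $i$ satisfies $\DLP_{A_{\tilde m_1},\cW_i}(\nu)\geq \DLP_{A_{\tilde m_1},\cV}(\nu)\geq \DLP_{A_m,\cV}(\nu)$, where the last inequality is the $m$-monotonicity from the previous paragraph applied on $[m,\tilde m_1]$. Since $r(\cW_i)\leq r-2$, the inductive hypothesis applied on $[\tilde m_1,m']$ then gives $\DLP^{<r}_{A_{m'}}(\nu)\geq \DLP^{<r-1}_{A_{\tilde m_1}}(\nu)\geq \DLP^{<r}_{A_m}(\nu)$. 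The delicate step --- and the main obstacle --- is compatibility of branch selection in the convex-combination argument, which works only because the Jordan--H\"older structure at the wall forces each $\cW_i$ to share $\cV$'s $A_{\tilde m_1}$-slope.
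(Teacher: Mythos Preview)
Your overall architecture mirrors the paper's proof closely: track an exceptional bundle $\cV$ as the polarization moves, handle branch switches and domain transitions via the twist $\cV\otimes(-K_{\F_e})$ exactly as in Lemma \ref{lem-monotonicity2}, and when $\cV$ destabilizes replace it by lower-rank exceptionals whose $\DLP$-values dominate via a weighted-mean argument. The inductive framing in $r$ is a harmless repackaging of what the paper does locally in $m$.

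There is, however, a genuine gap in the destabilization step. You take the \emph{Jordan--H\"older} factors of $\cV$ at the wall $A_{\tilde m_1}$ and claim that ``iteratively applying Proposition \ref{prop-mukai}'' shows each factor is exceptional. But Proposition \ref{prop-mukai}(2) requires $\Hom(\cW,\cU)=0$, not just the $\Ext^2$-vanishing you mention, and this fails for Jordan--H\"older filtrations: a stable factor $\gr_1$ can recur later in the filtration, giving $\Hom(\gr_1,\cV/\gr_1)\neq 0$. Your parenthetical alternative ``Theorem \ref{thm-rigidSplit} on the associated graded'' does not work either: $\cV$ is simple, so if the filtration has length $\geq 2$ then $\bigoplus_i\gr_i\not\cong\cV$; since $\bigoplus_i\gr_i$ lies in the closure of the (open, because $\cV$ is rigid) orbit of $\cV$, it must have $\ext^1>0$, so it is \emph{not} rigid and Theorem \ref{thm-rigidSplit} does not apply.

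The fix is exactly what the paper does: use the $H_+$-\emph{Harder--Narasimhan} factors $\cV_i$ of $\cV$ for $H_+=A_{\tilde m_1+\epsilon}$ instead. Then $\Hom(\cV_1,\cV/\cV_1)=0$ holds automatically (maximal destabilizing sub), while $\Ext^2(\cV/\cV_1,\cV_1)=0$ follows from $-K_{\F_e}\cdot H_+>0$. Proposition \ref{prop-mukai}(2) now applies iteratively, each $\cV_i$ is rigid, hence semiexceptional by Theorem \ref{thm-rigidSplit} and $H_+$-semistability; write $\cV_i\cong\cE_i^{a_i}$. Because $\cV$ was $\mu_{A_{\tilde m_1}}$-semistable, all $\cE_i$ share $\cV$'s $A_{\tilde m_1}$-slope, so your convex-combination identity and branch-selection argument go through verbatim with the $\cE_i$ in place of the $\cW_i$. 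Since each $\cE_i$ is $\mu_{H_+}$-stable and exceptional, Proposition \ref{prop-interval} guarantees $\cE_i$ is also $\mu_{A_{\tilde m_1}}$-stable, so it legitimately contributes to $\DLP^{<r-1}_{A_{\tilde m_1}}(\nu)$ and your induction on $[\tilde m_1,m']$ completes the argument.
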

\begin{proof}
We consider the function $\DLP_H^{<r}(\nu)$; all the arguments apply identically to $\DLP_{H}(\nu)$ after trivial modifications.  As $H$ moves away from $-K_{\F_e}$, the number $\DLP_{H}^{<r}(\nu)$  can potentially change in three different ways: there must be a $\mu_H$-stable exceptional bundle $\cV$ of rank less than $r$ such that either 
\begin{enumerate}
\item as $H$ changes the branch of $\DLP_{H,\cV}$ used to compute $\DLP_{H,\cV}(\nu)$ changes;
\item as $H$ changes the slope $\nu$ either leaves or enters the domain of definition of $\DLP_{H,\cV}$; or
\item as $H$ moves away from $-K_{\F_e}$ the bundle $\cV$ becomes unstable, so that it no longer contributes to the computation of the supremum in the definition of $\DLP_H^{<r}(\nu)$.
\end{enumerate}
We discuss each case in turn, with cases (1) and (2) being similar to the line bundle case discussed in the proof of Lemma \ref{lem-monotonicity2}.  
 
\emph{Case 1:} This case is handled exactly as in the line bundle case; just replace $L$ with $\cV$ in the proof.

\emph{Case 2:} As in the proof of Lemma \ref{lem-monotonicity2}, there are four cases to consider.  Again consider the case where $(\nu-\nu(\cV))\cdot H = -\frac{1}{2}K_{\F_e}\cdot H$, let $H_-$ and $H_+$ be polarizations slightly closer to and farther from $-K_{\F_e}$, respectively, and suppose $\nu$ is in the domain of definition of $\DLP_{H_-,\cV}$ but not in the domain of definition of $\DLP_{H_+,\cV}$.  Consider the bundle $\cV' = \cV \te (-K_{\F_e})$.  Since $\cV$ is $\mu_H$-stable, it is also $\mu_{H_+}$-stable, and $\cV'$ is $\mu_{H_+}$-stable.  Then as in the proof of Lemma \ref{lem-monotonicity2}, the slope $\nu$ is in the domain of definition of $\DLP_{H_+,\cV'}$ and $\DLP_{H_+,\cV'}(\nu) = \DLP_{H_-,\cV}(\nu)$.  We conclude as before.

\emph{Case 3:} The idea is that as $\cV$ is destabilized, one of its Harder-Narasimhan factors will provide a stronger inequality.  Let $H$ be a polarization such that $\cV$ is strictly $H$-semistable, and let $H_+$ be a polarization slightly farther from $-K_{\F_e}$.  Let $\cV_1,\ldots,\cV_\ell$ be the $H_+$-Harder-Narasimhan factors of $\cV$.  By an induction on $\ell$ and Proposition \ref{prop-mukai} (2), these factors will all be rigid and semistable for the generic polarization $H_+$.  They are direct sums of exceptional bundles by Theorem \ref{thm-rigidSplit}, but since they are $H_+$-semistable they must be semiexceptional.  Let $\cE_1,\ldots,\cE_\ell$ be exceptional bundles with $\cV_i \cong \cE_i^{\oplus a_i}$.  By our choice of $H_+$, we have $\mu_H(\cV) = \mu_H(\cE_i)$ for all $i$, and therefore the functions $\DLP_{H,\cV}$ and $\DLP_{H,\cE_i}$ have the same domain of definition.  Let $\nu$ be a total slope in this domain.  The bundles $\cE_i$ are $H$-stable (and $H_+$-stable) of rank less than $r$, so they contribute to the computation of $\DLP_{H}^{<r}(\nu)$, and they will continue to contribute to the computation of $\DLP_{H_+}^{<r}(\nu)$ so long as $\nu$ remains in the domain of definition; if $\nu$ does not remain in the domain of definition, then as in Case (2) the exceptional bundle can be replaced by a twist without changing the computed value.

To complete the proof, we must show that $$\DLP_{H,\cV}(\nu) \leq \max_i \DLP_{H,\cE_i}(\nu).$$  Since $\cV$ and the $\cE_i$ all have the same $H$-slope, the numbers $(\nu-\nu(\cV))\cdot H$ and $(\nu-\nu(\cE_i))\cdot H$ are all equal and the numbers $\DLP_{H,\cV}(\nu)$ and $\DLP_{H,\cE_i}(\nu)$ are all computed using the same branch.  Without loss of generality assume $(\nu-\nu(\cV)) \cdot H < 0$, as the other two cases are dealt with similarly.  Let $\bv = \ch \cV$, $\bv_i = \ch \cV_i$, and $\be_i = \ch \cE_i$, and consider the (rational) character $\bw = (1,\nu,0)$ of rank $1$ and discriminant $0$.  Then the inequality to be proved is $$\frac{\chi(\bv,\bw)}{r(\bv)} \leq \max_i \frac{\chi(\be_i,\bw)}{r(\be_i)}.$$  But $\bv = \sum_i a_i \be_i$, so the number on the left is the weighted mean of the numbers on the right:\begin{align*}\frac{\chi(\bv,\bw)}{r(\bv)} &= \frac{a_1 \chi(\be_1,\bw) + \cdots + a_\ell \chi(\be_\ell,\bw)}{a_1r(\be_i) +\cdots + a_\ell r(\be_\ell)} \\&= \frac{a_1 r(\be_1) \frac{\chi(\be_1,\bw)}{r(\be_1)}+\cdots + a_\ell r(\be_\ell) \frac{\chi(\be_\ell,\bw)}{r(\be_\ell)}}{a_1r(\be_1)+\cdots + a_\ell r(\be_\ell)}.\end{align*} Thus the required inequality holds.
\end{proof}

Combining Corollary \ref{cor-K1/2} and Proposition \ref{prop-DLPmonotone} immediately proves the following result.

\begin{corollary}\label{cor-DLP1/2}
Let $e=0$ or $1$ and let $\nu\in \Pic(\F_e)\te \QQ$.  Let $H$ be an arbitrary polarization.  Then $$\DLP_{H}(\nu) \geq \frac{1}{2}.$$  
\end{corollary}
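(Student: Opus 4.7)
The plan is to reduce immediately to the anticanonical case by combining scale invariance with the monotonicity proposition. First I would observe that $\DLP_H(\nu)$ depends only on the ray spanned by $H$ in the ample cone: the set $\mathbb{E}_H$ of $\mu_H$-stable exceptional bundles is unchanged under positive rescaling of $H$, the condition $|(\nu-\nu(\cV))\cdot H|\le -\tfrac12 K_{\F_e}\cdot H$ scales homogeneously, and the two formulas $P(\pm(\nu-\nu(\cV)))-\Delta(\cV)$ defining each $\DLP_{H,\cV}(\nu)$ do not involve $H$ at all (only the sign of $(\nu-\nu(\cV))\cdot H$ does, and that sign is scale invariant). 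Consequently, if I set $A_m := -\tfrac12 K_{\F_e}+mF$, then $\DLP_{A_0}(\nu) = \DLP_{-K_{\F_e}}(\nu)$.

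Next I would write an arbitrary ample $H$ (up to positive scaling) in the form $A_m$ for some $m \in (\tfrac{e}{2}-1,\infty)$. Using the identity $-\tfrac12 K_{\F_e} = E+\tfrac{e+2}{2}F$ gives $A_m = E + (\tfrac{e+2}{2}+m)F$, and matching with $H_n = E+(n+e)F$ yields $n = 1-\tfrac{e}{2}+m$, which ranges over all of $(0,\infty)$ precisely when $m$ ranges over $(\tfrac{e}{2}-1,\infty)$. Since every ample divisor on $\F_e$ is a positive rational multiple of some $H_n$ with $n>0$, every ample $H$ is proportional to some $A_m$ in this range, and thus $\DLP_H(\nu) = \DLP_{A_m}(\nu)$ by the scale invariance from the first step.

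Finally I would apply Proposition \ref{prop-DLPmonotone} to compare with $m=0$. If $m \geq 0$, the first case of the proposition (with $0 \leq 0 \leq m$) gives $\DLP_{A_0}(\nu) \leq \DLP_{A_m}(\nu)$. If $\tfrac{e}{2}-1 < m \leq 0$, the second case (with $m \leq 0$) gives $\DLP_{A_0}(\nu) \leq \DLP_{A_m}(\nu)$. Either way, combining with Corollary \ref{cor-K1/2} yields
\[
\DLP_H(\nu) \;=\; \DLP_{A_m}(\nu) \;\geq\; \DLP_{A_0}(\nu) \;=\; \DLP_{-K_{\F_e}}(\nu) \;\geq\; \tfrac12,
\]
which is the desired inequality.

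The argument is essentially formal once the two cited results are in hand; the only point requiring a moment of care is verifying scale invariance of the function $\DLP_H(\nu)$, which is why Proposition \ref{prop-DLPmonotone} was stated for the family $A_m$ rather than the family $H_m$ directly. There is no substantial obstacle beyond this bookkeeping.
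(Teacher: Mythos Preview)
Your proof is correct and follows exactly the same route as the paper: the paper's proof is a single sentence invoking Corollary~\ref{cor-K1/2} and Proposition~\ref{prop-DLPmonotone}, and your write-up simply makes the scale-invariance bookkeeping and the two cases of the monotonicity proposition explicit.
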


Theorem \ref{thm-stabilityInterval} focuses on describing the polarizations such that a given exceptional bundle $\cV$ is stable.  On the other hand, we can fix a polarization and study the stable exceptionals.  We can now show that the Dr\'ezet-Le Potier surface answers this question.  This result generalizes Corollary \ref{cor-delPezzoExceptional} to arbitrary polarizations.

\begin{corollary}\label{cor-DLPexcdelPezzo}
Let $e=0$ or $1$, and let $H$ be an arbitrary polarization.  If $\bv = (r,\nu,\Delta)$ is potentially exceptional, then there is a $\mu_H$-stable exceptional bundle $\cV$ of character $\bv$ if and only if $$ \Delta \geq \DLP_{H}^{<r}(\nu).$$
\end{corollary}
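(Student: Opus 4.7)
The plan is to handle the two directions separately, using Proposition \ref{prop-DLPmonotone} to reduce the case of a general polarization to the anticanonical case (Corollary \ref{cor-delPezzoExceptional}). For the forward direction, suppose $\cV$ is a $\mu_H$-stable exceptional bundle of character $\bv$, and let $\cW$ be any $\mu_H$-stable exceptional bundle with $r(\cW) < r$ and $|(\nu-\nu(\cW))\cdot H| \leq -\tfrac{1}{2}K_{\F_e}\cdot H$. Since $\cV\not\cong\cW$ (their ranks differ), $\mu_H$-stability forces $\Hom(\cW,\cV) = 0$ whenever $\mu_H(\cW) \leq \mu_H(\cV)$ and $\Hom(\cV,\cW) = 0$ whenever $\mu_H(\cV) \leq \mu_H(\cW)$, while the slope constraint together with Serre duality and $K_{\F_e}\cdot H < 0$ kills the corresponding $\Ext^2$. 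Translating $\chi(\cW,\cV) \leq 0$ or $\chi(\cV,\cW)\leq 0$ with Riemann--Roch then yields $\Delta \geq \DLP_{H,\cW}(\nu)$ (with both branches active, hence dominating the max, when the $H$-slopes coincide), and taking the supremum over $\cW$ gives $\Delta \geq \DLP_H^{<r}(\nu)$.

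For the reverse direction, assume $\Delta \geq \DLP_H^{<r}(\nu)$. By Proposition \ref{prop-DLPmonotone}, $\DLP_{-K_{\F_e}}^{<r}(\nu) \leq \DLP_H^{<r}(\nu) \leq \Delta$, so Corollary \ref{cor-delPezzoExceptional} produces an exceptional bundle $\cV$ of character $\bv$; it is unique up to isomorphism by Proposition \ref{prop-excPrior} and $\mu_{-K_{\F_e}}$-stable by Theorem \ref{thm-delPezzoExc}. Connect $-K_{\F_e}$ to $H$ by the family of polarizations $A_t = -\tfrac{1}{2}K_{\F_e} + tF$ used in Proposition \ref{prop-DLPmonotone}. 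By local finiteness of the walls for $\mu$-stability, if $\cV$ fails to be $\mu_H$-stable then there is a first polarization $H'$ along this segment at which $\cV$ becomes strictly $\mu_{H'}$-semistable, and $H'$ lies between $-K_{\F_e}$ and $H$ in the sense of Proposition \ref{prop-DLPmonotone}.

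The main obstacle is to derive a contradiction at $H'$. Revisiting the argument in the proof of Theorem \ref{thm-stabilityInterval} applied to the endpoint of the stability interval $I_\cV$ reached at $H'$, the maximal destabilizing subsheaf (for a polarization just past $H'$) is semiexceptional by Proposition \ref{prop-mukai} and Theorem \ref{thm-rigidSplit}, yielding a $\mu_{H'}$-stable exceptional bundle $\cW$ with $r(\cW) < r$, $\mu_{H'}(\cW) = \mu_{H'}(\cV)$, and $\chi(\cW,\cV) > 0$ (or dually $\chi(\cV,\cW) > 0$, by Remark \ref{rem-stabilityIntervalQuotient}). Riemann--Roch then gives $\Delta < P(\nu-\nu(\cW)) - \Delta(\cW)$, and since $(\nu-\nu(\cW))\cdot H' = 0$ places $\nu$ on the boundary where $\DLP_{H',\cW}$ is defined as the maximum of its two branches, we obtain
$$\Delta < \DLP_{H',\cW}(\nu) \leq \DLP_{H'}^{<r}(\nu) \leq \DLP_H^{<r}(\nu),$$
where the last inequality is Proposition \ref{prop-DLPmonotone}. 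This contradicts the hypothesis, so $\cV$ must be $\mu_H$-stable.
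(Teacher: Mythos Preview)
Your overall strategy matches the paper's: use monotonicity (Proposition \ref{prop-DLPmonotone}) and Corollary \ref{cor-delPezzoExceptional} to produce the exceptional bundle, then analyze the endpoint of the stability interval via Theorem \ref{thm-stabilityInterval} to derive a contradiction. The reverse direction is essentially identical to the paper's, except that the paper evaluates at a polarization $H_+$ just past the wall (so that $\mu_{H_+}(\cW)>\mu_{H_+}(\cV)$ strictly and one lands on the correct branch of $\DLP_{H_+,\cW}$), whereas you evaluate at the wall $H'$ itself and use the ``max of both branches'' clause in the definition of $\DLP_{H',\cW}$. Both work.

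There is a slip in your forward direction: the inequalities are reversed. For $\mu_H$-stable nonisomorphic sheaves one has $\Hom(\cW,\cV)=0$ when $\mu_H(\cW)\geq \mu_H(\cV)$ (not $\leq$), and $\Hom(\cV,\cW)=0$ when $\mu_H(\cV)\geq \mu_H(\cW)$. With the inequalities corrected, the argument goes through: when $(\nu-\nu(\cW))\cdot H<0$ one has $\mu_H(\cW)>\mu_H(\cV)$, so $\Hom(\cW,\cV)=0$, and the slope constraint $|(\nu-\nu(\cW))\cdot H|\leq -\tfrac12 K_{\F_e}\cdot H$ gives $\mu_H(\cW(K_{\F_e}))<\mu_H(\cV)$, hence $\Ext^2(\cW,\cV)=0$; this yields $\chi(\cW,\cV)\leq 0$ and thus $\Delta\geq \DLP_{H,\cW}(\nu)$, and symmetrically in the other case. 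The paper avoids this casework by perturbing to a nearby generic $H_+$ and quoting Proposition \ref{prop-DLPss}, then descending via monotonicity; your direct approach at $H$ is fine once the signs are fixed.
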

\begin{proof}
($\Rightarrow$) Suppose there is a $\mu_H$-stable exceptional bundle $\cV$.  Then $\cV$ is also $\mu_{H_+}$-stable for a polarization $H_+$ slightly farther from $-K_{\F_e}$, and  Proposition \ref{prop-DLPss} (1) gives $\Delta \geq \DLP_{H_+}^{<r}(\nu)$.  By Proposition \ref{prop-DLPmonotone}, we have $\Delta \geq \DLP_{H}^{<r}(\nu)$.

($\Leftarrow$) Suppose $\Delta \geq \DLP_H^{<r}(\nu)$.  Then by Proposition \ref{prop-DLPmonotone}, we know that $\Delta \geq \DLP_{-K_{\F_e}}^{<r}(\nu)$, and therefore by Corollary \ref{cor-delPezzoExceptional} there is an exceptional bundle $\cV$ of character $\bv$.  It remains to show that $\cV$ is $\mu_H$-stable.  Let $$I = \{m>0 : \Delta \geq \DLP_{H_m}^{<r}(\nu)\}.$$  By Proposition \ref{prop-DLPmonotone}, $I$ is an interval containing $1-\frac{e}{2}$.  As $m$ moves away from $1-\frac{e}{2}$, the number $\DLP_{H_m}^{<r}(\nu)$ is locally constant and only jumps up at special values, so $I$ is open.

We claim that $I = I_\cV$ is the stability interval of $\cV$.  From the first direction of the proof, we know that if $\cV$ is $\mu_{H_m}$-stable, then $m\in I$, and therefore $I_\cV \subset I$.  On the other hand, suppose $\cV$ is strictly $\mu_{H_m}$-semistable, and in the notation of Theorem \ref{thm-stabilityInterval} let $\cW$ be a $\mu_{H_m}$-stable exceptional bundle that shows that $m=m_{\cV,\cW}\in S_\cV$.  Let $H_+$ be a polarization slightly farther away from $-K_{\F_e}$ than $H_m$.  Then $\chi(\cW,\cV) > 0$ and $\mu_{H_+}(\cW) > \mu_{H_+}(\cV)$, which shows that $\Delta < \DLP_{H_+,\cW}(\nu)$ and therefore $\Delta < \DLP_{H_+}^{<r}(\nu)$.  This shows that $m$ must be an endpoint of the interval $I$.  Therefore $I = I_\cV$.
\end{proof}

\subsection{Generic stability intervals} We can also combine the results of the past three sections to study the stability interval of a general bundle on a del Pezzo Hirzebruch surface.  Intuitively, for a general sheaf it is easier to be $\mu_{-K_{\F_e}}$-stable than slope stable for other polarizations.

\begin{corollary}\label{cor-KstabilityEasy}
Let $e=0$ or $1$, and let $\bv = (r,\nu,\Delta)$ be a character such that there is a $\mu_{H_m}$-stable sheaf $\cV$ for some $m$.  Then the general sheaf in the stack $\cP_{F}(\bv)$ is $\mu_{-K_{\F_e}}$-stable.  

In other words, the generic stability interval $I_\bv$ contains $1-\frac{e}{2}$.
\end{corollary}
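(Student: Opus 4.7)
The plan is to split into two cases based on whether $\bv$ is potentially exceptional, i.e., whether $\chi(\bv,\bv) = 1$.

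In the potentially exceptional case, I will apply Proposition \ref{prop-DLPss}(1) to $\cV$ (possibly after slightly perturbing $H_m$ to a generic polarization, using openness of $\mu$-stability in the polarization) and combine it with the monotonicity of $\DLP^{<r}$ (Proposition \ref{prop-DLPmonotone}) to obtain $\Delta \geq \DLP_{-K_{\F_e}}^{<r}(\nu)$.  Corollary \ref{cor-delPezzoExceptional} then produces an exceptional bundle $\cE$ of character $\bv$, which by Proposition \ref{prop-excPrior} is $F$-prioritary and unique up to isomorphism, and which is $\mu_{-K_{\F_e}}$-stable by Theorem \ref{thm-delPezzoExc}.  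Rigidity of $\cE$ shows it represents the general point of the irreducible stack $\cP_F(\bv)$, finishing this case.

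In the non-potentially-exceptional case, I first observe that $\cV$ cannot be semiexceptional:  a semiexceptional bundle $\cE^{\oplus k}$ is simple only when $k=1$, which would force $\bv$ exceptional.  After perturbing $H_m$ to a generic polarization if necessary, Proposition \ref{prop-DLPss}(2), Proposition \ref{prop-DLPmonotone}, and Corollary \ref{cor-DLP1/2} together give $\Delta \geq \DLP_{-K_{\F_e}}(\nu) \geq \frac{1}{2}$.  In particular $\Delta \geq \DLP_{-K_{\F_e}}^{<r}(\nu) = \DLP_{H_\pm}^{<r}(\nu)$ for $H_\pm = -\frac{1}{2}K_{\F_e} \pm \epsilon F$ with $\epsilon > 0$ small, by Lemma \ref{lem-limitK}.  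Theorem \ref{thm-existence} then yields $H_\pm$-semistable sheaves of character $\bv$ for both signs, each of which is $F$-prioritary by Proposition \ref{prop-ssPrior} and Lemma \ref{lem-priorCompare}.

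Combining the openness of $H$-semistability and $\mu_H$-stability with the irreducibility of $\cP_F(\bv)$, the general $\cW \in \cP_F(\bv)$ will simultaneously be $H_+$-semistable, $H_-$-semistable, and $\mu_{H_m}$-stable.  Assume for contradiction that $\cW$ is not $\mu_{-K_{\F_e}}$-stable.  Convexity of slope semistability in the polarization shows $\cW$ is $\mu_{-K_{\F_e}}$-semistable, so there is a proper subsheaf $\cF \subset \cW$ of smaller rank with $\mu_{-K_{\F_e}}(\cF) = \mu_{-K_{\F_e}}(\cW)$.  The inequalities $(\nu(\cF)-\nu(\cW)) \cdot H_\pm \leq 0$ coming from $\mu_{H_\pm}$-semistability, together with $(\nu(\cF)-\nu(\cW)) \cdot K_{\F_e} = 0$, span a system whose only common solution is $\nu(\cF) - \nu(\cW) = 0$, so $\nu(\cF) = \nu(\cW)$.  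But then $\mu_{H_m}(\cF) = \mu_{H_m}(\cW)$, contradicting the $\mu_{H_m}$-stability of $\cW$.  The principal technical point is extracting $\nu(\cF) = \nu(\cW)$ from $\mu$-semistability against polarizations on both sides of $-K_{\F_e}$; once this is in hand, the $\mu_{H_m}$-stability hypothesis closes the argument cleanly.
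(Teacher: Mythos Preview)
Your proof is correct, but it takes a somewhat different route from the paper's.

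The paper's argument is shorter and uniform.  It does not split into cases.  Assuming without loss of generality that $m > 1-\frac{e}{2}$, it observes (via Proposition~\ref{prop-DLPss} applied to either part depending on whether $\cV$ is exceptional, together with Proposition~\ref{prop-DLPmonotone} and Lemma~\ref{lem-limitK}) that $\Delta \geq \DLP^{<r}_{-K_{\F_e}-\epsilon F}(\nu)$, and then applies Theorem~\ref{thm-existence} once to get $(-K_{\F_e}-\epsilon F)$-semistable sheaves.  The general sheaf is then simultaneously $\mu_{-K_{\F_e}-\epsilon F}$-semistable and $\mu_{H_{m+\epsilon}}$-stable, and since $-K_{\F_e}$ lies between these two polarizations, convexity yields $\mu_{-K_{\F_e}}$-stability directly.

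Your approach differs in two places.  In the exceptional case you bypass the convexity argument entirely by invoking Gorodentsev's Theorem~\ref{thm-delPezzoExc}, which is a clean alternative.  In the non-exceptional case you apply Theorem~\ref{thm-existence} on \emph{both} sides of $-K_{\F_e}$ to produce $H_+$- and $H_-$-semistable sheaves, and then argue that any destabilizing subsheaf for $\mu_{-K_{\F_e}}$-stability would have to have the same total slope (from the three linear conditions), contradicting $\mu_{H_m}$-stability.  This is a valid and more symmetric argument---it avoids the ``without loss of generality $m > 1-\frac{e}{2}$'' reduction---but it is longer.  The paper instead uses the given polarization $H_{m+\epsilon}$ itself as one side of the sandwich, which is more economical since it needs only one application of Theorem~\ref{thm-existence} and no linear-algebra step.
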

\begin{proof}
Without loss of generality assume $m > 1-\frac{e}{2}$.  Let $\cV\in \cP_{F}(\bv)$ be a general sheaf. Then it is $\mu_{H_{m+\epsilon}}$-stable for $\epsilon>0$ small.  By Propositions \ref{prop-DLPss} and \ref{prop-DLPmonotone} and Lemma \ref{lem-limitK}, we find (shrinking $\epsilon$ if necessary) $$\Delta \geq \DLP^{<r}_{H_{m+\epsilon}}(\nu)\geq \DLP_{-K_{\F_e}}^{<r}(\nu) = \DLP_{-K_{\F_e}-\epsilon F}^{<r} (\nu).$$ By Theorem \ref{thm-existence}, the general sheaf in $\cP_F(\bv)$ is both $(-K_{\F_e}-\epsilon F)$-semistable and $\mu_{H_{m+\epsilon}}$-stable, and so it is $\mu_{-K_{\F_e}}$-stable.
\end{proof}

\begin{corollary}
Let $e=0$ or $1$, and let $I_{\bv}$ be the generic stability interval of $\bv\in K(\F_e)$.  
\begin{enumerate}
\item If $m\in I_\bv$, then the moduli space $M_{H_m}(\bv)$ is birational to $M_{-K_{\F_e}}(\bv)$.  
\item If $m\notin \overline{I}_{\bv}$, then $M_{H_m}(\bv)$ is empty.
 \end{enumerate}
 \end{corollary}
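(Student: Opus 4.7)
The overall approach is to exploit the open dense substack $\cU \subset \cP_F(\bv)$ furnished by the Proposition defining $I_\bv$, consisting of sheaves $\cV$ with $I_\cV = I_\bv$, combined with the fact that $1-\frac{e}{2} \in I_\bv$ from Corollary \ref{cor-KstabilityEasy}. Part (1) will follow by viewing $\cU$ as producing a common open dense subset in both $M_{H_m}(\bv)$ and $M_{-K_{\F_e}}(\bv)$; part (2) will follow by combining Walter's irreducibility of $\cP_F(\bv)$ with the elementary linear structure of stability intervals.

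For part (1), fix $m \in I_\bv$. Since $1-\frac{e}{2}$ also lies in $I_\bv$, every $\cV \in \cU$ is simultaneously $\mu_{H_m}$-stable and $\mu_{-K_{\F_e}}$-stable, and hence $H_m$- and $(-K_{\F_e})$-Gieseker stable. The universal family on $\cU$ induces morphisms to the stable loci $M_{H_m}^s(\bv)$ and $M_{-K_{\F_e}}^s(\bv)$, whose images agree as the same set $V$ of isomorphism classes of sheaves in $\cU$. Since the morphism from the stable substack of $\cP_F(\bv)$ to the coarse moduli of stable sheaves is open (the moduli stack of stable sheaves being a $\mathbb{G}_m$-gerbe over its coarse moduli), $V$ is open in each moduli space and dense in the irreducible component containing it, giving the desired birational equivalence.

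For part (2), I argue by contraposition. Suppose $M_{H_m}(\bv)$ is nonempty, witnessed by an $H_m$-Gieseker semistable sheaf $\cV_0$. Then $\cV_0$ is $\mu_{H_m}$-semistable and, by Proposition \ref{prop-ssPrior}, is $F$-prioritary. Openness of $\mu_{H_m}$-semistability in flat families, combined with Walter's irreducibility of $\cP_F(\bv)$, forces the open substack of $\mu_{H_m}$-semistable sheaves to be dense in $\cP_F(\bv)$. Therefore a general sheaf $\cV \in \cP_F(\bv)$ is both $\mu_{H_m}$-semistable and has $I_\cV = I_\bv$. Writing $I_\cV = (m_0, m_1)$, the sheaf $\cV$ is $\mu_{H_{m'}}$-stable for $m' \in (m_0, m_1)$, strictly $\mu_{H_{m_i}}$-semistable at the endpoints, and $\mu_{H_{m'}}$-unstable outside $[m_0, m_1]$, since a destabilizing subsheaf at an endpoint $m_i$ has $H_{m'}$-slope varying linearly in $m'$ and so strictly destabilizes $\cV$ on the far side of $m_i$. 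Consequently $m \in [m_0, m_1] = \overline{I_\bv}$.

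The principal obstacle lies in part (1), where one must carefully justify that the set-theoretic identification of isomorphism classes of sheaves in $\cU$ upgrades to an isomorphism of open scheme-theoretic subsets of the two moduli spaces, and handle the potential subtleties of multiple irreducible components or strictly semistable loci lying outside $V$. Once this moduli-theoretic bookkeeping is in place, part (2) is essentially direct, combining Walter's irreducibility with openness of semistability and the linear variation of slopes in the polarization parameter $m$.
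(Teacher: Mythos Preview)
Your approach is the same as the paper's. The obstacle you flag in part~(1) dissolves immediately: since every $H_m$-semistable (resp.\ $-K_{\F_e}$-semistable) sheaf is $F$-prioritary and $\cP_F(\bv)$ is irreducible by Walter's theorem, both moduli spaces $M_{H_m}(\bv)$ and $M_{-K_{\F_e}}(\bv)$ are themselves irreducible, so your open set $V$ is automatically dense in each and the birational equivalence follows directly without any gerbe or component bookkeeping. Your argument for part~(2), passing to the general prioritary sheaf and using the linear variation of slopes in $m$ to rule out $\mu_{H_m}$-semistability outside $\overline{I_\bv}$, is in fact more carefully spelled out than the paper's one-line version.
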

 \begin{proof}
(1) The spaces $M_{H_m}(\bv)$ and $M_{-K_{\F_e}}(\bv)$ are both irreducible, and by Corollary \ref{cor-KstabilityEasy} they share the open dense subsets of simultaneously $\mu_{H_m}$- and $\mu_{-K_{\F_e}}$-stable sheaves.

(2) If $\cV\in M_{H_m}(\bv)$, then $m\in \overline{I}_{\cV}$, and then $m\in \overline{I}_{\bv}$ by the construction of $\overline{I}_{\cV}$.
 \end{proof}

\begin{example}
Sheaves that are not general in moduli can be slope-stable for some polarization $H$ and fail to be slope-stable for $-K_{\F_e}$.  For example, on $\F_0$ it is easy to show that a general extension of the form $$0\to \OO_{\F_0}\to \cV \to \OO_{\F_0}(2F_1-3F_2)\to 0$$ is strictly $\mu_{H_{\frac{3}{2}}}$-semistable and $\mu_{H_{\frac{3}{2}+\epsilon}}$-stable for all $\epsilon > 0$, so that $I_\cV = (\frac{3}{2},\infty)$.  But, such sheaves $\cV$ are certainly not general in moduli; we have $\Delta(\cV) = \frac{3}{2}$, so $$\dim (M_{H_{\frac{3}{2}+\epsilon}}(\bv)) = \dim (M_{-K_{\F_e}}(\bv)) = r^2(2\Delta -1 )+1=9,$$ while $\dim( \P\Ext^1(\OO_{\F_0}(2F_1-3F_2),\OO_{\F_0}) )= 3$.
\end{example}

\section{Sharp Bogomolov inequalities}\label{sec-sharpBogomolov}

In this section we introduce functions of the slope which provide sharp Bogomolov-type inequalities for various stabilities.  We study their general properties in preparation for the rest of the paper, where we will perform some computations of these functions.

\begin{definition}\label{def-deltass}
Let $\nu\in \Pic(\F_e) \te \QQ$.  We define $$\delta_{m}^{\mus}(\nu) = \inf\left\{\Delta \geq \frac{1}{2}:\textrm{there is a $\mu_{H_m}$-stable sheaf of total slope $\nu$ and discriminant $\Delta$}\right\}.$$ We identically define functions $\delta_m^{s}$, $\delta_m^{ss}$, and $\delta_m^{\muss}$, where the notion of stability is replaced with $H_m$-stability, $H_m$-semistability, and $\mu_{H_m}$-semistability, respectively.  If the Hirzebruch surface $\F_e$ is not evident, we may write notation such as $\delta_{m,\F_e}^{ss}$.
\end{definition}

Analogously, recall that for $m\in \Z$ we defined the function $\delta_m^p(\nu)$ which bounds the discriminant of $H_m$-prioritary sheaves.  See Corollary \ref{cor-prioritaryDelta}.  From the basic relationship between Gieseker and slope stability and Proposition \ref{prop-ssPrior}, it follows that we have inequalities $$\delta_{\lceil m\rceil +1}^p(\nu) \leq \delta_m^{\mu\textrm{-}ss}(\nu)\leq \delta_m^{ss}(\nu)\leq \delta_m^s(\nu) \leq \delta_m^{\mu\textrm{-}s}(\nu)$$ for any $m$ and $\nu$. If $m$ is special, then there are examples showing that any one of the inequalities can be strict.  On the other hand, if $m$ is general, then things are much better behaved, as we now show.

\subsection{Generic polarization}\label{ssec-deltaGeneric} Here we compare the various $\delta$-functions in the case where the polarization $H_m$ is generic.

\begin{theorem}\label{thm-deltaGeneric}
Let $\nu\in \Pic(\F_e)\te \QQ$, and let $m$ be generic.  Then $$\delta_m^{ss}(\nu)=\delta_m^s(\nu)=\delta_m^{\mu\textrm{-}s}(\nu)$$ If furthermore there is no $\mu_{H_m}$-stable exceptional bundle of total slope $\nu$, then these numbers also equal $\delta_m^{\muss}(\nu)$.
\end{theorem}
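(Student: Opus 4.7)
My plan is to establish the easy implications first and then tackle the reverse inequalities via Jordan--H\"older arguments that exploit genericity of $m$. The inequalities $\delta_m^{ss}(\nu)\leq \delta_m^{s}(\nu)\leq \delta_m^{\mus}(\nu)$ and $\delta_m^{\muss}(\nu)\leq \delta_m^{\mus}(\nu)$ follow from the chain of implications among the four stability notions, so only the reverse inequalities require real work. The pivotal observation, for generic $m$, is that two Chern characters on $\F_e$ have the same reduced $H_m$-Hilbert polynomial if and only if they have equal total slope and discriminant: equating the linear coefficients yields $\nu \cdot H_m = \nu' \cdot H_m$, which forces $\nu = \nu'$ for $m$ outside a countable rational set, and then matching constant terms gives $\Delta = \Delta'$. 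In particular, for generic $m$, all Jordan--H\"older factors of a semistable sheaf of slope $\nu$ have the same total slope $\nu$.

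First, to prove $\delta_m^{s}(\nu)\leq \delta_m^{ss}(\nu)$, I would take an $H_m$-semistable sheaf $\cV$ of slope $\nu$ and discriminant $\Delta\ge 1/2$ and pass to its Gieseker Jordan--H\"older factors. These are $H_m$-stable, share the reduced $H_m$-Hilbert polynomial of $\cV$, and therefore share its slope and discriminant by the key observation; any factor witnesses $\delta_m^{s}(\nu)\leq \Delta$.

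For $\delta_m^{\mus}(\nu)\leq \delta_m^{s}(\nu)$, and likewise (under the no-exceptional hypothesis of the second part) for $\delta_m^{\mus}(\nu)\leq \delta_m^{\muss}(\nu)$, I would take an $H_m$-stable (respectively, $\mu_{H_m}$-semistable) sheaf $\cV$ of slope $\nu$ and discriminant $\Delta\ge 1/2$ and analyze its $\mu_{H_m}$-Jordan--H\"older filtration. The factors $\gr_i$ are $\mu_{H_m}$-stable of slope $\nu$ and satisfy the averaging identity $\sum_i r(\gr_i)\Delta(\gr_i) = r(\cV)\Delta$. If none of the $\gr_i$ is exceptional, each $\Delta(\gr_i)\ge 1/2$ by Lemma~\ref{lem-excFacts}(4), and averaging forces some $\Delta(\gr_i)\le \Delta$; this factor certifies $\delta_m^{\mus}(\nu)\le \Delta$. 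Under the second part's hypothesis no exceptional factor can occur, so this case always applies and immediately yields $\delta_m^{\mus}(\nu)\leq \delta_m^{\muss}(\nu)$.

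The hard part, relevant only to the first part of the theorem, is the case in which some $\gr_i$ is exceptional; by Lemma~\ref{lem-excFacts}(3) it must then be the unique $\mu_{H_m}$-stable exceptional bundle $\cE$ of slope $\nu$. I plan to handle this by proving directly that $\delta_m^{s}(\nu)=\delta_m^{\mus}(\nu)=1/2$. Elementary modifications of $\cE$ at general points preserve $\mu_{H_m}$-stability, since any subsheaf of the modification embeds into $\cE$; this yields $\mu_{H_m}$-stable sheaves of rank $r(\cE)$ and discriminant $\Delta(\cE)+k/r(\cE)$ for each $k\ge 0$. To obtain discriminant arbitrarily close to $1/2$ from above, I would pass to rank multiples $N r(\cE)$ and deform a strictly $\mu_{H_m}$-semistable direct sum built from $\cE$ and one of its elementary modifications in the relevant moduli space; a standard dimension count, comparing $\sum_i r_i^2(2\Delta_i-1) + \#\{\text{distinct factors}\}$ with the expected dimension $(Nr(\cE))^2(2\Delta-1)+1$, shows the strictly $\mu_{H_m}$-semistable locus has positive codimension once $N$ is large enough, producing $\mu_{H_m}$-stable sheaves of slope $\nu$ with discriminant in any prescribed right-neighborhood of $1/2$. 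Since $\delta_m^{s}(\nu),\delta_m^{\mus}(\nu)\ge 1/2$ by definition, both must equal $1/2$ in this case, so they again coincide.
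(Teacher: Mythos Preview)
Your argument contains a genuine error in the exceptional case of the step $\delta_m^{\mus}(\nu)\leq\delta_m^{s}(\nu)$. When there is a $\mu_{H_m}$-stable exceptional bundle $\cE$ of slope $\nu$ and rank $r$, your claim that $\delta_m^{s}(\nu)=\delta_m^{\mus}(\nu)=\tfrac12$ is false. Indeed, any $\mu_{H_m}$-stable sheaf $\cW$ with $\nu(\cW)=\nu$ and $\Delta(\cW)\geq\tfrac12$ is $H_m$-semistable and not semiexceptional, so Proposition~\ref{prop-DLPss}(2) forces
\[
\Delta(\cW)\;\geq\;\DLP_{H_m,\cE}(\nu)\;=\;\tfrac12+\tfrac{1}{2r^2}\;>\;\tfrac12.
\]
Thus $\delta_m^{\mus}(\nu)\geq\tfrac12+\tfrac{1}{2r^2}$; for instance when $\cE$ is a line bundle one gets $\delta_m^{\mus}(\nu)\geq 1$. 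Your proposed dimension count cannot produce $\mu_{H_m}$-stable sheaves with discriminant in $(\tfrac12,\tfrac12+\tfrac{1}{2r^2})$ because none exist. The count itself breaks down precisely because the exceptional factor $\cE$ has $\chi(\cE,\cE)=1>0$: for a character $\bw$ with $\nu(\bw)=\nu$ and $\Delta(\bw)<\DLP_{H_m,\cE}(\nu)$ one has $\chi(\bw,\cE)>0$ (or $\chi(\cE,\bw)>0$), so the relevant Schatz stratum has codimension $0$ and the general sheaf is genuinely only $\mu_{H_m}$-semistable.

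The paper's route for $\delta_m^{s}=\delta_m^{\mus}$ (Proposition~\ref{prop-sIMPmus}) avoids this pitfall entirely: it takes a character $\bv$ with $H_m$-stable sheaves, supposes the general one is not $\mu_{H_m}$-stable, and uses a Schatz estimate on a length-two filtration to force the quotient to be semiexceptional; the inequality $\chi(\bv,\bgr_2)>0$ then gives $\Delta(\bv)<\DLP_{H_m,\gr_2}(\nu)$, contradicting Proposition~\ref{prop-DLPss}. This argument works uniformly, whether or not an exceptional of slope $\nu$ exists, and in fact proves the stronger statement that $\mu_{H_m}$-stable sheaves exist of the \emph{same} character $\bv$. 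Your Jordan--H\"older approaches to $\delta_m^{s}\leq\delta_m^{ss}$ and to $\delta_m^{\mus}\leq\delta_m^{\muss}$ under the no-exceptional hypothesis are correct (and pleasantly direct), but you will need to replace the exceptional-case argument for $\delta_m^{\mus}\leq\delta_m^{s}$ with something along the paper's lines.
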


The result follows from a series of facts about the existence of sheaves exhibiting various stabilities.  Since the polarization is generic, the proofs mostly carry over directly from results in \cite{DLP}, where analogous statements were proved for $\P^2$.

\begin{proposition}
If $m$ is generic and there is no $\mu_{H_m}$-stable exceptional bundle of total slope $\nu$, then $$\delta_m^{\muss}(\nu) = \delta_m^{ss}(\nu)$$
\end{proposition}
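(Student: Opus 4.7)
The inequality $\delta_m^{\muss}(\nu) \leq \delta_m^{ss}(\nu)$ is immediate, since $H_m$-semistability implies $\mu_{H_m}$-semistability, so every discriminant admissible for the right-hand side is admissible for the left-hand side. The content is in the reverse direction $\delta_m^{ss}(\nu) \leq \delta_m^{\muss}(\nu)$. The plan is to start with an arbitrary $\mu_{H_m}$-semistable sheaf $\cV$ of total slope $\nu$ and discriminant $\Delta \geq \tfrac12$ and produce, directly, an $H_m$-semistable sheaf of total slope $\nu$ and discriminant $\Delta' \leq \Delta$ with $\Delta' \geq \tfrac12$; then taking the infimum over $\Delta$ yields the claim.

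First I would pass to a Jordan--H\"older filtration of $\cV$ with respect to $\mu_{H_m}$-stability, with $\mu_{H_m}$-stable factors $\cV_1,\ldots,\cV_k$. All factors have $H_m$-slope equal to $\mu_{H_m}(\nu)$. Because $m$ is generic, the numerical walls on which two distinct total slopes $\nu_i \in \Pic(\F_e)\te\QQ$ could share an $H_m$-slope are avoided, so in fact $\nu(\cV_i) = \nu$ for every $i$. Since the factors share a common total slope, a short computation with the identity $\ch_2(\cV) = \sum \ch_2(\cV_i)$ shows that $\Delta(\cV)$ is the rank-weighted average of the $\Delta_i := \Delta(\cV_i)$; in particular $\Delta_{\min} := \min_i \Delta_i \leq \Delta$.

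Next I invoke the hypothesis. Each $\cV_i$ is a $\mu_{H_m}$-stable sheaf of total slope $\nu$, and by assumption there is no $\mu_{H_m}$-stable exceptional bundle of slope $\nu$; so by Lemma \ref{lem-excFacts}(4) every $\cV_i$ satisfies $\Delta_i \geq \tfrac12$, and hence $\Delta_{\min} \geq \tfrac12$. Since $\mu_{H_m}$-stability implies $H_m$-Gieseker stability (in particular $H_m$-semistability) for torsion-free sheaves, the factor $\cV_{i_0}$ realizing $\Delta_{\min}$ furnishes an $H_m$-semistable sheaf of total slope $\nu$ and discriminant $\Delta_{\min} \in [\tfrac12,\Delta]$. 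Therefore $\delta_m^{ss}(\nu) \leq \Delta_{\min} \leq \Delta$, and taking the infimum over $\Delta$ completes the proof. The only point that requires care, and the main (mild) obstacle, is the reduction to $\nu(\cV_i) = \nu$ from genericity of $m$: one must check that the genericity convention excludes the walls across which a $\mu_{H_m}$-stable sheaf can be strictly $\mu_{H_m}$-semistable while having factors of different total slopes, which is the standard content of the wall-and-chamber decomposition for slope stability recalled in \S\ref{subsec-semistable}.
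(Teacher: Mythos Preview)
Your proof is correct and follows the same overall strategy as the paper: start with a $\mu_{H_m}$-semistable sheaf $\cV$ of slope $\nu$ and discriminant $\Delta \geq \tfrac12$, and extract from a suitable filtration an $H_m$-semistable factor of slope $\nu$ with discriminant in $[\tfrac12,\Delta]$. The difference is in which filtration is used. The paper takes the $H_m$-Gieseker Harder--Narasimhan filtration of $\cV$; its first factor $\gr_1$ is $H_m$-semistable with the smallest discriminant, and one then needs a further step (passing to an $H_m$-Jordan--H\"older factor of $\gr_1$ and invoking Lemma~\ref{lem-excFacts}(4\,\&\,5)) to rule out $\Delta_1<\tfrac12$. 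You instead take a $\mu_{H_m}$-Jordan--H\"older filtration, so the factors are already $\mu_{H_m}$-stable; the hypothesis then applies directly via Lemma~\ref{lem-excFacts}(4), and the factor of minimal discriminant is automatically Gieseker-stable. Your route is slightly more direct and even produces an $H_m$-\emph{stable} witness rather than merely a semistable one. The use of genericity of $m$ (to force all factors to share the total slope $\nu$) is identical in both arguments.
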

\begin{proof}
Let $\cV$ be a $\mu_{H_m}$-semistable sheaf with invariants $(r,\nu,\Delta)$.  Consider a (possibly trivial) $H_m$-Harder-Narasimhan filtration for $\cV$ with factors $\gr_1,\ldots,\gr_\ell$ which have invariants $(r_i,\nu_i,\Delta_i)$.  Since $m$ is generic, each factor $\gr_i$ has the same total slope $\nu$, the discriminants $\Delta_i$ are strictly increasing, and $\Delta$ is a weighted mean $$r \Delta = r_1\Delta_1+\cdots + r_\ell \Delta_\ell.$$  Thus $\gr_1$ is an $H_m$-semistable sheaf with $\Delta_1 \leq \Delta$.  If $\Delta_1 < \frac{1}{2}$ then an $H_m$-Jordan-H\"older factor of $\gr_1$ is a $\mu_{H_m}$-stable exceptional bundle by Lemma \ref{lem-excFacts} (4 \& 5), a contradiction.  Therefore $\Delta_1 \geq \frac{1}{2}$ and $\delta_m^{ss}(\nu) \leq \Delta_1.$
\end{proof}

\begin{remark}
Let $m$ be generic, and suppose there is an $H_m$-stable exceptional bundle $\cV$ of total slope $\nu$ and rank $r$.  Here we show that $\delta_m^{\muss}(\nu) = \frac{1}{2} < \delta_m^{ss}(\nu)$.

The bundle $\cV$ is $\mu_{H_m}$-stable, so an elementary modification $$0\to \cV'\to \cV \to \OO_p\to 0$$ is also $\mu_{H_m}$-stable.  Then $$
\Delta(\cV) = \frac{1}{2}\left(1-\frac{1}{r^2}\right) \qquad \textrm{and} \qquad  
\Delta(\cV') = \frac{1}{2}\left(1-\frac{1}{r^2}\right) + \frac{1}{r},$$ and a straightforward computation shows $$\Delta(\cV^{\oplus(2r-1)} \oplus \cV') = \frac{1}{2},$$
so $\cV^{\oplus(2r-1)}\oplus \cV'$ is $\mu_{H_m}$-semistable of discriminant $\frac{1}{2}$.  Therefore $\delta_m^{\muss}(\nu) = \frac{1}{2}$.  On the other hand, we have $$\delta_m^{ss}(\nu) \geq \DLP_{H_m}(\nu) \geq \DLP_{H_m,\cV}(\nu) = \frac{1}{2}\left(1+\frac{1}{r^2}\right).$$
\end{remark}

\begin{proposition}\label{prop-ssIMPs}
Suppose $m$ is generic and there are $H_m$-semistable sheaves of character $\bv = (r,\nu,\Delta)$.  If $\Delta > \frac{1}{2}$, then there are $H_m$-stable sheaves of character $\bv$.  Therefore, $$\delta_m^{ss}(\nu) = \delta_m^s(\nu).$$
\end{proposition}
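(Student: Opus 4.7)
The equality $\delta_m^{ss}(\nu) = \delta_m^s(\nu)$ follows at once from the displayed existence assertion, so the plan is to produce an $H_m$-stable sheaf of character $\bv = (r,\nu,\Delta)$ whenever $\Delta > \tfrac{1}{2}$ and $H_m$-semistable sheaves of character $\bv$ exist. I would use Walter's irreducibility of $\cP_F(\bv)$: the $\mu_{H_m}$-semistable locus is a nonempty open, hence irreducible, substack of $\cP_F(\bv)$, and by openness of Gieseker stability it suffices to exhibit a single $H_m$-stable sheaf of character $\bv$; the stable locus is then automatically dense.

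I would proceed by induction on the rank $r$, the case $r = 1$ being trivial. For the inductive step, suppose every $H_m$-semistable $\cV$ of character $\bv$ is strictly semistable, so $\cV$ admits a saturated $H_m$-stable subsheaf $\cV' \subset \cV$ with $p_{\cV'} = p_\cV$. Since $m$ is generic, distinct total slopes yield distinct $H_m$-slopes, forcing $\nu(\cV') = \nu$, and Riemann--Roch then forces $\Delta(\cV') = \Delta$. Write $\bv' = \ch\cV' = (r',\nu,\Delta)$ and $\bv'' = \bv - \bv' = (r'',\nu,\Delta)$ with $r',r'' \geq 1$; the quotient $\cV'' := \cV/\cV'$ is $H_m$-semistable. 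By Lemma~\ref{lem-excFacts}~(4) and $\Delta > \tfrac{1}{2}$, neither character is exceptional, and by the inductive hypothesis $H_m$-stable sheaves of character $\bv''$ exist, so $\dim M_{H_m}(\bv'') = r''^2(2\Delta - 1) + 1$.

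The key is a dimension count. Parametrize pairs $(\cV', \cV'')$ with $\cV' \in M_{H_m}(\bv')$ stable and $\cV'' \in M_{H_m}(\bv'')$ semistable, together with an extension class in $\Ext^1(\cV'', \cV')$ modulo the natural $\CC^*$ scaling. Since $-K_{\F_e}\cdot H_m = 2m + e + 2 > 0$, if $\phi\colon \cV' \to \cV''(K_{\F_e})$ is nonzero then the quotient $\Im\phi$ of the stable sheaf $\cV'$ would have $\mu_{H_m}$-slope at least $\mu_{H_m}(\cV')$ while sitting inside a semistable sheaf of strictly smaller slope, a contradiction. Hence $\Ext^2(\cV'', \cV') \cong \Hom(\cV', \cV''(K_{\F_e}))^\vee = 0$, and an analogous argument bounds $\Hom(\cV'', \cV')$; Riemann--Roch then gives $\ext^1(\cV'', \cV') = r'r''(2\Delta - 1)$ up to a bounded correction. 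The dimension of the parameter space is thus
\begin{align*}
[r'^2(2\Delta-1) + 1] + [r''^2(2\Delta-1) + 1] + r'r''(2\Delta-1) - 1 = (r'^2 + r''^2 + r'r'')(2\Delta-1) + 1,
\end{align*}
whereas $\dim M_{H_m}(\bv) = r^2(2\Delta-1) + 1$. Since $r^2 - r'^2 - r''^2 - r'r'' = r'r'' > 0$, the shortfall is $r'r''(2\Delta - 1) > 0$ under $\Delta > \tfrac{1}{2}$. Summing over the finitely many admissible integral characters $\bv'$ (finite because $1 \leq r' \leq r-1$ with $\nu,\Delta$ fixed), the strictly semistable locus has strictly positive codimension in $M_{H_m}(\bv)$, so $H_m$-stable sheaves of character $\bv$ exist.

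The main obstacle is handling the possible nonvanishing of $\Hom(\cV'', \cV')$ when $\cV''$ is only semistable (so a stable summand isomorphic to $\cV'$ could appear). This is resolved by choosing $\cV'$ to be the first piece of the socle filtration of $\cV$, so that the resulting parameter-to-moduli map has the expected fiber dimension, and the strict codimension estimate $r'r''(2\Delta - 1)$ survives. A minor subtlety is the case $r' = r''$, where $(\cV', \cV'')$ and $(\cV'', \cV')$ may parametrize the same sheaves; this at worst introduces a finite-to-one identification and does not affect the strict codimension.
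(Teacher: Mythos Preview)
Your strategy---a codimension estimate for the strictly semistable locus---matches the paper's, but the execution has a genuine gap. You compare the extension-parameter space to $\dim M_{H_m}(\bv) = r^2(2\Delta-1)+1$, but this dimension formula holds only at \emph{stable} points (where $\hom(\cV,\cV)=1$); at strictly semistable points the coarse moduli space parametrizes $S$-equivalence classes and need not have this dimension. Since the existence of stable points is exactly what you are trying to prove, invoking this dimension is circular. The same issue infects the parametrization of $\cV''$: you use $\dim M_{H_m}(\bv'')$ for the space of semistable $\cV''$, but distinct semistable sheaves can be $S$-equivalent.

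The paper instead works in the stack $\cP_{H_{\lceil m\rceil}}(\bv)$ (equivalently, in a complete family), whose dimension is known a priori from $\ext^2(\cV,\cV)=0$. For each hypothetical Jordan--H\"older type $(\bgr_1,\ldots,\bgr_\ell)$ of a general sheaf---all $\bgr_i$ with slope $\nu$ and discriminant $\Delta$ since $m$ is generic---the corresponding Schatz stratum has codimension at least $-\sum_{i<j}\chi(\bgr_i,\bgr_j) = (2\Delta-1)\sum_{i<j} r_ir_j > 0$, by the standard estimate (as in Dr\'ezet--Le Potier for $\P^2$). This bound uses only $\ext^2(\gr_i,\gr_j)=0$, which follows from $-K_{\F_e}\cdot H_m > 0$ and semistability; once you work in the right ambient space, your socle-filtration workaround for $\Hom(\cV'',\cV')$ becomes unnecessary.

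Finally, the equality $\delta_m^{ss}(\nu)=\delta_m^s(\nu)$ does not follow ``at once'': the existence statement assumes $\Delta>\tfrac12$, so the case where $\delta_m^{ss}(\nu)=\tfrac12$ is realized by a semistable sheaf of discriminant exactly $\tfrac12$ requires a separate line---any Jordan--H\"older factor is then stable of the same slope and discriminant (since $m$ is generic), so $\delta_m^s(\nu)=\tfrac12$ as well.
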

\begin{proof}
The argument is essentially the same as the argument given in \cite[Theorem 4.10]{DLP} in the case of $\P^2$, so we will be brief.  One considers the characters $\bgr_1,\ldots,\bgr_\ell$ of a hypothetical length $\ell \geq 2$ Jordan-H\"older filtration of a general sheaf in $\cP_{H_{\lceil m\rceil}}(\bv)$.  Since $m$ is generic, all these characters have the same total slope $\nu$ and discriminant $\Delta$.  Since $\Delta > \frac{1}{2}$, it follows that $\chi(\bgr_i,\bgr_j) = r_ir_j(1-2\Delta) < 0$.  But we can estimate that the codimension of the corresponding Schatz stratum is at least $$-\sum_{i<j} \chi(\bgr_i,\bgr_j) > 0.$$ Therefore no such stratum is dense, and the general sheaf is $H_m$-stable.

If $\delta_m^{ss}(\nu) >\frac{1}{2}$ or if $\delta_m^{ss}(\nu) = \frac{1}{2}$ and the infimum is computed by a sequence of $H_m$-semistable bundles with decreasing discriminants, then this shows that $\delta_m^{ss}(\nu) = \delta_m^{s}(\nu)$.  Finally, it is possible that $\delta_m^{ss}(\nu)=\frac{1}{2}$ is computed by an $H_m$-semistable bundle $\cV$ of discriminant $\frac{1}{2}$.  But then since $m$ is generic any $H_m$-Jordan-H\"older factor of $\cV$ is $H_m$-stable of total slope $\nu$ and discriminant $\frac{1}{2}$,  so $\delta_m^s(\nu) = \frac{1}{2}$.
\end{proof}

The next fact completes the proof of Theorem \ref{thm-deltaGeneric}.

\begin{proposition}\label{prop-sIMPmus}
Suppose $m$ is generic and there are $H_m$-stable sheaves of character $\bv=(r,\nu,\Delta)$.  Then there are $\mu_{H_m}$-stable sheaves of character $\bv$.  Therefore, $$\delta_m^{s}(\nu) = \delta_m^{\mus}(\nu).$$
\end{proposition}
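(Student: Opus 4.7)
The plan mirrors Proposition \ref{prop-ssIMPs}. Since $\mu_{H_m}$-stability is an open condition in flat families and $\cP_{H_{\lceil m\rceil}}(\bv)$ is irreducible, it suffices to show that a general $H_m$-stable sheaf of character $\bv$ is $\mu_{H_m}$-stable.

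I would argue by contradiction: suppose every $H_m$-stable sheaf of character $\bv$ is strictly $\mu_{H_m}$-semistable. For a general such $\cV$, I would choose a proper subsheaf $\cF\subsetneq\cV$ with $\mu_{H_m}(\cF)=\mu_{H_m}(\cV)$ and $\cQ:=\cV/\cF$ $\mu_{H_m}$-stable (for instance, by taking $\cF$ maximal with this property). Since $m$ is generic, $\nu(\cF)=\nu(\cQ)=\nu$, and since $\cV$ is $H_m$-stable, $p_\cF<p_\cV$ translates to $\Delta(\cF)>\Delta>\Delta(\cQ)$. Boundedness restricts the invariants of $(\cF,\cQ)$ to finitely many types, so I may fix characters $\bw_1=(r_1,\nu,\Delta_1)$ and $\bw_2=(r-r_1,\nu,\Delta_2)$ with $r_1\Delta_1+(r-r_1)\Delta_2=r\Delta$ and $\Delta_1>\Delta>\Delta_2$; by induction on rank, $\mu_{H_m}$-stable sheaves of each of these characters exist.

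The key step is to establish the strict inequality $\Delta_1+\Delta_2>1$. First, $\Ext^2(\cQ,\cF)=\Hom(\cF,\cQ\te K_{\F_e})^\vee=0$ by Serre duality, since $K_{\F_e}\cdot H_m=-(2m+e+2)<0$ forces $\cQ\te K_{\F_e}$ to have strictly smaller $\mu_{H_m}$-slope than the $\mu_{H_m}$-semistable $\cF$. Arranging $\cF$ so that no $\mu_{H_m}$-stable subsheaf of $\cF$ is isomorphic to $\cQ$ then also gives $\Hom(\cQ,\cF)=0$, and Riemann-Roch yields $\chi(\cQ,\cF)=r_1(r-r_1)(1-\Delta_1-\Delta_2)\leq 0$. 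If equality held, $\ext^1(\cQ,\cF)=0$ would force $\cV\cong\cF\oplus\cQ$, so $\cQ\subsetneq\cV$ would be a subsheaf with $p_\cQ>p_\cV$, contradicting $H_m$-stability of $\cV$. Hence $\Delta_1+\Delta_2>1$.

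The main obstacle is the dimension count that converts this strict inequality into a contradiction. Parametrizing the locus of such $\cV$ by pairs $(\cF,\cQ)$ and an extension class in $\P\Ext^1(\cQ,\cF)$, and accounting for the scaling action, its dimension is at most $\dim M_{H_m}(\bw_1)+\dim M_{H_m}(\bw_2)+\ext^1(\cQ,\cF)-1$. Expanding with $\dim M_{H_m}(\bw_i)=1+r_i^2(2\Delta_i-1)$ and $\ext^1(\cQ,\cF)=-\chi(\cQ,\cF)=r_1(r-r_1)(\Delta_1+\Delta_2-1)$, then comparing to $\dim\cP_{H_{\lceil m\rceil}}(\bv)=1+r^2(2\Delta-1)$ via the constraint $r_1\Delta_1+(r-r_1)\Delta_2=r\Delta$, gives a codimension of $r_1(r-r_1)(\Delta_1+\Delta_2-1)>0$. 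This contradicts density of the strictly $\mu_{H_m}$-semistable locus in the $H_m$-stable locus, completing the proof.
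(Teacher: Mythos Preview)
Your approach is genuinely different from the paper's and, modulo two fixable gaps, it works. The paper argues in the opposite direction: from density of the Schatz stratum it deduces $\chi(\bgr_1,\bgr_2)\geq 0$, hence $\Delta_1+\Delta_2\leq 1$, so the \emph{quotient} $\gr_2$ has $\Delta_2<\tfrac12$ and is semiexceptional; then $\chi(\cV,\bgr_2)=\chi(\bgr_1,\bgr_2)+\chi(\bgr_2,\bgr_2)>0$ contradicts Proposition~\ref{prop-DLPss}. You instead show $\Delta_1+\Delta_2>1$ directly and compute the codimension by hand. Your route is more self-contained (it avoids the Dr\'ezet--Le~Potier inequality), while the paper's route leverages machinery already in place.

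There are two gaps in your write-up. First, your justification ``arranging $\cF$ so that no $\mu_{H_m}$-stable subsheaf of $\cF$ is isomorphic to $\cQ$'' is vague and unnecessary: the vanishing $\Hom(\cQ,\cF)=0$ is automatic. Since $\cQ$ is $\mu_{H_m}$-stable with $\nu(\cQ)=\nu(\cV)$ and $\Delta(\cQ)=\Delta_2<\Delta$, any nonzero map $\cQ\to\cV$ is injective (same slope, source stable, target semistable) and would give a subsheaf with $p_\cQ>p_\cV$, contradicting $H_m$-stability of $\cV$; hence $\Hom(\cQ,\cV)=0$ and a fortiori $\Hom(\cQ,\cF)=0$.

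Second, and more seriously, your dimension count uses $\dim M_{H_m}(\bw_1)=1+r_1^2(2\Delta_1-1)$, but $\cF$ is only $\mu_{H_m}$-semistable, not Gieseker semistable, and your ``induction on rank'' does not establish that $M_{H_m}(\bw_1)$ is nonempty (the inductive hypothesis needs $H_m$-\emph{stable} sheaves of character $\bw_1$ as input, which you have not produced). The clean fix is to run the count at the stack level: the $\mu_{H_m}$-semistable $\cF$'s lie in an open substack of $\cP_F(\bw_1)$ of dimension $r_1^2(2\Delta_1-1)$, the $\mu_{H_m}$-stable $\cQ$'s in one of dimension $r_2^2(2\Delta_2-1)$, the stack of extensions over $(\cF,\cQ)$ has relative dimension $\ext^1(\cQ,\cF)-\hom(\cQ,\cF)=r_1r_2(\Delta_1+\Delta_2-1)$, and comparing with $\dim\cP_F(\bv)=r^2(2\Delta-1)$ via $r\Delta=r_1\Delta_1+r_2\Delta_2$ gives the same codimension $r_1r_2(\Delta_1+\Delta_2-1)>0$ without any appeal to coarse moduli or induction.
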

\begin{proof}
The result is clear if $\bv$ is exceptional, so assume $\bv$ is not exceptional. Again the argument closely follows the argument in \cite[Theorem 4.11]{DLP} for $\P^2$.  If the $H_m$-stable sheaf of character $\bv$ is not $\mu_{H_m}$-stable, then we can find characters $\bgr_1,\bgr_2$ such that the general sheaf $\cV$ admits a filtration $$0\to \gr_1\to \cV\to \gr_2\to 0$$ where $\gr_i$ is $H_m$-semistable of character $\bgr_i = (r_i,\nu,\Delta_i)$.  Here we will have $\Delta_1 > \Delta_2$ since $\cV$ is $H_m$-stable.  Estimating the codimension of the corresponding Schatz stratum as in \cite{DLP}, we see that we must have $\chi(\bgr_1,\bgr_2) \geq 0$ in order for the stratum to be dense.  Riemann-Roch then implies $\Delta_1+\Delta_2 \leq 1$, and therefore $\Delta_2 < \frac{1}{2}$.  So, $\gr_2$ is an $H_m$-semistable semiexceptional bundle.  But then $$\chi(\cV,\bgr_2) = \chi(\bgr_1,\bgr_2)+\chi(\bgr_2,\bgr_2) > 0,$$ and this gives $$\Delta < \DLP_{H_m,\gr_2}(\nu).$$ By Proposition \ref{prop-DLPss}, this contradicts the $H_m$-stability of $\cV$ since $\cV$ is not exceptional.  Therefore no such stratum is dense, and the general sheaf is $\mu_{H_m}$-stable.
\end{proof}

\subsection{Existence of sheaves with discriminant above $\delta_m^{\mus}(\nu)$} In this section we show that the function $\delta_m^{\mus}(\nu)$ is reminiscient of the Dr\'ezet-Le Potier curve that appears in the classification of semistable sheaves on $\P^2$.  Specifically, there are always $\mu_{H_m}$-stable sheaves if the discriminant lies above $\delta_m^{\mus}(\nu)$.

\begin{theorem}\label{thm-deltaSurface}
Let $\bv=(r,\nu,\Delta)\in K(\F_e)$ and let $m\in \QQ_{>0}$ be arbitrary.

\begin{enumerate} \item  If $$\Delta > \delta_m^{\mus}(\nu),$$ then there are $\mu_{H_m}$-stable sheaves of character $\bv$.

\item If there is a non-exceptional $\mu_{H_m}$-stable sheaf of character $\bv$, then $$\Delta \geq \delta_m^{\mus}(\nu).$$

\item If there is a $\mu_{H_m}$-stable sheaf of slope $\nu$ and discriminant $\Delta = \delta_m^{\mus}(\nu)>\frac{1}{2}$, then non-exceptional $\mu_{H_m}$-stable sheaves of character $\bv$ exist if and only if $\Delta \geq \delta_m^{\mus}(\nu)$.
\end{enumerate}
\end{theorem}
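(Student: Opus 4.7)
The plan is to dispatch parts (2) and (3) quickly using earlier results and then focus on part (1), which carries the main technical weight.

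For part (2), suppose $\cV$ is a non-exceptional $\mu_{H_m}$-stable sheaf of slope $\nu$ and discriminant $\Delta$. By Lemma \ref{lem-excFacts}(4), any $\mu_{H_m}$-stable sheaf with $\Delta < \frac{1}{2}$ is exceptional, so the non-exceptional hypothesis forces $\Delta \geq \frac{1}{2}$. Thus $\Delta$ lies in the set whose infimum is $\delta_m^{\mus}(\nu)$, giving $\Delta \geq \delta_m^{\mus}(\nu)$.

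For part (3), every exceptional bundle has discriminant $\frac{1}{2}(1-1/r^2) < \frac{1}{2}$ by Lemma \ref{lem-excFacts}(1). Combined with the hypothesis $\delta_m^{\mus}(\nu) > \frac{1}{2}$, this forces any $\mu_{H_m}$-stable sheaf of slope $\nu$ and discriminant $\Delta' \geq \delta_m^{\mus}(\nu)$ to be non-exceptional. The only-if direction is then a restatement of part (2), while the if-direction follows from the hypothesis of (3) (for $\Delta' = \delta_m^{\mus}(\nu)$) and part (1) (for $\Delta' > \delta_m^{\mus}(\nu)$).

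For part (1), assume $\Delta > \delta_m^{\mus}(\nu)$. By the infimum definition there is a $\mu_{H_m}$-stable sheaf $\cV_0$ of slope $\nu$, rank $r_0$, and discriminant $\Delta_0 \in [\delta_m^{\mus}(\nu), \Delta)$. Since $\cV_0$ is $\mu_{H_m}$-semistable it is $H_{\lceil m\rceil+1}$-prioritary by Proposition \ref{prop-ssPrior}, and then so is any direct sum $\cV_0^{\oplus k}$; by Lemma \ref{lem-priorCompare} this sheaf is also $F$-prioritary and $H_{\lceil m\rceil}$-prioritary. The strategy is to take $k$ copies of $\cV_0$, perform general elementary modifications at $j$ points to raise the discriminant in steps of $1/(kr_0)$ while preserving prioritariness (\S\ref{subsec-elementarymodification}), and choose $k,j$ so that $kr_0 = r$ and $\Delta_0 + j/(kr_0) = \Delta$; this populates the irreducible stack $\cP_F(\bv)$ with an $F$-prioritary sheaf of character $\bv$ whose $H_m$-Jordan-H\"older factors are all copies of $\cV_0$ (up to the modifications). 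By the irreducibility of $\cP_F(\bv)$ and the openness of $\mu_{H_m}$-stability in families, it then suffices to exhibit a single $\mu_{H_m}$-stable member of this stack, which is produced by the standard argument that a general elementary modification of a $\mu_{H_m}$-stable sheaf at a general point remains $\mu_{H_m}$-stable (a destabilizing subsheaf of the modification would extend to one of the original). Alternatively, one runs the generic Harder-Narasimhan analysis of \S\ref{sec-genHN}: if the general sheaf of $\cP_F(\bv)$ were not $\mu_{H_m}$-stable, Theorem \ref{thm-HNcriterion} would produce factors $\bgr_1,\ldots,\bgr_\ell$ ($\ell\geq 2$) with nonempty $M_{H_m}(\bgr_i)$, satisfying $\chi(\bgr_i,\bgr_j)=0$ and slope closeness, and applying part (2) inductively on the rank together with these constraints yields a contradiction with $\Delta > \delta_m^{\mus}(\nu)$.

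The main obstacle is that the rank $r_0$ of $\cV_0$ need not divide the target rank $r$, so pure direct sums of $\cV_0$ cannot reach character $\bv$. To resolve this, one must either (i) show that $\delta_m^{\mus}(\nu)$ can be computed using stable sheaves whose rank divides $r$, by an induction on rank that exchanges stable sheaves of different ranks via elementary modifications, or (ii) replace the pure sum $\cV_0^{\oplus k}$ by a general extension involving several stable sheaves of nearby slopes, again appealing to the irreducibility of $\cP_F(\bv)$ together with Theorem \ref{thm-HNcriterion} to control the generic Harder-Narasimhan behaviour. A secondary technical difficulty is the borderline case where a would-be Harder-Narasimhan factor $\bgr_i$ in the contradiction argument has $\Delta(\bgr_i) = \delta_m^{\mus}(\nu_i)$ exactly; since part (2) gives only a weak inequality, the strict inequality $\Delta > \delta_m^{\mus}(\nu)$ must be propagated through the orthogonality relations on the $\bgr_i$ (rather than a naive averaging argument, which the indefinite intersection form on $\F_e$ does not support directly).
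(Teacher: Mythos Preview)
Your handling of parts (2) and (3) is fine and matches the paper. The real issue is part (1), where you correctly identify the rank-divisibility obstruction but do not find the mechanism that actually resolves it.

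The paper's key move is a ``multiply, then divide'' trick. Starting from a $\mu_{H_m}$-stable sheaf $\cW$ of rank $r'$, slope $\nu$, discriminant $\Delta'<\Delta$, one does \emph{not} try to force $r'\mid r$. Instead one considers $\cW^{\oplus r}$, which is $H_m$-semistable of character $r\bw$ with rank $rr'$; since $\Delta'>\tfrac12$ (guaranteed, after Lemma~\ref{lem-muss12}, even when $\delta_m^{\mus}(\nu)=\tfrac12$), Propositions~\ref{prop-ssIMPs} and~\ref{prop-sIMPmus} upgrade this to a $\mu_{H_m}$-stable sheaf of character $r\bw$. Now elementary modifications bring this to character $r'\bv$ (still rank $rr'$). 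The crucial step is Proposition~\ref{prop-divideSpace}: if there are $H_m$-semistable sheaves of character $n\bv$ then there are $H_m$-semistable sheaves of character $\bv$. Applying this with $n=r'$ drops the rank to $r$, and Propositions~\ref{prop-ssIMPs} and~\ref{prop-sIMPmus} again give $\mu_{H_m}$-stability. Your proposed fixes (i) and (ii) do not supply this; (i) asserts something not established anywhere, and (ii) is too vague to produce sheaves of the correct character.

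There is a second structural point you miss. The argument above uses Propositions~\ref{prop-ssIMPs} and~\ref{prop-sIMPmus}, which require $m$ to be \emph{generic}. The paper therefore first proves the theorem for generic $m$ (Lemma~\ref{lem-deltaSurface}), and then deduces the arbitrary-$m$ case by a perturbation: a $\mu_{H_m}$-stable witness $\cW$ is also $\mu_{H_{m\pm\epsilon}}$-stable, so the generic case produces $\mu_{H_{m\pm\epsilon}}$-stable sheaves of character $\bv$, and by irreducibility of $\cP_F(\bv)$ there is a sheaf simultaneously $\mu_{H_{m-\epsilon}}$- and $\mu_{H_{m+\epsilon}}$-stable, hence $\mu_{H_m}$-stable. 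Your sketch works directly at the given $m$, which leaves no way to invoke the genericity-dependent steps. Your alternative Harder--Narasimhan contradiction route does not close either: the orthogonality $\chi(\bgr_i,\bgr_j)=0$ relates $\Delta_i+\Delta_j$ to $P(\nu_j-\nu_i)$, not to $\delta_m^{\mus}$ of anything, so there is no clean way to propagate the strict inequality $\Delta>\delta_m^{\mus}(\nu)$ through the filtration as you suggest.
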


Since elementary modifications take slope-stable sheaves to slope-stable sheaves, it is clear that there is a function $f_m(r,\nu)$ of the rank and slope such that $H_m$-slope stable sheaves of character $(r,\nu,\Delta)$ exist if and only if $\Delta \geq f_m(r,\nu)$.  So, the interesting part of Theorem \ref{thm-deltaSurface} is that the dependence on the rank is not necessary.  The next result gives the key step.

\begin{proposition}\label{prop-divideSpace}
Let $\bv = (r,\nu,\Delta)\in K(\F_e)$, let $n$ be a positive integer, and suppose there are $H_m$-semistable sheaves of character $n\bv$.  Then there are $H_m$-semistable sheaves of character $\bv$.
\end{proposition}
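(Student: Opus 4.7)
\textbf{Proof proposal for Proposition \ref{prop-divideSpace}.} The plan is to show that the generic Harder–Narasimhan filtration on $\cP_{H_{\lceil m\rceil}}(\bv)$ has length one, by comparing it to the generic Harder–Narasimhan filtration on $\cP_{H_{\lceil m\rceil}}(n\bv)$ under the character scaling $\bu\mapsto n\bu$. The key observation is that $\bv$ and $n\bv$ share the same total slope $\nu$ and the same discriminant $\Delta$, so essentially every invariant that appears in Section \ref{sec-genHN} behaves well under multiplication by $n$.

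First, let $\cW$ be an $H_m$-semistable sheaf of character $n\bv$. By Proposition \ref{prop-ssPrior} and Lemma \ref{lem-priorCompare}, $\cW$ is $H_{\lceil m\rceil}$-prioritary, so $\cP_{H_{\lceil m\rceil}}(n\bv)$ is nonempty. Since the line bundle $L_0$ of Definition \ref{def-L0} depends only on $(\nu,\Delta)$, and since Riemann–Roch gives $\chi(n\bv(-L_0-H_{\lceil m\rceil})) = n\,\chi(\bv(-L_0-H_{\lceil m\rceil}))$, Theorem \ref{thm-prioritaryChi} shows that $\cP_{H_{\lceil m\rceil}}(\bv)$ is also nonempty. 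Moreover, since $\cP_F(n\bv)$ is irreducible by Walter's theorem and $H_m$-semistability is open in flat families, the general sheaf in $\cP_F(n\bv)$ is $H_m$-semistable, so the generic $H_m$-Harder–Narasimhan filtration on $\cP_{H_{\lceil m\rceil}}(n\bv)\subset \cP_F(n\bv)$ has length one.

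Now suppose for contradiction there are no $H_m$-semistable sheaves of character $\bv$. Let $\bu_1,\ldots,\bu_k$, with $k\geq 2$, be the characters of the generic $H_m$-Harder–Narasimhan filtration on $\cP_{H_{\lceil m\rceil}}(\bv)$. By Lemmas \ref{lem-HNclose} and \ref{lem-HNorthogonal} together with the definition of the HN filtration, these characters satisfy
\begin{enumerate}
\item[(a)] $\bu_1+\cdots+\bu_k=\bv$;
\item[(b)] the reduced $H_m$-Hilbert polynomials $q_i$ satisfy $q_1>\cdots>q_k$;
\item[(c)] $\mu_{H_m}(\bu_1)-\mu_{H_m}(\bu_k)\leq 1$;
\item[(d)] $\chi(\bu_i,\bu_j)=0$ for $i<j$;
\item[(e)] $M_{H_m}(\bu_i)$ is nonempty.
\end{enumerate}
I claim the characters $n\bu_1,\ldots,n\bu_k$ satisfy the same list for $n\bv$. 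Clearly $\sum n\bu_i=n\bv$, and (b), (c) transfer because reduced Hilbert polynomials and $H_m$-slopes are scale-invariant. Bilinearity gives $\chi(n\bu_i,n\bu_j)=n^2\chi(\bu_i,\bu_j)=0$, and if $\cV_i\in M_{H_m}(\bu_i)$ then $\cV_i^{\oplus n}$ is $H_m$-semistable of character $n\bu_i$, so $M_{H_m}(n\bu_i)$ is nonempty.

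Since $\cP_{H_{\lceil m\rceil}}(n\bv)$ is nonempty, Theorem \ref{thm-HNcriterion} applies to the characters $n\bu_1,\ldots,n\bu_k$ for $n\bv$, and concludes that they are the characters of the generic $H_m$-Harder–Narasimhan filtration on $\cP_{H_{\lceil m\rceil}}(n\bv)$. As $k\geq 2$, this contradicts what we established in the first paragraph. Hence $k=1$ and the general sheaf in $\cP_{H_{\lceil m\rceil}}(\bv)$ is $H_m$-semistable. The main obstacle, which I expect is really just a bookkeeping matter, is the verification of (a)–(e) under the scaling $\bu\mapsto n\bu$; condition (e) is the only nontrivial one and is handled by the direct-sum trick $\cV_i\mapsto\cV_i^{\oplus n}$.
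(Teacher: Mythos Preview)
Your proof is correct and follows essentially the same approach as the paper's. Both arguments establish the nonemptiness of the relevant prioritary stack for $\bv$ from that for $n\bv$ (you via Theorem \ref{thm-prioritaryChi} and the scaling $\chi(n\bv(-L_0-H_{\lceil m\rceil}))=n\chi(\bv(-L_0-H_{\lceil m\rceil}))$, the paper via the equivalent Corollary \ref{cor-prioritaryDelta} and the fact that $\delta^p$ depends only on $\nu$), then scale the hypothetical generic Harder--Narasimhan factors by $n$ and apply Theorem \ref{thm-HNcriterion} to reach a contradiction; the paper is simply terser in omitting the routine verification of conditions (a)--(e).
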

\begin{proof}
Since there are $H_m$-semistable sheaves of character $n\bv$, there are $H_{\lceil m\rceil +1}$-prioritary sheaves of character $n\bv$.  Therefore $$\Delta \geq \delta_{\lceil m\rceil +1}^p(\nu),$$ and by Corollary \ref{cor-prioritaryDelta} there are $H_{\lceil m\rceil+1}$-prioritary sheaves of character $\bv$.  Suppose there are not $H_m$-semistable sheaves of character $\bv$, and consider the characters $\bgr_1,\ldots,\bgr_\ell$ of the generic $H_m$-Harder-Narasimhan filtration for $\bv$.  Then by Theorem \ref{thm-HNcriterion} the characters $n \bgr_1,\ldots,n \bgr_\ell$ are the characters of the generic $H_m$-Harder-Narasimhan filtration for $n\bv$.  Therefore there are no $H_m$-semistable sheaves of character $n\bv$, a contradiction. 
\end{proof}

It is easy to deduce Theorem \ref{thm-deltaSurface} from the case where $m$ is generic, so we first focus on that case.  As is often the case, sheaves of discriminant $\frac{1}{2}$ require some care.

\begin{lemma}\label{lem-muss12}
Suppose $m$ is generic and there is a $\mu_{H_m}$-stable sheaf $\cV$ of character $\bv = (r,\nu,\frac{1}{2})$.  Then for any $k\geq 1$, there is a $\mu_{H_m}$-stable sheaf of character $\bw = (rk,\nu,\frac{1}{2}+\frac{1}{rk}).$
\end{lemma}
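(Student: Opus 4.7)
The plan is to construct an $H_m$-Gieseker stable sheaf $\cW$ of character $\bw$ as a single general elementary modification of a direct sum $\bigoplus_{i=1}^{k}\cV_i$ of pairwise non-isomorphic $\mu_{H_m}$-stable sheaves of character $(r,\nu,\tfrac{1}{2})$, and then upgrade Gieseker stability to slope stability via Proposition \ref{prop-sIMPmus}.

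First, I will produce enough pairwise non-isomorphic stable sheaves. Riemann-Roch gives $\chi(\cV,\cV)=r^2(1-2\Delta(\cV))=0$, and $\Ext^2(\cV,\cV)=\Hom(\cV,\cV\te K_{\F_e})^*=0$ by $\mu_{H_m}$-stability of $\cV$ combined with $K_{\F_e}\cdot H_m<0$. Hence $\ext^1(\cV,\cV)=1$, so the moduli space of $\mu_{H_m}$-stable sheaves of character $(r,\nu,\tfrac{1}{2})$ is smooth of dimension $1$ at $[\cV]$ and therefore contains infinitely many isomorphism classes. Lemma \ref{lem-excFacts} (1), (4) forces each such stable sheaf to be locally free: otherwise its reflexive hull would be $\mu_{H_m}$-stable with $\Delta<\tfrac{1}{2}$ hence exceptional, but the resulting equation $\tfrac{1}{2}-\ell/r=\tfrac{1}{2}(1-1/r^2)$ has no positive integer solution. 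So I can choose pairwise non-isomorphic $\mu_{H_m}$-stable locally free sheaves $\cV_1,\ldots,\cV_k$ of character $(r,\nu,\tfrac{1}{2})$; an identical Riemann-Roch computation then yields $\Ext^1(\cV_i,\cV_j)=0$ for $i\neq j$.

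For a general point $p\in\F_e$ and a general functional $\phi\colon\bigoplus_i\cV_i|_p\to\CC$ (so every component $\phi_i\neq 0$), put $\cW=\ker(\bigoplus_i\cV_i\twoheadrightarrow\OO_p)$; then $\ch\cW=\bw$. To see $\cW$ is $H_m$-stable, let $\cF\subsetneq\cW$ be a saturated proper subsheaf with $\mu_{H_m}(\cF)=\mu_{H_m}(\cW)$. Its saturation $\tilde\cF\subset\bigoplus_i\cV_i$ has the same slope, and the Jordan-H\"older factors of $\tilde\cF$ form a subset of $\{\cV_1,\ldots,\cV_k\}$, each with multiplicity one. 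The vanishing $\Ext^1(\cV_i,\cV_j)=0$ for $i\neq j$ splits the JH filtration, so $\tilde\cF=\bigoplus_{i\in I}\cV_i$ as the standard subsheaf for some $I\subsetneq\{1,\ldots,k\}$, and a comparison of saturations gives $\cF=\tilde\cF\cap\cW$. Since every $\phi_i\neq 0$ the restriction $\tilde\cF\to\OO_p$ is surjective, so $\cF$ is a proper elementary modification of $\tilde\cF$ of rank $r|I|$, giving $\Delta(\cF)=\tfrac{1}{2}+\tfrac{1}{r|I|}>\tfrac{1}{2}+\tfrac{1}{rk}=\Delta(\cW)$. Because $m$ is generic, equal $\mu_{H_m}$-slopes imply equal total slopes, so $p_\cF(n)-p_\cW(n)=\Delta(\cW)-\Delta(\cF)<0$ identically and $\cF$ does not destabilize $\cW$ as a Gieseker-semistable sheaf. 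Subsheaves of strictly smaller $\mu_{H_m}$-slope automatically have smaller reduced Hilbert polynomial for $n\gg 0$, so $\cW$ is $H_m$-stable.

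Finally, $\chi(\cW,\cW)=-2rk\neq 1$ shows $\cW$ is not exceptional, so Proposition \ref{prop-sIMPmus} promotes $\cW$ to a $\mu_{H_m}$-stable sheaf of character $\bw$, as required. The main subtlety is the third paragraph: although $\cW$ is typically only strictly $\mu_{H_m}$-semistable (the proper slope-level subsheaves are parametrized by proper subsets $I\subsetneq\{1,\ldots,k\}$), every such destabilizing sub has strictly larger discriminant, which is exactly what preserves Gieseker stability and allows Proposition \ref{prop-sIMPmus} to do the final upgrade.
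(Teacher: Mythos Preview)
Your argument is correct and takes a genuinely different route from the paper's.  The paper builds only a $\mu_{H_m}$-\emph{semistable} sheaf of character $\bw$ (namely $\cV^{\oplus(k-1)}\oplus\cV'$ for an elementary modification $\cV'$ of $\cV$), then invokes the generic Harder--Narasimhan machinery of \S\ref{sec-genHN}: since the putative factors $\bgr_i$ all share the total slope $\nu$, the orthogonalities $\chi(\bgr_i,\bgr_j)=0$ force every $\Delta_i=\tfrac12$, contradicting $\Delta(\bw)>\tfrac12$; hence the filtration has length one and Propositions \ref{prop-ssIMPs} and \ref{prop-sIMPmus} finish.

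Your approach instead exploits the one-dimensionality of the moduli space at $\Delta=\tfrac12$ to pick $k$ pairwise non-isomorphic stable $\cV_i$ and the resulting vanishing $\Ext^1(\cV_i,\cV_j)=0$, then directly constructs an $H_m$-Gieseker \emph{stable} sheaf as an elementary modification of the polystable sheaf $\bigoplus_i\cV_i$.  The key step---classifying equal-slope saturated subsheaves of $\bigoplus_i\cV_i$ as the standard summands $\bigoplus_{i\in I}\cV_i$---is a clean semisimplicity argument.  (Incidentally, you do not actually need genericity of $m$ to get $\nu(\cF)=\nu$: it follows directly because $\tilde\cF$ is a standard summand and $\tilde\cF/\cF$ is zero-dimensional.)  Your route is more explicit and avoids the Schatz-stratum computations behind Theorem \ref{thm-HNcriterion}; the paper's route is faster given that machinery is already in place.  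Both finish with the same appeal to Proposition \ref{prop-sIMPmus}.
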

\begin{proof}
Let $\cV'$ be an elementary modification of $\cV$: $$0\to \cV'\to \cV\to \OO_p\to 0.$$  Then $\cV'$ is $\mu_{H_m}$-stable, and $\cW = \cV^{\oplus(k-1)}\oplus \cV'$ is $\mu_{H_m}$-semistable of character $\bw = (rk,\nu,\frac{1}{2}+\frac{1}{rk})$.  Therefore, there are $H_{\lceil m\rceil + 1}$-prioritary sheaves of character $\bw$.  Consider the characters $\bgr_1,\ldots,\bgr_\ell$ of the factors of the generic $H_m$-Harder-Narasimhan filtration of sheaves in $\cP_{H_{\lceil m\rceil +1}}(\bw)$.  Since the generic sheaf is $\mu_{H_m}$-semistable and $m$ is generic, all the factors $\bgr_i$ have the same total slope.  

None of the factors $\bgr_i$  can have discriminant $\Delta_i$ less than $\frac{1}{2}$.  Indeed, if one did then there would be an exceptional bundle $\cE$ of slope $\nu$.  Then $\DLP_{H_m}(\nu) \geq \DLP_{H_m,\cE}(\nu) > \frac{1}{2}$, contradicting the stability of $\cV$ and Proposition \ref{prop-DLPss}.  On the other hand, we must have $\chi(\bgr_i,\bgr_j) = 0$ for all $i<j$, and by Riemann-Roch this requires $\Delta_i+\Delta_j = 1$.  Therefore all $\Delta_i$ must be $\frac{1}{2}$, which contradicts $\Delta(\cW) > \frac{1}{2}$.  Thus the generic Harder-Narasimhan filtration must have length $1$, and there are $H_m$-semistable sheaves of character $\bw$.  By Propositions \ref{prop-ssIMPs} and \ref{prop-sIMPmus}, there are $\mu_{H_m}$-stable sheaves of character $\bw$.
\end{proof}

\begin{lemma}\label{lem-deltaSurface}
If $m$ is generic, then Theorem \ref{thm-deltaSurface} holds.
\end{lemma}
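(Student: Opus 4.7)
The plan is to address the three parts of Theorem~\ref{thm-deltaSurface} separately, with parts~(1) and~(3) sharing a common construction inspired by Lemma~\ref{lem-muss12}. Part~(2) is essentially definitional: a non-exceptional $\mu_{H_m}$-stable sheaf has discriminant $\Delta \geq \frac{1}{2}$ by Lemma~\ref{lem-excFacts}(4), so it contributes to the set whose infimum defines $\delta_m^{\mus}(\nu)$.

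For parts~(1) and~(3), I would begin by choosing a $\mu_{H_m}$-stable sheaf $\cV_0$ of character $(r_0,\nu,\Delta_0)$ with $\delta_m^{\mus}(\nu) \leq \Delta_0 \leq \Delta$, taking $\Delta_0 = \delta_m^{\mus}(\nu) = \Delta$ in part~(3) via the given hypothesis, and $\Delta_0 < \Delta$ in part~(1) using the definition of the infimum. A short $K$-theory computation shows that $j := rr_0(\Delta - \Delta_0)$ is a nonnegative integer, since both $r_0\bv$ and $r\bv_0$ are integral characters. Let $\cV_0^{(j)}$ be the result of $j$ general elementary modifications of $\cV_0$ (see \S\ref{subsec-elementarymodification}), which remains $\mu_{H_m}$-stable. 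Then
\[
\cW := \cV_0^{\oplus(r-1)} \oplus \cV_0^{(j)}
\]
has character $r_0\bv$ and is $\mu_{H_m}$-semistable (all summands share the slope $\nu$), so $\cP_{H_{\lceil m\rceil + 1}}(r_0\bv)$ is nonempty. Next I would apply Theorem~\ref{thm-HNcriterion} to the characters $\bgr_1,\ldots,\bgr_\ell$ of the generic $H_m$-Harder-Narasimhan filtration in this stack; the main claim is that $\ell = 1$. Openness of $\mu_{H_m}$-semistability together with the genericity of $m$ forces each $\bgr_i$ to have total slope $\nu$, and then the orthogonality $\chi(\bgr_i,\bgr_j) = 0$ together with $P(0) = 1$ gives $\Delta_i + \Delta_j = 1$ for $i < j$. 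Combined with the strict inequalities $\Delta_1 < \cdots < \Delta_\ell$ among the discriminants, this forces $\ell \leq 2$.

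The main obstacle is ruling out $\ell = 2$. In that case $\Delta_1 < \frac{1}{2} < \Delta_2$, so by Lemma~\ref{lem-excFacts}(6) the character $\bgr_1$ corresponds to a semiexceptional bundle $\cE^{\oplus a}$ with $\Delta(\cE) = \frac{1}{2}(1 - 1/r_\cE^2)$ and $\Delta_2 = \frac{1}{2}(1 + 1/r_\cE^2)$. Propositions~\ref{prop-ssIMPs} and~\ref{prop-sIMPmus} applied to $\bgr_2$ (using $\Delta_2 > \frac{1}{2}$ and $m$ generic) produce a $\mu_{H_m}$-stable sheaf of character $\bgr_2$, so $\delta_m^{\mus}(\nu) \leq \Delta_2$; conversely, the existence of the exceptional bundle $\cE$ of slope $\nu$ forces $\delta_m^{\mus}(\nu) \geq \DLP_{H_m,\cE}(\nu) = \Delta_2$, whence $\delta_m^{\mus}(\nu) = \Delta_2$. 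But $\Delta$ is the rank-weighted average of $\Delta_1 < \Delta_2$, so $\Delta < \Delta_2 = \delta_m^{\mus}(\nu)$, contradicting our hypothesis $\Delta \geq \delta_m^{\mus}(\nu)$.

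Therefore $\ell = 1$ and there is an $H_m$-semistable sheaf of character $r_0\bv$. Proposition~\ref{prop-divideSpace} then gives an $H_m$-semistable sheaf of character $\bv$, which Propositions~\ref{prop-ssIMPs} and~\ref{prop-sIMPmus} promote to a $\mu_{H_m}$-stable sheaf; this sheaf is automatically non-exceptional because $\Delta > \frac{1}{2}$. This simultaneously establishes parts~(1) and~(3).
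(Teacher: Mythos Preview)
Your argument is correct. Both your proof and the paper's share the same skeleton: build a (semi)stable sheaf of character $r_0\bv$, descend to $\bv$ via Proposition~\ref{prop-divideSpace}, and promote to $\mu_{H_m}$-stability via Propositions~\ref{prop-ssIMPs} and~\ref{prop-sIMPmus}. The difference lies in how the sheaf of character $r_0\bv$ is produced.

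The paper first picks $\cW$ with $\Delta' \in (\tfrac{1}{2},\Delta)$ (invoking Lemma~\ref{lem-muss12} to guarantee $\Delta' > \tfrac{1}{2}$ when $\delta_m^{\mus}(\nu)=\tfrac{1}{2}$), applies Propositions~\ref{prop-ssIMPs} and~\ref{prop-sIMPmus} to $\cW^{\oplus r}$ to get a $\mu_{H_m}$-stable sheaf of character $r\bw$, and then performs elementary modifications---which preserve $\mu$-stability---to reach character $r'\bv$. You instead build a $\mu_{H_m}$-\emph{semistable} sheaf $\cV_0^{\oplus(r-1)}\oplus \cV_0^{(j)}$ of character $r_0\bv$ directly and then run a Harder--Narasimhan analysis (using Lemmas~\ref{lem-HNclose} and~\ref{lem-HNorthogonal}) to rule out a length-$2$ filtration via the exceptional bundle $\cE$ at slope $\nu$. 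Your contradiction $\Delta < \Delta_2 = \delta_m^{\mus}(\nu)$ is exactly the obstruction that Lemma~\ref{lem-muss12} was designed to circumvent, so in effect you have folded the content of that lemma into the main argument. The paper's route is shorter once Lemma~\ref{lem-muss12} is in hand; yours is self-contained and makes the role of the exceptional bundle more visible. One minor remark: you cite Theorem~\ref{thm-HNcriterion} for the properties of the $\bgr_i$, but what you actually use is the converse direction established in Lemmas~\ref{lem-HNclose} and~\ref{lem-HNorthogonal}.
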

\begin{proof}
(1) Suppose $\Delta > \delta_m^{\mus}(\nu)$.  Pick a $\mu_{H_m}$-stable sheaf $\cW$ with invariants $\bw = (r',\nu,\Delta')$ such that $\Delta > \Delta' > \frac{1}{2}$, using Lemma \ref{lem-muss12} in case $\delta_m^{\mus}(\nu)=\frac{1}{2}$. Then $\cW^{\oplus r}$ is an $H_m$-semistable sheaf of character $r\bw$, and by Propositions \ref{prop-ssIMPs} and \ref{prop-sIMPmus} there are $\mu_{H_m}$-stable sheaves of character $r\bw$; pick one such sheaf $\cU$.  Let $$n = \chi(r\bw) - \chi(r' \bv).$$  Then if we perform $n$ general elementary modifications on $\cU$, we will get a $\mu_{H_m}$-stable sheaf $\cU'$ of character $r'\bv$. Proposition \ref{prop-divideSpace} then provides $H_m$-semistable sheaves of character $\bv$, and Propositions \ref{prop-ssIMPs} and \ref{prop-sIMPmus} give $\mu_{H_m}$-stable sheaves of character $\bv$.

(2) If there is a non-exceptional $\mu_{H_m}$-stable sheaf then $\Delta \geq \frac{1}{2}$, so this is clear. 

(3) Repeat the argument in (1), except choose $\cW$ so that $\Delta' = \delta_m^{\mus}(\nu)$.
\end{proof}

Now we can quickly deduce Theorem \ref{thm-deltaSurface} for arbitrary $m$.

\begin{proof}[Proof of Theorem \ref{thm-deltaSurface}]
(1) Since $\Delta > \delta_m^{\mus}(\nu)$, there is a $\mu_{H_m}$-stable sheaf $\cW$ with invariants $\bw = (r',\nu,\Delta')$ such that $\Delta > \Delta' \geq \frac{1}{2}.$  Then since $\mu_{H_m}$-stability is open in the polarization, if $\epsilon>0$ is small then $\cW$ is both $\mu_{H_{m-\epsilon}}$- and $\mu_{H_{m+\epsilon}}$-stable, and therefore $\delta_{m\pm\epsilon}^{\mus}(\nu) \leq \Delta'$.  By Lemma \ref{lem-deltaSurface}, there are $\mu_{H_{m\pm \epsilon}}$-stable sheaves of character $\bv$, and the irreducibility of the stack of $F$-prioritary sheaves shows that there are sheaves of character $\bv$ that are simultaneously $\mu_{H_{m-\epsilon}}$- and $\mu_{H_{m+\epsilon}}$-stable.  Such sheaves are automatically slope stable stable for the convex combination $H_m$ of $H_{m-\epsilon}$ and $H_{m+\epsilon}$.

(2) As in the proof of Lemma \ref{lem-deltaSurface}.

(3) The argument in (1) is easily adapted to this situation.
\end{proof}

It is worth pointing out that the function $\delta_m^{\mus}(\nu)$ has a monotonicity property in the polarization.  Here we treat the del Pezzo case.  See \S\ref{sec-reduction} when $e\geq 2$.

\begin{corollary}\label{cor-deltaMonotone}
Let $e = 0$ or $1$, let $\nu\in \Pic(\F_e)\te \QQ$, and suppose either $1-\frac{e}{2} \leq m \leq m'$ or $0 < m' \leq m \leq 1-\frac{e}{2}$.  Then $$\delta_m^{\mus}(\nu) \leq \delta_{m'}^{\mus}(\nu).$$
\end{corollary}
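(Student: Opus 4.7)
The plan is to reduce the statement to the fact that the generic stability interval $I_\bv$ always contains the anticanonical value $1-\frac{e}{2}$ whenever it is nonempty, which is Corollary \ref{cor-KstabilityEasy}. Concretely, given $\Delta > \delta_{m'}^{\mus}(\nu)$, Theorem \ref{thm-deltaSurface}(1) produces a $\mu_{H_{m'}}$-stable sheaf of character $\bv = (r,\nu,\Delta)$ for some rank $r$ (and in fact, by definition of the infimum, we may pick such a $\bv$ with $\Delta$ arbitrarily close to $\delta_{m'}^{\mus}(\nu)$). I would then propagate $\mu_{H_{m'}}$-stability to $\mu_{H_m}$-stability via the generic prioritary sheaf.

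First, since $\mu_{H_{m'}}$-stability is open in flat families and Walter's theorem gives that $\cP_F(\bv)$ is irreducible, the general sheaf $\cV \in \cP_F(\bv)$ is $\mu_{H_{m'}}$-stable. Next, since a $\mu_{H_{m'}}$-stable sheaf of character $\bv$ exists, Corollary \ref{cor-KstabilityEasy} applies and tells us that the general sheaf of $\cP_F(\bv)$ is also $\mu_{-K_{\F_e}}$-stable, that is $\mu_{H_{1-e/2}}$-stable. Combining these two open conditions on the irreducible stack $\cP_F(\bv)$, the general $\cV\in \cP_F(\bv)$ is simultaneously $\mu_{H_{m'}}$-stable and $\mu_{H_{1-e/2}}$-stable.

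The stability interval $I_\cV \subset \RR_{>0}$ is an open interval containing both $m'$ and $1-\frac{e}{2}$. In each of the two ranges of the hypothesis, $m$ lies between these two values, so $m \in I_\cV$. Hence $\cV$ is $\mu_{H_m}$-stable of character $\bv$, yielding $\delta_m^{\mus}(\nu) \leq \Delta$. Taking an infimum over $\Delta > \delta_{m'}^{\mus}(\nu)$ gives $\delta_m^{\mus}(\nu) \leq \delta_{m'}^{\mus}(\nu)$.

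The main obstacle is really just making sure the applications of Theorem \ref{thm-deltaSurface} and Corollary \ref{cor-KstabilityEasy} line up: both require producing an actual character $\bv$ of some integral rank $r$ with slope $\nu$ and discriminant strictly exceeding $\delta_{m'}^{\mus}(\nu)$, and then invoking the existence result that such a character admits a $\mu_{H_{m'}}$-stable representative. Once that is set up, the monotonicity is formal from openness and convexity of $I_\cV$. No calculation is needed; the argument is purely structural and works uniformly for $e=0$ or $1$ because that is precisely the setting in which Corollary \ref{cor-KstabilityEasy} is proved.
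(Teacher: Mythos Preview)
Your proof is correct and follows essentially the same approach as the paper's: both reduce to Corollary~\ref{cor-KstabilityEasy}, observing that if there is a $\mu_{H_{m'}}$-stable sheaf of character $\bv=(r,\nu,\Delta)$, then the general sheaf in $\cP_F(\bv)$ is simultaneously $\mu_{H_{m'}}$- and $\mu_{-K_{\F_e}}$-stable, hence $\mu_{H_m}$-stable by convexity of the stability interval. The paper's proof is simply the one-line version of your argument, working directly with the defining set of the infimum rather than passing through Theorem~\ref{thm-deltaSurface}.
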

\begin{proof}
By Corollary \ref{cor-KstabilityEasy}, if there are $\mu_{H_{m'}}$-stable sheaves of some character $\bv = (r,\nu,\Delta)$, then there are $\mu_{H_m}$-stable sheaves of character $\bv$. 
\end{proof}

We can compare the function $\delta_m^{\mus}(\nu)$ to $\DLP_{H_m}(\nu)$.

\begin{corollary}\label{cor-deltaDLP01}
Let $e=0$ or $1$, let $\nu\in \Pic(\F_e)\te \QQ$, and let $m>0$.  Then $$\delta_m^{\mus}(\nu) \geq \DLP_{H_m}(\nu).$$
\end{corollary}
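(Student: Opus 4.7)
The plan is to strengthen Proposition \ref{prop-DLPss}(2), which is stated only for generic polarizations, to the effect that for \emph{arbitrary} polarization $H_m$, any non-semiexceptional $\mu_{H_m}$-stable sheaf $\cW$ of total slope $\nu$ satisfies $\Delta(\cW) \geq \DLP_{H_m}(\nu)$. Combined with Theorem \ref{thm-deltaSurface}(1), which produces such a sheaf for every $\Delta > \delta_m^{\mus}(\nu)$ (after choosing the rank appropriately), taking infimum yields the corollary.

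To carry out the extension, fix any $\mu_{H_m}$-stable exceptional bundle $\cV$ in the range $|(\nu-\nu(\cV))\cdot H_m| \leq -\tfrac{1}{2} K_{\F_e}\cdot H_m$ and show $\Delta(\cW) \geq \DLP_{H_m,\cV}(\nu)$. When $(\nu-\nu(\cV))\cdot H_m \neq 0$, the proof mimics that of Proposition \ref{prop-DLPss}: $\mu_{H_m}$-stability of both sheaves together with the slope bound and Serre duality kill the relevant $\Hom$ and $\Ext^2$ (using $K_{\F_e}\cdot H_m < 0$), yielding $\chi(\cV,\cW)\leq 0$ or $\chi(\cW,\cV)\leq 0$ according to sign; Riemann--Roch converts this to the appropriate branch of $\DLP_{H_m,\cV}(\nu)$.

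The main obstacle is the degenerate case $(\nu-\nu(\cV))\cdot H_m = 0$, precisely the situation the original Proposition avoided by assuming $H$ generic. Here $\DLP_{H_m,\cV}(\nu)$ is the maximum of the two branches, so we must establish \emph{both} $\chi(\cV,\cW)\leq 0$ and $\chi(\cW,\cV)\leq 0$. If $\nu(\cV)\neq \nu(\cW)$, any nonzero map $\cV\to\cW$ or $\cW\to\cV$ is forced by strict $\mu_{H_m}$-stability of both sheaves to be injective with torsion cokernel; but a torsion cokernel has vanishing $c_1$, contradicting $\nu(\cV)\neq \nu(\cW)$. If instead $\nu(\cV)=\nu(\cW)$, then non-exceptionality of $\cW$ together with Lemma \ref{lem-excFacts}(4) gives $\Delta(\cW) \geq \tfrac{1}{2} > \Delta(\cV)$, and the rank-and-torsion analysis again excludes nonzero maps in either direction. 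The $\Ext^2$ groups vanish because $\mu_{H_m}(\cV \te K_{\F_e}) < \mu_{H_m}(\cV) = \mu_{H_m}(\cW)$, so Serre duality forces the required $\Hom$ spaces to be zero. Both Euler characteristic inequalities follow, yielding the desired bound.

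To conclude, for each $\Delta > \delta_m^{\mus}(\nu)$, apply Theorem \ref{thm-deltaSurface}(1) to obtain a $\mu_{H_m}$-stable sheaf $\cW$ of character $(r,\nu,\Delta)$ for some integer $r$ with this character integral. By Lemma \ref{lem-excFacts}(1), a potentially exceptional character of a given $\nu$ and $\Delta$ can occur for at most one value of $r$ (namely $r$ with $\Delta = \tfrac{1}{2}(1-1/r^2)$), so we may choose $r$ so that $\cW$ is non-exceptional, hence non-semiexceptional. The strengthened inequality then gives $\Delta \geq \DLP_{H_m,\cV}(\nu)$ for each admissible $\cV$; taking supremum over $\cV$ and infimum over $\Delta > \delta_m^{\mus}(\nu)$ gives $\delta_m^{\mus}(\nu) \geq \DLP_{H_m}(\nu)$.
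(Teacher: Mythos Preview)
Your approach is genuinely different from the paper's and, once two minor slips are repaired, is correct. The paper instead argues indirectly: for a $\mu_{H_m}$-stable sheaf $\cW$ with $\Delta(\cW)\geq\frac{1}{2}$, it perturbs to a generic polarization $H_{m+\epsilon}$ (slightly farther from $-K_{\F_e}$), invokes Proposition~\ref{prop-DLPss}(2) there, and then uses the monotonicity of the Dr\'ezet--Le~Potier function (Proposition~\ref{prop-DLPmonotone}) to descend back to $H_m$. Your route is more elementary in that it avoids Proposition~\ref{prop-DLPmonotone} entirely by handling the equal-slope case directly; the paper's route is cleaner because monotonicity absorbs all the casework.

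Two points to fix. First, the assertion that ``a torsion cokernel has vanishing $c_1$'' is false in general (a sheaf supported on a curve has effective $c_1$). What you actually need is: a nonzero map between $\mu_{H_m}$-stable sheaves of the same $H_m$-slope forces equal ranks and an injective map with torsion cokernel $T$; then $c_1(T)$ is effective and $c_1(T)\cdot H_m=0$, so ampleness of $H_m$ gives $c_1(T)=0$, whence $\nu(\cV)=\nu(\cW)$. Second, in the case $\nu(\cV)=\nu(\cW)$ your ``rank-and-torsion analysis'' does \emph{not} exclude a nonzero map $\cW\to\cV$: such a map is compatible with $\Delta(\cW)>\Delta(\cV)$. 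Fortunately you do not need it: when $\nu(\cV)=\nu(\cW)$ the two branches of $\DLP_{H_m,\cV}(\nu)$ coincide (both equal $P(0)-\Delta(\cV)$), and Riemann--Roch gives $\chi(\cV,\cW)=\chi(\cW,\cV)$, so the single vanishing $\Hom(\cV,\cW)=0$ (which your argument does establish, since it would force $\Delta(\cW)\leq\Delta(\cV)$) suffices.

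Finally, the detour through Theorem~\ref{thm-deltaSurface}(1) is unnecessary: by definition $\delta_m^{\mus}(\nu)$ is an infimum over discriminants $\Delta\geq\frac{1}{2}$ of actual $\mu_{H_m}$-stable sheaves, each of which is automatically non-exceptional, so your strengthened inequality applies to every term in the infimum directly.
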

\begin{proof}
Without loss of generality suppose $m \geq 1-\frac{e}{2}$.  
Suppose $\cV$ is $\mu_{H_m}$-stable with $\Delta(\cV) \geq \frac{1}{2}$.  Then for $\epsilon >0$ small it is also $\mu_{H_{m+\epsilon}}$-stable.  Since it is not semiexceptional, Propositions \ref{prop-DLPss} (2) and \ref{prop-DLPmonotone} give $$\Delta(\cV) \geq \DLP_{H_{m+\epsilon}}(\nu)  \geq \DLP_{H_m}(\nu),$$ and therefore every sheaf used to compute $\delta_{m}^{\mus}(\nu)$ has discriminant at least $\DLP_{-K_{\F_e}}(\nu)$.
\end{proof}

In the anticanonically polarized del Pezzo case, our previous results can be interpreted as follows.

\begin{corollary}\label{cor-deltaDLP}
If $e=0$ or $1$ and $\nu\in \Pic(\F_e)\te \QQ$ then $$\delta_{1-\frac{e}{2}}^{\mus}(\nu) = \DLP_{-K_{\F_e}}(\nu).$$
\end{corollary}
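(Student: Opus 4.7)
The $\geq$ direction is already in Corollary \ref{cor-deltaDLP01}, since $\DLP_{H_m}(\nu)$ depends only on the ray spanned by $H_m$ and hence $\DLP_{H_{1-e/2}}(\nu)=\DLP_{-K_{\F_e}}(\nu)$. My plan is to establish the reverse inequality $\delta_{1-\frac{e}{2}}^{\mus}(\nu) \leq \DLP_{-K_{\F_e}}(\nu)$ by constructing $\mu_{-K_{\F_e}}$-stable sheaves whose discriminants approach $\DLP_{-K_{\F_e}}(\nu)$ from above.

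I would fix any $\Delta > \DLP_{-K_{\F_e}}(\nu)$ and any positive integer $r$ making $\bv = (r,\nu,\Delta)$ an integral Chern character. Corollary \ref{cor-DLP1/2} gives $\Delta > \frac{1}{2}$ automatically, and by definition $\DLP_{-K_{\F_e}}^{<r}(\nu) \leq \DLP_{-K_{\F_e}}(\nu) < \Delta$. For sufficiently small generic $\epsilon > 0$, Lemma \ref{lem-limitK} guarantees that the polarizations $H_\pm = -\tfrac{1}{2}K_{\F_e}\pm \epsilon F$ satisfy $\DLP_{H_\pm}^{<r}(\nu) = \DLP_{-K_{\F_e}}^{<r}(\nu) < \Delta$ and are themselves generic in the sense of Section \ref{ssec-deltaGeneric}. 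Then Theorem \ref{thm-existence} produces $H_\pm$-semistable sheaves of character $\bv$, which Propositions \ref{prop-ssIMPs} and \ref{prop-sIMPmus} upgrade to $\mu_{H_\pm}$-stable sheaves using $\Delta > \frac{1}{2}$ and the genericity of $H_\pm$.

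Finally, the stack $\cP_F(\bv)$ is irreducible by Walter's theorem, so the open substacks parameterizing $\mu_{H_+}$-stable and $\mu_{H_-}$-stable sheaves meet in a nonempty open substack; any such sheaf is slope-stable for every polarization on the segment joining $H_+$ and $H_-$, in particular for the midpoint $\tfrac{1}{2}(H_++H_-) = -\tfrac{1}{2}K_{\F_e}$ and hence for the proportional polarization $-K_{\F_e}$. This yields $\delta_{1-\frac{e}{2}}^{\mus}(\nu) \leq \Delta$; letting $\Delta$ decrease to $\DLP_{-K_{\F_e}}(\nu)$ completes the argument. The only real care required is the bookkeeping to arrange that a single small $\epsilon$ makes both $H_\pm$ satisfy the equality of Lemma \ref{lem-limitK} while also avoiding the (locally finite) set of walls for $\bv$ needed to apply Propositions \ref{prop-ssIMPs} and \ref{prop-sIMPmus}; this is a mild condition since the bad $\epsilon$ form a discrete subset of a neighborhood of $0$.
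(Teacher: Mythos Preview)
Your proof is correct and follows essentially the same route as the paper's. The only cosmetic difference is in the final step: where you explicitly take both polarizations $H_\pm$ and use the irreducibility of $\cP_F(\bv)$ to find a sheaf simultaneously $\mu_{H_+}$- and $\mu_{H_-}$-stable (hence $\mu_{-K_{\F_e}}$-stable by convexity), the paper works with a single polarization $-K_{\F_e}+\epsilon F$ and then cites Corollary~\ref{cor-KstabilityEasy}, whose proof is precisely your two-sided argument packaged as a standalone statement.
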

\begin{proof}
By Corollary \ref{cor-deltaDLP01} it remains to show that $\delta_{1-\frac{e}{2}}^{\mus}(\nu) \leq \DLP_{-K_{\F_e}}(\nu)$.  Let $\bv = (r,\nu,\Delta)$ be any integral character with $\Delta > \DLP_{-K_{\F_e}}(\nu)$.  If $\epsilon>0$ is sufficiently small, then by Lemma \ref{lem-limitK} we have $$\Delta \geq \DLP_{-K_{\F_e}}(\nu) \geq \DLP_{-K_{\F_e}}^{< r}(\nu) = \DLP_{-K_{\F_e}+\epsilon F}^{<r}(\nu),$$ so Theorem \ref{thm-existence} shows there are $(-K_{\F_e}+\epsilon F)$-semistable sheaves of character $\bv$.  We have $\Delta>\frac{1}{2}$ by Corollary \ref{cor-K1/2}, so there are $\mu_{-K_{\F_e}+\epsilon F}$-stable sheaves of character $\bv$ by Propositions \ref{prop-ssIMPs} and \ref{prop-sIMPmus}.  Then there are $\mu_{-K_{\F_e}}$-stable sheaves of character $\bv$ by Corollary \ref{cor-KstabilityEasy}.  Therefore $\delta_{1-\frac{e}{2}}^{\mus}(\nu) \leq \Delta$.  We can choose the character $\bv$ such that $\Delta> \DLP_{-K_{\F_e}}(\nu)$ is as close to $\DLP_{-K_{\F_e}}(\nu)$ as we want, so this shows $\delta_{1-\frac{e}{2}}^{\mus}(\nu) \leq \DLP_{-K_{\F_e}}(\nu)$.
\end{proof}

\section{Harder-Narasimhan filtrations from Kronecker modules}\label{sec-HNKronecker}

We let $e=0$ or $1$ throughout this section.  We construct Chern characters $\bv \in K(\F_e)$ and polarizations $H_m$ such that the general sheaf $\cV\in \cP_{H_{\lceil m\rceil}}(\bv)$ has an $H_m$-Harder-Narasimhan filtration of length $2$ and neither factor is semiexceptional.  This is in direct contrast to the case of $\P^2$ (see Example \ref{ex-HNP2}) or an anticanonically polarized Hirzebruch surface (see Corollary \ref{cor-HNShape}). 

 Intuitively, we can construct such characters $\bv$ as follows.  First, take an exceptional collection $$\cE_1,\quad \cE_2,\quad \cE_3,\quad\cE_4$$ of length $4$, so that $\chi(\cE_i,\cE_j) = 0$ for $i>j$.  Let $\cK$ be a bundle constructed from $\cE_3$ and $\cE_4$ (say by taking extensions, kernels, or cokernels of direct sums of copies of $\cE_3$ and $\cE_4$) and let $\cL$ be a bundle  constructed from $\cE_1$ and $\cE_2$.  Such bundles can be viewed as arising from \emph{Kronecker modules}, which are representations of a Kronecker quiver, and have previously been studied by Dr\'ezet \cite{Drezet}, Karpov \cite{Karpov}, and others.  

We observe that $\chi(\cK,\cL) = 0$.  If we can furthermore arrange that $\cK$ and $\cL$ are $H_m$-stable and $\mu_{H_m}(\cK)$ is slightly larger than $\mu_{H_m}(\cL)$, then by Theorem \ref{thm-HNcriterion} the characters $\bk = \ch\cK$ and $\bl = \ch \cL$ will be the characters of the factors of the generic $H_m$-Harder-Narasimhan filtration of sheaves in $\cP_{H_{\lceil m\rceil}} ( \bv)$, where $\bv = \bk + \bl$.  If $\cK$ and $\cL$ both have moduli, we will have constructed the desired character $\bv$.

The main difficulty in the previous analysis is to determine when the bundles $\cK$ and $\cL$ are $H_m$-stable, but our study of stability intervals in the preceding sections makes this tractable.  Karpov previously studied the $-K_{\F_e}$-stability of many such bundles \cite{Karpov}, but for our purposes varying the polarization is crucial.  When combined with general results on stability intervals, our approach also gives a new proof of the $-K_{\F_e}$-stability of these bundles.  We carry out this program in the case where the starting exceptional collection is $$\OO_{\F_e}(-E-\ell F),\quad\OO_{\F_e},\quad \OO_{\F_e}(F),\quad \OO_{\F_e}(E-(\ell-1-e)F) \qquad (\ell \geq 3)$$ and where we construct $\cK$ as an extension and $\cL$ as a cokernel. Many of the arguments undoubtedly generalize to more arbitrary exceptional collections, but as the computations are already considerable we do not pursue this here.

In the final subsection of this section, we study how the parameters of our construction can be varied to both construct stable sheaves of certain characters and see that stable sheaves of certain characters cannot exist.  In other words, we compute the function $\delta_m^{\mus}(\nu)$ for some values of $m$ and $\nu$.  In contrast with the functions $\DLP^{<r}_{H_m}(\nu)$, which are locally constant in $m$ and jump at special values, we will see that it is possible for $\delta_m^{\mus}(\nu)$ to increase continuously as $m$ increases.  

\subsection{Stability of bundles from an inverse pair}\label{ssec-inversePair}

Let $k\geq 3-e$ be an integer, and let $N = 2(k-1)+e$ (so that $N\geq 3$). In this subsection, we consider the stability of bundles on $\F_e$ that arise as extensions \begin{equation}\tag{$\dagger$}\label{eqn-inverse} 0\to \OO_{\F_e} (E -kF)^{\oplus a} \to \cK\to \OO_{\F_e}^{\oplus b} \to 0\end{equation} coming from the inverse exceptional pair $(\OO_{\F_e}(E-kF),\OO_{\F_e})$.  Let $\bk = \ch \cK$.  We have \begin{align*}\hom(\OO_{\F_e},\OO_{\F_e}(E-kF)) &= 0 \\ \ext^1(\OO_{\F_e},\OO_{\F_e}(E-kF)) &= N \\ \ext^2(\OO_{\F_e},\OO_{\F_e}(E-kF)) &= 0\end{align*} so if $\cK'$ is another bundle $$0\to \OO_{\F_e}(E-kF)^{\oplus a'} \to \cK' \to \OO_{\F_e}^{\oplus b'}\to 0,$$ then we have $$\chi(\cK',\cK) = b'b+a'a-Nb'a.$$ In the special case $\cK'=\cK$, we get $$r(\cK)^2(1-2\Delta(\cK))=b^2+a^2-Nab.$$ If we let $\eta = \frac{b}{a}$, then the right hand side will be negative when $$\eta^2-N\eta+1 < 0.$$ Putting $$\psi_N = \frac{N + \sqrt{N^2-4}}{2},$$ the two roots of $\eta^2-N\eta+1=0$ are $\psi_N^{\pm 1}$, and we conclude the following.

\begin{lemma}\label{lem-Kronecker1/2}
We have $\Delta(\cK) > \frac{1}{2}$ if and only if $$\eta := \frac{b}{a} \in (\psi_N^{-1},\psi_N).$$
\end{lemma}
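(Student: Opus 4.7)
The lemma is essentially a direct algebraic consequence of the identity
$$r(\cK)^2(1-2\Delta(\cK)) = b^2 + a^2 - Nab$$
established in the paragraph immediately preceding the statement, so the plan is to carry out the elementary algebra and identify the roots of the resulting quadratic.

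First I would rewrite the condition $\Delta(\cK) > \tfrac{1}{2}$ as $1 - 2\Delta(\cK) < 0$, which by the displayed formula translates to $b^2 + a^2 - Nab < 0$. Since $a > 0$ is implicit in the definition $\eta = b/a$ (and in any case $a = 0$ would force $\cK \cong \OO_{\F_e}^{\oplus b}$ with $\Delta = 0$, where the claim is vacuous), I can divide by $a^2$ to get the equivalent condition $\eta^2 - N\eta + 1 < 0$.

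Next I would identify the roots of the quadratic $\eta^2 - N\eta + 1$. By Vieta's formulas the roots multiply to $1$ and sum to $N$, so if one root is $\psi_N = \tfrac{1}{2}(N + \sqrt{N^2-4})$ (which is real since $N \geq 3$ gives $N^2 - 4 \geq 5$), then the other is $\psi_N^{-1}$. Both roots are positive and distinct, with $0 < \psi_N^{-1} < \psi_N$. Since the quadratic opens upward, it is negative on exactly the open interval between its two roots. Hence $\eta^2 - N\eta + 1 < 0$ if and only if $\eta \in (\psi_N^{-1}, \psi_N)$, which completes the equivalence.

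There is no real obstacle here: the content of the lemma is the Riemann--Roch computation already done, and the rest is elementary. The statement is being recorded in this form because the interval $(\psi_N^{-1}, \psi_N)$ will be the natural parameter range in which the Kronecker extensions $\cK$ have positive-dimensional moduli, which is the setting needed for the subsequent construction of length-two Harder--Narasimhan filtrations from orthogonal Kronecker modules.
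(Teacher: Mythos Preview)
Your proposal is correct and follows exactly the same route as the paper: rewrite $\Delta(\cK)>\tfrac12$ as $b^2+a^2-Nab<0$ via the displayed identity, divide by $a^2$, and observe that the quadratic $\eta^2-N\eta+1$ has roots $\psi_N^{\pm1}$ and is negative precisely between them. The paper in fact carries out this computation in the paragraph before the lemma and merely records the conclusion, so there is nothing to add.
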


\begin{remark}
The number $\psi_N$ is irrational, since $N\geq 3$.  So, we can never have $\Delta(\cK) = \frac{1}{2}$.
\end{remark}

\begin{remark}
We can use left and right mutations on the exceptional pair $(\OO_{\F_e}(E-kF),\OO_{\F_e})$ to generate an infinite sequence $(\cE_i,\cE_{i+1})$ of exceptional bundles.  These can all be realized as general extensions $$0\to \OO_{\F_e} (E -kF)^{\oplus a} \to \cE_i\to \OO_{\F_e}^{\oplus b} \to 0$$ for appropriate $a,b$, and we will have $\eta \in [0,\psi_N^{-1}) \cup (\psi_N,N]$.    But as we have already studied the stability of exceptional bundles, in this section we are primarily interested in the case where $\cK$ has moduli, so $\Delta(\cK)  > \frac{1}{2}$.
\end{remark}

For polarizations on one side of the anticanonical polarization, the stability of $\cK$ is easy to analyze.

\begin{theorem}\label{thm-inverseInterval}
Suppose $\eta = \frac{b}{a} \in (\psi_N^{-1},\psi_N)$.  Then the stability interval $I_{\cK}$ of a general extension (\ref{eqn-inverse}) contains the interval $[1-\frac{e}{2},k)$, and $\cK$ is strictly $\mu_{H_k}$-semistable.
\end{theorem}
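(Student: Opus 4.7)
The plan is to decompose the statement into three pieces: first verify the strict $\mu_{H_k}$-semistability; second prove $\mu_{H_m}$-stability throughout the closed-open interval $[1-\tfrac{e}{2},k)$ by a direct subsheaf analysis that reduces to the stability theory of Kronecker modules; and finally invoke openness of slope-stability in the polarization to conclude $I_\cK \supset [1-\tfrac{e}{2},k)$. For the first piece, a direct computation gives $H_k \cdot (E-kF) = -e + k + e - k = 0$, so both $\OO_{\F_e}(E-kF)$ and $\OO_{\F_e}$ have $H_k$-slope zero. Hence $\cK$, built by extensions of $\mu_{H_k}$-stable line bundles of a common slope, is $\mu_{H_k}$-semistable, and the sub $\OO_{\F_e}(E-kF)^{\oplus a}$ provides a proper subsheaf of equal slope, giving strict semistability.

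For the main step, I fix $m\in [1-\tfrac{e}{2},k)$, so $H_m\cdot(E-kF) = m-k < 0$. Given a saturated subsheaf $\cF \subset \cK$ with $0 < \rk\cF < \rk\cK$, intersecting with the sub in (\ref{eqn-inverse}) gives a short exact sequence $0\to \cS\to \cF\to \cQ\to 0$ with $\cS$ saturated in $\OO_{\F_e}(E-kF)^{\oplus a}$ and $\cQ$ torsion-free in $\OO_{\F_e}^{\oplus b}$. Since the ambient bundles are polystable direct sums of exceptional line bundles, I plan to use Proposition \ref{prop-mukai}(2) and Theorem \ref{thm-rigidSplit} to force (after saturation) $\cS\cong \OO_{\F_e}(E-kF)^{\oplus s}$ and $\cQ\cong \OO_{\F_e}^{\oplus t}$ for some $s,t$. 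With this reduction, $\cF\subset \cK$ is exactly a sub-representation of dimension vector $(t,s)$ of the underlying Kronecker module on the $N$-arrow quiver classified by the extension class in $\Ext^1(\OO_{\F_e}^{\oplus b}, \OO_{\F_e}(E-kF)^{\oplus a}) \cong \Mat_{a\times b}(H^1(\OO_{\F_e}(E-kF)))$. The inequality $\mu_{H_m}(\cF) \geq \mu_{H_m}(\cK)$ rearranges (using $m-k<0$) to $t/s > b/a = \eta$, or the degenerate case $s=0$ with $t>0$. Under the assumption $\eta\in(\psi_N^{-1},\psi_N)$, equivalent to $\Delta(\cK) > \tfrac{1}{2}$ by Lemma \ref{lem-Kronecker1/2}, I will invoke the classical result of Dr\'ezet that a generic Kronecker representation of such a dimension vector is slope-stable for the Kronecker slope $(s,t)\mapsto \tfrac{s}{s+t}$. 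This rules out both the possibility $t/s>\eta$ and the degenerate case (which would require a Kronecker-splitting of the extension), establishing $\mu_{H_m}$-stability of a general $\cK$ on the whole interval $[1-\tfrac{e}{2},k)$.

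Finally, slope-stability is an open condition in the polarization, so $I_\cK$ is open in $\RR_{>0}$; stability at $m=1-\tfrac{e}{2}$ then forces $I_\cK$ to contain an open neighborhood of $1-\tfrac{e}{2}$, giving $I_\cK \supset [1-\tfrac{e}{2},k)$ as required. \textbf{The main obstacle} is the middle step, and specifically two sub-issues. The first is the reduction from an abstract saturated $\cS\subset \OO_{\F_e}(E-kF)^{\oplus a}$ with maximal $H_m$-slope to a polystable summand $\OO_{\F_e}(E-kF)^{\oplus s}$: this needs the polystability and rigidity of the ambient bundle together with a Jordan--H\"older argument to eliminate any other slope-maximal subsheaf. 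The second, and deeper, is Dr\'ezet's genericity theorem for Kronecker modules, which I will cite rather than reprove; the key numerical input is that the condition $\eta\in(\psi_N^{-1},\psi_N)$ is precisely the root interval in which the dimension vector $(a,b)$ is a Schur root of the $N$-Kronecker quiver, so that a generic representation is stable.
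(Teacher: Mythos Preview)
Your strategy—reduce to Kronecker-module stability and cite Dr\'ezet—is genuinely different from the paper's and is a reasonable line of attack, but the reduction step has a real gap. You propose to use Proposition~\ref{prop-mukai}(2) and Theorem~\ref{thm-rigidSplit} to force $\cS \cong \OO_{\F_e}(E-kF)^{\oplus s}$ and $\cQ \cong \OO_{\F_e}^{\oplus t}$, but neither result applies: Mukai's lemma would need $\Hom(\cS,\OO_{\F_e}(E-kF)^{\oplus a}/\cS)=0$, which fails already when $\cS$ \emph{is} a direct summand, and Theorem~\ref{thm-rigidSplit} needs rigidity of $\cS$ or $\cQ$, which is nowhere available. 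Without this reduction you only get the numerical bound $t/s \geq \eta$ on the ranks from the slope estimates; that does not exhibit a Kronecker sub-representation, so Dr\'ezet's theorem gives you nothing about whether such an $\cF$ exists in a general $\cK$.

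A correct reduction does exist, but only at the polarization $H_k$. If the maximal destabilizing subsheaf $\cF$ for $\mu_{H_{k-\epsilon}}$ is fixed for all small $\epsilon$, then $\mu_{H_k}(\cF)=0$, so the $\mu_{H_k}$-Jordan--H\"older factors of $\cF$ lie among $\{\OO_{\F_e}(E-kF),\OO_{\F_e}\}$; the vanishings $\Ext^1(\OO_{\F_e}(E-kF),\OO_{\F_e})=H^1(\OO_{\F_e}(-E+kF))=0$ and $H^1(\OO_{\F_e})=0$, together with $\Hom(\OO_{\F_e}(E-kF),\OO_{\F_e})=0$, then force $\cS$ and $\cQ$ to be direct summands. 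This yields $\mu_{H_{k-\epsilon}}$-stability for small $\epsilon$ only, not for arbitrary $m\in[1-\tfrac{e}{2},k)$ as you claim; extending to the full interval still requires Corollary~\ref{cor-KstabilityEasy}, which you do not invoke.

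The paper's proof avoids subsheaf analysis entirely. It shows $\Delta(\cK) \geq \DLP^{<r}_{H_{k-\epsilon}}(\nu(\cK))$ by a three-case argument over the $\mu_{H_{k-\epsilon}}$-stable exceptional bundles $\cV$, splitting on the sign of $\mu_{H_k}(\cV)$; the delicate case $\mu_{H_k}(\cV)=0$ is handled by comparing $\cV$ to a character in the span of $\ch\OO_{\F_e}(E-kF)$ and $\ch\OO_{\F_e}$ and using the orthogonality $\chi(\OO_{\F_e}(E-(k-1)F),-)=0$. Monotonicity of $\DLP$ (Proposition~\ref{prop-DLPmonotone}) then transfers the inequality to a polarization near $-K_{\F_e}$, Theorem~\ref{thm-existence} produces semistable sheaves there, Propositions~\ref{prop-ssIMPs} and~\ref{prop-sIMPmus} upgrade to $\mu$-stability, and Corollary~\ref{cor-KstabilityEasy} extends the stability interval back up to $k$. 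This stays entirely within the paper's machinery and requires no external Kronecker results.
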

\begin{proof}
The line bundles $\OO_{\F_e}(E-kF)$ and $\OO_{\F_E}$ are both stable of $H_k$-slope $0$, so any bundle $\cK$ fitting as an extension as above is automatically strictly $\mu_{H_k}$-semistable.  They are then also $H_{k+1}$-prioritary by Proposition \ref{prop-ssPrior}.  If $\epsilon>0$ is small, then the characters of $\OO_{\F_e}(E-kF)^{\oplus a}$  and $\OO_{\F_e}^{\oplus b}$ are the characters of the $H_{k+\epsilon}$-Harder-Narasimhan quotients of a general sheaf in $\cP_{F}(\bk)$.  Therefore the general sheaf in $\cP_{F}(\bk)$ fits as an extension of $\OO_{\F_e}^{\oplus b}$ by $\OO_{\F_e}(E-kF)^{\oplus a}$.  To complete the proof, we need to show that the general such extension is $\mu_{H_{k-\epsilon}}$-stable.

Let $\bv = \ch \cK = (r,\nu,\Delta)$, and let $\epsilon>0$ be small.  Suppose we prove that $\Delta \geq \DLP^{<r}_{H_{k-\epsilon}}(\nu)$.  Then by monotonicity of $\DLP_H^{<r}(\nu)$ in the polarization (Proposition \ref{prop-DLPmonotone}) we have $\Delta \geq  \DLP^{<r}_{-K_{\F_e}+\epsilon F}(\nu).$  By Theorem \ref{thm-existence}, there are $(-K_{\F_e}+\epsilon F)$-semistable sheaves of character $\bv$, and by Lemma \ref{lem-Kronecker1/2} and Propositions \ref{prop-ssIMPs} and \ref{prop-sIMPmus} there are $\mu_{-K_{\F_e}+\epsilon F}$-stable sheaves of character $\bv$.  Then the general sheaf $\cK$ in $\cP_{F}(\bk)$ is both $\mu_{H_{k}}$-semistable and $\mu_{-K_{\F_e}+\epsilon F}$-stable.  Then it is $\mu_{H_{k-\epsilon}}$-stable, and Corollary \ref{cor-KstabilityEasy} shows  that $I_\cK$ contains $[1-\frac{e}{2},k)$.

In order to prove $\Delta \geq \DLP^{<r}_{H_{k-\epsilon}}(\nu),$ we let $\cV$ be an exceptional bundle involved in the computation of $\DLP^{<r}_{H_{k-\epsilon}}(\nu)$ and show that $\Delta \geq \DLP_{H_{k-\epsilon},\cV}(\nu)$.  There are three cases to consider, depending on whether $\mu_{H_k}(\cV)$ is positive, negative, or zero.  As $t\in [0,\epsilon)$ varies in a small interval there are only finitely many exceptional bundles of interest in computing $\DLP_{H_{k-t}}^{<r}(\nu)$ by Remark \ref{rem-max}.  Thus we can shrink $\epsilon$ as necessary to accommodate each exceptional bundle $\cV$.

\emph{Case 1:} $\mu_{H_k}(\cV) >0$.  In this case we assume $\epsilon$ is chosen small enough that $$\mu_{H_{k-\epsilon}}(\cV) > \mu_{H_{k-\epsilon}}(\OO_{\F_e}) > \mu_{H_{k-\epsilon}}(\OO_{\F_e}(E-kF))$$ Then since $\OO_{\F_e}$ and $\OO_{\F_e}(E-kF)$ are $\mu_{H_{k-\epsilon}}$-stable, we have $\chi(\cV,\OO_{\F_e})\leq 0$ and $\chi(\cV,\OO_{\F_e}(E-kF))\leq 0$. Therefore also $\chi(\cV,\cK) \leq 0$, and we find $\Delta \geq \DLP_{H_{k-\epsilon},\cV}(\nu)$.

\emph{Case 2:} $\mu_{H_k}(\cV) <0$.  This is handled by a dual argument.

\emph{Case 3:} $\mu_{H_k}(\cV) = 0$.  In this case the total slope of $\cV$ lies on the line spanned by $\nu(\OO_{\F_e})$ and $\nu(\OO_{\F_e}(E-kF))$.  Then there are $a',b'\in \Z$ such that the linear combination $$\bk' = a'\ch(\OO_{\F_e}(E-kF)) + b'\ch \OO_{\F_e}$$ has $r(\bk') = r(\cV)$ and $\nu(\bk') = \nu(\cV)$.  This character satisfies $$\chi(\OO_{\F_e}(E-(k-1)F),\bk') = 0,$$ and we have $$\mu_{H_{k-\epsilon}}(\OO_{\F_e}(E-(k-1)F))=1-\epsilon < -\frac{1}{2}K_{\F_e}\cdot H_{k-\epsilon},$$ so $\OO_{\F_e}(E-(k-1)F))$ is a line bundle involved in the computation of $\DLP_{H_{k-\epsilon}}(\nu(\cV))$ and the character $\bk'$ satisfies $$\Delta(\bk') \leq \DLP_{H_{k-\epsilon}}(\nu(\cV)).$$ Since $\cV$ is $H_{k-\epsilon}$-semistable it must have $\Delta(\cV) \geq \Delta(\bk')$.  Then $\Delta(\bk') < \frac{1}{2}$, so $\eta':= b'/a' \notin (\psi_N^{-1},\psi_N)$.  There are then two further cases to consider:
 
\emph{Case 3a:} $0 < \eta' < \psi_N^{-1},$ or $b'\leq 0$ and $a'>0$.  In this case $\nu(\cV)$ lies on the line spanned by $\nu(\OO_{\F_e})$ and $\nu(\OO_{\F_e}(E-kF))$, but it lies down and to the right of $\nu(\cK)$.  Therefore, $\mu_{H_{k-\epsilon}}(\cV) < \mu_{H_{k-\epsilon}}(\cK),$ and we need to verify $\chi(\cK,\cV) \leq 0$ to show that $\Delta \geq \DLP_{H_{k-\epsilon},\cV}(\nu)$.  But since $\Delta(\bk') \leq \Delta(\cV)$, we can use Riemann-Roch to estimate $$\chi(\cK,\cV)\leq \chi(\cK,\bk') = bb'+aa'-Nba'.$$ If $b'=0$, then we get $$\chi(\cV,\bk') = aa'-Nba' = aa'(1-N\eta),$$ which is negative since $\eta > \psi_{N}^{-1} > \frac{1}{N}.$ Instead suppose $b'\neq 0$.  Then $$\chi(\cK,\bk') = bb'+aa'-Nba' = aa'(\eta\eta'+1-N\eta).$$ Since $a'>0$, we have $\chi(\cK,\cV) \leq 0$ so long as $$\eta \eta' + 1 - N\eta \leq 0,$$ which holds when $$\eta' \leq N- \eta^{-1}.$$ But $\psi_N^{-1} < N-\eta^{-1}$, so $\eta' < \psi_N^{-1}$ gives the required inequality.

\emph{Case 3b:} $\psi_N < \eta',$ or $b'>0$ and $a'\leq 0$.  This time $\nu(\cV)$ lies on the line spanned by $\nu(\OO_{\F_e})$ and $\nu(\OO_{\F_e}(E-kF))$, but it lies up and to the left of $\nu(\cK)$.  Therefore, $\mu_{H_{k-\epsilon}}(\cV) > \mu_{H_{k-\epsilon}}(\cK)$, and we need to verify $\chi(\cV,\cK)\leq 0$ to show that $\Delta \geq \DLP_{H_{k-\epsilon},\cV}(\nu)$.  The argument is similar to Case 3a, so we omit it.
\end{proof}

\subsection{Stability of bundles from a regular pair}\label{ssec-regularPair} Let $\ell \geq 1$,  and let $M = 2(\ell+1) -e$, so that $M\geq 3$.  Here we instead consider the regular exceptional pair $(\OO_{\F_e}(-E-\ell F),\OO_{\F_e})$ and study the stability of general bundles $\cL$ fitting as cokernels
\begin{equation}\tag{$\ddagger$}\label{eqn-regular} 0 \to \OO_{\F_e}(-E-\ell F)^{\oplus c} \to \OO_{\F_e}^{\oplus d} \to \cL \to 0.\end{equation} Write $\bl = \ch \cL$.  We assume $r(\cL) \geq 2$; in this case, since $$\sHom(\OO_{\F_e}(-E-\ell F),\OO_{\F_e})\cong \OO_{\F_e}(E+\ell F)$$ is globally generated we find that $\cL$ is a vector bundle by \cite[Proposition 2.6]{HuizengaJAG}. We have \begin{align*}\hom(\OO_{\F_e}(-E-\ell F),\OO_{\F_e}) &= M \\ \ext^1(\OO_{\F_e}(-E-\ell F),\OO_{\F_e}) &= 0 \\ \ext^2(\OO_{\F_e}(-E-\ell F),\OO_{\F_e}) &= 0,\end{align*} and if $\cL'$ is another bundle given as a cokernel $$0\to \OO_{\F_e}(-E-\ell F)^{\oplus c'} \to \OO_{\F_e}^{\oplus d'} \to \cL' \to 0,$$ then we have $$\chi(\cL',\cL)  = c'c+d'd-Nc'd.$$ As in the previous subsection, we deduce the following fact about discriminants.

\begin{lemma}\label{lem-KroneckerRegular1/2}
We have $\Delta(\cL) > \frac{1}{2}$ if and only if $$\frac{d}{c} \in (\psi_M^{-1},\psi_M).$$
\end{lemma}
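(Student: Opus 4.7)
The proof will be a near-verbatim adaptation of the proof of Lemma \ref{lem-Kronecker1/2}, just with the regular pair $(\OO_{\F_e}(-E-\ell F), \OO_{\F_e})$ replacing the inverse pair $(\OO_{\F_e}(E-kF),\OO_{\F_e})$ and $M$ replacing $N$. The setup is already fully in place: the $\Ext$ computations for the regular pair have been recorded, yielding the pairing $\chi(\cL',\cL) = c'c + d'd - M c' d$ (note the coefficient is $M$, corresponding to the $M$-dimensional $\Hom$ space, not $N$). Specializing to $\cL' = \cL$ gives
\[
\chi(\cL,\cL) \;=\; c^2 + d^2 - M c d.
\]

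Next I would invoke Riemann-Roch on the surface, which says $\chi(\cL,\cL) = r(\cL)^2 (P(0) - 2\Delta(\cL)) = r(\cL)^2 (1 - 2\Delta(\cL))$ since $P(0) = \chi(\OO_{\F_e}) = 1$. Combining the two displayed expressions yields
\[
r(\cL)^2 (1 - 2\Delta(\cL)) \;=\; c^2 + d^2 - M c d,
\]
so the inequality $\Delta(\cL) > \tfrac{1}{2}$ is equivalent to $c^2 + d^2 - M c d < 0$.

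Finally, dividing by $c^2$ (which is positive, since $c \geq 1$ is needed for the exact sequence to be nontrivial) and setting $\zeta = d/c$, the inequality becomes $\zeta^2 - M\zeta + 1 < 0$. The quadratic $\zeta^2 - M\zeta + 1$ has discriminant $M^2 - 4 > 0$ (because $M \geq 3$), and its roots are precisely
\[
\frac{M \pm \sqrt{M^2 - 4}}{2} \;=\; \psi_M^{\pm 1},
\]
using the elementary identity $\psi_M \cdot \psi_M^{-1} = 1$ together with $\psi_M + \psi_M^{-1} = M$. Hence the strict inequality $\zeta^2 - M\zeta + 1 < 0$ holds iff $\zeta \in (\psi_M^{-1}, \psi_M)$, completing the argument.

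There is no real obstacle here; the only minor point to double-check is the coefficient in the $\chi(\cL',\cL)$ formula, which should read $-M c' d$ rather than $-N c' d$ so as to be consistent with the Ext dimensions just computed for the regular pair. Beyond that, the proof is a direct two-line computation.
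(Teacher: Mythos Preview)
Your proof is correct and is exactly the approach the paper intends: the paper's own ``proof'' is just the sentence ``As in the previous subsection, we deduce the following fact about discriminants,'' and you have spelled out that adaptation in full. Your observation that the displayed formula $\chi(\cL',\cL) = c'c + d'd - Nc'd$ in the paper should read $-Mc'd$ is also correct---that is a typo in the paper, since the relevant $\hom$ dimension for the regular pair is $M$, not $N$.
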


\begin{lemma}
Any bundle $\cL$ as above is $H_{\ell+1-e}$-prioritary, and the general sheaf in $\cP_{F}(\bl)$ is a cokernel as in (\ref{eqn-regular}).
\end{lemma}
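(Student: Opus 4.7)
The plan is to prove the two assertions separately: (i) the $H_{\ell+1-e}$-prioritarity of any cokernel $\cL$ as in (\ref{eqn-regular}) by a direct cohomology computation using the resolution, and (ii) that such cokernels form a dense open substack of $\cP_F(\bl)$ by a standard Kodaira-Spencer calculation combined with Walter's irreducibility theorem.

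For (i), Serre duality rewrites $\Ext^2(\cL,\cL(-H_{\ell+1-e})) \cong \Hom(\cL,\cL(-E+(\ell-e-1)F))^*$, since $K_{\F_e} + H_{\ell+1-e} = -E + (\ell-e-1)F$. Applying $\Hom(-,\cL(-E+(\ell-e-1)F))$ to (\ref{eqn-regular}) embeds this Hom-group into $H^0(\cL(-E+(\ell-e-1)F))^d$, so the problem reduces to showing the $H^0$ on the right vanishes. Twisting (\ref{eqn-regular}) by $\OO_{\F_e}(-E+(\ell-e-1)F)$ and taking cohomology, the vanishing follows from $H^0(\OO(-E+(\ell-e-1)F)) = 0$ and $H^0(\OO(-2E-(e+1)F)) = 0$ (both immediate since $E$ is the unique effective section of negative self-intersection on $\F_e$), together with $H^1(\OO(-2E-(e+1)F)) = H^1(\OO(-F))^* = 0$ (by Serre duality and pullback from $\PP^1$). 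That $\cL$ is additionally $F$-prioritary then follows from Lemma \ref{lem-priorCompare}, since $H_{\ell+1-e} - F = E + \ell F$ is effective.

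For (ii), Walter's theorem makes $\cP_F(\bl)$ irreducible, and by (i) it is nonempty. Let $\Omega \subset \Hom(\OO(-E-\ell F)^c,\OO^d)$ be the open subscheme of injective sheaf maps whose cokernel is locally free and $F$-prioritary; the universal cokernel on $\Omega$ induces a modular map $\Omega \to \cP_F(\bl)$. It suffices to show this map is dominant, equivalently that at a general $\phi\in \Omega$ its Kodaira-Spencer differential $\Hom(\OO(-E-\ell F)^c,\OO^d) \to \Ext^1(\cL,\cL)$ is surjective, where $\cL = \coker \phi$. Standard deformation theory identifies this differential with the composition
$$\Hom(\OO(-E-\ell F)^c,\OO^d) \twoheadrightarrow \Hom(\OO(-E-\ell F)^c,\cL) \twoheadrightarrow \Ext^1(\cL,\cL),$$
where the first arrow is induced by $\OO^d \twoheadrightarrow \cL$ and the second is the connecting map from $\Hom(-,\cL)$ applied to (\ref{eqn-regular}). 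The first is surjective because $\Ext^1(\OO(-E-\ell F),\OO(-E-\ell F)) = H^1(\OO) = 0$, and the second is surjective because $\Ext^1(\OO,\cL) = H^1(\cL) = 0$; this last vanishing comes once more from the cohomology of (\ref{eqn-regular}), using $H^1(\OO) = 0$ and $H^2(\OO(-E-\ell F)) = H^0(\OO(-E+(\ell-e-2)F))^* = 0$.

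The main obstacle is part (ii): one must correctly identify the Kodaira-Spencer differential of the cokernel family with the connecting map in the $\Hom$-sequence of the resolution, and then keep track of the chain of cohomology vanishings, each of which ultimately traces back to the facts that $E$ is the unique effective section of negative self-intersection on $\F_e$ and that line bundles pulled back from $\PP^1$ have no intermediate cohomology.
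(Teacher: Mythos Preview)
Your proof is correct and follows essentially the same approach as the paper's. The only cosmetic difference is in part (ii): you factor the Kodaira--Spencer map through $\Hom(\OO_{\F_e}(-E-\ell F)^c,\cL)$ (using $\Ext^1(\OO,\OO)=0$ and $H^1(\cL)=0$), while the paper factors through $\Ext^1(\cL,\OO_{\F_e}^d)$ (using $\Ext^1(\OO,\OO)=0$ and $\Ext^2(\cL,\OO_{\F_e}(-E-\ell F))=0$); both factorizations are standard and lead to the same conclusion.
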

\begin{proof}
The group $\Ext^2(\cL,\cL(-H_{\ell+1-e}))$ is a quotient of copies of $$\Ext^2(\cL,\OO_{\F_e}(-H_{\ell+1-e}))\cong \Hom(\OO_{\F_e}(-H_{\ell+1-e}),\cL(K_{\F_e}))\cong H^0(\cL(-E+(\ell-e-1)F)),$$ which vanishes since $$H^1(\OO_{\F_e}(-2E-(e+1)F))=0.$$ Let $S = \Hom(\OO_{\F_e}(-E-\ell F)^{\oplus c}, \OO_{\F_e}^{\oplus d}).$  Then the family of cokernels $\cL_s/S$ parameterized by $S$ is a complete family since $(\OO_{\F_e}(-E-\ell F)^{\oplus c},\OO_{\F_e}^{\oplus d})$ is an exceptional pair.  Indeed, the Kodaira-Spencer map $$T_sS \cong \Hom(\OO_{\F_e}(-E-\ell F)^{\oplus c},\OO_{\F_e}^{\oplus d}) \to \Ext^1(\cL_s,\cL_s)$$ factors as the composition of the natural maps $$\Hom(\OO_{\F_e}(-E-\ell F)^{\oplus c},\OO_{\F_e}^{\oplus d}) \to \Ext^1(\cL_s,\OO_{\F_e}^{\oplus d})\to \Ext^1(\cL_s,\cL_s),$$ and both maps are surjective.  
\end{proof}

In contrast with the case of an inverse pair, the stability of a general bundle $\cL$ depends on the exponents $c$ and $d$ (or in particular, on their ratio $d/c$).  However, for ratios $d/c$ in a particular range, if we increase the polarization, then the bundle $\cL$ is destabilized by the quotient line bundle $\OO_{\F_e}(F)$.  The destabilizing subbundle is a bundle coming from an inverse exceptional pair, which is why we studied that phenomenon first in Section \ref{ssec-inversePair}.

\begin{lemma}\label{lem-regularDestab}
Suppose $$\ell+2-\frac{e}{2} < \frac{d}{c} < \psi_M.$$  Then $$\mu_{-K_{\F_e}}(\cL) < \mu_{-K_{\F_e}}(\OO_{\F_e}(F))$$ and $$s:=\chi(\cL, \OO_{\F_e}(F)) > 0.$$ For $$m_\cL := \frac{d}{c}-\ell -1$$ we have $$\mu_{H_{m_\cL}}(\cL) = \mu_{H_{m_\cL}}(\OO_{\F_e}(F)).$$ Let $$\bk = \ch(\cL) - s \cdot\ch(\OO_{\F_e}(F)).$$  Then there are $\mu_{H_{m_\cL}}$-stable sheaves of character $\bk$.  More precisely, a general sheaf $\cK \in M_{H_{m_\cL}}(\bk)$ fits as an extension 
$$ 0 \to \OO_{\F_e}(E-(\ell -e)F)^{\oplus c}\to \cK \to \OO_{\F_e}(2F)^{\oplus (c(2\ell +2-e)-d)}\to 0,$$ and has a stability interval $I_\cK$ containing the interval $[1-\frac{e}{2}, \ell+2-e)$.
\end{lemma}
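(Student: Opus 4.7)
The plan is to verify the numerical assertions (1)--(3) by direct Riemann--Roch and intersection calculations on $\F_e$, and then to deduce the statement about $\cK$ by reducing to Theorem~\ref{thm-inverseInterval} via a twist.

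First I would compute the invariants of $\cL$ from the defining resolution~(\ref{eqn-regular}): $r(\cL)=d-c$ and $c_1(\cL)=c(E+\ell F)$, which immediately gives $\mu_{-K_{\F_e}}(\cL)=\frac{cM}{d-c}$ (using $M=2(\ell+1)-e$) and $\mu_{-K_{\F_e}}(\OO_{\F_e}(F))=2$, so the inequality in (1) reduces to $d/c>M/2+1=\ell+2-e/2$, which is exactly our lower bound on $d/c$. Similarly, $\mu_{H_{m_\cL}}(\cL)=\frac{c(m_\cL+\ell)}{d-c}$ and $\mu_{H_{m_\cL}}(\OO_{\F_e}(F))=1$, whose equality yields precisely $m_\cL=\frac{d}{c}-\ell-1$, proving (3). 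Plugging the resolution into $\chi(-,\OO_{\F_e}(F))$ gives
$$s=\chi(\cL,\OO_{\F_e}(F))=d\,\chi(\OO_{\F_e}(F))-c\,\chi(\OO_{\F_e}(E+(\ell+1)F))=2d-c(M+2),$$
which is positive under the standing assumption, establishing~(2).

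The main content is the last sentence. I would set $\cK':=\cK\te \OO_{\F_e}(-2F)$, so that the claimed extension for $\cK$ is equivalent to
$$0\to \OO_{\F_e}(E-(\ell+2-e)F)^{\oplus c}\to \cK'\to \OO_{\F_e}^{\oplus (c(2\ell+2-e)-d)}\to 0,$$
which is exactly an extension of the form~(\ref{eqn-inverse}) from the inverse pair $(\OO_{\F_e}(E-kF),\OO_{\F_e})$ with $k=\ell+2-e$, $a=c$, $b=c(2\ell+2-e)-d$. A direct computation gives $N=2(k-1)+e=M$ and
$$\eta:=b/a=M-d/c.$$
Using $\psi_M+\psi_M^{-1}=M$, the hypothesis $\ell+2-e/2<d/c<\psi_M$ becomes $\psi_M^{-1}<\eta<M-(\ell+2-e/2)<\psi_M$, so the numerical hypotheses of Theorem~\ref{thm-inverseInterval} are satisfied.

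Then I would quote Theorem~\ref{thm-inverseInterval}: the general such $\cK'$ exists, has the prescribed Chern character (computed directly from the sequence and tensoring back by $\OO_{\F_e}(2F)$ to match $\bk$), and has stability interval $I_{\cK'}\supset [1-\frac{e}{2},k)=[1-\frac{e}{2},\ell+2-e)$. Since tensoring by a line bundle preserves slope stability, $I_\cK=I_{\cK'}$ contains the same interval. A final bookkeeping check confirms $m_\cL=d/c-\ell-1\in[1-\frac{e}{2},\ell+2-e)$ (the upper bound uses $\psi_M<M\le M+1$), so $\cK$ is $\mu_{H_{m_\cL}}$-stable of character $\bk$, and the general sheaf in the corresponding moduli space arises as the asserted extension by irreducibility of the stack of $F$-prioritary sheaves. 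The only mildly subtle point is matching the ratio $b/a$ to the interval $(\psi_M^{-1},\psi_M)$; everything else is essentially bookkeeping.
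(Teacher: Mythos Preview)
Your proof is correct and follows essentially the same approach as the paper. The only cosmetic difference is that you make the twist by $\OO_{\F_e}(-2F)$ explicit in order to put $\cK$ into the standard form~(\ref{eqn-inverse}) before invoking Theorem~\ref{thm-inverseInterval}, whereas the paper verifies the Chern character of $\bk$ by hand and observes that the transformation $(d,c)\mapsto (c,Mc-d)$ preserves the interval $(\psi_M^{-1},\psi_M)$; these are the same computation in different packaging.
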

\begin{proof}
We compute 
$$ 
r(\cL) = d - c
\qquad \textrm{and} \qquad c_1(\cL) = c(E+\ell F),
$$
so $$\mu_{H_m}(\cL) = \frac{c(E+\ell F)(E+(e+m)F)}{d-c} = \frac{c(m+\ell)}{d-c}$$ and $\mu_{H_m}(\OO_{\F_e}(F)) = 1$.  Then $\mu_{H_m} (\cL) \leq \mu_{H_m}(\OO_{\F_e}(F))$ is equivalent to $$c(m+\ell) \leq d-c,$$ or, dividing by $c$, to $$\frac{d}{c} \geq m+\ell+1.$$ Equality holds for $m = m_\cL$, and we have $m_\cL > 1-\frac{e}{2}$ (corresponding to $-K_{\F_e}$) since $\frac{d}{c} > \ell+2-\frac{1}{2}e$.  So, we also have $\mu_{-K_{\F_e}}(\cL) < \mu_{-K_{\F_e}}(\OO_{\F_e}(F))$.

Next we compute $\chi(\cL,\OO_{\F_e}(F))$.  We have \begin{align*} s:=\chi(\cL,\OO_{\F_e}(F)) &= d \chi(\OO_{\F_e},\OO_{\F_e}(F)) - c \chi(\OO_{\F_e}(-E-\ell  F),\OO_{\F_e}(F))\\
&= 2d-c(2\ell+4-e).
\end{align*}
Dividing by $2c$, we find that $s>0$ since $\frac{d}{c} > \ell+2-\frac{1}{2}e.$

The character $\bk$ now satisfies 
\begin{align*}
r(\bk) &= d-c-s=c(2\ell+3-e) - d\\
c_1(\bk) &= c(E+\ell F)-sF = cE +(c(3\ell+4-e)-2d)F\\
\chi(\bk) &= d-2s = c(4\ell+8-2e)-3d.
\end{align*}
On the other hand, if $\cK$ is an extension $$0\to \OO_{\F_e}(E-(\ell-e)F)^{\oplus c} \to \cW \to \OO_{\F_e}(2F)^{\oplus (c(2\ell+2-e)-d)}\to 0,$$ then we easily verify $\ch \cK = \bk$.  Notice that the number $2\ell+2-e$ is both $M$ and the number $N$ of \S \ref{ssec-inversePair} applied to the inverse exceptional pair $(\OO_{\F_e}(E-(\ell+2-e)F),\OO_{\F_e})$.  The transformation $(d,c) \mapsto (c,Mc-d)$ takes pairs $(d,c)$ such that $\frac{d}{c}\in (\psi_M^{-1},\psi_M)$ to pairs with the same property.  Then since $\frac{d}{c}\in (\psi_M^{-1},\psi_M)$, we find that $\Delta(\bk) >\frac{1}{2}$.  By Theorem \ref{thm-inverseInterval}, the general $F$-prioritary sheaf $\cK$ of character $\bk$ is an extension as above, and has a stability interval $I_\cK$ containing $[1-\frac{e}{2},\ell+2-e)$.  But $\frac{d}{c} < \psi_M$ implies $m_\cL < \ell+2-e$, so the general such extension is $\mu_{H_{m_\cL}}$-stable.
\end{proof}

Now we can analyze how the bundles $\cL$ are destabilized as the polarization changes.  First, we recall a general lemma \cite[Lemma 6.1]{CoskunHuizengaNef} that is often useful for studying the stability of extensions.  Recall that a simple object in an abelian category $\cA$ is an object with no proper subobjects, and a semisimple object is a (finite) direct sum of simple objets.

\begin{lemma}[{\cite[Lemma 6.1]{CoskunHuizengaNef}}]\label{lem-abelianExt}
Let $\cA$ be an abelian category, and let $A,B\in \cA$.  Consider an extension $E$ of the form $$0\to A\to E\to B\to 0.$$ \begin{enumerate}
 \item If $A$ is semisimple, $B$ is simple, and $\Hom(E,A) = 0$, then any proper subobject of $E$ is a subobject of $A$.

\item If $A$ is simple, $B$ is semisimple, and $\Hom(B,E) = 0$, then any proper quotient  of $E$ is a quotient of $B$.
 \end{enumerate}
\end{lemma}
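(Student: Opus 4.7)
The plan is to prove each part by contradiction: assume the conclusion fails, use simplicity of $B$ (resp.\ $A$) to force a dichotomy on an image, then use semisimplicity of $A$ (resp.\ $B$) to split off a direct summand and construct a nonzero morphism $E\to A$ (resp.\ $B\to E$), contradicting the assumed Hom-vanishing. Both parts are completely dual and use only standard abelian-category bookkeeping (second isomorphism theorem, and the fact that in a semisimple object every subobject is a direct summand).

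For (1), let $F\subset E$ be a proper subobject and let $\bar F\subset B$ denote the image of $F$ under $E\to B$. Since $B$ is simple, $\bar F$ is either $0$ or $B$. In the first case $F\subset A$ and we are done, so assume $\bar F = B$, equivalently $F+A = E$. Set $F' := F\cap A\subset A$. Because $A$ is semisimple, every subobject is a direct summand, so we may write $A = F'\oplus A''$. Note $A''\neq 0$: otherwise $A\subset F$, and combined with $F+A = E$ this would give $F = E$, contradicting properness. By the second isomorphism theorem, $E/F = (F+A)/F\cong A/F'\cong A''$. Composing the projection $E\twoheadrightarrow E/F$ with the splitting $E/F\cong A''\hookrightarrow A$ produces a nonzero morphism $E\to A$, contradicting $\Hom(E,A)=0$.

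For (2), let $K\subset E$ be a nonzero subobject and put $G = E/K$; the goal is to show that the composition $A\hookrightarrow E\twoheadrightarrow G$ is zero, for then $G$ factors through $E/A = B$ as required. The kernel of $A\to G$ is $A\cap K$, a subobject of the simple $A$, so it equals $0$ or $A$. If it equals $A$ we are done, so suppose $A\cap K = 0$. Then the restriction of $E\to B$ to $K$ is injective, embedding $K$ as a nonzero subobject $\bar K\subset B$. Since $B$ is semisimple, $B = \bar K\oplus K''$. The composition $B\twoheadrightarrow \bar K\cong K\hookrightarrow E$ is then a nonzero morphism $B\to E$, contradicting $\Hom(B,E) = 0$.

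The main obstacle is negligible: one only needs to confirm that the second isomorphism theorem and the splitting principle for subobjects of semisimple objects hold in a general abelian category $\cA$, both of which are standard. No input specific to coherent sheaves or to the Hirzebruch setting is required, which is exactly why the lemma is useful in many different stability contexts throughout the paper.
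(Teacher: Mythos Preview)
Your proof is correct. The paper does not actually give an argument here: it cites \cite[Lemma 6.1]{CoskunHuizengaNef} for part (1) and only remarks that part (2) is dual, so your writeup supplies exactly the elementary abelian-category argument the citation points to, together with its dual.
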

\begin{proof}
Part (1) is proved in \cite[Lemma 6.1]{CoskunHuizengaNef}.  Part (2) is dual to part (1).
\end{proof}

\begin{theorem}\label{thm-regularInterval}
Suppose $$\ell+2-\frac{e}{2} < \frac{d}{c} < \psi_M.$$  Then the general cokernel $$0 \to \OO_{\F_e}(-E-\ell F)^{\oplus c} \to \OO_{\F_e}^{\oplus d} \to \cL\to 0$$ fits as an extension $$0\to \cK \to \cL \to \OO_{\F_e}(F)^{\oplus s} \to 0,$$ where $s = \chi(\cL,\OO_{\F_e}(F))$ and $\cK$ is a general bundle that fits as an extension $$0\to \OO_{\F_e}(E-(\ell-e)F)^{\oplus c} \to \cK \to \OO_{\F_e}(2F)^{\oplus( c(2\ell+2-e)-d)}\to 0.$$ The stability interval $I_\cL$ is an open interval that contains $[1-\frac{e}{2},m_\cL)$, where $m_\cL = \frac{d}{c}-\ell-1$, and $\cL$ is strictly $\mu_{H_{m_\cL}}$-semistable.
\end{theorem}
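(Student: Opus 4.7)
The plan is to first show that a general $F$-prioritary sheaf $\cL$ of character $\bl$ fits as the claimed extension by analyzing its generic $H_{m_\cL+\epsilon}$-Harder-Narasimhan filtration for small $\epsilon>0$, then use this extension to establish $\mu_{H_{m_\cL-\epsilon}}$-stability, and finally invoke Corollary \ref{cor-KstabilityEasy} to spread stability across $[1-\frac{e}{2},m_\cL)$. For the first step, I apply Theorem \ref{thm-HNcriterion} with the proposed Harder-Narasimhan characters $(\bk,\,s\,\ch\OO_{\F_e}(F))$. Since $m_\cL<\psi_M-\ell-1<\ell+1-e$, the character $\bl$ admits $H_{\lceil m_\cL+\epsilon\rceil}$-prioritary representatives (the prioritariness lemma in \S\ref{ssec-regularPair} combined with Lemma \ref{lem-priorCompare}), so the theorem applies. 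The four hypotheses are straightforward: the slope ordering follows because $\mu_{H_m}(\cK)$ is strictly increasing in $m$ while $\mu_{H_m}(\OO_{\F_e}(F))=1$, and they agree at $m_\cL$; the slope difference bound is automatic for small $\epsilon$; the orthogonality $\chi(\bk,\ch\OO_{\F_e}(F))=0$ is an immediate consequence of $s=\chi(\bl,\OO_{\F_e}(F))$ together with $\chi(\OO_{\F_e}(F),\OO_{\F_e}(F))=1$; and nonemptiness of $M_{H_{m_\cL+\epsilon}}(\bk)$ is given by Lemma \ref{lem-regularDestab}, since $m_\cL+\epsilon<\ell+2-e$ lies in the stability interval of $\cK$. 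Thus the general $\cL$ admits a short exact sequence $0\to\cK\to\cL\to\OO_{\F_e}(F)^{\oplus s}\to 0$ with $\cK$ generic in $M_{H_{m_\cL+\epsilon}}(\bk)$, which by Lemma \ref{lem-regularDestab} has the stated presentation.

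Since both $\cK$ and $\OO_{\F_e}(F)$ are $\mu_{H_{m_\cL}}$-stable of slope $1$, the extension immediately gives $\mu_{H_{m_\cL}}$-semistability of $\cL$, and $\cK\subsetneq\cL$ of equal slope makes this strict. To upgrade to $\mu_{H_{m_\cL-\epsilon}}$-stability, let $\cF\subsetneq\cL$ be any destabilizing subsheaf. Replacing $\cF$ by its saturation only increases its slope, and combined with $\mu_{H_{m_\cL}}$-semistability of $\cL$ and letting $\epsilon\to 0$, we reduce to the case where both $\cF$ and $\cL/\cF$ are $\mu_{H_{m_\cL}}$-semistable of slope $1$; then $\cF$ is a proper subobject of $\cL$ in the abelian category $\mathcal{C}$ of $\mu_{H_{m_\cL}}$-semistable sheaves of slope $1$, in which both $\cK$ and $\OO_{\F_e}(F)$ are simple. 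A direct cohomology computation, obtained by twisting the defining sequence of $\cL$ by $\OO_{\F_e}(-F)$ and invoking the vanishing of $H^0(\OO_{\F_e}(-F))$ and $H^1(\OO_{\F_e}(-E-(\ell+1)F))$ via Riemann-Roch, yields $H^0(\cL(-F))=\Hom(\OO_{\F_e}(F),\cL)=0$; this forces $\Hom_\mathcal{C}(\OO_{\F_e}(F)^{\oplus s},\cL)=0$. Lemma \ref{lem-abelianExt}(2) then forces every proper subobject of $\cL$ in $\mathcal{C}$ to contain $\cK$, so $\cF$ is $\cK$ itself or an extension of $\OO_{\F_e}(F)^{\oplus s'}$ by $\cK$ with $0\leq s'<s$. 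A short slope computation shows that $\mu_{H_m}(\cK)$ and all such weighted means remain strictly below $\mu_{H_m}(\cL)$ for $m<m_\cL$ close to $m_\cL$, yielding the required contradiction.

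Having established that the general $F$-prioritary sheaf of character $\bl$ is $\mu_{H_{m_\cL-\epsilon}}$-stable for some $\epsilon>0$, the generic stability interval $I_\bl$ contains $m_\cL-\epsilon$, and Corollary \ref{cor-KstabilityEasy} then places $1-\frac{e}{2}$ in $I_\bl$ as well; since $I_\bl$ is an open interval, it must contain $[1-\frac{e}{2},m_\cL)$. The main obstacle is the passage through the abelian category $\mathcal{C}$ in the stability argument: one must recognize that arbitrary destabilizers reduce to subobjects in $\mathcal{C}$, and then squeeze them using the key $H^0$-vanishing together with Lemma \ref{lem-abelianExt}. Everything else amounts to verifying the hypotheses of Theorem \ref{thm-HNcriterion} and carrying out routine slope arithmetic.
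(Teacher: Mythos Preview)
Your proposal is correct and follows essentially the same approach as the paper: verify the hypotheses of Theorem~\ref{thm-HNcriterion} via Lemma~\ref{lem-regularDestab} to get the extension, then use the abelian-category Lemma~\ref{lem-abelianExt}(2) together with the vanishing $\Hom(\OO_{\F_e}(F),\cL)=0$ to rule out destabilizers near $m_\cL$, and finish with Corollary~\ref{cor-KstabilityEasy}. The only cosmetic differences are that the paper phrases the stability step in terms of destabilizing quotients rather than subsheaves (your version is the dual, using that kernels of proper quotients contain $\cK$), and the paper makes the ``$\epsilon\to 0$'' reduction precise by invoking local finiteness of walls; you should tighten that sentence accordingly.
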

\begin{proof}
If $\epsilon>0$ is small, then by Lemma \ref{lem-regularDestab} and Theorem \ref{thm-HNcriterion} the characters $\bk$ and $s \cdot \ch \OO_{\F_e}(F)$ are the characters of the generic $H_{m_\cL+\epsilon}$-Harder-Narasimhan filtration of sheaves in $\cP_{F}(\bl)$.  Therefore the generic sheaf in $\cP_{F}(\bl)$ fits as an extension $$0\to \cK \to \cL\to \OO_{\F_e}(F)^{\oplus s}\to 0$$ for some $\cK \in M_{H_{m_\cL+\epsilon}}(\bk)$.  The general such $\cK$ also fits as an extension $$0\to \OO_{\F_e}(E-(\ell-e)F)^{\oplus c} \to \cK \to \OO_{\F_e}(2F)^{\oplus( c(2\ell+2-e)-d)}\to 0.$$  Since the general sheaf in $\cP_{F}(\bl)$ is a cokernel as in (\ref{eqn-regular}), we conclude that a general cokernel $\cL$ fits as an extension of the required form.

If $\cK\in M_{H_{m_\cL}}(\bk)$ is general (hence $\mu_{H_{m_\cL}}$-stable), then any bundle $\cL$ fitting as an extension $$0\to \cK\to \cL\to \OO_{\F_e}(F)^{\oplus s}\to 0$$ is automatically $\mu_{H_{m_\cL}}$-semistable.  We will show that $\cL$ is actually $\mu_{H_{m_\cL-\epsilon}}$-stable for any sufficiently small $\epsilon > 0$; then since $\cL$ is a general sheaf in $\cP_F(\bl)$, we find that the stability interval $I_\cL$ contains $[1-\frac{e}{2},m_\cL)$ by Corollary \ref{cor-KstabilityEasy}.

Now let us show that $\cL$ is $\mu_{H_{m_\cL-\epsilon}}$-stable for any sufficiently small $\epsilon > 0$.  Since the walls for slope-semistability are locally finite, we can let $\epsilon>0$ be small enough that if $\cL\onto \cQ$ is a quotient such that the total slopes $\nu(\cL)$ and $\nu(\cQ)$ are not proportional, and $0< \epsilon' < \epsilon$, then $\mu_{H_{m_0-\epsilon'}}(\cL) \neq \mu_{H_{m_0-\epsilon'}}(\cQ)$.  We claim that $\cL$ is $\mu_{H_{m_0-\epsilon'}}$-stable for all such $0 < \epsilon' < \epsilon$.  Let $\cL\onto \cQ$ be a proper quotient and suppose $\mu_{H_{m_0-\epsilon'}}(\cL) \geq \mu_{H_{m_0-\epsilon'}}(\cQ)$. Then it follows from our choice of $\epsilon$ that $\mu_{H_{m_0}}(\cL) = \mu_{H_{m_0}}(\cQ)$.  Let $\cA \subset \Coh(\F_e)$ be the full abelian subcategory of $\mu_{H_{m_0}}$-semistable sheaves with the same $H_{m_0}$-slope as $\cL$. In this category, $\cW$ is a simple object (by Lemma \ref{lem-regularDestab}) and $\OO_{\F_e}(F)^{\oplus s}$ is a semisimple object.  The vanishing $\Hom(\OO_{\F_e}(F),\cL)=0$ follows immediately from the expression of $\cL$ as a cokernel (\ref{eqn-regular}).  Therefore, by Lemma \ref{lem-abelianExt} (2), $\cQ$ is a direct sum of copies of $\OO_{\F_e}(F)$.  But $\mu_{H_{m_0-\epsilon}}(\cL) < \mu_{H_{m_0-\epsilon'}}(\OO_{\F_e}(F)),$ contradicting our assumption that $\mu_{H_{m_0-\epsilon'}}(\cL) \geq \mu_{H_{m_0-\epsilon'}}(\cQ)$.  Therefore $\cL$ is $\mu_{H_{m_0-\epsilon'}}$-stable.
\end{proof}

\subsection{Harder-Narasimhan filtrations from Kronecker modules}\label{ssec-HNKronecker} We can now combine the previous two constructions to find characters $\bv$ such that the generic Harder-Narasimhan filtration has two factors, each of which corresponds to a space of Kronecker modules.  Let \begin{align*} \ell & \geq 3 \\ k &= \ell - e \\ N &= 2(k-1) +e \\ M &= 2(\ell+1)-e\end{align*} and consider bundles $\cK$ and $\cL$ defined by sequences $$0\to \OO_{\F_e}(E-(k-1)F)^{\oplus b} \to \cK \to \OO_{\F_e}(F)^{\oplus a} \to 0$$$$0\to \OO_{\F_e}(-E-\ell F)^{\oplus c} \to \OO_{\F_e}^{\oplus d} \to \cL \to 0.$$ So, $\cK(-F)$ is a bundle coming from an inverse pair as in \S\ref{ssec-inversePair}, and $\cL$ is a bundle coming from a regular pair as in \S\ref{ssec-regularPair}.  Assume the exponents satisfy \begin{align*}\psi_N^{-1} &< \frac{b}{a} < \psi_N \\  2\ell-e+1 &< \frac{d}{c} < \psi_M.
\end{align*} (Note that the lower bound on $\frac{d}{c}$ is stronger than the inequality $\frac{d}{c} > \ell+2-\frac{e}{2}$ in Theorem \ref{thm-regularInterval}, since $\ell \geq 3$.)  The line bundles $$\OO_{\F_e}(-E-\ell F),\quad\OO_{\F_e},\quad \OO_{\F_e}(F),\quad \OO_{\F_e}(E-(\ell-1-e)F)$$ form an exceptional collection, from which we get the orthogonality $\chi(\cK,\cL) = 0$.  By Theorems \ref{thm-inverseInterval} and \ref{thm-regularInterval}, if $m_\cK = k$ and $m_\cL = \frac{d}{c}-\ell-1$, then the stability intervals satisfy \begin{align*} I_\cK &\supset [1-\frac{e}{2},m_\cK) \\ I_{\cL} & \supset [1- \frac{e}{2},m_\cL).\end{align*} The inequality $\frac{d}{c} > 2\ell - e+1$ implies $m_\cK < m_\cL.$
\begin{theorem}\label{thm-intervalKronecker}
With the notation in this subsection, consider a general extension $\cV$ of the form $$0\to \cK \to \cV \to \cL\to 0.$$ There is a unique number $m_\cV\in (1-\frac{e}{2},k)$ such that $\mu_{H_{m_\cV}}(\cK) = \mu_{H_{m_\cV}}(\cL)$.  The stability interval $I_\cV$ of $\cV$ is an open interval containing $[1-\frac{e}{2},m_\cV)$, and $\cV$ is strictly $\mu_{H_{m_\cV}}$-semistable.

Let $\bv  =\ch \cV$, $\bk = \ch \cK$, and $\bl = \ch \cL$.  The stack $\cP_{H_{\lceil m_\cV\rceil+1}}(\bv)$ is nonempty.  If $\epsilon>0$ is small, then for $m\in (m_\cV,m_\cV+\epsilon)$ the general sheaf in $\cP_{H_{\lceil m_\cV\rceil+1}}(\bv)$ has a generic $H_m$-Harder-Narasimhan filtration with quotients of characters $\bk$ and $\bl$.
\end{theorem}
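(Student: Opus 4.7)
First, I would establish the existence and uniqueness of $m_\cV$ directly. The slopes $\mu_{H_m}(\cK)$ and $\mu_{H_m}(\cL)$ are affine in $m$, with leading coefficients $b/(a+b)$ and $c/(d-c)$ respectively, which are unequal under the standing assumptions. Thus $\mu_{H_m}(\cK) - \mu_{H_m}(\cL)$ is a strictly monotone affine function of $m$ with a unique zero $m_\cV$. A direct computation gives $\mu_{H_k}(\cK) = 1 > c(2\ell-e)/(d-c) = \mu_{H_k}(\cL)$, where the inequality uses $d/c > 2\ell - e + 1$; this places $m_\cV < k$. A parallel computation at the anticanonical polarization $m = 1 - \tfrac{e}{2}$ using the bounds $\psi_N^{-1} < b/a < \psi_N$ and $2\ell-e+1 < d/c$ gives $\mu_{H_{1-e/2}}(\cK) < \mu_{H_{1-e/2}}(\cL)$, placing $m_\cV > 1-\tfrac{e}{2}$. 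In particular, $\mu_{H_m}(\cK) > \mu_{H_m}(\cL)$ for $m > m_\cV$ and the reverse inequality for $m < m_\cV$.

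At $m = m_\cV$, Theorems \ref{thm-inverseInterval} and \ref{thm-regularInterval} guarantee that $\cK$ and $\cL$ are both $\mu_{H_{m_\cV}}$-stable, since $m_\cV$ lies in both $[1-\tfrac{e}{2},k)$ and $[1-\tfrac{e}{2},m_\cL)$. As they share the same $H_{m_\cV}$-slope by construction, any extension $\cV$ of $\cL$ by $\cK$ is $\mu_{H_{m_\cV}}$-semistable with Jordan--H\"older factors $\cK,\cL$, and is strictly semistable since these factors are distinct. The main step will be to prove that a general such extension $\cV$ is $\mu_{H_{m_\cV - \epsilon}}$-stable for $\epsilon > 0$ small. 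Let $\cA$ be the abelian subcategory of $\mu_{H_{m_\cV}}$-semistable sheaves of slope $\mu_{H_{m_\cV}}(\cV)$; both $\cK$ and $\cL$ are simple objects of $\cA$. Applying $\Hom(-,\cK)$ to $0 \to \cK \to \cV \to \cL \to 0$ yields an exact sequence $0 \to \Hom(\cL,\cK) \to \Hom(\cV,\cK) \to \Hom(\cK,\cK) \to \Ext^1(\cL,\cK)$, in which the connecting map sends the identity on $\cK$ to the class of the extension $\cV$. For a general (nonzero) extension class this map is injective, while $\Hom(\cL,\cK) = 0$ since $\cK,\cL$ are distinct simples in $\cA$; hence $\Hom(\cV,\cK) = 0$. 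Lemma \ref{lem-abelianExt}(1) then implies that $\cK$ is the only nontrivial proper sub-object of $\cV$ in $\cA$. By local finiteness of walls for $\mu$-stability, for $\epsilon > 0$ sufficiently small any subsheaf $\cF$ destabilizing $\cV$ at $m_\cV - \epsilon$ would have $\mu_{H_{m_\cV}}(\cF) = \mu_{H_{m_\cV}}(\cV)$, and then by passing to its top $\mu_{H_{m_\cV}}$-Harder--Narasimhan factor we obtain a nonzero sub-object in $\cA$; the only possibility is $\cK$, which does not destabilize since $\mu_{H_{m_\cV - \epsilon}}(\cK) < \mu_{H_{m_\cV - \epsilon}}(\cV)$ by the crossing structure established above. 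Therefore $\cV$ is $\mu_{H_{m_\cV - \epsilon}}$-stable.

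The remaining conclusions follow directly from previously established results. Corollary \ref{cor-KstabilityEasy}, applied to a general extension (which is $\mu_{H_{m_\cV-\epsilon}}$-stable and hence lies in the stack $\cP_F(\bv)$), shows that the generic $\cV$ is also $\mu_{-K_{\F_e}}$-stable; by connectedness of $I_\cV$ we deduce $I_\cV \supset [1-\tfrac{e}{2}, m_\cV)$. Proposition \ref{prop-ssPrior}, applied to the $\mu_{H_{m_\cV}}$-semistable sheaf $\cV$, gives that $\cV$ is $H_{\lceil m_\cV\rceil + 1}$-prioritary, so $\cP_{H_{\lceil m_\cV\rceil + 1}}(\bv)$ is nonempty. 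For the generic $H_m$-Harder--Narasimhan filtration for $m$ slightly larger than $m_\cV$, I would invoke Theorem \ref{thm-HNcriterion} with $\bw_1 = \bk$ and $\bw_2 = \bl$: condition (1) holds by construction, (2) follows from $\mu_{H_m}(\bk) > \mu_{H_m}(\bl)$ for $m > m_\cV$, (3) is automatic since the slope difference vanishes at $m_\cV$ and hence is $< 1$ nearby, (4) $\chi(\bk,\bl) = 0$ is the orthogonality inherited from the full exceptional collection $(\OO_{\F_e}(-E-\ell F), \OO_{\F_e}, \OO_{\F_e}(F), \OO_{\F_e}(E-(\ell-1-e)F))$, and (5) the nonemptiness of $M_{H_m}(\bk)$ and $M_{H_m}(\bl)$ follows from the inclusions $m \in I_\cK \cap I_\cL$ for $m$ sufficiently close to $m_\cV$.
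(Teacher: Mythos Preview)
Your overall strategy matches the paper's: establish $m_\cV\in(1-\tfrac{e}{2},k)$ by checking the sign of $\mu_{H_m}(\cK)-\mu_{H_m}(\cL)$ at the two endpoints, use slope-stability of $\cK$ and $\cL$ at $m_\cV$ to get strict semistability of $\cV$, apply Lemma~\ref{lem-abelianExt} in the abelian category of $\mu_{H_{m_\cV}}$-semistables to get $\mu_{H_{m_\cV-\epsilon}}$-stability, then invoke Corollary~\ref{cor-KstabilityEasy}, Proposition~\ref{prop-ssPrior}, and Theorem~\ref{thm-HNcriterion} for the remaining claims. One cosmetic difference: you use part (1) of Lemma~\ref{lem-abelianExt} (subobjects, requiring $\Hom(\cV,\cK)=0$), while the paper uses the dual part (2) (quotients, requiring $\Hom(\cL,\cV)=0$). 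Both are valid and the long-exact-sequence arguments for the needed vanishings are symmetric.

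There is, however, one genuine gap. Your argument for $\Hom(\cV,\cK)=0$ assumes the extension class is nonzero, and the theorem is vacuous (indeed false) for the split extension: $\cK\oplus\cL$ has $\cL$ as a $\mu_{H_{m_\cV-\epsilon}}$-destabilizing subsheaf. So you must verify $\Ext^1(\cL,\cK)\neq 0$. The paper does this by showing $\chi(\cL,\cK)<0$: one has $\Hom(\cL,\cK)=\Ext^2(\cL,\cK)=0$ by stability and Serre duality, so $\chi(\cL,\cK)\leq 0$; if equality held then combining with the orthogonality $\chi(\cK,\cL)=0$ and Riemann--Roch forces $-K_{\F_e}\cdot(\nu(\cL)-\nu(\cK))=0$, contradicting $\mu_{-K_{\F_e}}(\cL)>\mu_{-K_{\F_e}}(\cK)$. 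This step is short but essential, and your proposal omits it.

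A smaller point: the inequality $\mu_{-K_{\F_e}}(\cK)<\mu_{-K_{\F_e}}(\cL)$ is not quite a ``parallel computation'' to the one at $m=k$. The paper argues by monotonicity in the parameters that it suffices to check the extremal case $b/a=\psi_N^{-1}$, $d/c=\psi_M$, and then produces an explicit positive expression; you should indicate how the full range of $(b/a,d/c)$ is handled, not just assert it.
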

\begin{proof}
To establish the existence of $m_\cV$, we only need to verify the inequalities
\begin{align*}
\mu_{-K_{\F_e}}(\cK) &< \mu_{-K_{\F_e}}(\cL)\\
\mu_{H_k}(\cK) &> \mu_{H_k}(\cL).
\end{align*}
For $m\in \QQ$ we compute
\begin{align*}
\mu_{H_m}(\cK) &= \frac{1-\frac{b}{a}(k-1-m)}{1+\frac{b}{a}}\\
\mu_{H_m}(\cL) &= \frac{m+\ell}{\frac{d}{c}-1}.\\
\end{align*}
For $m =k$, we get \begin{align*} \mu_{H_k}(\cK) &=  1 \\ \mu_{H_k}(\cL) &= \frac{2\ell - e}{\frac{d}{c}-1},\end{align*} and the inequality $\mu_{H_k}(\cK) > \mu_{H_k}(\cL)$ follows from $\frac{d}{c} > 2\ell-e+1$.  On the other hand, taking $m = 1-\frac{e}{2}$, the slope $\mu_{-K_{\F_e}}(\cL)$ will be as small as possible when $\frac{d}{c}$ is as large as possible, and $\mu_{-K_{\F_e}}(\cK)$ will be as large as possible when $\frac{b}{a}$ is as small as possible.  So, we can verify the inequality $\mu_{-K_{\F_e}}(\cK) < \mu_{-K_{\F_e}}(\cL)$ when $\frac{d}{c} = \psi_M$ and $\frac{b}{a} = \psi_N^{-1}$.  In this case, after simplifying we get \begin{align*} \mu_{-K_{\F_e}}(\cL) - \mu_{-K_{\F_e}}(\cK) &=\frac{\ell+1-\frac{e}{2}}{\psi_M-1}-\frac{1 - \psi_N^{-1}(\ell-2-\frac{e}{2})}{1+\psi_N^{-1}} \\&= \frac{\left(\sqrt{2\ell-e}\cdot\left(\sqrt{2 \ell + 4 - e} - \sqrt{2\ell -4 - e}\right)-4\right)^2}{4\sqrt{2\ell-e}\cdot \left(\sqrt{2\ell+4-e}-\sqrt{2\ell-4-e}\right)},
\end{align*}
which is visibly positive.

Since $1-\frac{e}{2} < m_\cV < m_\cK < m_\cL$, the general sheaves of the form of $\cK$ and $\cL$ are both $\mu_{H_{m_\cV}}$-stable, and so $\cV$ is strictly $\mu_{H_{m_\cV}}$-semistable.  We can now show that $\cV$ is $\mu_{H_{m_\cV-\epsilon}}$-stable for any sufficiently small $\epsilon > 0$ by an argument similar to the proof of Theorem \ref{thm-regularInterval}, and then it similarly follows that $\cV$ is $\mu_{H_m}$-stable for any $m\in [1-\frac{e}{2},m_\cV)$; the only difference is in the proof of the vanishing $\Hom(\cL,\cV) = 0$.  

We claim that if $\cV$ is not a split extension then $\Hom(\cL,\cV) = 0$.  Consider the exact sequence $$\Hom(\cL,\cK) \to \Hom(\cL,\cV) \to \Hom(\cL,\cL) \to \Ext^1(\cL,\cK).$$
By assumption $\CC \cong \Hom(\cL,\cL)\to \Ext^1(\cL,\cK)$ is injective.  Also, $\Hom(\cL,\cK) = 0$ since there are polarizations where $\cL$ and $\cK$ are both slope-stable and $\cL$ has greater slope than $\cK$.  Therefore, $\Hom(\cL,\cV) = 0$.  

To guarantee that there are non-split extensions of $\cL$ by $\cK$, we show $\chi(\cL,\cK) < 0$.  We already observed the vanishing $\Hom(\cL,\cK)=0$, and $\Ext^2(\cL,\cK) = 0$ can be proved by a similar argument and Serre duality.  So, $\chi(\cL,\cK) \leq 0$ and we need to prove the inequality is strict.  Suppose $\chi(\cL,\cK) = 0$.  Since $\chi(\cK,\cL) = 0$, Riemann-Roch gives $$P(\nu(\cL)-\nu(\cK)) = P(\nu(\cK)-\nu(\cL)).$$
But $$P(\nu) - P(-\nu) = -K_{\F_e}\cdot \nu,$$ so this would require $$-K_{\F_e} \cdot (\nu(\cL)-\nu(\cK))=0.$$  This contradicts $\mu_{-K_{\F_e}}(\cL) > \mu_{-K_{\F_e}}(\cK)$.  Therefore $\chi(\cL,\cK) < 0$, there are non-split extensions $\cV$ of $\cL$ by $\cK$, we have $\Hom(\cL,\cV) = 0$, and the stability interval $I_\cV$ of $\cV$ contains $(1-\frac{e}{2},m_\cV)$.

Finally, the stack $\cP_{H_{\lceil m_\cV\rceil +1}}(\bv)$ is nonempty by Proposition \ref{prop-ssPrior}.  Let $\epsilon>0$ be small enough that $$\mu_{H_{m_\cV+\epsilon}}(\cK) - \mu_{H_{m_\cV+\epsilon}}(\cL) < 1.$$ Then for $m\in (m_\cV,m_{\cV+\epsilon})$, the characters $\bk$ and $\bl$ satisfy all the criteria of Theorem \ref{thm-HNcriterion}, so they are the characters of the $H_m$-Harder-Narasimhan filtration of the general sheaf in $\cP_{H_{\lceil m_\cV\rceil +1}}(\bv)$.
\end{proof}

\subsection{Sharp Bogomolov inequalities from Kronecker modules}

As in \S\ref{ssec-HNKronecker}, we continue to consider extensions $\cV$ of the form \begin{eqnarray*}&0\to \cK \to \cV\to \cL\to 0\\&0\to \OO_{\F_e}(E-(k-1)F)^{\oplus b} \to \cK \to \OO_{\F_e}(F)^{\oplus a} \to 0\\&0\to \OO_{\F_e}(-E-\ell F)^{\oplus c} \to \OO_{\F_e}^{\oplus d} \to \cL \to 0\end{eqnarray*} where the exponents $a,b,c,d$ satisfy 
\begin{align*}\psi_N^{-1} &< \frac{b}{a} < \psi_N \\  2\ell-e+1 &< \frac{d}{c} < \psi_M.
\end{align*} The ratios $b/a$ and $d/c$ determine and are determined by the total slopes $\nu(\cK)$ and $\nu(\cL)$ respectively.  As $b/a$ increases along the interval $(\psi_N^{-1},\psi_N)$, the total slope $\nu(\cK) = xE+yF = (x,y)$ travels along the open line segment $\overline{P_1P_2}$ of slope $-k$ with endpoints
\begin{align*}P_1&= \left(\frac{\psi_N^{-1}}{1+\psi_N^{-1}},\frac{1-\psi_N^{-1}(k-1)}{1+\psi_N^{-1}}\right)\\
P_2&= \left(\frac{\psi_N}{1+\psi_N},\frac{1-\psi_N(k-1)}{1+\psi_N}\right).
\end{align*}
Rational ratios $b/a$ correspond to rational total slopes on this segment.
As $d/c$ decreases along the interval $(2\ell-e+1,\psi_M)$ the total slope $\nu(\cL)$ travels along the open line segment $\overline{P_3P_4}$ of slope $\ell$ with endpoints
\begin{align*}
P_3 &= \left(\frac{1}{\psi_M-1},\frac{\ell}{\psi_M-1}\right)\\
P_4 &= \left(\frac{1}{2\ell-e},\frac{\ell}{2\ell-e}\right),
\end{align*}
again with rational points corresponding to rational points.
The point $P_1$ lies on the line segment $\overline{P_4P_2}$.  Let $R$ be the triangular region with vertices $P_2$, $P_3$, $P_4$.

\begin{theorem}\label{thm-deltaKronecker}
Let $\nu = x_0 E+y_0 F = (x_0,y_0)$ be a rational total slope in the triangular region $R$.  Suppose $m\in \QQ$ is a number such that the line through $\nu$ of slope $-m$ intersects the open line segments $\overline{P_1P_2}$ and $\overline{P_3P_4}$.  Then \begin{align*}\delta_m^{\mus}(\nu)  &= -\frac{e}{2}x_0^2+x_0y_0+\frac{1}{k+\ell}y_0+\left(\ell-\frac{1}{2}-\frac{e}{2}-\frac{e}{2(k+\ell)}\right)x_0\\
 & \qquad + \frac{(m-k)(y_0-\ell x_0)}{(k+\ell)^2\left(y_0+mx_0-\frac{m+\ell}{k+\ell}\right)}.\end{align*}
\end{theorem}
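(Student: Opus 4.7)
Given $\nu=(x_0,y_0)\in R$ and $m$ as specified, the line of slope $-m$ through $\nu$ meets the two open segments uniquely at
$$\nu_1=(x_1,1-kx_1),\ x_1=\tfrac{y_0+mx_0-1}{m-k};\qquad \nu_2=(x_2,\ell x_2),\ x_2=\tfrac{y_0+mx_0}{m+\ell},$$
and the collinearity forces $y_1-y_2=-m(x_1-x_2)$, whence $(\nu_1-\nu_2)^2=-(e+2m)(x_1-x_2)^2$. Since $N+2=2k+e=2\ell-e=k+\ell$, Sections \ref{ssec-inversePair} and \ref{ssec-regularPair} produce, for sufficiently divisible $a,b,c,d$, integral Kronecker characters $\bk$ (from the inverse pair $(\OO_{\F_e}(E-(k-1)F),\OO_{\F_e}(F))$) and $\bl$ (from the regular pair $(\OO_{\F_e}(-E-\ell F),\OO_{\F_e})$) with slopes $\nu_1,\nu_2$ and the Kronecker-minimum discriminants
$$\Delta(\bk)=\tfrac{(k+\ell)x_1(1-x_1)}{2},\qquad \Delta(\bl)=\tfrac{(k+\ell)x_2(1+x_2)}{2}.$$
Adjusting scalings so that $t:=r(\bk)/(r(\bk)+r(\bl))=(x_0-x_2)/(x_1-x_2)$, the character $\bv:=\bk+\bl$ has total slope $\nu$. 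Combining $\Delta(\bv)=t\Delta(\bk)+(1-t)\Delta(\bl)-\tfrac{t(1-t)}{2}(\nu_1-\nu_2)^2$ with $t(1-t)(x_1-x_2)^2=(x_0-x_2)(x_1-x_0)$ and direct polynomial expansion in $(x_0,y_0,m)$ verifies that $\Delta(\bv)$ equals the right-hand side of the theorem.

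\textbf{Upper bound $\delta_m^{\mus}(\nu)\leq\Delta(\bv)$.} Let $\bv'=(r,\nu,\Delta')$ be integral with $\Delta'>\Delta(\bv)$. Since $\Delta(\bk),\Delta(\bl)>1/2$ by Lemmas \ref{lem-Kronecker1/2}--\ref{lem-KroneckerRegular1/2}, one quickly checks $\Delta'>1/2$. By Theorem \ref{thm-deltaSurface} and Propositions \ref{prop-ssIMPs}--\ref{prop-sIMPmus} it suffices to show the generic $H_m$-Harder-Narasimhan filtration of $\cP_{H_{\lceil m\rceil}}(\bv')$ has length one. Suppose not; Theorem \ref{thm-HNcriterion} produces factors $\bv_1,\ldots,\bv_s$ with $s\ge 2$ satisfying the HN criterion. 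The slope-strip bound (Lemma \ref{lem-HNclose}), the orthogonalities $\chi(\bv_i,\bv_j)=0$, and an inductive argument on rank combined with Theorems \ref{thm-inverseInterval} and \ref{thm-regularInterval} (which pin down $\delta_m^{\mus}(\nu_i)$ on the two segments as the Kronecker values) force $s=2$, $(\nu(\bv_1),\nu(\bv_2))=(\nu_1,\nu_2)$, and $\Delta(\bv_1)=\Delta(\bk),\Delta(\bv_2)=\Delta(\bl)$. The rank-convex-combination identity then gives $\Delta'=\Delta(\bv)$, contradicting $\Delta'>\Delta(\bv)$; letting $\Delta'\to\Delta(\bv)$ finishes the upper bound.

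\textbf{Lower bound $\delta_m^{\mus}(\nu)\geq\Delta(\bv)$.} Suppose $\cV$ is $\mu_{H_m}$-stable of character $(r,\nu,\Delta)$ with $\Delta<\Delta(\bv)$. Both $\cK$ and $\cL$ are $\mu_{H_m}$-stable (Theorems \ref{thm-inverseInterval}, \ref{thm-regularInterval}) with $\mu_{H_m}$-slope equal to that of $\cV$ but distinct total slope; stability forces $\Hom(\cK,\cV)=\Hom(\cV,\cL)=0$, and $K_{\F_e}\cdot H_m=-(2m+e+2)<0$ together with Serre duality forces $\Ext^2(\cK,\cV)=\Ext^2(\cV,\cL)=0$. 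Hence $\chi(\cK,\cV)\leq 0$ and $\chi(\cV,\cL)\leq 0$. Converting via Riemann-Roch and combining with $\chi(\bk,\bl)=0$ and the convex-combination identities determining $t,x_1,x_2$ from $\nu$ yields $\Delta\geq\Delta(\bv)$, the required contradiction. Equivalently, one can invoke Theorem \ref{thm-HNcriterion}: a generic $H_{\lceil m\rceil}$-prioritary sheaf of character $(r,\nu,\Delta)$ acquires a nontrivial Kronecker-Kronecker generic HN filtration, precluding $\mu_{H_m}$-stability.

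The main obstacle is the sharpness of the lower bound: the individual inequalities $\chi(\cK,\cV)\leq 0$ and $\chi(\cV,\cL)\leq 0$ each give only a weaker bound, and the precise formula value is recovered only by using both simultaneously together with the inductive identification of $\delta_m^{\mus}(\nu_i)$ on the two segments as the Kronecker minima, and the geometry of the Kronecker extension $0\to\cK\to\cU\to\cL\to 0$ whose character realizes $\Delta(\bv)$ exactly.
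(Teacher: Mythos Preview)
Your computation that $\Delta(\bv)$ equals the displayed formula is correct, but both halves of your existence argument have genuine gaps.

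\textbf{The lower bound via $\chi(\cK,\cV)\leq 0$ and $\chi(\cV,\cL)\leq 0$ is strictly too weak.}  Write $w=\nu_2-\nu_1$ and $t=r(\bk)/r(\bv)$, so $\nu-\nu_1=(1-t)w$ and $\nu_2-\nu=tw$.  The inequality $\chi(\cK,\cV)\leq 0$ gives $\Delta\geq P((1-t)w)-\Delta(\bk)$.  Using $\chi(\bk,\bl)=0$ (i.e.\ $P(w)=\Delta(\bk)+\Delta(\bl)$) together with your formula $\Delta(\bv)=t\Delta(\bk)+(1-t)\Delta(\bl)-\tfrac{t(1-t)}{2}w^2$, a direct computation yields
\[
\Delta(\bv)-\bigl(P((1-t)w)-\Delta(\bk)\bigr)=t\bigl(2\Delta(\bk)-1\bigr)>0,
\]
since $\Delta(\bk)>\tfrac12$ by Lemma \ref{lem-Kronecker1/2}.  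Symmetrically, the second inequality misses $\Delta(\bv)$ by $(1-t)(2\Delta(\bl)-1)>0$.  No convex combination of these two inequalities can recover $\Delta\geq\Delta(\bv)$.  Your fallback---invoking Theorem \ref{thm-HNcriterion} to produce a nontrivial Harder--Narasimhan filtration---requires exhibiting characters $\bw_1,\bw_2$ summing to $(r,\nu,\Delta)$; but with $\Delta<\Delta(\bv)$ the scaled pair $(\bk,\bl)$ no longer sums correctly, and you do not supply alternatives.

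\textbf{The upper bound argument is also incomplete.}  You assert that if the generic $H_m$-Harder--Narasimhan filtration of a character $(r,\nu,\Delta')$ with $\Delta'>\Delta(\bv)$ is nontrivial, its factors must have slopes precisely at $\nu_1,\nu_2$ on the two segments, with the Kronecker-minimal discriminants.  Lemmas \ref{lem-HNclose}--\ref{lem-discBound} constrain the factors, but nothing forces their total slopes onto $\overline{P_1P_2}$ and $\overline{P_3P_4}$; your appeal to ``an inductive argument on rank'' and to Theorems \ref{thm-inverseInterval}--\ref{thm-regularInterval} does not supply this.

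\textbf{The paper's method is different and avoids both problems.}  Rather than fixing $m$ and arguing about a single character, one varies the parameter $t$ near $m$: for each $t$ the same construction produces a character of discriminant $\Delta_t$ (continuous in $t$) with $m_\cV=t$.  Theorem \ref{thm-intervalKronecker} shows that for $t=m+\epsilon$ the general sheaf is $\mu_{H_m}$-stable (since $m<t$ lies in its stability interval), giving $\delta_m^{\mus}(\nu)\leq\Delta_{m+\epsilon}$.  For $t=m-\epsilon$ the general sheaf is only strictly $\mu_{H_{m-\epsilon}}$-semistable with right endpoint of its stability interval at $m-\epsilon$, so no sheaf of that character is $\mu_{H_m}$-stable; by Theorem \ref{thm-deltaSurface} this gives $\Delta_{m-\epsilon}\leq\delta_m^{\mus}(\nu)$.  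Letting $\epsilon\to 0$ squeezes $\delta_m^{\mus}(\nu)=\Delta_m=\Delta(\bv)$.
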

Note that the geometry of $R$ implies that $1-\frac{e}{2}<m<k$.

\begin{example}\label{ex-triangle}
In Figure \ref{fig-triangle} we sketch the setup of the theorem in one of the simplest cases.  We take $e=1$ and $\ell = 3$, and let $\nu = (\frac{2}{13}, \frac{6}{13})$ and $m = \frac{12}{7}$.  Then the line through $\nu$ of slope $-m$ meets $\overline{P_1P_2}$ at $\nu_1 = (\frac{1}{2},0)$ and meets $\overline{P_3P_4}$ at $\nu_2 = (\frac{2}{11},\frac{6}{11})$.  See Example \ref{ex-KroneckerF1} for more analysis of this example.  For appropriate choices of $m$, the theorem can be applied to any slope $\nu$ lying in the triangle with vertices $P_2,P_3,P_4$.
\begin{figure}[p]
\begin{center}
\includegraphics[scale=0.55,bb=0 0 7.82in 15.39in]{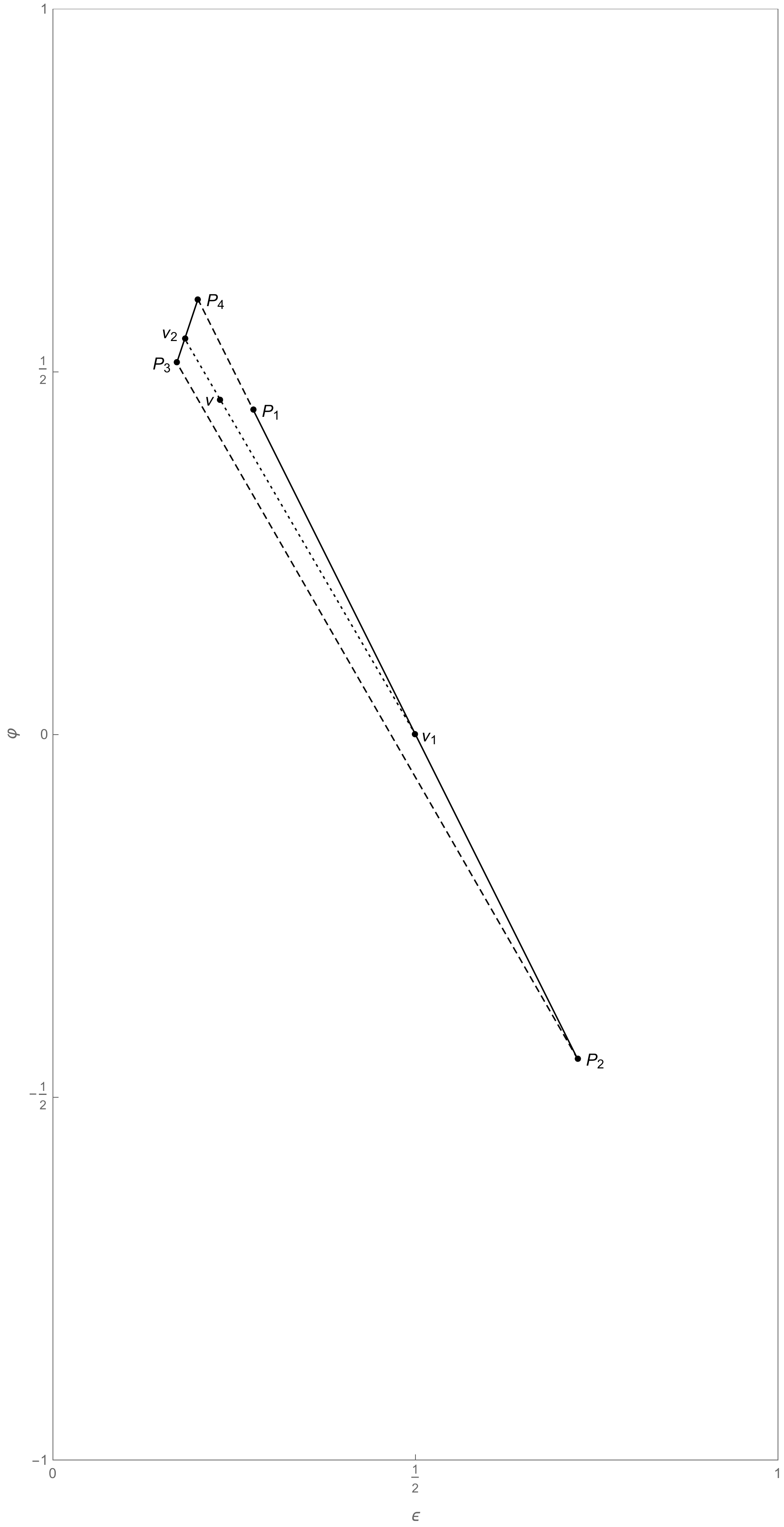}
\end{center}
\caption{The setup of Theorem \ref{thm-deltaKronecker}.  See Example \ref{ex-triangle}.}\label{fig-triangle}
\end{figure}
\end{example}

\begin{remark}
Since $e\in \{0,1\}$, the formula for $\delta_m^{\mus}(\nu)$ can be made more transparent in each case.  If $e = 0$ (so $\ell = k$), we have $$\delta_m^{\mus}(\nu) = x_0y_0+\frac{1}{2\ell}y_0+\left(\ell-\frac{1}{2}\right)x_0+\frac{(m-\ell)(y_0-\ell x_0)}{4\ell^2\left(y_0+mx_0-\frac{m+\ell}{2\ell}\right)}.$$ Since $m<\ell$ and $(x_0,y_0)\in R$, the final term is negative.  So in particular, we have a bound $$\delta_m^{\mus}(\nu) < x_0y_0+\frac{1}{2\ell}y_0+\left(\ell-\frac{1}{2}\right)x_0.$$ A similar analysis can be made when $e=1$.
\end{remark}
\begin{proof}[Proof of Theorem \ref{thm-deltaKronecker}]
Let $L_\cK$ be the line through $P_1$ and $P_2$, let $L_\cL$ be the line through $P_3$ and $P_4$, and let $L_\nu$ be the line through $\nu$ with slope $-m$.  These lines have equations
\begin{align*}
L_\cK &: y=-kx+1\\
L_\cL &: y=\ell x\\
L_\nu &: y-y_0=-m(x-x_0).
\end{align*}
Let $\nu_1=(x_1,y_1)$ be the point of intersection $L_\cK \cap L_\nu$ and let $\nu_2=(x_2,y_2)$ be the point of intersection $L_{\cL} \cap L_\nu$.  We compute
\begin{align*}
x_1 &= \frac{y_0+mx_0-1}{m-k}\\
x_2 &= \frac{y_0+mx_0}{m+\ell},
\end{align*}
and $y_1$ and $y_2$ are readily determined (although we won't need them).
Next we find the ratios $b/a$ and $c/d$ such that the corresponding bundles $\cK$ and $\cL$ have $\nu(\cK) = \nu_1$ and $\nu(\cL) = \nu_2$; it is enough for the $x$-components of the slopes to agree, so comparing the $x$-components we need \begin{align*}
\frac{\frac{b}{a}}{1+\frac{b}{a}} &= \frac{y_0+mx_0-1}{m-k}\\
\frac{1}{\frac{d}{c}-1} &= \frac{y_0+mx_0}{m+\ell},
\end{align*}
from which we get
\begin{align*}
\frac{b}{a} &= - \frac{y_0+mx_0-1}{y_0+m x_0+k-m-1}\\
\frac{d}{c} &= \frac{y_0+mx_0+m+\ell}{y_0+mx_0}.
\end{align*}
Pick arbitrary positive integers $a,b,c,d$ such that the ratios $b/a$ and $d/c$ take the above values, and let $\cK$ and $\cL$ be the corresponding bundles, with characters $\bk$ and $\bl$.  Normalize the ranks to be $1$ by considering the characters $$\bk' = \frac{\ch (\bk)}{r(\bk)} \qquad \bl' = \frac{\ch (\bl)}{r(\bl)}.$$ We compute \begin{align*}
\chi(\bk') &= \frac{\chi(\bk)}{r(\bk)} = \frac{2-\frac{b}{a}(2k-4+e)}{1+\frac{b}{a}}\\
\chi(\bl') &= \frac{\chi(\bl)}{r(\bl)} = \frac{\frac{d}{c}}{\frac{d}{c}-1}
\end{align*}
Let $0<\lambda<1$ be the number such that $\nu = \lambda \nu_1 +(1-\lambda)\nu_2$; again this equality will hold if equality holds for the $x$-components, and we compute 
$$
\lambda = \frac{(k-m)(y_0-\ell x_0)}{(k+\ell)(y_0+mx_0)-m-\ell}.
$$
Now consider the character $\bv' = \lambda \bk'+(1-\lambda) \bl'$ which has total slope $\nu(\bv') = \nu$.  Then substituting for $\lambda$, $\chi(\bk')$, $\chi(\bl')$, $\frac{b}{a}$, $\frac{d}{c}$, and performing considerable simplification we get \begin{align*}\chi(\bv') &= \lambda \chi(\bk') + (1-\lambda)\chi(\bl')\\
&= \left(1-\frac{1}{k+\ell}\right)y_0-\left(\ell-\frac{3}{2}-\frac{e}{2(k+\ell)}\right)x_0+\left(1+\frac{1}{k+\ell}\right)\\&\qquad-\frac{(m+\ell)(y_0+k x_0 - 1)}{(k+\ell)^2\left(y_0+mx_0-\frac{m+\ell}{k+\ell}\right)}.
\end{align*}
By Riemann-Roch,
\begin{align*}
\Delta(\bv') &= P(\nu(\bv')) - \chi(\bv')\\
&= (x_0+1)\left(y_0+1-\frac{e}{2}x_0\right)-\chi(\bv')\\
&= -\frac{e}{2}x_0^2+x_0y_0+\frac{1}{k+\ell}y_0+\left(\ell-\frac{1}{2}-\frac{e}{2}-\frac{e}{2(k+\ell)}\right)x_0-\frac{1}{k+\ell}\\
 & \qquad + \frac{(m+\ell)(y_0+kx_0-1)}{(k+\ell)^2\left(y_0+mx_0-\frac{m+\ell}{k+\ell}\right)}\\
 &= -\frac{e}{2}x_0^2+x_0y_0+\frac{1}{k+\ell}y_0+\left(\ell-\frac{1}{2}-\frac{e}{2}-\frac{e}{2(k+\ell)}\right)x_0\\
 & \qquad + \frac{(m-k)(y_0-\ell x_0)}{(k+\ell)^2\left(y_0+mx_0-\frac{m+\ell}{k+\ell}\right)}.
\end{align*}

We claim that $\delta_m^{\mus}(\nu) = \Delta(\bv')$.  Pick an integer $s$ such that the characters $s\lambda \bk'$ and $s(1-\lambda)\bl'$ are integer multiples of $\bk$ and $\bl$ respectively.  Replace $\bk$ with $s\lambda \bk'$ and $\bl$ with $s(1-\lambda)\bl'$.  Then the character $\bv = \bk + \bl$ is integral and a multiple of $\bv'$, so $\Delta(\bv) = \Delta(\bv')$.  By construction, $m$ is the number $m_\cV$ of Theorem \ref{thm-intervalKronecker}.  By Theorem \ref{thm-intervalKronecker}, it follows that the general sheaf $\cV\in \cP_{H_{\lceil m \rceil+1}}(\bv)$ has a stability interval that contains interval $[1-\frac{e}{2},m)$, is strictly $\mu_{H_m}$-semistable, and is not $\mu_{H_{m+\epsilon}}$-semistable for any $\epsilon>0$.

Furthermore, if we let $t\in \QQ$ be sufficiently close to $m$, we can carry out the above discussion using $t$ in place of $m$.  Let $\Delta_t$ be the discriminant of the corresponding character $\bv'$.  Then by our formula for $\Delta(\bv')$, we see that $\Delta_t$ is a continuous function of $t$ for $t$ close enough to $m$.  By considering $t = m+\epsilon$ slightly larger than $m$, we find that there are $\mu_{H_m}$-stable sheaves of total slope $\nu$ and discriminant $\Delta_{m+\epsilon}$, so $\delta_m^{\mus}(\nu) \leq \Delta_{m+\epsilon}$.  On the other hand, by considering $t = m-\epsilon$ slightly smaller than $m$, we find that there are no $\mu_{H_m}$-stable sheaves of total slope $\nu$ and discriminant $\Delta_{m-\epsilon}$.  Therefore by Theorem \ref{thm-deltaSurface}, we have $\Delta_{m-\epsilon} \leq \delta_m^{\mus}(\nu)$, and $$\Delta_{m-\epsilon} \leq \delta_m^{\mus}(\nu) \leq \Delta_{m+\epsilon}.$$ Letting $\epsilon\to 0$ we get $\delta_m^{\mus}(\nu) = \Delta_m$, which completes the proof.
\end{proof}

\begin{example}\label{ex-KroneckerF0}
Let $e=0$ and take $\ell = 3$.  Consider the total slope $\nu = \frac{1}{5} E + \frac{1}{3} F$, and let $m = \frac{25}{9}$.  Then the point $(\frac{1}{5},\frac{1}{3})$ lies in the triangular region $R$, and the line of slope $-25/9$ through $(\frac{1}{5},\frac{1}{3})$ intersects the open line segments $\overline{P_1P_2}$ and $\overline{P_3P_4}$ at the points $(\frac{1}{2},-\frac{1}{2})$ and $(\frac{2}{13},\frac{6}{13})$, respectively.  (In fact, these rational points on these open line segments have the smallest possible denominators.)  In the notation of the proof, the ratios $\frac{b}{a}$ and $\frac{d}{c}$ satisfy $\frac{b}{a} = 1$ and $\frac{d}{c} = \frac{15}{2}$.  Consider general extensions of the form 
\begin{eqnarray*} &0\to \cK \to \cV\to \cL\to 0\\ &0\to \OO_{\F_0}(E-2F) \to \cK \to \OO_{\F_0}(F) \to 0\\ &0\to \OO_{\F_0}(-E-3 F)^{\oplus 2} \to \OO_{\F_0}^{\oplus 15} \to \cL \to 0,\end{eqnarray*} and let $\bv = \ch \cV$, $\bk = \ch \cK$, $\bl = \ch \cL$. Then writing characters as $(r,\nu,\Delta)$, we have 
\begin{align*}
\bv &= \left(15,\frac{1}{5}{E}+\frac{1}{3}F,\frac{3}{5}\right)\\
\bk &= \left(2,\frac{1}{2}E-\frac{1}{2}F,\frac{3}{4}\right)\\
\bl &= \left(13,\frac{2}{13}E+\frac{6}{13}F,\frac{90}{169}\right)
\end{align*}
The stability interval $I_\cV$ is an open interval containing $[1,\frac{25}{9})$, and $\cV$ is strictly $\mu_{H_{25/9}}$-semistable.  For $\epsilon > 0$ small, the generic sheaf in $\cP_{H_3}(\bv)$ has an $H_{\frac{25}{9}+\epsilon}$-Harder-Narasimhan filtration with quotients of characters $\bk$ and $\bl$.  We have $$\delta_{25/9}^{\mus}\left(\frac{1}{5}E+\frac{1}{3}F\right) = \frac{3}{5}=0.6.$$ On the other hand, considering exceptional bundles of rank smaller than $15$ we can use a computer to compute $$\DLP_{H_{25/9}}^{<15}\left(\frac{1}{5}E+\frac{1}{3}F\right)= \frac{19}{35}\approx 0.543.$$ If we increase the polarization a little bit we will still have $$\DLP_{H_{\frac{25}{9}+\epsilon}}^{<15}\left(\frac{1}{5}E+\frac{1}{3}F\right) < \frac{3}{5} <  \delta_{\frac{25}{9}+\epsilon}^{\mus}\left(\frac{1}{5} E+\frac{1}{3}F\right).$$  There are no $H_{\frac{25}{9}+\epsilon}$-semistable sheaves of character $\bv$, so the exceptional bundles of rank less than $r(\cV)$ are not sufficient to control the existence of $\mu_{H_{\frac{25}{9}+\epsilon}}$-stable sheaves $\cV$ on $\F_0$.
\end{example}

\begin{example}\label{ex-KroneckerF1}
Let $e=1$ and take $\ell = 3$, so $k=2$.  Then $N = 3$ and $M=7$.  Consider the total slope $\nu = \frac{3}{13}E+\frac{6}{13}F$, and let $m = \frac{12}{7}$.  Then the point $(\frac{3}{13},\frac{6}{13})$ lies in the triangular region $R$, and the line of slope $-m$ meets the open line segments $\overline{P_1P_2}$ and $\overline{P_3P_4}$ at the points $(\frac{1}{2},0)$ and $(\frac{2}{11},\frac{6}{11})$, respectively.  Then $\frac{b}{a} = 1$ and $\frac{d}{c} = \frac{13}{2}$.  We consider general extensions of the form
\begin{eqnarray*} &0\to \cK \to \cV\to \cL\to 0\\ &0\to \OO_{\F_1}(E-F) \to \cK \to \OO_{\F_1}(F) \to 0\\ &0\to \OO_{\F_1}(-E-3 F)^{\oplus 2} \to \OO_{\F_1}^{\oplus 13} \to \cL \to 0,\end{eqnarray*}
and let $\bv = \ch \cV$, $\bk = \ch \cK$, $\bl = \ch \cL$.  Writing characters as $(r,\nu,\Delta)$, we get 
\begin{align*}
\bv &= \left(13,\frac{3}{13}E+\frac{6}{13}F,\frac{98}{169}\right)\\
\bk &= \left(2,\frac{1}{2}E,\frac{5}{8}\right)\\
\bl &= \left(11,\frac{2}{11}E+\frac{6}{11}F,\frac{65}{121}\right).
\end{align*}
Then the stability interval $I_\cV$ is an open interval containing $[\frac{1}{2},\frac{12}{7})$, and $\cV$ is strictly $\mu_{H_{12/7}}$-semistable.  For $\epsilon > 0$ small, the generic sheaf in $\cP_{H_2}(\bv)$ has an $H_{\frac{12}{7}+\epsilon}$-Harder-Narasimhan filtration with quotients of characters $\bk$ and $\bl$.  Finally, $$\delta_{12/7}^{\mus}\left(\frac{3}{13}E+\frac{6}{13}F\right)=\frac{98}{169}\approx 0.580,$$ but computer calculations show $$\DLP^{<13}_{H_{12/7}}\left(\frac{3}{13}E+\frac{6}{13}F\right) = \frac{523}{1014}\approx 0.516.$$  So again, $$\DLP_{H_{12/7}}^{<13}\left(\frac{3}{13}E+\frac{6}{13}F\right) < \delta_{12/7}^{\mus}\left(\frac{3}{13}E+\frac{6}{13}F\right),$$ and similar conclusions follow.
\end{example}

We conclude the section with the following more qualitative fact.

\begin{corollary}
If $\nu$ and $m$ are as in Theorem \ref{thm-deltaKronecker} and $m$ is generic, then for any $r\geq 2$ we have $$\DLP_{H_m}^{<r}(\nu) < \delta_m^{\mus}(\nu).$$
\end{corollary}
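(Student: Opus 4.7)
The plan is to argue by contradiction, combining strict monotonicity of $\delta_m^{\mus}(\nu)$ in $m$ (extracted from the explicit formula of Theorem~\ref{thm-deltaKronecker}) with local constancy of $\DLP_{H_m}^{<r}(\nu)$ in $m$ for generic $m$.  First I would record the non-strict inequality $\DLP_{H_m}^{<r}(\nu) \leq \delta_m^{\mus}(\nu)$: for any $\Delta > \delta_m^{\mus}(\nu)$, Theorem~\ref{thm-deltaSurface} produces a $\mu_{H_m}$-stable sheaf of slope $\nu$ and discriminant $\Delta$, which is automatically non-semiexceptional since $\Delta \geq \tfrac12$ while exceptionals have discriminant strictly below $\tfrac12$; Proposition~\ref{prop-DLPss}(2) then gives $\Delta \geq \DLP_{H_m}(\nu) \geq \DLP_{H_m}^{<r}(\nu)$, and letting $\Delta \searrow \delta_m^{\mus}(\nu)$ yields the claim.

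Next I would differentiate the formula of Theorem~\ref{thm-deltaKronecker} with respect to $m$.  Writing $\nu = x_0 E + y_0 F$, a direct calculation gives
$$\frac{\partial \delta_m^{\mus}(\nu)}{\partial m} = \frac{(y_0 - \ell x_0)(y_0 + k x_0 - 1)}{(k+\ell)^2 \left( y_0 + m x_0 - \tfrac{m+\ell}{k+\ell}\right)^2}.$$
The triangular region $R$ has one edge $\overline{P_3P_4}$ on the line $L_{\cL}: y = \ell x$, and from the stated fact that $P_1 \in \overline{P_4P_2}$ together with $P_1,P_2 \in L_{\cK}$, a second edge $\overline{P_4P_2}$ on the line $L_{\cK}: y + kx = 1$.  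Hence for $\nu$ in the interior of $R$ both factors in the numerator are strictly negative, the derivative is strictly positive, and $\delta_m^{\mus}(\nu)$ is strictly increasing in $m$ throughout the open range for which Theorem~\ref{thm-deltaKronecker} applies.

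Third, I would show that $\DLP_{H_m}^{<r}(\nu)$ is locally constant at generic $m$.  For each fixed $r$, only finitely many $\mu_{H_m}$-stable exceptional bundles $\cV$ of rank less than $r$ have $\nu$ in the domain of $\DLP_{H_m,\cV}$, since this requires $\nu(\cV)$ to lie in a bounded neighborhood of $\nu$ and integral Chern characters of bounded rank in a bounded region form a finite set.  For each such $\cV$ the two candidate values $P(\pm(\nu - \nu(\cV)))-\Delta(\cV)$ computing $\DLP_{H_m,\cV}(\nu)$ are independent of $m$; only the branch (determined by the sign of $(\nu - \nu(\cV)) \cdot H_m$), the membership in the domain of definition, and the $\mu_{H_m}$-stability of $\cV$ can change with $m$, and each changes only at finitely many walls which generic $m$ avoids.

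Finally, I would conclude as follows.  Suppose for contradiction that $\DLP_{H_m}^{<r}(\nu) = \delta_m^{\mus}(\nu)$.  By local constancy and density of generic rationals, choose a generic rational $m' < m$ sufficiently close to $m$ so that $\DLP_{H_{m'}}^{<r}(\nu) = \DLP_{H_m}^{<r}(\nu)$ and the hypotheses of Theorem~\ref{thm-deltaKronecker} continue to hold for $(\nu, m')$.  Strict monotonicity forces $\delta_{m'}^{\mus}(\nu) < \delta_m^{\mus}(\nu) = \DLP_{H_{m'}}^{<r}(\nu)$, contradicting the non-strict inequality applied at $m'$.  The main obstacle I anticipate is the derivative calculation together with the sign verification: correctly identifying that the edge $\overline{P_4P_2}$ of $R$ actually lies on $L_{\cK}$ is what ensures $y_0 + kx_0 - 1 < 0$ in the interior of $R$, and similarly that $\overline{P_3P_4}$ lies on $L_{\cL}$ ensures $y_0 - \ell x_0 < 0$; without this geometric input the monotonicity step collapses.
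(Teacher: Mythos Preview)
Your approach is essentially the paper's own: combine the non-strict inequality $\DLP_{H_m}^{<r}(\nu)\leq \delta_m^{\mus}(\nu)$, local constancy of $\DLP_{H_{m'}}^{<r}(\nu)$ near a generic $m$, and strict monotonicity of $\delta_{m'}^{\mus}(\nu)$ in $m'$, then derive a contradiction by moving to a nearby generic $m'<m$.  Your explicit derivative computation and the verification that both numerator factors are negative in the interior of $R$ (via $P_4 = L_\cK\cap L_\cL$, which the paper states but does not highlight) are nice additions that the paper leaves implicit.

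There is one small gap in your local-constancy step.  You claim that only finitely many $\mu_{H_m}$-stable exceptionals $\cV$ of rank $<r$ have $\nu$ in the domain of $\DLP_{H_m,\cV}$ because ``this requires $\nu(\cV)$ to lie in a bounded neighborhood of $\nu$.''  But the domain condition $|(\nu-\nu(\cV))\cdot H_m|\leq -\tfrac12 K_{\F_e}\cdot H_m$ cuts out an unbounded strip, not a bounded region, so the finiteness claim fails as written.  The fix is Remark~\ref{rem-max}: for any constant $c$ only finitely many exceptionals $\cV$ satisfy $\DLP_{H_m,\cV}(\nu)\geq c$, so the supremum defining $\DLP_{H_m}^{<r}(\nu)$ is a maximum over a finite set, and near a generic $m$ the stability, domain membership, and branch choice for each of these finitely many $\cV$ are all unchanged.
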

\begin{proof}
Let $m'\in \QQ$ be close to $m$.  As $m'$ varies, $\DLP_{H_{m'}}^{<r}(\nu)$ only changes values at special values.  Since $m$ is generic, $\DLP_{H_{m'}}^{<r}(\nu)$ remains constant if we vary $m'$ by a little bit.  But for generic $m'$ we have $\delta_{m'}^{\mus}(\nu) \geq \DLP_{H_{m'}}^{<r}(\nu)$, and $\delta_{m'}^{\mus}(\nu)$ is strictly increasing in $m'$.  This is only possible if $\DLP_{H_m}^{<r}(\nu) < \delta_m^{\mus}(\nu)$.
\end{proof}

\section{Reduction to $\FF_0$ and $\FF_1$}\label{sec-reduction}

Finally, we deduce results on an arbitrary Hirzebruch surface $\F_e$ by reducing our problems to the del Pezzo case.

\subsection{The reduction} In this section, consider a Hirzebruch surface $\FF_e$ with $e\geq 2$.  Write $\Pic \F_e = \Z E \oplus \Z F$ and $\Pic \F_{e-2} = \Z E' \oplus \Z F'$ for clarity, and for $m\in \QQ$ let $H_m\in K(\F_e)\te \QQ$ and $H_m'\in K(\F_{e-2})\te \QQ$ be the usual polarizations.  Consider the linear map $$\pi:K(\F_e)\te \QQ\to K(\F_{e-2})\te \QQ$$ given by the formula $$\pi (r,aE+bF,d) = (r,aE'+(b-a)F',d)$$ in $(\ch_0,\ch_1,\ch_2)$-coordinates.  (We will see later that $\pi$ carries integral characters to integral characters.)  Since $\pi$ only affects the $\ch_1$-term, we may also abuse notation and write $$\pi(aE+bF) = aE' + (b-a)F'$$ for a class $aE+bF \in \Pic(\F_e)\te \QQ$.    A different normalization of our polarization is useful.  For $m\in \QQ$, let \begin{align*} A_m &= -\frac{1}{2}K_{\F_e} + mF = E + \left(m+\frac{e}{2}+1\right)F = H_{m-\frac{e}{2}+1} \\ A_{m}' &= -\frac{1}{2}K_{\F_{e-2}}+mF' = E'+\left(m+\frac{e}{2}\right)F' = H_{m-\frac{e}{2}+2}'.\end{align*}   Thus, $A_m \in \Pic \F_e \te \QQ$ and $A_m' \in \Pic \F_{e-2}\te \QQ$ are both ample if and only if $m > \frac{e}{2}-1$. Write $\lceil A_m \rceil$ and $\lceil A_m'\rceil$ for the line bundles \begin{align*}\lceil A_m\rceil &= E + \left\lceil m + \frac{e}{2}+ 1 \right\rceil F\\ \lceil A_m'\rceil &= E'+\left\lceil m+\frac{e}{2}\right\rceil F'\end{align*} Observe that $\pi$ carries the class of $A_m$ (resp. $\lceil A_m\rceil$) to the class of $A_m'$ (resp. $\lceil A_m'\rceil$).  In this section we prove the following theorem.

\begin{theorem}\label{thm-FeInduct}
Let $\bv\in K(\F_e)$ satisfy $\Delta(\bv) \geq 0$, and suppose $m\in \QQ$.   Then there is an $\lceil A_m \rceil$-prioritary sheaf of character $\bv$ on $\F_e$ if and only if there is an $\lceil A_m'\rceil$-prioritary sheaf of character $\pi(\bv)$ on $\F_{e-2}$.

Additionally assume $m > \frac{e}{2}-1$ and that there are $\lceil A_m\rceil$ prioritary sheaves of character $\bv$ on $\F_e$.   Then the $A_m$-Harder-Narasimhan filtration of a general sheaf in $\cP^{\F_e}_{F}(\bv)$ has factors of characters $\bgr_1,\ldots,\bgr_\ell$ if and only if the $A_m'$-Harder-Narasimhan filtration of a general sheaf in $\cP^{\F_{e-2}}_{F}(\pi(\bv))$ has factors of characters $\pi( \bgr_1),\ldots, \pi(\bgr_\ell)$.
\end{theorem}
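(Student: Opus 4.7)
The plan is to convert both statements of the theorem into checks on numerical invariants, using Theorems \ref{thm-prioritaryChi} and \ref{thm-HNcriterion}, and then to verify that $\pi$ preserves all the relevant data.  Note first that $\pi$ is a linear isomorphism $K(\F_e)\te \QQ \to K(\F_{e-2})\te \QQ$ that restricts to a bijection on integral Chern characters, with inverse $(r,a'E'+b'F',d) \mapsto (r,a'E+(a'+b')F,d)$.  By construction $\pi$ preserves $r$ and $\ch_2$, and a direct calculation with $\nu = xE + yF$ gives $\nu^2 = \pi(\nu)^2 = -ex^2 + 2xy$, so $\pi$ preserves discriminants.  The key intersection identities $A_m^2 = (A_m')^2 = 2m+2$, $A_m\cdot K_{\F_e} = A_m' \cdot K_{\F_{e-2}} = -2(m+2)$, and $\pi(c_1)\cdot \pi(L) = c_1 \cdot L$ for any $L\in \Pic(\F_e)\te \QQ$, combined with Riemann-Roch, then force $\pi$ to preserve the $A_m$-Hilbert polynomial (hence the reduced Hilbert polynomial and $A_m$-slope), the Euler pairing $\chi(\bv,\bw) = \chi(\pi(\bv),\pi(\bw))$, and twisting in the sense $\pi(\bv \te L) = \pi(\bv)\te \pi(L)$.

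For the first assertion, I would apply Theorem \ref{thm-prioritaryChi}: an $\lceil A_m\rceil$-prioritary sheaf of character $\bv$ exists on $\F_e$ iff $\chi(\bv(-L_0(\bv) - \lceil A_m\rceil))\leq 0$, where $L_0(\bv)$ is as in Definition \ref{def-L0}.  A short computation with the formula for $\psi$ shows $\psi_{e-2}(\pi(\bv)) = \psi_e(\bv) - \lceil \epsilon \rceil$, whence $\pi(L_0(\bv)) = L_0(\pi(\bv))$; also $\pi(\lceil A_m\rceil) = \lceil A_m'\rceil$ by construction.  Combined with the twist- and $\chi$-compatibility of the previous paragraph, the criterion of Theorem \ref{thm-prioritaryChi} transfers verbatim under $\pi$.  (Since $F$-prioritariness is equivalent to $\Delta\geq 0$ and this is $\pi$-invariant, the stacks $\cP_F^{\F_e}(\bv)$ and $\cP_F^{\F_{e-2}}(\pi(\bv))$ are simultaneously nonempty, and both are irreducible by Walter's theorem.)

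For the second assertion, I would apply Theorem \ref{thm-HNcriterion}, which characterizes the generic Harder-Narasimhan factors by five numerical conditions: (1) summation, (2) strict ordering of reduced Hilbert polynomials, (3) slope proximity, (4) $\chi$-orthogonality, and (5) nonemptiness of each $M_{A_m}(\bgr_i)$.  Conditions (1)--(4) transfer under $\pi$ immediately from the first paragraph.  The main obstacle is condition (5), which is recursive, so I would prove the full ``iff'' by induction on $r = r(\bv) = r(\pi(\bv))$.  The base case $r=1$ is trivial since line bundles are always semistable.  In the inductive step, if the Harder-Narasimhan filtration on $\F_e$ has length $\ell \geq 2$, each $\bgr_i$ has rank strictly less than $r$, so by the inductive hypothesis ($M_{A_m}(\bgr_i)\neq \emptyset \Leftrightarrow M_{A_m'}(\pi(\bgr_i))\neq \emptyset$) condition (5) transfers, and Theorem \ref{thm-HNcriterion} forces the Harder-Narasimhan factors on $\F_{e-2}$ to be $\pi(\bgr_1),\ldots,\pi(\bgr_\ell)$.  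The delicate case is $\ell = 1$ on $\F_e$, i.e.\ $M_{A_m}(\bv)\neq \emptyset$, which I would handle by contradiction: if the Harder-Narasimhan filtration on $\F_{e-2}$ had length $\ell' \geq 2$ with factors $\bw_1,\ldots,\bw_{\ell'}$, each of rank $<r$, then induction gives $M_{A_m}(\pi^{-1}(\bw_i)) \neq \emptyset$, the five conditions hold for $\pi^{-1}(\bw_1),\ldots,\pi^{-1}(\bw_{\ell'})$ on $\F_e$, and Theorem \ref{thm-HNcriterion} produces a Harder-Narasimhan filtration of length $\ell'\geq 2$ on $\F_e$, contradicting $M_{A_m}(\bv)\neq \emptyset$.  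The reverse direction is symmetric.  Since $\pi$ preserves rank, the reductions in the $\ell \geq 2$ cases strictly decrease the inductive parameter, and the $\ell=1$ argument only invokes smaller-rank cases, so the induction closes without circularity.
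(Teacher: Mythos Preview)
Your proposal is correct and follows essentially the same strategy as the paper: reduce both assertions to numerical criteria and check that $\pi$ preserves all the relevant invariants, handling condition (5) of Theorem \ref{thm-HNcriterion} by induction on rank.  The Harder--Narasimhan argument is identical to the paper's.

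There is a minor difference in the prioritary part.  The paper argues constructively: it transports the explicit direct sums of line bundles from Proposition \ref{prop-triangle} (Lemma \ref{lem-piAction}(6) records that $\pi$ carries the character of $\OO_{\F_e}(-E+(n-1)F)^A\oplus \OO_{\F_e}^B\oplus \OO_{\F_e}(-F)^C$ to that of the corresponding $H'_{n+1}$-prioritary direct sum on $\F_{e-2}$), then uses elementary modifications.  You instead show that the numerical inequality of Theorem \ref{thm-prioritaryChi} transfers by verifying $\pi(L_0(\bv)) = L_0(\pi(\bv))$ and $\pi(\lceil A_m\rceil) = \lceil A_m'\rceil$.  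Your computation $\psi_{e-2}(\pi(\bv)) = \psi_e(\bv) - \lceil\epsilon\rceil$ is correct and gives exactly this.  Both routes are short; yours avoids the twist/dual reduction in Proposition \ref{prop-triangle} at the cost of tracking $L_0$ through $\pi$, while the paper's is more hands-on and makes the matching of prioritary direct sums visible.
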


In the particular case of length 1 Harder-Narasimhan filtrations, we immediately conclude the following result.  Since $A_m'$ is ample whenever $A_m$ is ample, it follows that a solution to the existence problem for $\F_0$ and $\F_1$ gives a solution to the existence problem on all Hirzebruch surfaces $\F_e$.

\begin{corollary}\label{cor-translateModuli}
Let $\bv \in K(\F_e)$ and let $m > \frac{e}{2}-1$.  Then $M_{A_m}^{\F_e}(\bv)$ is nonempty if and only if $M_{A_m'}^{\F_{e-2}}(\pi(\bv))$ is nonempty.
\end{corollary}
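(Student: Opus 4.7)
The plan is to derive this corollary as the special case $\ell = 1$ of Theorem~\ref{thm-FeInduct}.

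First, I would reformulate the nonemptiness of the moduli space in terms of the language of the theorem. For any character $\bw$ on a Hirzebruch surface $\F_f$, the space $M_{A_m}^{\F_f}(\bw)$ is nonempty if and only if there exists an $A_m$-semistable sheaf of character $\bw$. Any such sheaf is $\lceil A_m\rceil$-prioritary (by Proposition~\ref{prop-ssPrior} together with Lemma~\ref{lem-priorCompare}, noting that for $e\geq 2$ we have $\lceil A_m\rceil = H_{\lceil m - e/2 + 1\rceil}$, which is one step below the $H_{\lceil m - e/2 + 1\rceil + 1}$ appearing in Proposition~\ref{prop-ssPrior}), and in particular it is $F$-prioritary. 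Conversely, since $\cP_F^{\F_f}(\bw)$ is irreducible and semistability is open in flat families, the existence of a single $A_m$-semistable sheaf forces the generic sheaf in $\cP_F^{\F_f}(\bw)$ to be $A_m$-semistable. Thus $M_{A_m}^{\F_f}(\bw)$ is nonempty if and only if
\begin{enumerate}
\item[(i)] $\cP_{\lceil A_m\rceil}^{\F_f}(\bw)$ is nonempty, and
\item[(ii)] the generic $A_m$-Harder-Narasimhan filtration on $\cP_F^{\F_f}(\bw)$ has length one.
\end{enumerate}

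With this reformulation applied on both $\F_e$ and $\F_{e-2}$, the corollary follows at once from Theorem~\ref{thm-FeInduct}: its first assertion transfers condition (i) between $\bv$ and $\pi(\bv)$, while its second assertion transfers the entire list of HN characters under $\pi$, so in particular the length-one property is preserved in both directions. As a routine sanity check, I would verify that $\pi$ preserves the Bogomolov discriminant: writing $\bv = (r, aE + bF, \ch_2)$, direct computation yields
$$c_1(\pi(\bv))^2 = -a^2(e-2) + 2a(b-a) = -a^2 e + 2ab = c_1(\bv)^2,$$
so $\Delta(\pi(\bv)) = \Delta(\bv)$ and the hypothesis $\Delta \geq 0$ needed to invoke Theorem~\ref{thm-FeInduct} is symmetric under $\pi$.

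There is no genuine obstacle here: Theorem~\ref{thm-FeInduct} does all of the substantive work by establishing the correspondence between prioritary stacks and their generic HN filtrations on $\F_e$ and $\F_{e-2}$. The only care required is the bookkeeping above, together with noting that when $\cP_{\lceil A_m\rceil}^{\F_e}(\bv)$ is empty both moduli spaces are empty (again by the first part of the theorem), so the equivalence is symmetric in both alternatives.
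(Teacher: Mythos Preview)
Your proposal is correct and follows exactly the approach the paper intends: the corollary is stated immediately after Theorem~\ref{thm-FeInduct} with the one-line justification that it is the length-one case of the generic Harder--Narasimhan filtration correspondence. Your write-up simply supplies the bookkeeping (semistable $\Rightarrow$ $\lceil A_m\rceil$-prioritary via Proposition~\ref{prop-ssPrior} and Lemma~\ref{lem-priorCompare}, and $\Delta$ preserved under $\pi$, which is Lemma~\ref{lem-piAction}(2)) that the paper leaves implicit.
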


We saw that there is a numerical criterion for the existence of prioritary sheaves, and a numerical criterion for a list of characters to be the generic Harder-Narasimhan filtration.  Thus we simply have to check that these criterions transform appropriately under the map $\pi$.

\begin{lemma}\label{lem-piAction}
The map $\pi:K(\F_e)\to K(\F_{e-2})$ has the following properties.
\begin{enumerate}
\item It preserves the intersection pairing: $c_1(\bv_1) \cdot c_1(\bv_2) = c_1(\pi(\bv_1)) \cdot c_1(\pi(\bv_2))$.
\item It preserves discriminants: $\Delta(\bv) = \Delta(\pi( \bv)).$
\item It preserves the class of the canonical bundle and trivial bundle:  we have $\pi (\ch K_{\F_e}) = \ch K_{\F_{e-2}}$ and $\pi(\ch \OO_{\F_e}) = \ch \OO_{\F_{e-2}}$.  
\item It preserves Euler characteristics $\chi(\bv)$ and Euler characteristics of pairs $\chi(\bv,\bw)$.  In particular, $\pi$ carries integral characters to integral characters, and it carries primitive characters to primitive characters.
\item We have $\pi(\ch (A_m)^{\te t}) = \ch (A_m')^{\te t}$.  Therefore $\pi$ transforms $A_m$-slopes to $A_m'$-slopes: $$\mu_{A_m} (\bv) = \mu_{A_m'}(\pi(\bv)).$$ It also transforms $A_m$-Hilbert polynomials to $A_m'$-Hilbert polynomials: $$\chi((A_m^*)^{\te t}, \bv) = \chi((A_{m}'^*)^{\te t},\pi(\bv))$$

\item For $n\in \Z$, it carries the character of the direct sum $$\OO_{\F_e}(-E+(n-1)F)^A \oplus \OO_{\F_e}^B \oplus \OO_{\F_e}(-F)^C$$ to the character of the direct sum $$\OO_{\F_{e-2}}(-E'+nF')^A \oplus \OO_{\F_{e-2}}^B \oplus \OO_{\F_{e-2}}(-F')^C.$$
\end{enumerate}
\end{lemma}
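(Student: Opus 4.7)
My plan is to verify the six claims in the order stated, since each subsequent claim builds on the previous ones. The entire lemma is essentially a bookkeeping exercise, with the only nontrivial content in part~(1); the remaining parts follow almost formally from (1) combined with Riemann--Roch.

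For part (1), I would compute both sides directly. Writing $c_1(\bv_i) = a_i E + b_i F$, the intersection form on $\F_e$ gives $c_1(\bv_1)\cdot c_1(\bv_2) = -a_1a_2 e + a_1 b_2 + a_2 b_1$. Expanding $\pi(c_1(\bv_1))\cdot\pi(c_1(\bv_2))$ using the intersection form on $\F_{e-2}$ (where $E'^2 = -(e-2)$) and simplifying, the correction $+2a_1a_2$ coming from the changed self-intersection exactly cancels the corrections $-a_1a_2$ coming twice from the substitution $b_i \mapsto b_i - a_i$. Part (2) is then immediate from $\Delta = \frac{c_1^2}{2r^2} - \frac{\ch_2}{r}$ since $\pi$ preserves $r$ and $\ch_2$ by definition and $c_1^2$ by (1). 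For part (3), I would check directly that $\pi(-2E-(e+2)F) = -2E' - eF' = c_1(K_{\F_{e-2}})$, that $K_{\F_e}^2 = K_{\F_{e-2}}^2 = -8$, and that the trivial bundle is fixed tautologically.

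For part (4), the plan is to apply Riemann--Roch for a single sheaf, $\chi(\bv) = r - \tfrac{1}{2}c_1(\bv)\cdot K_X + \ch_2(\bv)$, and observe that each term is preserved by combining (1), (2), (3) and the fact that $r$ and $\ch_2$ are invariant by construction. For pairs, I would use the formula $\chi(\bv,\bw) = r(\bv)r(\bw)(P(\nu(\bw)-\nu(\bv))-\Delta(\bv)-\Delta(\bw))$ quoted in \S\ref{sec-Prelim}; the polynomial $P$ depends only on $\chi(\OO)=1$, the intersection pairing, and the canonical class, all of which transfer through $\pi$ by (1) and (3). Integrality of $\pi$ on integral characters is clear from the formula $\pi(r,aE+bF,d) = (r,aE'+(b-a)F',d)$; preservation of primitivity then follows from the observation that $\pi$ is invertible on $K(\F_e)\te \QQ$ with inverse $(r,aE'+cF',d)\mapsto (r,aE+(c+a)F,d)$, which likewise preserves integrality.

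For part (5), I would compute $\ch(A_m^{\te t}) = (1, tA_m, \tfrac{t^2}{2}A_m^2)$ and check that applying $\pi$ to this yields $(1, tA_m', \tfrac{t^2}{2}(A_m')^2)$, using that $\pi(A_m)=A_m'$ and $A_m^2=(A_m')^2 = 2m+2$; this last equality is the one numerical coincidence that makes the whole scheme work. Preservation of slopes follows from (1) applied to $c_1(\bv)\cdot A_m$, and preservation of the Hilbert polynomial follows from (4) applied to the pair $((A_m^*)^{\te t},\bv)$, since $(A_m^*)^{\te t}$ maps to $(A_m'^*)^{\te t}$. Finally, part (6) is a direct Chern character computation on the two direct sums; the key numerical check is that $\ch_2$ of $\OO_{\F_e}(-E+(n-1)F)$ equals $\ch_2$ of $\OO_{\F_{e-2}}(-E'+nF')$, which reduces to $-e-2(n-1) = -(e-2)-2n$, an identity. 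I do not foresee any real obstacle: the whole lemma is engineered so that the map $\pi$ exactly absorbs the shift $e\mapsto e-2$ in the intersection form, and this is the content of (1); everything else is formal consequence.
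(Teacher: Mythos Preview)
Your proposal is correct and follows essentially the same approach as the paper's proof, which is equally brief and treats each part as a direct computation or formal consequence of Riemann--Roch and the earlier parts. One minor slip: $K_{\F_e}^2 = 8$, not $-8$ (compute $(-2E-(e+2)F)^2 = 4(-e)+4(e+2) = 8$); this does not affect your argument since you only need that $K_{\F_e}^2 = K_{\F_{e-2}}^2$, which remains true.
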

\begin{proof}
(1) Say $c_1(\bv_1) = a_1E + b_1F$ and $c_1(\bv_2) = a_2E + b_2F$.  Then  \begin{align*} c_1(\pi (\bv_1))\cdot c_1(\pi(\bv_2)) &= (a_1 E'+(b_1-a_1)F')\cdot (a_2 E' + (b_2-a_2)F')\\ &= a_1(b_2-a_2)+a_2(b_1-a_1)-(e-2)(a_1a_2)\\
&=a_1b_2+a_2b_1 - ea_1a_2\\
&= (a_1E+b_1F)(a_2E+b_2F)\\
&= c_1(\bv_1)\cdot c_1(\bv_2).
\end{align*}

(2) The map preserves $\ch_0$, $\ch_2$, and the intersection pairing, so since $$\Delta(\bv) = \frac{\ch_1(\bv)^2}{2\ch_0(\bv)} - \frac{\ch_2(\bv)}{\ch_0(\bv)}$$ it follows that it also preserves discriminants.

(3) Clear by a direct computation.

(4) Follows from Riemann-Roch and (1), (2), (3).

(5) Use (1), (2), and (4).

(6) It is clear that $\pi$ acts on the factors in the indicated way, and the result follows by linearity.
\end{proof}

The proof of Theorem \ref{thm-FeInduct} is now easy.

\begin{proof}[Proof of Theorem \ref{thm-FeInduct}]
For the first part, without loss of generality we may assume $m\in \QQ$ is such that $A_m$ and $A_m'$ are integral.  Let \begin{align*}
 n &= m-\frac{e}{2}+1,
\end{align*}
so that $A_m = \lceil A_m\rceil = H_n$ and $A_m' = \lceil A_m'\rceil = H'_{n+1}$. Suppose there is an $H_n$-prioritary sheaf $\cV$ of character $\bv$ on $\F_e$.  By Proposition \ref{prop-triangle}, after taking twists and/or duals of $\bv$ there is an $H_n$-prioritary direct sum of line bundles $$\cW = \OO_{\F_e}(-E+(n-1)F)^A \oplus \OO_{\F_e}^B \oplus \OO_{\F_e}(-F)^C$$ with $r(\cW) = r(\cV)$, $c_1(\cW) = c_1(\cV)$, and $\Delta(\cW) \leq \Delta(\cV)$.  Then $$\cW' = \OO_{\F_{e-2}}(-E'+nF')^A \oplus \OO_{\F_{e-2}}^B \oplus \OO_{\F_{e-2}}(-F')^C$$ is an $H'_{n+1}$-prioritary bundle with $r(\cW') = r(\pi(\bv))$, $c_1(\cW') = c_1(\pi(\bv))$, and $$\Delta(\cW') = \Delta(\cW) \leq \Delta(\cV).$$ By taking elementary modifications of $\cW'$ we get an $H_{n+1}'$-prioritary sheaf of character $\pi(\bv)$, as required.  A similar argument shows that if there is an $H'_{n+1}$-prioritary sheaf of character $\pi(\bv)$ on $\F_{e-2}$, then there is an $H_n$-prioritary sheaf of character $\bv$ on $\F_e$.

We prove the second part by induction on the rank.  The result is clear for rank one characters, so suppose it holds for characters of rank less than $r(\bv)$.  Let $m\in \QQ$ satisfy $m > \frac{e}{2}-1$, and let $\bgr_1,\ldots,\bgr_\ell$ (resp. $\bgr_1',\ldots,\bgr_{\ell'}'$) be the characters of the factors in the $A_m$- (resp. $A_m'$-) Harder-Narasimhan filtration of a general sheaf in $\cP_{\lceil A_m\rceil}^{\F_e}(\bv)$ (resp. $\cP^{\F_{e-2}}_{\lceil A_m'\rceil}(\pi(\bv))$).  We have to show that $\ell = \ell'$ and $\pi(\bgr_i) = \bgr_i'$.  If $\ell = \ell' =1$ then this is immediate, so the remaining possibilities are that $\ell \geq 2$ and/or $\ell' \geq 2$.

First suppose $\ell \geq 2$.  In this case $\bgr_1 + \cdots + \bgr_\ell = \bv$, the corresponding reduced $A_m$-Hilbert polynomials $q_1,\ldots,q_\ell$ satisfy $q_1 > \cdots > q_\ell$, by Lemma \ref{lem-HNclose} we have $\mu_{A_m}(\bgr_1)-\mu_{A_m}(\bgr_\ell) \leq 1$, and $\chi(\bgr_i,\bgr_j) = 0$ for $i < j$ by Lemma \ref{lem-HNorthogonal}.  By Lemma \ref{lem-piAction}, the transformed list $\pi(\bgr_1),\ldots,\pi(\bgr_\ell)$ still satisfies all these properties for the character $\pi(\bv)$ and polarization $A_m'$.  By induction on the rank, we know that the moduli spaces $M_{A_m'}^{\F_{e-2}}(\pi(\bgr_i))$ are nonempty. Therefore by Theorem \ref{thm-HNcriterion}, these are the characters of the factors of the $A_m'$-Harder-Narasimhan filtration of a general sheaf in $\cP_{\lceil A_m'\rceil}^{\F_{e-2}}(\bv)$, and we conclude $\ell = \ell'$ and $\pi (\bgr_i) = \bgr_i'$.

If instead $\ell' \geq 2$, we repeat the above argument but use $\pi^{-1}$ instead of $\pi$.   This shows that $\ell = \ell'$ and $\bgr_i = \pi^{-1}(\bgr_i').$
\end{proof}

It is useful to have the result analogous to \ref{cor-translateModuli} for slope stability.

\begin{corollary}\label{cor-highermus}
Let $\bv \in K(\F_e)$ and let $m > \frac{e}{2}-1$.  Then there is a $\mu_{A_m}$-stable sheaf of character $\bv$ on $\F_e$ if and only if there is a $\mu_{A_m'}$-stable sheaf of character $\pi(\bv)$ on $\F_{e-2}$.

If the generic stability interval $I_{\pi(\bv)}$ of $\pi(\bv)$ is $(m_0,m_1)$, then the generic stability interval  $I_\bv$ of $\bv$ is $(0,m_1-1)$.
\end{corollary}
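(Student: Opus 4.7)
The plan is to deduce this corollary from Theorem \ref{thm-FeInduct} and Corollary \ref{cor-translateModuli}, upgrading the Gieseker statements to slope-stability statements. First I record the polarization translation: since $A_m = H_{m-\frac{e}{2}+1}$ and $A_m' = H'_{m-\frac{e}{2}+2}$, the correspondence $A_m\leftrightarrow A_m'$ (valid for $m > \frac{e}{2}-1$) reads as $H_n \leftrightarrow H'_{n+1}$ in $H$-coordinates, valid for $n > 0$.

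For the first assertion, I would argue as follows. Every $\mu_{A_m}$-semistable sheaf is $\lceil A_m\rceil$-prioritary by Proposition \ref{prop-ssPrior} together with Lemma \ref{lem-priorCompare}, and the stack $\cP_{\lceil A_m\rceil}^{\F_e}(\bv)\subset \cP_F^{\F_e}(\bv)$ is irreducible by Walter's theorem.  Openness of $\mu$-stability then implies that $\mu_{A_m}$-stable sheaves of character $\bv$ exist on $\F_e$ if and only if $\cP_{\lceil A_m\rceil}^{\F_e}(\bv)$ is nonempty and its generic point is $\mu_{A_m}$-stable; likewise on $\F_{e-2}$ for $\pi(\bv)$. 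Theorem \ref{thm-FeInduct} supplies the nonemptiness correspondence, so what remains is to show generic $\mu_{A_m}$-stability of $\bv$ is equivalent to generic $\mu_{A_m'}$-stability of $\pi(\bv)$. Generic $\mu_{A_m}$-stability is equivalent to the generic slope $A_m$-Harder-Narasimhan filtration having length one together with primitivity of $\bv$. The slope HN filtration is the coarsening of the Gieseker HN filtration obtained by merging consecutive Gieseker factors of equal slope (extensions of slope-semistable sheaves of a common slope are again slope-semistable). By Lemma \ref{lem-piAction} (4) and (5), $\pi$ preserves slopes and primitivity, and by Theorem \ref{thm-FeInduct} it preserves Gieseker HN filtrations, hence it preserves slope HN filtrations. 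This establishes the first assertion.

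For the second assertion, applying the first assertion to each individual $n > 0$ via the translation $H_n \leftrightarrow H'_{n+1}$ shows that $n \in I_\bv$ if and only if $n+1 \in I_{\pi(\bv)}$. If $I_{\pi(\bv)} = (m_0, m_1)$, this gives $I_\bv = (\max(0, m_0-1),\, m_1 - 1)$. To conclude $I_\bv = (0, m_1-1)$ one needs $m_0 \leq 1$, which I would verify by induction on $e \geq 2$. In the base cases $e\in\{2,3\}$, the character $\pi(\bv)$ lives on the del Pezzo surface $\F_0$ or $\F_1$, and Corollary \ref{cor-KstabilityEasy} shows that $1-\tfrac{e-2}{2} \in I_{\pi(\bv)}$ (since $-K_{\F_{e-2}}$ is a positive rational multiple of $H'_{1-(e-2)/2}$), forcing $m_0 \leq 1-\tfrac{e-2}{2} \leq 1$. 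For $e \geq 4$, the inductive hypothesis applied to the corollary on $\F_{e-2}$ gives $I_{\pi(\bv)} = (0, \tilde m_1)$, so $m_0 = 0 \leq 1$ automatically.

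The main obstacle is the slope HN correspondence: Theorem \ref{thm-FeInduct} provides only the Gieseker HN correspondence, so one must carefully verify that coarsening by slope commutes with $\pi$ and that the result really is the slope HN filtration---which requires both semistability of the merged pieces (extensions of slope-semistable sheaves of the same slope) and their maximality (inherited from maximality in the Gieseker HN). A secondary subtlety is the truncation at $n=0$, which is precisely why the induction is needed to rule out the a priori possibility $m_0 > 0$.
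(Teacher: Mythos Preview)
Your argument has a genuine gap in the first assertion. The claimed equivalence ``generic $\mu_{A_m}$-stability is equivalent to the generic slope $A_m$-Harder-Narasimhan filtration having length one together with primitivity of $\bv$'' is false in both directions, and this is precisely where the real content lies.

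For the failure of ($\Leftarrow$), consider $\bv = (r,\nu,\Delta) = (2,0,\tfrac{1}{2})$ on $\F_0$ with any polarization $H_m$. By Theorem \ref{thm-HNcriterion} the generic Gieseker Harder--Narasimhan factors are $\ch(\OO_{\F_0})$ and $\ch(I_p)$: both have total slope $0$, the reduced Hilbert polynomials are ordered, $\chi(\OO_{\F_0},I_p) = 0$, and both moduli spaces are nonempty. Since both factors have $H_m$-slope $0$, your coarsened slope filtration has length $1$. The character is primitive (its half $(1,0,-\tfrac{1}{2})$ in $(r,c_1,\ch_2)$-coordinates is not integral). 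Yet the generic sheaf sits in $0\to \OO_{\F_0}\to \cV\to I_p\to 0$ and is therefore only strictly $\mu_{H_m}$-semistable. For the failure of ($\Rightarrow$), any character $(2,0,N)$ with $N$ a sufficiently large integer is twice $(1,0,N)$, hence not primitive, but carries $\mu$-stable sheaves by Theorem \ref{thm-deltaSurface}.

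Since $\pi$ does preserve slopes, primitivity, and (via Theorem \ref{thm-FeInduct}) the generic Gieseker Harder--Narasimhan filtration, your argument correctly shows that \emph{your criterion} transfers under $\pi$; but as the criterion is not equivalent to generic $\mu$-stability, this does not prove the corollary. The paper's proof instead perturbs to a nearby generic polarization $A_{m\pm\epsilon}$, transfers Gieseker-semistability via Corollary \ref{cor-translateModuli}, and then upgrades to $\mu$-stability by a case analysis on $\Delta$: for $\Delta > \tfrac{1}{2}$ one invokes Propositions \ref{prop-ssIMPs} and \ref{prop-sIMPmus}; for $\Delta = \tfrac{1}{2}$ a moduli-dimension count forces primitivity (so Gieseker-semistable becomes Gieseker-stable for the generic polarization, and Proposition \ref{prop-sIMPmus} applies); for $\Delta < \tfrac{1}{2}$ the sheaf is exceptional and primitivity is automatic. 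The stability-interval induction in your second paragraph---which is correct once the first assertion is established---then returns from $A_{m\pm\epsilon}'$ to $A_m'$.
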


In particular, if $e\geq 2$ and there are slope-stable sheaves of character $\bv$ then the generic stability interval $I_{\bv}$ is always an interval of the form $(0,m)$ with $m\in (0,\infty]$.  This is an analog of Corollary \ref{cor-KstabilityEasy}, with the anticanonical class $-\frac{1}{2}K_{\F_e} = -H_{1-\frac{e}{2}}$ no longer being ample.

\begin{proof}[Proof of Corollary \ref{cor-highermus}]
We simultaneously prove the second claim by induction on $e$.

$(\Rightarrow)$  Suppose there is a $\mu_{A_m}$-stable sheaf $\cV$ of character $\bv$ on $\F_e$.  Then for $\epsilon>0$ small it is  $\mu_{A_{m+\epsilon}}$-stable, so there are $A_{m+\epsilon}'$-semistable sheaves on $\F_{e-2}$ of character $\pi(\bv)$.  We claim there are $\mu_{A_{m+\epsilon}'}$-stable sheaves of character $\pi(\bv)$.  There are three cases to consider, depending on $\Delta = \Delta(\bv)$.

\emph{Case 1: $\Delta > \frac{1}{2}$.}  Since $A_{m+\epsilon}'$ is generic, Propositions \ref{prop-ssIMPs} and \ref{prop-sIMPmus} show there are $\mu_{A_{m+\epsilon}'}$-stable sheaves of character $\pi(\bv)$.

\emph{Case 2: $\Delta = \frac{1}{2}$.}  Since there is a $\mu_{A_{m+\epsilon}}$-stable sheaf $\cV$ of character $\bv$, the moduli space $M_{A_{m+\epsilon}}(\bv)$ is smooth at $\cV$ and irreducible of dimension $1 = r^2(2\Delta - 1) + 1$.  The character $\bv$ must then be primitive, since if $\bv = n \bv'$ with $\bv'$ primitive, then Proposition \ref{prop-divideSpace} shows there are $\mu_{A_{m+\epsilon}}$-stable sheaves of character $\bv'$ (here we again use that $A_{m+\epsilon}$ is generic).  So, the moduli space $M_{A_{m+\epsilon}}(\bv')$ is smooth of dimension $1$.  Taking direct sums of such sheaves, we see that every sheaf in $M_{A_{m+\epsilon}}(\bv)$ must be strictly semistable, a contradiction.  Therefore $\bv$ is primitive.  But then by Lemma \ref{lem-piAction} (4), we find that $\pi(\bv)$ is also primitive, and it follows that there are $\mu_{A_{m+\epsilon}}$-stable sheaves of character $\pi(\bv)$.

\emph{Case 3: $\Delta < \frac{1}{2}$.}  By Lemma \ref{lem-excFacts} (4), $\cV$ must be an exceptional bundle, $\bv$ is primitive, $\pi(\bv)$ is primitive, and there are $\mu_{A_{m+\epsilon}'}$-stable sheaves of character $\pi(\bv)$.

Now in any case, let $\cV'$ be a general $\mu_{A_{m+\epsilon}'}$-stable sheaf of character $\pi(\bv)$.  By induction (or by Corollary \ref{cor-KstabilityEasy} if $e-2\in \{0,1\}$) we know that $\cV'$ is additionally slope stable for any polarization between $H_1'=A_{\frac{e}{2}-1}'$ and $A_{m+\epsilon}'$.  The polarization $A_m'$ is between these, so $\cV'$ is $\mu_{A_m'}$-stable.  Thus there are $\mu_{A_m'}$-stable sheaves of character $\pi(\bv)$, and we must have $m < m_1-1$.  We conclude that the stability interval of $\bv$ is contained in $(0,m_1-1)$.

($\Leftarrow$)   Let $\cV'$ be a $\mu_{A_m'}$-stable exceptional bundle on $\F_{e-2}$ of character $\pi(\bv)$.  Then for $\epsilon >0$ small it is both $\mu_{A_{m-\epsilon}'}$- and $\mu_{A_{m+\epsilon}'}$-stable, so there are $A_{m-\epsilon}$- and $A_{m+\epsilon}$-semistable bundles on $\F_e$ of character $\bv$.  By the same analysis as in the other direction we see that there are $\mu_{A_{m-\epsilon}}$- and $\mu_{A_{m+\epsilon}}$-stable sheaves of character $\bv$, and therefore there are sheaves that simultaneously $\mu_{A_{m-\epsilon}}$- and $\mu_{A_{m+\epsilon}}$-stable.  They are $\mu_{A_m}$-stable, too.  By induction on $e$ (using Corollary \ref{cor-KstabilityEasy} if $e-2\in \{0,1\}$), the stability interval of $\pi(\bv)$ contains $[1,m_1)$, and therefore there are $\mu_{A_m}$-stable sheaves of character $\bv$ on $\F_e$ for any $m\in (0,m_1-1)$.  So, the stability interval of $\bv$ contains $(0,m_1-1)$.
\end{proof}

Therefore, the function $\delta_{m,\F_e}^{\mus}(\nu)$ only needs to be studied in the del Pezzo case.

\begin{corollary}
If $e \geq 2$, $m > 0$, and $\nu\in \Pic(\F_e)\te \QQ$, then $$\delta_{m,\F_e}^{\mus}(\nu) = \delta_{m+1,\F_{e-2}}^{\mus}(\pi(\nu)).$$
\end{corollary}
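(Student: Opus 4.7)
The plan is to deduce this directly from Corollary \ref{cor-highermus} together with the fact that the map $\pi$ preserves rank and discriminant. The only thing requiring care is the translation between the $H_\bullet$-polarizations, which appear in the statement of $\delta^{\mus}$, and the $A_\bullet$-polarizations, in which Corollary \ref{cor-highermus} is phrased.

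First I would pin down the polarization dictionary. By definition $A_n = -\tfrac{1}{2}K_{\F_e} + nF = H_{n-e/2+1}$ on $\F_e$ and $A_n' = -\tfrac{1}{2}K_{\F_{e-2}} + nF' = H'_{n-e/2+2}$ on $\F_{e-2}$. Setting $n = m + e/2 - 1$ therefore identifies $A_n$ with $H_m$ on $\F_e$, and at the same time identifies $A_n'$ with $H'_{m+1}$ on $\F_{e-2}$. The hypothesis $n > e/2-1$ of Corollary \ref{cor-highermus} translates exactly to $m > 0$, which is our assumption. Hence Corollary \ref{cor-highermus} asserts that for any integral character $\bv \in K(\F_e)$, there is a $\mu_{H_m}$-stable sheaf of character $\bv$ on $\F_e$ if and only if there is a $\mu_{H'_{m+1}}$-stable sheaf of character $\pi(\bv)$ on $\F_{e-2}$.

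Next I would transport the definition of $\delta^{\mus}$ through $\pi$. By Lemma \ref{lem-piAction} (parts (2) and (4)), $\pi$ preserves rank, preserves discriminant, carries the total slope $\nu$ to $\pi(\nu)$, and sets up a bijection between integral characters on $\F_e$ and integral characters on $\F_{e-2}$. Thus for a fixed total slope $\nu \in \Pic(\F_e)\te\QQ$ and any $\Delta \geq \tfrac{1}{2}$, the existence of an integral Chern character $(r,\nu,\Delta)$ on $\F_e$ carrying a $\mu_{H_m}$-stable sheaf is equivalent to the existence of the corresponding integral Chern character $(r,\pi(\nu),\Delta)$ on $\F_{e-2}$ carrying a $\mu_{H'_{m+1}}$-stable sheaf. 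Taking the infimum of the admissible $\Delta$'s on both sides gives the desired equality $\delta_{m,\F_e}^{\mus}(\nu) = \delta_{m+1,\F_{e-2}}^{\mus}(\pi(\nu))$.

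This is essentially bookkeeping; the substantive content has already been packaged in Corollary \ref{cor-highermus} and Lemma \ref{lem-piAction}, so I do not anticipate any real obstacle. The only potential subtlety is making sure that the rank is allowed to vary on both sides of the equation (which it is, in the definition of $\delta^{\mus}$), so that the bijection of integral characters provided by $\pi$ actually gives a term-by-term identification of the two sets over which the infimum is taken.
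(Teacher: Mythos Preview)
Your proof is correct and matches the paper's intended argument: the paper states the corollary without proof, treating it as immediate from Corollary \ref{cor-highermus} and Lemma \ref{lem-piAction}, and your polarization bookkeeping ($A_n = H_m$ with $n = m + \tfrac{e}{2} - 1$, giving $A_n' = H'_{m+1}$) is exactly the translation that makes this work.
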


Monotonicity in the polarization then follows from the del Pezzo case.  (See Corollary \ref{cor-deltaMonotone}.)

\begin{corollary}\label{cor-deltaMonotoneHigher}
If $e \geq 2$, $0 < m < m'$, and $\nu\in \Pic(\F_e)\te \QQ$, then $$\delta_{m}^{\mus}(\nu) \leq \delta_{m'}^{\mus}(\nu).$$
\end{corollary}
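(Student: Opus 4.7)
The plan is to reduce to the del Pezzo monotonicity statement Corollary~\ref{cor-deltaMonotone} by iterating the identity
$$\delta_{m,\F_e}^{\mus}(\nu)=\delta_{m+1,\F_{e-2}}^{\mus}(\pi(\nu))$$
from the preceding corollary until the Hirzebruch parameter lands in $\{0,1\}$. All the conceptual content is already present in Theorem~\ref{thm-FeInduct} and the displayed identity; what remains is a purely formal bookkeeping argument.

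Set $j=\lfloor e/2\rfloor$, so that $e-2j\in\{0,1\}$ and $\F_{e-2j}$ is a del Pezzo Hirzebruch surface. At each of the intermediate stages of the iteration, the Hirzebruch parameter is still at least $2$ and the slope parameter is still positive (it only grows by $1$ at each step), so the preceding corollary applies successively. Iterating $j$ times would then yield
$$\delta_{m,\F_e}^{\mus}(\nu)=\delta_{m+j,\F_{e-2j}}^{\mus}(\pi^j(\nu))$$
and likewise with $m$ replaced by $m'$.

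Since $e\geq 2$ one has $j\geq 1$, and therefore $m+j>1\geq 1-\tfrac{e-2j}{2}$. Thus under the hypothesis $0<m<m'$, the shifted values satisfy
$$1-\tfrac{e-2j}{2}\;\leq\; m+j\;<\;m'+j,$$
placing both inside the range to which the del Pezzo monotonicity statement Corollary~\ref{cor-deltaMonotone} applies on $\F_{e-2j}$. That corollary then gives
$$\delta_{m+j,\F_{e-2j}}^{\mus}(\pi^j(\nu))\;\leq\;\delta_{m'+j,\F_{e-2j}}^{\mus}(\pi^j(\nu)),$$
and translating both sides back across the iterated identity yields the desired inequality on $\F_e$. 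There is no real obstacle here; the serious work — proving that the map $\pi$ transports existence of slope-stable sheaves (and hence the value of $\delta^{\mus}$) between $\F_e$ and $\F_{e-2}$ — was already carried out in Theorem~\ref{thm-FeInduct} and Corollary~\ref{cor-highermus}.
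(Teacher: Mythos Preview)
Your proof is correct and is exactly the approach the paper has in mind: iterate the identity $\delta_{m,\F_e}^{\mus}(\nu)=\delta_{m+1,\F_{e-2}}^{\mus}(\pi(\nu))$ until landing on $\F_0$ or $\F_1$, then invoke Corollary~\ref{cor-deltaMonotone}. The paper's own proof is just the one-line remark that monotonicity follows from the del Pezzo case, and you have spelled out the bookkeeping (in particular the check that the shifted parameters $m+j<m'+j$ lie on the correct side of $1-\tfrac{e-2j}{2}$) carefully and correctly.
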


The results of \S \ref{sec-HNKronecker} can all be translated to get similar results on $\F_e$ for $e\geq 2$.  We do not dwell on this here.

\subsection{Exceptional bundles on $\F_e$}

The stable exceptional bundles on $\F_e$ can now be described by passing to the del Pezzo case.  Since Theorem \ref{thm-stabilityInterval} computes the stability intervals of exceptional bundles on $\F_0$ and $\F_1$, the next result computes the stability interval of any exceptional bundle on a Hirzebruch surface.

\begin{corollary}
Let $\bv\in K(\F_e)$ be a potentially exceptional character, where $e\geq 2$, and let $m > \frac{e}{2}-1$.  There is a $\mu_{A_m}$-stable exceptional bundle $\cV$ on $\F_e$ of character $\bv$ if and only if there is an $\mu_{A_m'}$-stable exceptional bundle $\cV'$ on $\F_{e-2}$ of character $\pi(\bv)$.

If $I_{\cV'} = (m_0,m_1)$, then $I_{\cV} = (0,m_1-1)$.
\end{corollary}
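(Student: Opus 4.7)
The plan is to deduce both assertions directly from Corollary \ref{cor-highermus}, using the fact that for potentially exceptional characters slope-stability is automatically equivalent to exceptionality. First I would observe that the map $\pi$ preserves $\chi(\bv,\bv)$ by Lemma \ref{lem-piAction} (4), so $\pi(\bv)$ is also potentially exceptional; by Lemma \ref{lem-excFacts} (1) both $\bv$ and $\pi(\bv)$ therefore have discriminant $\tfrac{1}{2}(1-1/r^2)<\tfrac{1}{2}$. Any $\mu_{A_m}$-stable sheaf of character $\bv$ is then $A_m$-Gieseker stable, and hence exceptional by Lemma \ref{lem-excFacts} (4) (and analogously for $\pi(\bv)$). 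Combining this observation with the equivalence in Corollary \ref{cor-highermus} between the existence of $\mu_{A_m}$-stable sheaves of character $\bv$ on $\F_e$ and of $\mu_{A_m'}$-stable sheaves of character $\pi(\bv)$ on $\F_{e-2}$ yields the first assertion.

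For the stability interval statement, Proposition \ref{prop-excPrior} (2) says that exceptional bundles on a Hirzebruch surface are determined up to isomorphism by their Chern character, so the bundles $\cV$ and $\cV'$ are unique whenever they exist. By the previous paragraph, every $F$-prioritary sheaf of character $\bv$ that is $\mu_{H_m}$-stable for some $m>0$ is exceptional, hence isomorphic to $\cV$. Consequently the generic stability interval $I_\bv$, defined as the union of the stability intervals $I_\cW$ over $\cW\in\cP_F(\bv)$, coincides with $I_\cV$; similarly $I_{\pi(\bv)}=I_{\cV'}$. Applying the stability interval statement of Corollary \ref{cor-highermus} to the character $\bv$ then yields $I_\cV=(0,m_1-1)$ immediately.

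The main technical work is already carried out by Corollary \ref{cor-highermus}, which handles both the transport of slope stability across $\pi$ and the precise shape of the resulting generic stability interval. The only point to verify in the present corollary is that in the exceptional setting the distinction between a specific bundle and the generic sheaf in the stack $\cP_F(\bv)$ collapses, and this is immediate from uniqueness of exceptional bundles together with the fact that $\Delta(\bv)<\tfrac{1}{2}$ forces every slope-stable sheaf of that character to be exceptional. Accordingly, I do not anticipate a serious obstacle beyond carefully tracking definitions of the two notions of stability interval.
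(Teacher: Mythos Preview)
Your proposal is correct and follows essentially the same approach as the paper: note that $\pi(\bv)$ is also potentially exceptional, observe that a slope-stable sheaf of a potentially exceptional character is automatically exceptional, and then invoke Corollary \ref{cor-highermus} for both the existence statement and the stability interval. The paper's proof is more terse but the logic is identical; your additional remarks on why $I_\bv = I_\cV$ (uniqueness of exceptional bundles) make explicit what the paper leaves implicit.
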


\begin{proof}
Both $\bv$ and $\pi(\bv)$ are potentially exceptional, and a $\mu_{A_m}$-stable (resp. $\mu_{A_m'}$-stable) bundle of character $\bv$ (resp. $\pi(\bv)$) is exceptional.  So this follows from Corollary \ref{cor-highermus}.
\end{proof}

Repeatedly applying the map $\pi$ to reduce all the way to $\F_0$ or $\F_1$, we have the following.

\begin{corollary}If $e = 2k+e'$ with $e'\in \{0,1\}$, there is a bijection between the exceptional bundles on $\F_e$ which are slope-stable for some polarization and the exceptional bundles on $\F_{e'}$ which are $\mu_{H_k}$-stable.
\end{corollary}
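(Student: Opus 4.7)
The plan is to iterate the preceding corollary $k$ times to produce a character-level bijection, lift it to bundles by uniqueness, and then identify how the polarization translates. First I would iterate the preceding corollary along the chain $\F_e\to\F_{e-2}\to\cdots\to\F_{e'}$ to form the composite $\pi^{(k)}:K(\F_e)\to K(\F_{e'})$. At each step, Lemma \ref{lem-piAction} ensures that $\pi$ preserves rank, discriminant, and Euler characteristics, and hence sends potentially exceptional characters to potentially exceptional characters. The preceding corollary then says that at each stage a character $\bv_j\in K(\F_{e_j})$ admits a $\mu_{A_m}$-stable exceptional representative (for some $m>e_j/2-1$) if and only if $\pi(\bv_j)$ admits a $\mu_{A_m'}$-stable exceptional representative on $\F_{e_j-2}$ for the same $m$. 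Because a potentially exceptional character has at most one exceptional representative up to isomorphism (Proposition \ref{prop-excPrior}(2)), this bijection at the character level lifts uniquely to a bijection at the bundle level.

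Next I would track the polarization shift. Writing $A_m=H_{m-e_j/2+1}$ on each surface, a single application of $\pi$ shifts the $H$-index by $+1$: for $n>0$, $\cV$ is $\mu_{H_n}$-stable on $\F_{e_j}$ if and only if $\pi(\cV)$ is $\mu_{H_{n+1}}$-stable on $\F_{e_j-2}$. Composing $k$ such shifts yields: for $n>0$, the bundle $\cV$ on $\F_e$ is $\mu_{H_n}$-stable if and only if $\pi^{(k)}(\cV)$ is $\mu_{H_{n+k}}$-stable on $\F_{e'}$. Thus $\cV$ is slope-stable for some polarization on $\F_e$ if and only if $\pi^{(k)}(\cV)$ is $\mu_{H_m}$-stable on $\F_{e'}$ for some $m>k$.

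The final task is to identify this last condition with plain $\mu_{H_k}$-stability on $\F_{e'}$. The backwards direction is immediate from the openness of slope stability in the polarization. For the forward direction, the rank one case is trivial since line bundles are stable for every polarization, so assume the rank is at least two and let $I$ be the stability interval of $\cV_0=\pi^{(k)}(\cV)$. By Proposition \ref{prop-interval}, $I$ contains the value $1-e'/2\in\{1/2,1\}$, hence $\inf I<1\le k$. If some $m>k$ lies in $I$, then $\sup I>k$ as well, so the open interval $I$ contains $k$, i.e.\ $\cV_0$ is $\mu_{H_k}$-stable. The main subtlety, and the crux of the argument, is precisely this appeal to Proposition \ref{prop-interval}: it relies on the fact that on the del Pezzo surface $\F_{e'}$ every exceptional bundle is $\mu_{-K_{\F_{e'}}}$-stable, which is what forces the stability interval to straddle the value $k\ge 1$ rather than lying entirely above it.
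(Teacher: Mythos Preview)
Your argument is correct and matches the paper's approach: the paper offers no proof beyond the sentence ``Repeatedly applying the map $\pi$ to reduce all the way to $\F_0$ or $\F_1$,'' and you have supplied precisely the details this entails. One remark: the preceding corollary already records how the full stability interval transforms, namely $I_{\cV}=(0,m_1-1)$ when $I_{\cV'}=(m_0,m_1)$, so iterating gives that the interval on $\F_e$ is $(0,M_1-k)$ where $(M_0,M_1)$ is the interval on $\F_{e'}$; combining this directly with $M_0<1-\frac{e'}{2}\leq 1\leq k$ yields the equivalence ``$M_1>k\Leftrightarrow k\in(M_0,M_1)$'' in one stroke, which is a slightly more streamlined packaging of the same final step you carry out.
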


The sets of characters of $\mu_{A_m}$-stable exceptional bundles are preserved by $\pi$, so it follows that the Dr\'ezet-Le Potier functions also transform appropriately. 

\begin{corollary}
Let $e \geq 2$, let $r\geq 2$, and let $\nu \in \Pic(\F_e) \te \QQ$.  For any $m>\frac{e}{2}-1$ we have $$\DLP_{A_m,\F_e}^{<r} (\nu) = \DLP_{A_m',\F_{e-2}}^{<r}(\pi(\nu)) \qquad \textrm{and} \qquad \DLP_{A_m,\F_e}(\nu) = \DLP_{A_m',\F_{e-2}}(\pi(\nu)).$$
\end{corollary}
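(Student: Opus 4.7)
The plan is to verify that the map $\pi: K(\F_e)\te \QQ \to K(\F_{e-2})\te \QQ$ induces a compatibility between the two Dr\'ezet-Le Potier functions. There are two ingredients. First, the preceding corollary gives a rank-preserving bijection between $\mu_{A_m}$-stable exceptional bundles on $\F_e$ and $\mu_{A_m'}$-stable exceptional bundles on $\F_{e-2}$: if $\cV$ is $\mu_{A_m}$-stable and $\cV'$ is any bundle with $\ch \cV' = \pi(\ch \cV)$, then $\cV'$ is $\mu_{A_m'}$-stable, and conversely. Second, I would show that the per-exceptional building blocks $\DLP_{A_m,\cV}(\nu)$ transform correctly under $\pi$:
\begin{equation*}
\DLP_{A_m,\cV}(\nu) = \DLP_{A_m',\cV'}(\pi(\nu)).
\end{equation*}

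To prove the per-exceptional identity, I would invoke Lemma \ref{lem-piAction}, which records that $\pi$ preserves the intersection pairing, discriminants, and the Chern characters of $\OO$, $K$, $A_m$, and $A_m'$. Since both $\F_e$ and $\F_{e-2}$ have $\chi(\OO) = 1$ and since $P(\nu) = \chi(\OO_X) + \tfrac{1}{2}(\nu^2 - \nu\cdot K_X)$, we immediately get $P_{\F_e}(\nu) = P_{\F_{e-2}}(\pi(\nu))$ for any slope $\nu$, and in particular
\begin{equation*}
P_{\F_e}(\pm(\nu - \nu(\cV))) - \Delta(\cV) = P_{\F_{e-2}}(\pm(\pi(\nu) - \pi(\nu(\cV)))) - \Delta(\cV').
\end{equation*}
Moreover, because $\pi$ preserves both $c_1\cdot A_m$ and $K\cdot A_m$, the domain constraint $|(\nu - \nu(\cV))\cdot A_m| \leq -\tfrac{1}{2}K_{\F_e}\cdot A_m$ transforms to the analogous constraint on $\F_{e-2}$, and the sign of $(\nu-\nu(\cV))\cdot A_m$ is preserved, so the same branch of the piecewise formula is selected on both sides. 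This verifies the displayed per-exceptional identity on the appropriate overlapping domains of definition.

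Combining the two ingredients, the supremum defining $\DLP_{A_m,\F_e}(\nu)$ (resp. $\DLP^{<r}_{A_m,\F_e}(\nu)$), taken over $\mu_{A_m}$-stable exceptional bundles (resp. of rank less than $r$), equals term-by-term the supremum defining $\DLP_{A_m',\F_{e-2}}(\pi(\nu))$ (resp. $\DLP^{<r}_{A_m',\F_{e-2}}(\pi(\nu))$) under the bijection $\cV \leftrightarrow \cV'$. This yields both equalities simultaneously. The proof is essentially a bookkeeping exercise: there is no real obstacle beyond confirming that every datum appearing in the piecewise definition of $\DLP_{A_m,\cV}$—the discriminant, the Riemann-Roch polynomial, the domain inequality, and the sign condition selecting the branch—is genuinely preserved by $\pi$, each of which is an immediate consequence of Lemma \ref{lem-piAction}.
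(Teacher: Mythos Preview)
Your proposal is correct and follows essentially the same approach as the paper. The paper's own justification is a single sentence—``The sets of characters of $\mu_{A_m}$-stable exceptional bundles are preserved by $\pi$, so it follows that the Dr\'ezet-Le Potier functions also transform appropriately''—and your argument simply spells out the bookkeeping behind that sentence, using the preceding corollary for the bijection on stable exceptionals and Lemma \ref{lem-piAction} to verify that each ingredient of $\DLP_{A_m,\cV}$ (the polynomial $P$, the discriminant, the domain inequality, and the branch-selecting sign) is preserved by $\pi$.
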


Consequently, the monotonicity properties of $\DLP$ also carry over from Proposition \ref{prop-DLPmonotone}.

\begin{corollary}
Let $e \geq 2$, let $r\geq 2$, and let $\nu\in \Pic(\F_e) \te \QQ$.  If $0 < m \leq m'$, then $$\DLP_{H_m}^{<r}(\nu)\leq \DLP_{H_{m'}}^{<r}(\nu) \qquad \textrm{and} \qquad \DLP_{H_m}(\nu) \leq \DLP_{H_{m'}}(\nu).$$
\end{corollary}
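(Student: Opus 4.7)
The plan is to reduce to the del Pezzo case via the preceding corollary and then invoke Proposition \ref{prop-DLPmonotone}. Directly on $\F_e$ with $e\geq 2$ there is no useful notion of monotonicity of $\DLP$ ``around'' the anticanonical class, since $-\frac{1}{2}K_{\F_e}$ is not ample; instead one transports the question to $\F_0$ or $\F_1$ by iterating the linear isomorphism $\pi$ until the base Hirzebruch surface is del Pezzo.

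Write $e=2k+e'$ with $k\geq 1$ and $e'\in\{0,1\}$. Using the identifications $A_m=H_{m-e/2+1}$ on $\F_e$ and $A_m'=H_{m-(e-2)/2+1}'$ on $\F_{e-2}$ from the introduction of \S\ref{sec-reduction}, the preceding corollary translates into
$$\DLP_{H_n,\F_e}^{<r}(\nu)=\DLP_{H_{n+1},\F_{e-2}}^{<r}(\pi(\nu))\qquad (n>0),$$
and likewise for the unrestricted $\DLP$. Iterating this identity $k$ times (each shift $n\mapsto n+1$ preserves positivity, so the range condition is maintained at every step) yields
$$\DLP_{H_m,\F_e}^{<r}(\nu)=\DLP_{H_{m+k},\F_{e'}}^{<r}(\pi^k(\nu)) \qquad (m>0),$$
and the analogue for $\DLP_{H_m,\F_e}(\nu)$.

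Now given $0<m\leq m'$, the shifted parameters satisfy $1\leq k<m+k\leq m'+k$. Translating to $A$-coordinates on $\F_{e'}$ via $H_t=A_{t+e'/2-1}'$, both parameters correspond to values strictly greater than $0$ of the $A$-parameter (since $1-e'/2\leq 1<m+k$), so Proposition \ref{prop-DLPmonotone} applies and yields
$$\DLP_{H_{m+k},\F_{e'}}^{<r}(\pi^k(\nu))\leq \DLP_{H_{m'+k},\F_{e'}}^{<r}(\pi^k(\nu)),$$
and the analogue for the unrestricted function. Combining with the preceding pair of equalities gives both inequalities of the corollary. The argument is essentially bookkeeping of polarization shifts, so no serious obstacle arises; the real content sits in Proposition \ref{prop-DLPmonotone} (del Pezzo monotonicity around $-\frac{1}{2}K_{\F_{e'}}$) and the preceding corollary (compatibility of $\DLP$ with $\pi$).
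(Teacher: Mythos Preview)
Your proof is correct and follows essentially the same approach as the paper, which simply states that the monotonicity ``carries over'' from Proposition~\ref{prop-DLPmonotone} via the preceding corollary; you have just made the iteration down to $\F_{e'}$ and the parameter bookkeeping explicit.
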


Corollary \ref{cor-DLP1/2} has the following final generalization.

\begin{corollary}
Let $\nu\in \Pic(\F_e)\te \QQ$ and  let $H$ be an arbitrary polarization.  Then $$\DLP_{H}(\nu) \geq \frac{1}{2}.$$  
\end{corollary}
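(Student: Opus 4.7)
The plan is to proceed by induction on $e$, using the reduction machinery of \S\ref{sec-reduction} to pull the inequality back to the del Pezzo cases $e=0,1$ already handled by Corollary \ref{cor-DLP1/2}. The base cases are done, so I only need to treat $e\geq 2$.

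First I would observe that the function $\DLP_H(\nu)$ depends only on the ray spanned by $H$ in the ample cone, not on $H$ itself. Indeed, from the definition of $\DLP_{H,\cV}$, the domain condition $|(\nu-\nu(\cV))\cdot H|\leq -\tfrac{1}{2}K_{\F_e}\cdot H$ and the sign of $(\nu-\nu(\cV))\cdot H$ that selects the branch are both invariant under positive rescaling of $H$, while the two branch formulas $P(\pm(\nu-\nu(\cV)))-\Delta(\cV)$ do not mention $H$ at all. Consequently it suffices to prove $\DLP_H(\nu) \geq \tfrac{1}{2}$ for $H$ of the form $A_m = -\tfrac{1}{2}K_{\F_e}+mF$ with $m>\tfrac{e}{2}-1$, since every ample divisor on $\F_e$ is a positive multiple of such an $A_m$.

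Then I would invoke the corollary stating
\[
\DLP_{A_m,\F_e}(\nu) \;=\; \DLP_{A_m',\F_{e-2}}(\pi(\nu)),
\]
which is valid for any $m>\tfrac{e}{2}-1$ and $\nu\in \Pic(\F_e)\te\QQ$. By the inductive hypothesis applied to the surface $\F_{e-2}$, the right-hand side is at least $\tfrac{1}{2}$, and we conclude $\DLP_{A_m,\F_e}(\nu)\geq \tfrac{1}{2}$. Iterating the reduction $e \mapsto e-2$ terminates at $e'\in\{0,1\}$, where Corollary \ref{cor-DLP1/2} furnishes the base case.

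There is no real obstacle here; the only point requiring care is the scaling invariance step, so that an arbitrary ample $H$ (not necessarily of the form $A_m$ on the nose) can be handled by the transfer corollary. Once that observation is in place, the statement is an immediate consequence of the preceding two corollaries of this subsection combined with the del Pezzo result.
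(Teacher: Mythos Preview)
Your proposal is correct and follows essentially the same approach as the paper: reduce to the del Pezzo case by induction on $e$ via the transfer identity $\DLP_{A_m,\F_e}(\nu)=\DLP_{A_m',\F_{e-2}}(\pi(\nu))$, and invoke Corollary \ref{cor-DLP1/2} as the base case. Your explicit verification of scaling invariance of $\DLP_H(\nu)$ is a nice addition, since the paper relies on this implicitly by always working with the normalized polarizations $H_m$ (equivalently $A_m$).
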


The Dr\'ezet-Le Potier functions for $\F_e$ can also be used to inductively compute the $\mu_H$-stable exceptional bundles on $\F_e$ directly, without passing to the del Pezzo case.  This generalizes Corollary \ref{cor-DLPexcdelPezzo}.

\begin{corollary}\label{cor-DLPExceptional}
Let $H$ be an arbitrary polarization.  If $\bv = (r,\nu,\Delta)\in K(\F_e)$ is potentially exceptional, then there is a $\mu_H$-stable exceptional bundle $\cV$ of character $\bv$ if and only if $$\Delta \geq \DLP_{H}^{<r}(\nu).$$
\end{corollary}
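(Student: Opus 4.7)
The plan is to reduce to the del Pezzo case via the linear map $\pi: K(\F_e)\te\QQ \to K(\F_{e-2})\te\QQ$ from \S\ref{sec-reduction}, and then invoke Corollary \ref{cor-DLPexcdelPezzo}. If $e = 0$ or $1$, there is nothing to prove. For $e \geq 2$, I will proceed by induction on $e$.

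First, I would reduce to a standardized polarization. Any ample divisor on $\F_e$ is a positive rational multiple of some $H_m = A_{m-1+\frac{e}{2}}$ with $m>0$, i.e.\ of some $A_{m'}$ with $m' > \frac{e}{2}-1$. Both $\mu_H$-stability and the function $\DLP_H^{<r}$ are invariant under rescaling of $H$, so we may as well assume $H = A_{m'}$.

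The next step assembles three transfer properties of $\pi$ already established in \S\ref{sec-reduction}:
\begin{enumerate}
\item By Lemma \ref{lem-piAction}, $\pi$ preserves rank, discriminant, and $\chi(\cdot,\cdot)$; in particular it sends potentially exceptional characters to potentially exceptional characters with the same rank and discriminant, so $\pi(\bv)\in K(\F_{e-2})$ is again potentially exceptional.
\item By the corollary to Corollary \ref{cor-highermus} (applied to potentially exceptional characters, for which slope-stability is automatically equivalent to exceptionality), there is a $\mu_{A_{m'}}$-stable exceptional bundle on $\F_e$ of character $\bv$ if and only if there is a $\mu_{A_{m'}'}$-stable exceptional bundle on $\F_{e-2}$ of character $\pi(\bv)$.
\item By the subsequent $\DLP$-transfer corollary, $\DLP_{A_{m'},\F_e}^{<r}(\nu) = \DLP_{A_{m'}',\F_{e-2}}^{<r}(\pi(\nu))$.
\end{enumerate}
Combining (i)--(iii), the equivalence for $(\bv,H)$ on $\F_e$ is literally the same statement as the equivalence for $(\pi(\bv), A_{m'}')$ on $\F_{e-2}$. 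By the inductive hypothesis (with base case $e \in \{0,1\}$ supplied by Corollary \ref{cor-DLPexcdelPezzo}), the latter holds, and the result follows.

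The only potential obstacle is the verification that $\pi$ transfers stability and $\DLP^{<r}$ correctly across all ranks $\leq r$ simultaneously; but this is exactly what the corollaries of Lemma \ref{lem-piAction} guarantee (note that (iii) above is itself proved by applying (ii) to all stable exceptional bundles of rank strictly less than $r$, so the induction on $e$ fits together cleanly with the rank filtration implicit in the definition of $\DLP^{<r}$). Once these identities are granted, the present corollary is a purely formal consequence of its del Pezzo analog.
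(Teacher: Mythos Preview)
Your proposal is correct and is essentially the approach the paper intends: the corollary is placed in \S\ref{sec-reduction} immediately after the transfer statements for slope-stable exceptional bundles and for $\DLP^{<r}$, and its proof is the formal combination of these with the base case Corollary \ref{cor-DLPexcdelPezzo}, exactly as you outline. The paper does not spell this out, but your induction on $e$ via $\pi$ is the implicit argument.
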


Corollary \ref{cor-deltaDLP01} also generalizes immediately.

\begin{corollary}\label{cor-deltaDLPe}
Let $\nu\in \Pic(\F_e)\te \QQ$, and let $m>0$.  Then $$\delta_m^{\mus}(\nu) \geq \DLP_{H_m}(\nu).$$
\end{corollary}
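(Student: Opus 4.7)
The plan is to handle the del Pezzo cases $e \in \{0,1\}$ and the higher Hirzebruch cases $e \geq 2$ separately, with the second case reduced to the first via the linear map $\pi \colon K(\F_e) \otimes \QQ \to K(\F_{e-2}) \otimes \QQ$ from \S\ref{sec-reduction}. When $e = 0$ or $1$, the statement is exactly Corollary \ref{cor-deltaDLP01}, so there is nothing to prove. When $e \geq 2$, I will induct on $e$, taking the inductive hypothesis to be the inequality $\delta_{m,\F_{e-2}}^{\mus}(\mu) \geq \DLP_{H_m, \F_{e-2}}(\mu)$ for all $\mu \in \Pic(\F_{e-2}) \otimes \QQ$ and $m > 0$.

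For the inductive step, I will invoke the two key transfer identities from Section \ref{sec-reduction}: the equality of sharp Bogomolov functions
\[
\delta_{m,\F_e}^{\mus}(\nu) = \delta_{m+1, \F_{e-2}}^{\mus}(\pi(\nu))
\]
and the equality of Dr\'ezet--Le Potier functions
\[
\DLP_{H_m, \F_e}(\nu) = \DLP_{H_{m+1}, \F_{e-2}}(\pi(\nu)),
\]
both valid for $m > 0$. The first is the corollary stated just before Corollary \ref{cor-deltaMonotoneHigher}; the second comes from rewriting the identity $\DLP_{A_m, \F_e}(\nu) = \DLP_{A_m', \F_{e-2}}(\pi(\nu))$ in $H$-coordinates via the substitution $A_m = H_{m - e/2 + 1}$ on $\F_e$ and $A_m' = H'_{m - e/2 + 2}$ on $\F_{e-2}$, which produces the shift by one in the subscript. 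Since $m > 0$ forces $m + 1 > 0$, the inductive hypothesis applies to $\pi(\nu)$ at parameter $m+1$, yielding $\delta_{m+1, \F_{e-2}}^{\mus}(\pi(\nu)) \geq \DLP_{H_{m+1}, \F_{e-2}}(\pi(\nu))$. Chaining the three relations gives the desired inequality on $\F_e$.

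There is no serious obstacle: the substantive content, namely the del Pezzo inequality and the preservation of both $\delta^{\mus}$ and $\DLP$ under $\pi$, has already been established earlier in the paper. The only thing to verify carefully is the conversion of the reduction formula for $\DLP$ from $A_m$-coordinates to $H_m$-coordinates, which is a straightforward bookkeeping check using the definitions of $A_m$ on $\F_e$ and $A_m'$ on $\F_{e-2}$.
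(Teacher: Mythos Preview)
Your proposal is correct and matches the paper's intended argument. The paper simply notes that Corollary \ref{cor-deltaDLP01} ``also generalizes immediately,'' and your proof spells out precisely that generalization: the del Pezzo case is Corollary \ref{cor-deltaDLP01}, and for $e\geq 2$ the two transfer identities $\delta_{m,\F_e}^{\mus}(\nu) = \delta_{m+1,\F_{e-2}}^{\mus}(\pi(\nu))$ and $\DLP_{H_m,\F_e}(\nu) = \DLP_{H_{m+1},\F_{e-2}}(\pi(\nu))$ (obtained from the $A_m$-formulation by the substitution you describe) reduce the claim to the inductive hypothesis on $\F_{e-2}$.
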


\begin{example}
The correspondence between exceptional bundles on $\F_e$ and $\F_{e-2}$ only holds for the slope-stable exceptional bundles.  

For example, on $\F_4$ consider the potentially exceptional character $\bv = (r,\nu,\Delta) = (3,\frac{1}{3}E+F,\frac{4}{9})$.  It transforms to the character $\bw = \pi^2(\bv) = (3,\frac{1}{3}E+\frac{1}{3}F,\frac{4}{9})$ of an exceptional bundle $\cW$ on $\F_0$, with stability interval $I_{\cW} = (\frac{1}{2},2)$.  Thus there are no slope-stable sheaves on $\F_4$ of character $\bv$, for any polarization $H$.  Furthermore, there is no exceptional bundle on $\F_4$ of character $\bv$.  Indeed, we use Corollary \ref{cor-prioritaryRho} to compute $\rho_\gen(\bv) = 1$, but if there were an exceptional bundle of character $\bv$ we would have $\rho_\gen(\bv) \geq 2$ by Proposition \ref{prop-excPrior} (1).  Although $\pi$ carries potentially exceptional characters to potentially exceptional characters, it can carry non-exceptional characters to exceptional characters.

We make the following conjecture.  If it is true, the exceptional bundles on $\F_e$ can be readily determined from the stability intervals of exceptional bundles on $\F_0$ and $\F_1$.

\begin{conjecture}
Let $e\geq 2$.  If $\cV$ is an exceptional bundle on $\F_e$, then $\cV$ is $\mu_{H_{\epsilon}}$-stable for $\epsilon >0$ sufficiently small.  
\end{conjecture}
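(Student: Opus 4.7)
The natural strategy is induction on $e \geq 2$ via the linear reduction map $\pi\colon K(\F_e)\to K(\F_{e-2})$ of Section~\ref{sec-reduction}, terminating at the del Pezzo cases $e'\in\{0,1\}$. Write $\bv=\ch\cV$ and $e=e'+2k$ with $k\geq 1$. Iterating Corollary~\ref{cor-highermus} $k$ times, for small $\epsilon>0$ the bundle $\cV$ is $\mu_{H_\epsilon}$-stable on $\F_e$ if and only if there is a $\mu_{H'_{k+\epsilon}}$-stable sheaf of character $\pi^k(\bv)$ on $\F_{e'}$. By Proposition~\ref{prop-excPrior}(2), uniqueness of exceptional bundles forces any such sheaf to be the exceptional bundle $\cV'$ of character $\pi^k(\bv)$. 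It therefore suffices to prove two claims: (A) that $\pi^k(\bv)$ is the character of an exceptional bundle $\cV'$ on $\F_{e'}$, and (B) that its stability interval $I_{\cV'}=(m_0,m_1)$ satisfies $m_1>k$.

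For (A), Lemma~\ref{lem-piAction} gives that $\pi^k(\bv)$ is potentially exceptional; Corollary~\ref{cor-DLPExceptional} then reduces exceptionality to the Dr\'ezet--Le Potier inequality $\Delta(\pi^k(\bv))\geq \DLP^{<r}_{H'}(\nu(\pi^k(\bv)))$ on $\F_{e'}$. The analogous inequality holds on $\F_e$ by the hypothesis that $\cV$ is exceptional, so the job is to match the two sides. I would proceed by simultaneous induction on rank: the inductive hypothesis combined with Corollary~\ref{cor-highermus} yields, for every $\mu_{H_\epsilon}$-stable exceptional $\cU$ on $\F_e$ of rank less than $r$, a $\mu_{H'_{k+\epsilon}}$-stable exceptional $\cU'$ on $\F_{e'}$ with $\ch \cU'=\pi^k(\ch\cU)$. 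The polarization compatibility $A_m\leftrightarrow A_m'$ of Lemma~\ref{lem-piAction}, together with invariance of $\chi$ and $\Delta$, then gives $\DLP_{H_\epsilon,\cU}(\nu(\bv))=\DLP_{H'_{k+\epsilon},\cU'}(\nu(\pi^k(\bv)))$ for each matched pair, so the two suprema $\DLP^{<r}$ agree term by term.

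For (B), apply Theorem~\ref{thm-stabilityInterval} to $\cV'$. If $m_1\leq k$, then some exceptional bundle $\cW'\in \mathbb{E}_{H'_{m_1}}$ of rank less than $r$ with $\chi(\cV',\cW')>0$ realizes $m_{\cV',\cW'}=m_1$. Applying the inductive version of (A) in the reverse direction to $\cW'$, one obtains an exceptional bundle $\cW$ on $\F_e$ with $\ch\cW=(\pi^k)^{-1}(\ch\cW')$ and $\chi(\cV,\cW)>0$. The polarization correspondence translates $\mu_{H'_{m_1}}(\cV')=\mu_{H'_{m_1}}(\cW')$ into $\mu_{H_{m_1-k}}(\cV)=\mu_{H_{m_1-k}}(\cW)$, with $m_1-k\leq 0$ lying on or outside the boundary of the ample cone of $\F_e$. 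Combining this slope coincidence at a non-ample polarization with $\chi(\cV,\cW)>0$ should produce a violation of the $\DLP^{<r}$ inequality that $\cV$ satisfies---tracked through a careful analysis of the positions of $\nu(\cV)$ and $\nu(\cW)$ relative to the branches defining $\DLP_{H_\epsilon,\cW}$ as $\epsilon\to 0$---yielding the required contradiction and forcing $m_1>k$.

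The principal obstacle is the rank-induction in (A). The example following Corollary~\ref{cor-DLPExceptional} shows that $\pi^{-1}$ does not preserve exceptionality in general, so the induction must genuinely exploit the existence of $\cV$ on $\F_e$, not merely the numerical properties of its character. The delicate technical point is establishing the precise bijection between the lower-rank exceptional bundles that contribute to $\DLP^{<r}_{H_\epsilon}(\nu(\bv))$ on $\F_e$ and those contributing to $\DLP^{<r}_{H'_{k+\epsilon}}(\nu(\pi^k(\bv)))$ on $\F_{e'}$, including a careful treatment of boundary cases in Remark~\ref{rem-DLPK} where the relevant branch of $\DLP_{H,\cU}$ just barely includes $\nu(\bv)$ in its slope-domain. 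Once this bookkeeping is settled, both claims (A) and (B) should follow cleanly, closing the induction.
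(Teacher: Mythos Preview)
The statement is a \emph{conjecture} in the paper, not a theorem; the authors do not prove it. They explicitly note that it is verified only for exceptional bundles of rank less than $107$, and they exhibit a specific potentially exceptional character of rank $107$ on $\F_3$ whose stability interval (computed via $\pi$) is empty, so that the conjecture in that case amounts to showing no exceptional bundle of that character exists---a question they leave open.

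Your argument has a genuine circularity in Claim~(A). You write that ``the analogous inequality holds on $\F_e$ by the hypothesis that $\cV$ is exceptional,'' meaning $\Delta(\bv)\geq \DLP^{<r}_{H_\epsilon}(\nu(\bv))$. But exceptionality of $\cV$ (i.e.\ $\hom(\cV,\cV)=1$, $\ext^1(\cV,\cV)=\ext^2(\cV,\cV)=0$) does \emph{not} imply this inequality. The inequality is, by Corollary~\ref{cor-DLPExceptional}, equivalent to the existence of a $\mu_{H_\epsilon}$-stable exceptional bundle of character $\bv$---which is exactly the conclusion you are trying to prove. The proof of the $(\Rightarrow)$ direction of Corollary~\ref{cor-DLPExceptional} uses slope-stability of $\cV$ to force the vanishings $\Hom(\cW,\cV)=\Ext^2(\cW,\cV)=0$ for lower-rank stable exceptionals $\cW$; without slope-stability you have no mechanism to control $\chi(\cW,\cV)$. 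So your rank induction never gets off the ground: at the inductive step you need a fact about $\cV$ that is precisely the conjecture itself.

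Claim~(B) has a related problem. You invoke ``the inductive version of (A) in the reverse direction'' to produce an exceptional bundle $\cW$ on $\F_e$ with $\ch\cW=(\pi^k)^{-1}(\ch\cW')$. But the paper's example immediately preceding the conjecture shows that $\pi^{-1}$ can carry exceptional characters to non-exceptional ones, so there is no reason such a $\cW$ exists. The rank-$107$ example is exactly an instance where the corresponding character on $\F_1$ is exceptional with stability interval not reaching far enough, and the whole question is whether the pulled-back character on $\F_3$ supports an exceptional bundle at all.
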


Arguments as above and computer computations of stability intervals (as in Example \ref{ex-stabilityIntervals}) show that the conjecture is true for exceptional bundles of rank less than $107$.  The first potential counterexample is given by the character $$(r,\nu,\Delta) = \left(107,\frac{25}{107}E+\frac{76}{107}F,\frac{5724}{11449}\right)$$ on $\F_3$; the stability interval is empty, so to verify the conjecture in this case one needs to show there is no exceptional bundle of this character.
\end{example}

\bibliographystyle{plain}

\end{document}